\newcommand{\sgn}{\operatorname{sgn}}
\newcommand\MM{\mathcal{M}}
\newcommand\NN{\mathcal{N}}
\newcommand\XX{\mathcal{X}}
\newcommand\YY{\mathcal{Y}}
\newcommand\R{\mathbb{R}}
\DeclareFontFamily{OT1}{pzc}{}
\DeclareFontShape{OT1}{pzc}{m}{it}{<-> s * [1.125] pzcmi7t}{}
\DeclareMathAlphabet{\mathpzc}{OT1}{pzc}{m}{it}
\newcommand\oo{\mathpzc{o}}
\newcommand\xx{\mathpzc{x}}
\newcommand\yy{\mathpzc{y}}
\newcommand\zz{\mathpzc{z}}
\newcommand\closure{\mathrm{cl}}
\renewcommand\span{\mathrm{span}}
\newtheorem{assumption}[theorem]{Assumption}
\crefname{assumption}{Assumption}{Assumptions} 
\Crefname{assumption}{Assumption}{Assumptions} 
\newtheorem{setting}[theorem]{Setting}
\crefname{setting}{Setting}{Settings} 
\Crefname{setting}{Setting}{Settings} 
\DeclareMathOperator*{\argmin}{arg\,min}
\DeclareMathOperator*{\dist}{dist}
\DeclareMathOperator*{\conv}{conv}
\newcommand\Item[1][]{%
  \ifx\relax#1\relax  \item \else \item[#1] \fi
  \abovedisplayskip=0pt\abovedisplayshortskip=0pt~\vspace*{-\baselineskip}}
\begin{document}

\title{On the Stability Properties and the Optimization Landscape of Training Problems with Squared Loss for
Neural Networks and General Nonlinear Conic Approximation Schemes}

\author{\name Constantin Christof \email christof@ma.tum.de \\
       \addr Technische Universit\"{a}t M\"{u}nchen\\
	   Chair of Optimal Control\\
	   Center for Mathematical Sciences, M17\\
       Boltzmannstraße 3, 85748 Garching, Germany}

\editor{Animashree Anandkumar}

\maketitle

\begin{abstract}
We study the optimization landscape and the stability properties of 
training problems with squared loss for neural networks and general nonlinear conic approximation schemes
in a deterministic setting.
It is demonstrated that,
if a nonlinear conic approximation scheme is considered that is (in an appropriately defined sense)
more expressive than a classical linear approximation approach
and if there exist unrealizable label vectors, 
then a training problem with squared loss is necessarily unstable 
in the sense that its solution set depends 
discontinuously on the label vector in the training data.
We further prove that
the same effects that are responsible for these instability properties
are also the reason for the emergence of saddle points and spurious local minima,
which may be arbitrarily far away from global solutions,
and that neither the instability of the training problem 
nor the existence of spurious local minima can, in general, be 
overcome by adding a regularization term to the objective function that penalizes the 
size of the parameters in the approximation scheme. 
The latter results are shown to be true regardless of whether the assumption of realizability 
is satisfied or not. 
It is further established that there exists a direct and quantifiable relationship between 
the analyzed instability properties and the expressiveness of the 
considered approximation instrument and that the 
set of training label vectors and, in the regularized case, 
Tikhonov regularization parameters that give rise to spurious local minima 
has a nonempty interior.
We demonstrate that our analysis  in particular applies to training problems 
for free-knot interpolation schemes 
and deep and shallow neural networks 
with variable widths that involve an arbitrary mixture of various activation functions 
(e.g., binary, sigmoid, 
tanh, arctan, soft-sign, ISRU, soft-clip, SQNL, ReLU, leaky ReLU, soft-plus, bent identity,
SILU, ISRLU, and ELU).
In summary, the findings of this paper illustrate that the improved approximation properties 
of neural networks and general nonlinear conic approximation instruments
come at a price and 
are linked in a direct and quantifiable way to undesirable properties of
the optimization problems that have to be solved in order to train them. 
\end{abstract}

\begin{keywords}
loss surface, optimization landscape, stability properties, squared loss,
neural networks,  sensitivity analysis, nonlinear approximation, spurious local minima 
\end{keywords}

~\pagebreak
\tableofcontents
~\\[-0.375cm]
\section{Introduction}
\label{sec:1}
The aim of this paper is to study the stability properties and the optimization landscape 
of training problems of the form
\begin{equation}
\label{eq:trainingpropprot}
\min_{\alpha \in D}\, \frac{1}{2n}\sum_{k=1}^n  \|\psi(\alpha, \xx_{\;d}^k) - \yy_d^k\|_\YY^2.
\end{equation}
Here, $\XX$ is supposed to be a nonempty set (the set of input elements), 
$\YY$ is supposed to be a finite-dimensional vector space over $\R$ that is
endowed with an inner product  $(\cdot, \cdot)_{\YY}$ and the associated norm $\|\cdot\|_\YY$ (the output space),
$\psi\colon D \times \XX \to \YY$, $(\alpha, \xx) \mapsto \yy$,  
is assumed to be an arbitrary but fixed approximation scheme (e.g., a neural network) that can be adjusted 
by selecting an $m$-dimensional vector $\alpha$ from a nonempty set of admissible parameters $D \subset \R^m$
(these may be weights, biases, coefficients, or something else),
and $(\xx_{\;d}^k, \yy_d^k)$, $k=1,...,n$, $n \in \mathbb{N}$, $n \geq 2$, 
is a given training set consisting of a label vector $\{ \yy_d^k\}_{k=1}^n \in \YY^n$
and an input  vector $\{ \xx_{\;d}^k\}_{k=1}^n \in \XX^n$. Note that, by introducing
the abbreviations 
\begin{equation}
\label{eq:randomeq2735355}
\begin{gathered}
X := \XX^n,
\qquad Y := \YY^n,
\qquad y_d := \{ \yy_d^k\}_{k=1}^n \in Y,
\qquad x_d := \{ \xx_{\;d}^k\}_{k=1}^n \in X,
\\
\big\| \{ \yy_k\}_{k=1}^n  \big\|_Y 
:= \left (
\frac{1}{2n} \sum_{k=1}^n   \| \yy_k  \|_{\YY}^2 \right )^{1/2}\quad \forall  \{ \yy_k\}_{k=1}^n  \in Y,
\\
\Psi\colon D  \times X \to Y,\qquad \Psi \left (\alpha, \{ \xx_{\;k}\}_{k=1}^n\right) := \left \{\psi(\alpha, \xx_{\;k}) \right \}_{k=1}^n,
\end{gathered}
\end{equation}
the problem \eqref{eq:trainingpropprot} can also be written in the more compact form 
\begin{equation}
\label{eq:trainingpropprot42}
\min_{\alpha \in D} \|\Psi( \alpha, x_d) - y_d\|_Y^2. 
\end{equation}

We prove that, if $\psi$
is more expressive than a 
linear approximation instrument, if the set $\Psi(D, x_d)$ is a cone,
and if the number of samples $n$ is so large that there exist training label vectors 
$y_d \in Y$ for which the optimal value of \eqref{eq:trainingpropprot} is positive
(i.e., $y_d$ that are not realizable),
then the problem \eqref{eq:trainingpropprot} always suffers from 
spurious local minima/spurious basins, saddle points, instability effects, and/or the nonuniqueness of solutions
for certain choices of the label vector $y_d$. This illustrates that undesirable 
properties of the minimization problem \eqref{eq:trainingpropprot} always appear
if $\psi$ is trained in a not sufficiently overparameterized regime. 
We moreover show that, in the presence of label vectors $y_d$ with a positive 
optimal loss value and under appropriate assumptions on $\psi$, there is a direct and quantifiable  relationship between 
the instability properties of \eqref{eq:trainingpropprot}, the size of the set of vectors $y_d$ for which 
spurious local minima/spurious basins exist, and the approximation power of $\psi$. This establishes
a quid-pro-quo relationship between the expressiveness of $\psi$ and undesirable properties of \eqref{eq:trainingpropprot}.
Compare also with the illustrative example in \cref{sec:4} in this context.

For problems \eqref{eq:trainingpropprot} for which the optimal value of the loss 
function is identical zero for all $y_d \in Y$, we establish that non-optimal stationary points, spurious 
local minima, and instability effects may still occur if a local linear/quadratic approximation of 
$\psi$ is not able to fit arbitrary $y_d$ with zero error. We moreover prove that the same is true 
for training problems that include an additional regularization term in the objective function. 
This shows that, although problems concerning spurious local minima, saddle points, and instability properties
may be mitigated by overparameterization and classical Tikhonov regularization, 
one cannot expect that such techniques resolve these undesirable effects entirely.
For an overview of the various theorems on the properties of squared-loss training problems
for general approximation schemes $\psi$ proved in this paper, see \cref{subsec:2.2}.

A main feature of our analysis is that it is axiomatic and discusses the 
properties of training problems of the form \eqref{eq:trainingpropprot} on a general level. 
Because of this, our results are not restricted to a certain type of approximation instrument
but can readily be applied to all functions $\psi$ that 
satisfy the required abstract assumptions. 
This offers the additional benefit of giving an insight into the
mathematical mechanisms that are behind, e.g., the emergence of spurious local minima in training problems
of the form \eqref{eq:trainingpropprot} and also allows to unify various previous results on the topic. 
For an overview of the consequences that our analysis has for neural networks,
we refer to \cref{subsec:2.3} below.
Moreover, our approach also allows to establish new results on the properties of
squared-loss training problems. 
In contrast to prior contributions, we are, for example, able to rigorously
prove that a training problem of the form \eqref{eq:trainingpropprot} satisfying mild assumptions 
always possesses spurious local minima for all label vectors $y_d$ in a nonempty 
open cone $K \subset Y$ when $\psi$ is a deep neural network that involves activation functions 
which are affine on an open nonempty interval, see \cref{cor:spurminaffineNN,cor:spurminconstantNN}.
Further, we can establish that these spurious 
minima can be arbitrarily bad in relative and absolute terms and in terms of loss. 
For a detailed discussion of this topic, see \cref{subsec:2.3,subsec:2.4} below.
 In summary, this paper thus provides an in-depth analysis of what can and---maybe more importantly---what 
 cannot be expected regarding the presence of spurious local minima and instability effects when studying squared-loss 
 training problems of the form \eqref{eq:trainingpropprot} for nonlinear approximation schemes $\psi$ in different training regimes.

We conclude this introduction with a brief summary of the content and the structure of 
the remainder of the paper:

In \cref{sec:2}, we give an overview of our main theorems and 
the consequences that our analysis has for neural networks. Here, we also discuss in more detail the 
contribution of the paper and relations to previous work.
\Cref{sec:3} is concerned with preliminaries and basic concepts 
that are needed for the rigorous analysis of the training problem \eqref{eq:trainingpropprot}.
In \cref{sec:4}, we discuss a toy example that 
illustrates the basic ideas of our approach and 
provides some intuition on how the approximation 
properties of a function $\psi$ are related to the loss landscape of training 
problems of the form \eqref{eq:trainingpropprot}.
The subsequent \cref{sec:5} contains the bulk of our analysis of the optimization landscape and
the stability properties of training problems with squared loss for 
general approximation schemes. Here, we rigorously prove the main results 
presented in \cref{sec:2}.
\Cref{sec:6} addresses the consequences that the analysis of \cref{sec:5} 
has for special instances of nonlinear conic
approximation instruments, namely, 
classical free-knot interpolation schemes and deep and shallow neural networks. 
This section in particular includes the rigorous proofs of the results collected in 
\cref{subsec:2.3}.
In \cref{sec:7}, we conclude the paper with
additional remarks on the overall role that our results play in the 
study of neural networks and the field of approximation theory.

\section{Overview of Main Theorems and Discussion of Contribution}
\label{sec:2}
In this section, we discuss the background of our work and summarize our contributions.

\subsection{Background}
\label{subsec:2.1}
Due to the widespread use of the quadratic loss function,
minimization problems of the type  \eqref{eq:trainingpropprot} 
(or \eqref{eq:trainingpropprot42}, respectively)
are encountered very frequently in machine learning and
the field of approximation theory in general. 
One of the main reasons why problems of the form \eqref{eq:trainingpropprot} are considered 
so often in the literature is that 
solving them (or, at least, solving them approximately)
by means of classical first-order methods works very well in practical applications---in particular in the context of neural networks. 
This has led some authors to speculate that training problems of the type \eqref{eq:trainingpropprot} 
are always very well behaved
when neural networks are considered, e.g., in the sense that all local minima
of \eqref{eq:trainingpropprot}  
are also globally optimal or achieve a loss that is very close to the optimum.
Compare, for instance, with the numerical results and conclusions of
\cite{LeCun2015}, \cite{Nguyen2018}, and \cite{Yu1995} in this context.
At least for neural networks with linear activation functions, 
the belief that problems of the form \eqref{eq:trainingpropprot} always possess very nice properties
turns out to be not completely unfounded. 
Indeed, \cite{Kawaguchi2016} could prove that,
for deep linear neural networks, local minima of \eqref{eq:trainingpropprot} are
always also globally optimal so that---as far as the notion of local optimality is concerned---\eqref{eq:trainingpropprot} 
effectively behaves like a convex problem. 
 This effect was later also 
discussed in more detail by \cite{Zhou2017}, \cite{Laurent2018}, and \cite{Yun2019},
and, with view on the convergence properties of 
gradient descent algorithms, by \cite{Eftekhari2020} and \cite{Zou2020}.

Unfortunately, for truly nonlinear approximation schemes, 
the picture turns out to be more bleak. 
Although there have been numerous attempts 
to establish, for instance, the ``local minima = global minima''-property 
for networks with nonlinear activation functions \citep[mostly based on the 
hope that the linear case gives a good enough impression of the nonlinear one, 
see][]{Eftekhari2020,Saxe2013ExactST}, the results that have been obtained in this context so far 
are typically only applicable in very special situations and under rather restrictive assumptions
on the network architecture, the degree of overparameterization, and/or the 
considered training data.
Compare, for instance, with the findings of 
\cite{Yu1995,Kazemipour2020,Li2018,LiDawei2021,Liang2018,Oymak2019,Soudry2016,Cooper2020} in this regard. 
For a critical discussion of this topic and further references, see also \cite{Goldblum2020Truth}
and \cite{Ding2020}. 
The reason behind these deficits of the known positive results 
on the loss surface of general neural networks is that even
slightest nonlinearities in the activation function can have a huge impact on the 
optimization landscape of training problems of the form \eqref{eq:trainingpropprot}
and may very well give rise to 
spurious (i.e., not globally optimal) local minima. Data sets illustrating this for 
two-layer ReLU neural networks have been constructed, for example, by
\cite{Swirszcz2016}, \cite{Zhou2017}, and \cite{Safran2018}. The minima documented in 
the latter of these papers  
have recently also been studied in more detail by \cite{Arjevani2020}.
Further, \cite{Yun2019} showed that for two-layer ReLU-like networks spurious local minima 
emerge for almost all choices of the training data.
This illustrates that local minima that are not globally optimal are not the exception but rather the rule
when piecewise linear activation functions are considered. 
\cite{Yun2019} also provide 
explicit examples of training problems for non-ReLU neural networks with two layers
which possess non-globally optimal local minima.
For problems involving only a single neuron, 
an example with numerous local minima can also be found in 
the early work of \cite{Auer1996}. 
Compare also with the results on spurious valleys of
\cite{Nguyen2018,Venturi2019} in this context, 
and, for an overview of papers on the existence of spurious local minima, with \cite{Sun2019,Sun2020}.
What all of the results on the existence of spurious local minima in the above contributions have in common is that 
they are only concerned with networks which are rather shallow (with depth not exceeding two).
The reason for this is that, as soon as more layers are considered, 
the explicit construction of (nontrivial) spurious local minima---or, more precisely, proving that a constructed 
local minimum is indeed not a global one---becomes 
very cumbersome. 
Two of the few contributions that address the construction of
spurious local minima for networks of arbitrary depth are 
the recent ones of \cite{Goldblum2020Truth} and \cite{Ding2020}.
In both of these papers, however, a detailed discussion of the neuralgic point of 
whether the constructed local minima are really spurious
is largely avoided. \cite{Goldblum2020Truth} address this issue merely by providing
numerical evidence, and \cite{Ding2020} resort to the assumption of realizability 
to resolve this problem\footnote[1]{While this was correct at the time of writing, 
in a revised version of their paper, 
\cite{Ding2020} were able to lift the assumption of realizability 
for a certain class of spurious local minima constructed for $C^2$-activations, 
see \cite[Theorem 1]{Ding2020}. The technique used to accomplish this, 
however, does not carry over to activations with an affine segment, see 
\cite[Theorem 2, Corollary~1]{Ding2020}.
Our \cref{cor:spurminaffineNN,cor:spurminconstantNN} are able to fill this gap, cf.\ the discussion in \cref{subsec:2.4} below.\\[-0.7cm]}. 
What is further noteworthy is that the majority of 
contributions on the existence of spurious local minima 
currently found in the literature rely on the fact that neural networks 
with piecewise linear activation functions are able to locally emulate a linear 
neural network and thus inherit the solutions of training problems of the form 
\eqref{eq:trainingpropprot42} for linear approximation schemes as spurious local minima.
Compare, for instance, with the methods of proof used by
\cite{Yun2019,Goldblum2020Truth,Ding2020} in this context.

\subsection{Overview of Main Theorems on General Approximation Schemes}
\label{subsec:2.2}
The purpose of the present paper is to demonstrate 
that the undesirable properties of the optimization landscape
of
training problems of the form \eqref{eq:trainingpropprot42}
for neural networks
with nonlinear activation functions are, in fact, 
not the result of a particular choice of network architecture
but rather a necessary consequence of the improved approximation properties that 
neural networks enjoy in comparison with linear approaches. 
More precisely, 
in what follows,
we will demonstrate that indeed \emph{every} 
nonlinear approximation scheme that is conic and---in an appropriate
sense---more expressive than a linear approximation instrument 
(regardless of whether it is a neural network or something different, e.g., 
an adaptive interpolation approach)
gives rise to squared-loss training problems that suffer
from stability and uniqueness issues and/or the existence of non-optimal 
stationary points. 

The starting point of our analysis  
is the observation that the overwhelming majority of nonlinear approximation schemes 
$\psi\colon D \times \XX \to \YY$ currently found in the literature
possess the following two properties for all $n \geq 2$ and all training data vectors $x_d \in X$
with $\xx_{\;d}^k \neq  \xx_{\;d}^j$ for all $k \neq j$: 
\begin{enumerate}[label=\Roman*)]

\item 
\label{fundass:I}
\emph{\bf (Conicity)} The set $\Psi(D, x_d)$
(with $\Psi$ etc.\ defined as in Equation \ref{eq:randomeq2735355})
is a cone, i.e., 
\begin{equation*}
y \in \Psi(D, x_d),\,\,s \in (0, \infty)\quad \Rightarrow \quad s y \in \Psi(D, x_d).
\end{equation*}

\item  
\label{fundass:II}
\emph{\bf (Improved Expressiveness)} The map $\Psi(\cdot, x_d)\colon D \to Y$ satisfies
\begin{equation*}
\forall y_d \in Y \setminus \{0\}:
\quad  \inf_{\alpha \in D}  \|\Psi(\alpha, x_d) - y_d\|_Y^2 < \|y_d\|_Y^2.
\end{equation*}
\end{enumerate}

Note that the first of the above conditions is 
rather unremarkable. If, for example, a neural network is considered, then this assumption is 
automatically satisfied since the 
topmost layer is affine, see \cref{lemma:nnconic}. Property \ref{fundass:II} is more interesting in this context. 
It expresses that, for the considered data vector $x_d$, the function $\Psi(\cdot, x_d)\colon D \to Y$ is able to provide 
an approximation of every nonzero training label vector $y_d$ that is better than the 
trivial guess $y= 0 \in Y$. 
The main point here is that
the map $\Psi(\cdot, x_d)$ can accomplish this 
regardless of the relationship between the number of parameters $m \in \mathbb{N}$
and the number of training samples $n \in \mathbb{N}$
(and in particular also in those situations with $m \ll n$). 
For further details on this topic, we refer to \cref{sec:4}.

For every training problem of the type \eqref{eq:trainingpropprot42} that involves an
approximation scheme $\psi \colon D \times \XX \to \YY$ and a training data vector $x_d \in X$
satisfying \ref{fundass:I} and \ref{fundass:II},
we are able to prove the following (see the theorems in brackets for the mathematically rigorous statements):
\begin{itemize}
\item {\bf (Nonuniqueness and Instability of Best Approximations)}
If there exist label vectors $y_d \in Y$ that are not realizable, then 
the map $\Psi(\cdot, x_d)\colon D \to Y$ is always unable to provide unique best approximations 
for all $y_d$. 
(See \cref{def:bestapproxmap} for the precise definition of what we mean with the term ``best approximation'' here.)
Further, arbitrarily small perturbations in $y_d$ can cause 
arbitrarily large changes in the set of best approximations. The degree of discontinuity of 
the best approximation map depends on the extent to which $\psi$ and $x_d$ satisfy condition \ref{fundass:II}.
(See \cref{theorem:abstractinstability}.)

\item {\bf (Choice Between Excessive Nonuniqueness and Spurious Minima/Basins)}
If there exist label vectors $y_d$ that are not realizable and if the map $\Psi(\cdot, x_d)\colon D \to Y$
is continuous, then there exist uncountably many label vectors $y_d \in Y$ for which $\Psi(\cdot, x_d)\colon D \to Y$
provides infinitely many best approximations
or there exists an open nonempty
cone $K \subset Y$ such that, for each $y_d \in K$, \eqref{eq:trainingpropprot42} possesses spurious local minima
and/or spurious basins. (See \cref{th:stevaluedspurious}.)

\item {\bf (Existence of Undesirable Stationary Points)} 
If the map $\Psi(\cdot, x_d)\colon D \to Y$ is differentiable at a point $\bar \alpha \in D$
and if the function value and partial derivatives of $\Psi(\cdot, x_d)$ at $\bar \alpha$
do not span the whole of $Y$, 
then there exist uncountably many $y_d \in Y$ such that $\bar \alpha$ is an arbitrarily bad saddle point or
spurious local minimum of \eqref{eq:trainingpropprot42}. 
In particular, in the case $m+1 < \dim(Y)$, every point of differentiability of $\Psi(\cdot, x_d)$ is 
a saddle or spurious local minimum of \eqref{eq:trainingpropprot42} for uncountably many $y_d$. 
The position of these $y_d$ depends on the extent to which $\psi$ and $x_d$
satisfy \ref{fundass:II}. 
(See \cref{theorem:existencestatpts}.)

\item {\bf (Existence of Spurious Local Minima)} 
If $\Psi(\cdot, x_d)$ is able to locally parameterize a
proper subspace $V$ of $Y$, then there exists an open nonempty cone $K \subset Y$
such that \eqref{eq:trainingpropprot42} possesses spurious local minima for all $y_d \in K$. 
These spurious minima satisfy a growth condition in $Y$ and can be arbitrarily bad in relative and absolute terms 
and in terms of loss. 
The size of $K$ depends on the extent to which $\psi$ and $x_d$ satisfy \ref{fundass:II}.
If every vector is realizable, then it holds $K = Y \setminus V$.
(See \cref{theorem:badcone}.)

\item \emph{\bf (Instability and Nonuniqueness in the Presence of Realizability)} 
If there exists an $\bar \alpha \in D$ such that $\Psi(\cdot, x_d)$ maps an open neighborhood of $\bar \alpha$ into a
proper subspace of $Y$ and if every $y_d \in Y$ is realizable, then 
the solution set of \eqref{eq:trainingpropprot42} is instable w.r.t.\ 
perturbations of the vector $y_d$
and \eqref{eq:trainingpropprot42} is not uniquely solvable 
(in the sense of minimizing sequences) for certain choices of the vector $y_d$.
(See  \cref{cor:instabilityoverpara}.)

\item \emph{\bf (Ineffectiveness of Regularization)} 
If a term of the form $\nu g(\alpha)$ 
with a $\nu > 0$ and a regularizer $g \colon D \to [0, \infty)$
is added to the objective function of \eqref{eq:trainingpropprot42}, then 
the following is true (under appropriate assumptions on $\psi$ and $g$, see \cref{subsec:5.3}):
\begin{enumerate}[label=\roman*)]
\item There exists an open nonempty set $O \subset Y \times (0, \infty)$
such that, for all label vectors $y_d$ and regularization parameters $\nu$ with $(y_d, \nu) \in O$, 
the regularized training problem possesses a spurious local minimum.
Further, these spurious minima can be arbitrarily bad
in terms of loss. (See \cref{theorem:spuriousregprob} and \cref{rem:Oneighborhood}.) 
\item There exist uncountably many combinations of $y_d$ and $\nu$
such that the resulting regularized training problem is not uniquely solvable (in the sense of minimizing sequences) and 
possesses a discontinuous solution map. (See \cref{theorem:nonuniquereg}.) 
\item Regardless of the choice of $\nu$, adding the term $\nu g(\alpha)$ to the 
objective function of \eqref{eq:trainingpropprot42}
compromises the approximation property  \ref{fundass:II}. (See \cref{theorem:approxgone}.)
\end{enumerate}
\end{itemize}

Before we comment in more detail on how the above results are related to the literature
and on the overall contribution of this paper,
 we briefly summarize the consequences that our analysis 
has for the study of neural networks. 

\subsection{Overview of Main Consequences for Deep and Shallow Neural Networks}
\label{subsec:2.3}

Our first main result on neural networks establishes that these special instances of nonlinear 
approximation schemes are indeed covered by our abstract analysis:
\begin{itemize}
\item {\bf (Conicity and Improved Expressiveness of Neural Networks)}
Consider a fully connected feedforward neural network with input space $\XX = \R^{d_\xx}$,
output space \mbox{$\YY = \R^{d_\yy}$,} $d_\xx, d_\yy \in \mathbb{N}$, depth $L \in \mathbb{N}$,
widths $w_1,...,w_L \in \mathbb{N}$, and activation functions $\sigma_i\colon \R \to \R$, $i=1,...,L$. Suppose
that an $n \in \mathbb{N}$ with $n \geq 2$ and an $x_d := \{\xx_{\;d}^k\}_{k=1}^n \in \XX^n$ 
satisfying $\smash{\xx_{\;d}^j \neq \xx_{\;d}^k}$ for all $j \neq k$ is given, and that one of the following is true:
\begin{enumerate}[label=\roman*)]
\item  The functions $\sigma_i$ are of Heaviside type for all $i =1,...,L$ and it holds $w_1 \geq 2$.
\item The set $\{1,...,L\}$ can be decomposed into two (possibly empty) disjoint index sets $I$ and $J$ such that
the function $\sigma_i$ is of ``sigmoid type''
(e.g., sigmoid, tanh, arctan, soft-sign) for all $i \in I$, such that
the function $\sigma_i$ is of ``ReLU type''
(e.g., ReLU, soft-plus, swish) for all $i \in J$,
such that  $w_i \geq 2$ holds for all $i \in J$,
and such that $w_1 \geq 2$ holds in the case $1 \in I$ and $w_1 \geq 4$ in the case $1 \in J$. 
\end{enumerate}
Then, the neural network and $x_d$ satisfy \ref{fundass:I} and \ref{fundass:II}.
(See \cref{lemma:heavisideapprox} and \cref{theorem:generalactiv}.)
\end{itemize}

We remark that the fact that neural networks
indeed possess property \ref{fundass:II}
under the above weak assumptions on the data, the network architecture, and the activation functions 
is also interesting on its own.
(See \cref{set:NNN} for our precise assumptions
and \cref{lemma:heavisideapprox,theorem:generalactiv} for an explanation of what we mean with
the terms ``sigmoid type'', ``ReLU type'',
and ``Heaviside type'' here.)
We will comment in more detail on this topic in \cref{subsec:2.4}.
For a result that shows that our analysis also covers ResNets, we refer the reader to \cref{cor:resNets}.
Given a training problem of the type \eqref{eq:trainingpropprot42} for a
neural network and a vector $x_d$ that satisfy the conditions in the last bullet point, we obtain, for instance,
the following corollaries from our abstract analysis (see again the results in brackets for the mathematically rigorous statements): 

\begin{itemize}
\item {\bf (Nonuniqueness and Instability of Best Approximations)}
If there exist unrealizable label vectors $y_d$, then the neural network is unable 
to provide unique best approximations for all $y_d \in Y$. Further, arbitrarily small perturbations of the label vector $y_d$
can affect the set of best approximations to an arbitrarily large extent. 
The degree of discontinuity of 
the best approximation map depends on the extent to which $x_d$ and
the considered network satisfy \ref{fundass:II}.
(See \cref{cor:NNbestapproxinstabil}.)

\item {\bf (Choice Between Excessive Nonuniqueness and Spurious Minima/Basins)}
If there exist unrealizable label vectors $y_d$ and if the activation functions $\sigma_i$, $i=1,...,L$, are continuous,  
then there exist uncountably many $y_d \in Y$ for which the neural network provides infinitely many best approximations 
or there exists an open nonempty cone $K \subset Y$ such that, for each $y_d \in K$, 
the training problem \eqref{eq:trainingpropprot42} 
possesses (arbitrarily bad) spurious local minima and/or spurious basins. (See \cref{cor:NNinfiniteNonUniqueness}.)

\item {\bf (Saddle Points and Spurious Minima in the Non-Overparameterized Case)}
If the number of parameters $m$ in the neural network is smaller than the product $n d_\yy$,
then every point of differentiability of the neural network is a saddle point or a spurious local minimum of \eqref{eq:trainingpropprot42}
for uncountably many $y_d$ and, as a saddle point or spurious local minimum, 
can be made arbitrarily bad in relative and absolute terms and in terms of loss by choosing $y_d$
appropriately. The position of these $y_d$ depends on the extent to which $x_d$ and
the considered network satisfy \ref{fundass:II}.  (See \cref{cor:saddlenonover}.)

\item {\bf (Saddle Points and Spurious Minima for Arbitrary Problems)}
If $d_\xx + 1 < n$ holds and if the functions $\sigma_i$ are differentiable, 
then there exists an $(m - d_\xx w_1)$-dimensional subspace of the parameter space of the network
such that each element of this subspace is a saddle point or a spurious local minimum of \eqref{eq:trainingpropprot42}
for uncountably many $y_d$. Again, these points can be made arbitrarily bad in 
relative and absolute terms and in terms of loss by choosing appropriate $y_d$.
(See \cref{cor:saddleoverpar}.)

\item {\bf (Spurious Local Minima for Activation Functions with an Affine Segment)}
If each $\sigma_i$ is affine-linear on some open nonempty interval $I_i \subset \R$ of its domain of definition
and if it holds $d_\xx + 1 < n$ and $\min(d_\xx, d_\yy) \leq \min(w_1,...,w_L)$,
then there exists an open, nonempty cone $K \subset Y$ such that
\eqref{eq:trainingpropprot42} possesses a spurious local minimum for each $y_d \in K$.
The size of this cone depends on the extent to which $x_d$ and the neural network $\psi$ satisfy \ref{fundass:II}.
If every vector is realizable, then the cone $K$ is dense in $Y$ 
and the solution map of \eqref{eq:trainingpropprot42} is discontinuous.  Further, by choosing appropriate $y_d$,
the spurious local minima can be made arbitrarily bad in relative and absolute terms and in terms of loss. 
(See \cref{cor:spurminaffineNN,cor:spurminconstantNN,cor:affineNNinstable}.)

\item \emph{\bf (Ineffectiveness of Regularization for Differentiable Activation Functions)} 
If the activation functions $\sigma_i$ are twice differentiable, 
if $\frac12(d_\xx + 2)(d_\xx + 1) < n$ holds, and if the training problem \eqref{eq:trainingpropprot42}
is regularized by adding a term of the form $\nu \|\alpha\|_p^p$, $\nu > 0$, $p \in [1,2]$,
to the objective function, where $\|\cdot\|_p$ denotes the $p$-norm on the Euclidean space, 
then there exists an open nonempty set $O \subset Y \times (0, \infty)$ 
such that the resulting regularized training problem possesses an (arbitrarily bad) spurious local minimum
for all $(y_d, \nu) \in O$
and there exist uncountably many values of $\nu > 0$
such that the regularized training problem is not uniquely solvable and 
possesses a discontinuous solution map. (See \cref{cor:regNNcrap}.)
\end{itemize}

Note that the set-valuedness and the instability of the best approximation map
in points one and two above immediately carry over to the solution operator of the problem \eqref{eq:trainingpropprot42}
w.r.t.\ $\alpha$
(just by taking preimages under the function $\alpha \mapsto \Psi(\alpha, x_d)$). 
For details on this topic,
see the comments after \cref{lemma:nonemptyproj} and \cref{rem:stability}.
We further would like to stress that the nonuniqueness of best approximations in, e.g., 
\cref{cor:NNbestapproxinstabil}
has nothing to do with symmetries in the parameterization of a neural network. On the contrary, 
it expresses that
there are different 
choices of the biases and weights (or, at least, minimizing sequences) 
which yield the same optimal loss in \eqref{eq:trainingpropprot42}
but give rise to functions $\psi(\alpha, \cdot)\colon \XX \to \YY$ that act 
differently not only on unseen data but even on the training data set
(see \cref{rem:stability} for more details).
Regarding \cref{cor:saddlenonover,cor:saddleoverpar}, 
we would like to point out that the fact that the saddle points and 
spurious local minima of \eqref{eq:trainingpropprot42} can be made arbitrarily bad is not merely a
consequence of the conicity property \ref{fundass:I}. Indeed,
it is easy to check that simply scaling the involved vectors cannot affect 
how well a non-optimal point performs in relative terms, cf.\ \cref{rem:scaling}.
Lastly, we would like to emphasize that our abstract analysis is not only applicable to deep and shallow neural networks
but also to other nonlinear conic approximation instruments. 
For an example demonstrating this, we refer the reader to \cref{subsec:6.1} where our results are applied to 
a free-knot spline interpolation scheme that has also been considered by 
\cite{Daubechies2019} and can be interpreted as a classical 
nonlinear dictionary approximation approach \citep[cf.][]{DeVore1998}. 
We only summarize the consequences of our abstract analysis for neural networks in this subsection 
because we expect that this is what the majority of readers are interested in.

\subsection{Contribution of the Paper and Comparison with Known Results}
\label{subsec:2.4}

The main contribution of this paper is that it establishes a direct and quantifiable connection between 
the improved approximation properties that nonlinear approximation schemes like neural networks 
enjoy over their linear counterparts
and undesirable properties 
of the optimization problems that have to be solved 
in order to train a nonlinear approximation instrument on a given data set. 
Compare, for instance, with the estimates \eqref{eq:thetabound61253}, \eqref{eq:scondition}, and \eqref{eq:Kspurdef}
in this context, which show that the degree of discontinuity of the 
best approximation map of a given nonlinear conic approximation scheme $\psi$,
the position of the label vectors $y_d$ that cause a given point to be a
saddle point or a spurious local minimum in \cref{theorem:existencestatpts},
and the size of the cone of ``bad'' label vectors in \cref{theorem:badcone}
depend directly on the extent to which the considered approximation instrument satisfies 
condition \ref{fundass:II}.
At least to the best of the author's knowledge, this relationship between the expressiveness of an approximation scheme 
and the loss landscape of the associated training problems  
has not been explored systematically so far in the literature 
(although it is, of course, closely related to classical topics of nonlinear approximation theory
and the study of nonlinear least-squares problems). Note that the results of 
this paper can be interpreted as an instance of the well-known fact that there is 
``no free lunch'' as they show that the improved approximation properties of, e.g., 
neural networks come at the price that the 
associated training problems are always potentially ill-posed in the sense of Hadamard and possess spurious local minima or saddle points
for certain choices of the training data. For further details on this topic
and its relationship to the curse of dimensionality and the problem of NP-hardness, see also \cref{sec:7}.

We would like to emphasize that the connections that we draw in this paper are not only 
interesting for their own sake but also allow to improve and complement 
known results on the optimization landscape of training problems with squared loss 
found in the literature. 
By exploiting the approximation property \ref{fundass:II}, for example, 
we are able to show that the assumption of realizability
used in \cite[Corollary 1]{Ding2020} to 
establish that certain local minima are not globally optimal
is unnecessary, that the conditions on the network widths in
\cite[Assumption~3]{Ding2020} 
can be relaxed, and that the observations made in the numerical experiments of \cite{Goldblum2020Truth} 
can also be backed up analytically, cf.\ \cref{cor:NNinfiniteNonUniqueness,cor:spurminaffineNN,cor:spurminconstantNN}. 
The main point in this context is that the approximation property \ref{fundass:II}
allows to prove that a point $\bar \alpha \in D$ is a spurious local minimum 
of a problem of the type \eqref{eq:trainingpropprot42} without 
the explicit construction of a parameter $\tilde \alpha \in D$
that yields a smaller loss than $\bar \alpha$. 
This makes the rather cumbersome calculations that are normally used 
to establish that a local minimum is not globally optimal unnecessary,
cf.\ the proofs of \cref{theorem:existencestatpts,prop:existencehotspurs}
and also the comments 
in the proof  of \cite[Theorem~1]{Yun2019} where it is emphasized that 
constructing points with smaller function values is precisely the hard part of showing
the existence of spurious local minima. 
We would like to point out 
that the difficulty of proving the spuriousness of a local minimum is also the reason why lifting the condition of realizability in 
\cite[Corollary~1]{Ding2020} is nontrivial. If realizability is assumed, then the optimal 
value of the loss function in  \eqref{eq:trainingpropprot42} is known to be zero. 
Accordingly, in order to construct an example of a spurious local minimum,
it suffices to construct a local minimum with a positive loss value. 
This can typically be done relatively easily by employing classical second-order sufficient conditions 
and by choosing the data of the problem appropriately. In the unrealizable setting, however, such a local construction 
is not sufficient anymore simply because it does not guarantee that the constructed local minimum 
is not a global one. To prove the latter, one needs global information about the neural network that 
cannot be obtained from derivative-based and, as a consequence, inherently local tools like second-order optimality conditions.
With the property \ref{fundass:II}, we are able to bridge this gap, see \cref{cor:spurminaffineNN,cor:spurminconstantNN}.
Since our approach does not require explicit constructions,
we are also able to rigorously prove the existence of spurious local minima in 
situations in which the classical approach of manually checking the spuriousness of a local minimum 
becomes intractable due to the presence of 
additional regularization terms or the architecture
of the considered nonlinear approximation scheme. 
Compare in particular with \cref{cor:spurminaffineNN,cor:spurminconstantNN,cor:regNNcrap} in this context,
which establish the existence of spurious local minima
for both unregularized and regularized training problems
and for neural networks with arbitrary depth and various activation functions.
At least to the best of the author's knowledge, results on the existence of spurious local minima with a comparable 
generality can currently not be found in the literature. In particular the existence of spurious local minima
in Tikhonov-regularized problems for deep networks has apparently not been considered so far. 
Note that our approach additionally offers the advantage that it allows to establish that 
saddle points and spurious local minima of training problems with squared loss 
can be arbitrarily far away from global optima in relative and absolute terms and in terms of loss,
see \cref{lemma:alldirectionsleadtosuboptimality,theorem:existencestatpts,theorem:spuriousregprob,prop:existencehotspurs}
and the associated corollaries on neural networks in \cref{sec:6}.

As already mentioned,
by exploiting the approximation property \ref{fundass:II}, we are also able to rigorously prove 
that solutions of training problems of the form \eqref{eq:trainingpropprot42} 
(or the associated best approximations, respectively)
cannot be expected to be unique or stable with respect to perturbations 
of the training label vector $y_d$, see  \cref{theorem:abstractinstability,theorem:nonuniquereg,cor:instabilityoverpara}.
This gives an analytic explanation for 
the instability effects that are commonly observed in network training, 
cf.\ \cite{Cunningham2000}
and also the comments on the nonuniqueness of 
global solutions in \cite[Section 1.1]{Cooper2020}. 
We remark that, for neural networks with one hidden layer, instability results 
similar to that in our \cref{theorem:abstractinstability} have already been proved in the $L^p$-spaces 
by \cite{Kainen1999,Kainen2001} by exploiting classical instruments of nonlinear approximation theory.
The finite-dimensionality of the training problem \eqref{eq:trainingpropprot42}
allows us to go further than the authors of these papers
and to establish the nonuniqueness and instability of solutions and best approximations for neural networks 
of arbitrary depth. By exploiting the inequality of Jung \cite[see][Theorem 11.1.1]{Burago1988}, we are further able to
establish a quantitative connection between the discontinuity properties of the best approximation map 
associated with \eqref{eq:trainingpropprot42} and the extent to which a function $\psi$ satisfies \ref{fundass:II}, cf.\ \eqref{eq:thetabound61253}.
The results that we prove in this context also seem to be new.

We would like to point out that, for deep and shallow neural networks 
whose activation functions are affine-linear on some open nonempty interval of their 
domain of definition, our results give a quite complete picture of how 
the optimization landscape of problems of the form \eqref{eq:trainingpropprot42} 
depends  on the approximation property \ref{fundass:II} or, more precisely, 
on the error bound $\Theta(\Psi, x_d) \in [0, 1)$ 
defined in \eqref{eq:defTheta} that measures the extent to which property \ref{fundass:II} is satisfied.
In the case $\Theta(\Psi, x_d) \in (0,1)$ (which corresponds to the situation where there are unrealizable label vectors),
one has to deal with both the instability of the set of best approximations of \eqref{eq:trainingpropprot42} 
and the existence of an open nonempty cone $K$ of vectors $y_d$
for which \eqref{eq:trainingpropprot42} possesses (potentially arbitrarily bad) spurious local minima
(see \cref{cor:spurminaffineNN,cor:spurminconstantNN,cor:NNbestapproxinstabil}). 
The closer $\Theta(\Psi, x_d)$ gets to zero (i.e., the more expressive the network becomes relative to $Y$, e.g., due 
to an increased number of network parameters or a smaller number of training pairs),
the less pronounced the instability properties of the best approximation map of \eqref{eq:trainingpropprot42} 
are, see \eqref{eq:thetabound61253}, and the larger the cone $K$ grows, see \eqref{eq:Kspurdef}. 
Finally, in the case $\Theta(\Psi, x_d) = 0$
(i.e., the case where every vector is realizable, cf.\ \cref{def:thetadef}),
the instability properties of the best approximation map are not present anymore 
and the cone $K$ is dense in $Y$ so that, for almost all $y_d$, \eqref{eq:trainingpropprot42}  
possesses spurious local minima. 
In summary, the above shows that, when considering problems of the type \eqref{eq:trainingpropprot42}
for a network satisfying the assumptions of \cref{cor:spurminaffineNN} or \cref{cor:spurminconstantNN} or, more generally, 
a nonlinear conic approximation instrument satisfying the conditions in \cref{theorem:badcone},
one can never get rid of both the discontinuity of the best approximation map and spurious local minima. 
The problem \eqref{eq:trainingpropprot42} always possesses at least one property that is undesirable
(cf.\ also with \cref{th:stevaluedspurious} in this context). 
Note that the fact that the instability properties of the best approximation map associated with \eqref{eq:trainingpropprot42} 
are not present when every vector $y_d \in Y$ is realizable provides a possible explanation for 
the often made observation that overparameterization benefits the training of neural networks 
in practical applications. Compare, e.g., with the results of \cite{Chen2020,Cooper2020,Li2018,Oymak2019,Zhu2019,Soudry2016}
in this context. However, it seems to be unlikely that this is the only reason 
for the advantageous properties that overparameterized training problems typically enjoy.
In fact, one can see in \cref{cor:instabilityoverpara,theorem:nonuniquereg} that, even in the case where every vector $y_d \in Y$
is realizable and the objective contains additional regularization terms,
there are still certain nonuniqueness and instability effects present in problems of the form \eqref{eq:trainingpropprot42}. 
(These, however, 
are of a different quality than those arising from the nonuniqueness of best approximations in \cref{theorem:abstractinstability}.) 
Note that the observation that neither by overparameterization nor 
by adding regularization terms to the objective function it is possible to completely remove the ill-posedness 
of training problems of the type \eqref{eq:trainingpropprot42} is also remarkable on its own. 

Regarding the application of our abstract analysis to neural networks, 
we would like to stress that the fact that these special instances of nonlinear 
approximation schemes indeed satisfy the condition \ref{fundass:II}
under the weak assumptions of \cref{lemma:heavisideapprox,theorem:generalactiv}
is also interesting independently of the study of the loss landscape
of training problems of the form \eqref{eq:trainingpropprot42}. 
As we will see in \cref{sec:4}, 
the property \ref{fundass:II} is a characteristic 
that distinguishes neural networks 
 clearly from linear approximation schemes
(e.g., polynomial approximation) and thus gives an idea of 
why these approximation instruments are able to outperform classical approaches. 
Compare also with \cref{lemma:thetaprops} in this context which establishes 
that the property \ref{fundass:II} is directly related to worst-case estimates
for the approximation error that nonlinear approximation schemes achieve for arbitrary 
training label vectors $y_d$. 
We also would like to emphasize at this point that \ref{fundass:II} 
is a global property of an approximation scheme 
and thus of a completely different flavor than, e.g., the local properties 
of activation functions (for instance, piecewise linearity) that are 
commonly worked with in the analysis of neural networks.
This also becomes apparent in the proof of \cref{theorem:generalactiv}
which, in contrast to many classical approaches, 
is not based on concepts like linearization but
on the observation that the overwhelming majority of neural networks used in practice 
are able to emulate networks with binary activation functions by saturation
and that the property \ref{fundass:II} is inherited from these binary networks obtained in the saturation limit. 
Further details on this topic can be found in \cref{sec:6}.

We finally would like to emphasize 
that the theorems proved in this paper do not contradict 
the results on the absence of spurious local minima in training problems for neural networks with linear activation functions 
established, e.g., by \cite{Kawaguchi2016,Zhou2017,Laurent2018}.
Since such networks give rise to functions $\psi(\alpha, \cdot)$ that are affine and sets 
 $\Psi(D, x_d)\subset Y$ that are subspaces, 
they only satisfy condition \ref{fundass:II} in pathological situations 
and thus do not fall under the scope of, e.g., 
\cref{th:stevaluedspurious,prop:existencehotspurs,theorem:badcone}. 
Compare again with the example in \cref{sec:4} in this context. 
Similarly, our theorems also
do not contradict the results on the absence of spurious valleys established by \cite{Nguyen2018} and \cite{Venturi2019}
(simply because we are mainly concerned with classical spurious local minima in this work, 
cf.\ \cref{def:minima}). They are, however, in good accordance with 
the observations on the role and presence of saddle points in network training made, e.g., 
by \cite{Dauphin2014}. For further details on this topic and additional remarks
on the relationship between our results and the literature, we refer the reader to the comments 
after the respective theorems in the subsequent sections.

\section{Notation, Preliminaries, and Basic Concepts Needed for the Analysis}
\label{sec:3}
Before we begin with our analysis, we fix the notation 
and introduce some basic concepts.
As already mentioned in the introduction, 
the main focus of this work will be on
training problems of the form
\begin{equation}
\label{eq:trainingpropprotalt}
\min_{\alpha \in D}\, \frac{1}{2n}\sum_{k=1}^n  \|\psi(\alpha, \xx_{\;d}^k) - \yy_d^k\|_\YY^2.
\end{equation}
For easy reference, we restate our assumptions on the quantities in \eqref{eq:trainingpropprotalt} in:
\begin{assumption}[Standing Assumptions and Notation]\label[assumption]{ass:notation}~
\begin{itemize}
\item $\XX$ is a nonempty set,
\item $\YY$ is a finite-dimensional vector space over $\R$ that is 
endowed with an inner product  $(\cdot, \cdot)_{\YY}$ and the associated norm $\|\cdot\|_\YY$
(i.e., $\|\yy\|_\YY := \smash{(\yy, \yy)_{\YY}^{1/2}}$ for all $\yy \in \YY$),
\item $m,n \in \mathbb{N}$, $n \geq 2$, and $D \subset \R^m$ is a nonempty set,
\item $\psi\colon D \times \XX \to \YY$ is a function (representing an approximation scheme),
\item $\{ \xx_{\;d}^k\}_{k=1}^n \in \XX^n$, $\{ \yy_d^k\}_{k=1}^n \in \YY^n$ is the training data. 
\end{itemize}
\end{assumption}

Note that the subscript $d$ is used in \cref{ass:notation} to highlight that  $\{ \xx_{\;d}^k\}_{k=1}^n \in \XX^n$
and $\{ \yy_d^k\}_{k=1}^n \in \YY^n$ take the role of the training data in \eqref{eq:trainingpropprotalt}
(in contrast to, e.g., the arbitrary elements of the space $\YY^n$ appearing in equation \eqref{eq:Ppsidef} below). 
We would like to point out that 
several of the results proved in the following sections can be extended (in one form or another)
to more general loss functions and to infinite dimensions. See, for instance,
\cite[Theorem 2.4]{Christof2021} for an instability and nonuniqueness result for optimization problems in Banach spaces 
with an $L^p$-loss structure that is similar in nature to \cref{theorem:abstractinstability}
and also with the general theory of Chebychev sets found in \cite[Section II-3]{Braess1986}.
We restrict the attention to the squared-loss function to be able 
to present the theory developed in this paper in a uniform manner and to make the 
proofs and discussion of results less cumbersome. 

Next, we collect the abbreviations in \eqref{eq:randomeq2735355}.
\begin{definition}[Some Abbreviations]%
\label[definition]{def:basicdefnotation}%
In the situation of \cref{ass:notation}, we define:
\begin{itemize}
\item $X$ to be the Cartesian product $X := \XX^n$,
\item $Y$ to be the Hilbert space $Y := \YY^n$ endowed with the product
\begin{equation*}
 \left ( \{ \yy_k\}_{k=1}^n , \{ \zz_k\}_{k=1}^n \right )_Y 
:= \frac{1}{2n} \sum_{k=1}^n \left ( \yy_k, \zz_k \right )_{\YY}\qquad \forall \{ \yy_k\}_{k=1}^n , \{ \zz_k\}_{k=1}^n  \in Y
\end{equation*}
and the associated norm $\|\cdot\|_Y$ (cf.\ Equation \ref{eq:randomeq2735355}),
\item $y_d$ to be the vector $y_d := \{ \yy_d^k\}_{k=1}^n \in Y$,
\item $x_d$ to be the vector $x_d := \{ \xx_{\;d}^k\}_{k=1}^n \in X$,
\item $\Psi$ to be the map
\begin{equation}
\label{eq:PsiDef}
\Psi\colon D  \times X \to Y,\qquad \left (\alpha, \{ \xx_{\;k}\}_{k=1}^n\right) \mapsto \left \{\psi(\alpha, \xx_{\;k}) \right \}_{k=1}^n . 
\end{equation}
\end{itemize}
\end{definition}

We remark that, here and in what follows, 
we always think of elements of the space $\R^m$ 
as column vectors. 
As already pointed out in \cref{sec:1}, 
the abbreviations in \cref{def:basicdefnotation} allow us to restate the problem \eqref{eq:trainingpropprotalt}
in the more compact form
\begin{equation}
\label{eq:trainingpropprot2}
\min_{\alpha \in D} \|\Psi( \alpha, x_d) - y_d\|_Y^2. 
\end{equation}
Note that, since the objective function of \eqref{eq:trainingpropprot2} 
is not necessarily coercive w.r.t.\ $\alpha$, it can, in general, not be expected that \eqref{eq:trainingpropprot2} 
possesses a
global minimizer $\bar \alpha \in D$. One can only guarantee that there exists 
a minimizing sequence $\{\alpha_i\} \subset D$, i.e., a sequence satisfying 
\begin{equation}
\label{eq:infsequence}
\|\Psi( \alpha_i, x_d) - y_d\|_Y^2 \to \inf_{\alpha \in D} \|\Psi( \alpha, x_d) - y_d\|_Y^2
\end{equation}
as $i$ tends to infinity. (This is, for example, the case when some of the activation functions in a neural 
network have to saturate to fit a training vector $y_d \in Y$ precisely.) 
To get a grip on these effects, it makes sense to not only study 
local and global minimizers $\bar \alpha \in D$ of \eqref{eq:trainingpropprot2}  
but also the set of all elements of $Y$ that can be approximated by the function $\Psi(\cdot, x_d)$ for a given $x_d$
and fit a training label vector $y_d$ in an optimal manner. 
This gives rise to: 

\begin{definition}[Best Approximation Map]
\label[definition]{def:bestapproxmap}
Let $x_d \in X$ be arbitrary but fixed and let $\Psi\colon D \times X \to Y$ etc.\ be as before. Then, we define 
$P_{\Psi}^{x_d}$ to be the map 
\begin{equation}
\label{eq:Ppsidef}
P_{\Psi}^{x_d}\colon Y \rightrightarrows Y,\qquad y_d \mapsto \argmin_{y \in  \closure_Y\left (\Psi(D, x_d)\right )} \|y - y_d\|_Y^2. 
\end{equation}
Here, the symbol $\rightrightarrows$ expresses that the function $P_{\Psi}^{x_d}$ may be set-valued
and with $\closure_Y(\cdot)$ we denote the topological closure of a set in $Y$. 
\end{definition}

Note that the map $P_{\Psi}^{x_d}$ is precisely the set-valued metric projection in $Y$ onto the 
closure of the image  $\Psi(D, x_d)$ of $D$ under the function $\Psi(\cdot, x_d)\colon D \to Y$
for the given vector $x_d$. 
Because of this, we in particular have:
\begin{lemma}[Properties of the Set of Best Approximations]
\label[lemma]{lemma:nonemptyproj}~
Suppose that $x_d \in X$ and $\psi \colon D \times \XX \to \YY$ are arbitrary but fixed. 
Then, the set $P_\Psi^{x_d}(y_d)$ is nonempty and compact for every training label vector $y_d \in Y$.
\end{lemma}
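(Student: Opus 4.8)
The plan is to recognize $P_\Psi^{x_d}(y_d)$ as the set-valued metric projection of $y_d$ onto the set $C := \closure_Y(\Psi(D, x_d)) \subset Y$, as already observed after \cref{def:bestapproxmap}, and to exploit that, by \cref{ass:notation}, $\YY$ and hence $Y = \YY^n$ is finite-dimensional. This makes the Heine--Borel theorem and the Weierstrass extreme value theorem available. First I would record two elementary facts: the set $C$ is nonempty, since $D \neq \emptyset$ forces $\Psi(D, x_d) \neq \emptyset$ and thus $C \neq \emptyset$; and $C$ is closed, directly by its definition as a topological closure. I would also note that the function $f \colon Y \to [0,\infty)$, $f(y) := \|y - y_d\|_Y^2$, is continuous.

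For the nonemptiness of $P_\Psi^{x_d}(y_d)$, I would fix an arbitrary $y_0 \in C$, put $r := \|y_0 - y_d\|_Y$, and introduce the truncated set $C_r := C \cap \{y \in Y : \|y - y_d\|_Y \leq r\}$. This $C_r$ is closed as an intersection of closed sets, bounded by construction, and therefore compact by Heine--Borel; moreover it is nonempty because $y_0 \in C_r$. By the Weierstrass theorem, the continuous function $f$ attains its minimum over $C_r$ at some $\bar y \in C_r$. Since every $y \in C \setminus C_r$ satisfies $f(y) = \|y - y_d\|_Y^2 > r^2 = f(y_0) \geq f(\bar y)$, the point $\bar y$ minimizes $f$ not merely over $C_r$ but over all of $C$, which gives $\bar y \in P_\Psi^{x_d}(y_d)$ and hence nonemptiness.

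For compactness, I would let $d := \dist(y_d, C) = \inf_{y \in C} \|y - y_d\|_Y$ denote the optimal distance, which is attained by the previous step. Then $P_\Psi^{x_d}(y_d) = \{y \in C : \|y - y_d\|_Y = d\}$ is the intersection of the closed set $C$ with the sphere $\{y \in Y : \|y - y_d\|_Y = d\}$, hence closed, and it is contained in that sphere, hence bounded. A closed and bounded subset of the finite-dimensional space $Y$ is compact, completing the argument.

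I do not expect a genuine obstacle here, as the assertion is a classical statement about metric projections onto closed sets in finite dimensions. The only point deserving emphasis is that the two structural hypotheses built into the setup do all the work: the finite-dimensionality of $Y$ from \cref{ass:notation} is what upgrades closed-and-bounded sets to compact ones, thereby simultaneously yielding existence of a minimizer and compactness of the minimizing set, while the closedness of $C$ is hard-wired into \cref{def:bestapproxmap} through the explicit use of $\closure_Y$. In particular, no continuity, convexity, or regularity of $\psi$ or of $\Psi(\cdot, x_d)$ is needed, since the projection is taken onto the \emph{closure} of the image rather than onto the image itself.
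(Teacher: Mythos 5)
Your proof is correct and follows essentially the same route as the paper's: the paper likewise reduces the claim to the nonemptiness and closedness of $\closure_Y(\Psi(D,x_d))$, the continuity and coercivity of the norm, the finite-dimensionality of $Y$, and the theorem of Weierstrass. You merely spell out the truncation-to-a-compact-sublevel-set step and the closed-and-bounded argument for the solution set that the paper's proof leaves implicit.
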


\begin{proof}
The nonemptyness and compactness of 
$P_\Psi^{x_d}(y_d)$ for all $y_d \in Y$ follow immediately from 
the fact that the minimization problem in the variable $y$
associated with the right-hand side of \eqref{eq:Ppsidef} possesses 
a nonempty, closed, and bounded set of solutions for all $y_d \in Y$
due to the continuity and coercivity of the norm, the closedness 
and nonemptyness of the set  $\closure_Y\left (\Psi(D, x_d)\right )$, the finite-dimensionality of $Y$,
and the theorem of Weierstrass. 
\end{proof}

We would like to point out that, by taking preimages and images under the function $\Psi(\cdot, x_d)\colon D \to Y$,
properties of the map $P_\Psi^{x_d}$ directly translate into properties of the optimization landscape 
of \eqref{eq:trainingpropprot2}
and vice versa. 
If, for example, $x_d$ and $y_d$ are vectors such that $P_\Psi^{x_d}(y_d) = \{\bar y_1, \bar y_2\}$ 
holds for some $\bar y_1 \neq \bar y_2$
and if we denote the closed balls in $Y$ of radius $\varepsilon$ around $\bar y_i$, $i=1,2$,
with  $B_{\varepsilon}^Y(\bar y_i)$, then, 
for every arbitrary but fixed $\varepsilon > 0$ with $B_\varepsilon^Y(\bar y_1) \cap B_\varepsilon^Y(\bar y_2) = \emptyset$,
we trivially have that the preimages
\begin{equation*}
D_1 := \Psi(\cdot, x_d)^{-1}\left ( B_\varepsilon^Y(\bar y_1) \right ) \subset D
\quad
\text{and}
\quad
D_2 := \Psi(\cdot, x_d)^{-1}\left ( B_\varepsilon^Y(\bar y_2) \right ) \subset D
\end{equation*}
satisfy $D_1 \neq \emptyset$, $D_2 \neq \emptyset$, $D_1 \cap D_2 = \emptyset$, and 
\begin{equation*}
\inf_{\alpha \in D_1} \|\Psi(\alpha, x_d) - y_d\|_Y^2
=
\inf_{\alpha \in D_2} \|\Psi(\alpha, x_d) - y_d\|_Y^2
=
\inf_{\alpha \in D} \|\Psi(\alpha, x_d) - y_d\|_Y^2.
\end{equation*}
The above implies that 
each of the two disjoint subsets $D_1$ and $D_2$
of the parameter space $D$
has to contain a global solution of the minimization problem \eqref{eq:trainingpropprot2}
or a sequence $\{\alpha_i\}$ satisfying \eqref{eq:infsequence}.
Note that the main advantage of considering the projection $P_\Psi^{x_d}$
instead of the objective $D \ni \alpha \mapsto \|\Psi(\alpha, x_d) - y_d\|_Y^2 \in \R$ of \eqref{eq:trainingpropprot2}
is that the former function allows  to also detect those cases where 
\eqref{eq:trainingpropprot2} possesses spurious local minima ``at infinity''
in the sense that the optimization landscape of \eqref{eq:trainingpropprot2} possesses 
basins which stretch to the boundary of $D$ and do not contain 
a local minimum in the classical sense (so-called spurious basins). Compare, e.g., with the behavior of 
the function $\R \ni \alpha \mapsto \min(\mathrm{e}^\alpha, \mathrm{e}^{-\alpha} - 1) \in \R$
in this context and also with \cref{th:stevaluedspurious}. Such cases should, of course, not be neglected as descent methods may very 
well get trapped in a non-optimal basin of this type and subsequently drive the parameter $\alpha$
to the boundary of the set $D$
without approximating the optimal value of the loss function on the 
right-hand side of \eqref{eq:infsequence} in the limit. 
Completely analogously to the above, 
stability and instability properties of $P_\Psi^{x_d}$ carry over to \eqref{eq:trainingpropprot2}, too.
For further details on this topic, we refer the reader to \cref{rem:stability}. 

For the sake of clarity, let us finally make precise what we mean with the terms 
``global minimum'', ``local minimum'', ``spurious local minimum'', etc.\
appearing in our analysis: 
\begin{definition}[Notions of Optimality]
\label[definition]{def:minima}
Given a function $f\colon U \to \R$ that is defined 
on a subset $U$ of a normed space $(V, \|\cdot \|_V)$, we call a point $\bar v \in U$ a:
\begin{itemize}
\item  global minimum (or, more precisely, global minimizer) of the function $f$ if
$f(v) \geq f(\bar v)$ holds for all $v \in U$.
\item  local minimum (or, more precisely, local minimizer) of the function $f$ if there 
exists a closed ball $B^V_\varepsilon(\bar v)$ of radius $\varepsilon > 0$ in $V$ centered at $\bar v$ such that
$f(v) \geq f(\bar v)$ holds for all $v \in U \cap B^V_\varepsilon(\bar v)$.
\item spurious local minimum of $f$ if $\bar v$ is a local minimum but not a global minimum of $f$.
\item global (respectively, local, respectively, spurious local)
maximum of $f$ if $\bar v$ is a global (respectively, local, respectively, spurious local) minimum of the function $-f$.
\item saddle point of $f$ if $V = \R^l$ holds for some $l \in \mathbb{N}$, 
$\bar v$ is an element of the interior of $U$, $f$ is differentiable 
at $\bar v$, it holds $\nabla f(\bar v) = 0$, and $\bar v$ is neither 
a local minimum nor a local maximum of $f$.
\end{itemize}
\end{definition}

We remark that some authors apparently go so far as to call every point with a vanishing 
gradient and a vanishing Hessian a spurious local minimum.
We believe that the term ``spurious local minimum''
should be reserved for points that are local minima.
Finally, we would like to emphasize that, throughout this work, the symbols $\min$, $\argmin$, etc.\
always refer to the global notion of optimality (e.g., in the definition of the map $P_\Psi^{x_d}$).

\section{A Toy Problem Illustrating the Basic Ideas}
\label{sec:4}

Having introduced the necessary notation, 
we now turn our attention to the optimization landscape and the stability properties 
of training problems of the form \eqref{eq:trainingpropprot2}.
We begin with a simple example that illustrates the main ideas of our analysis and
gives some intuition on why nonlinear approximation schemes may possess better 
approximation properties than their linear counterparts and on how these properties are related 
to the behavior of the function $P_\Psi^{x_d}$
and the loss landscape of \eqref{eq:trainingpropprot2}.
To construct our example, let us suppose that 
\begin{equation}
\label{eq:examplesituation323}
\XX = \YY = \R,\quad 
n \in \mathbb{N},\quad 
n \geq 3,
\quad 
m = 2,
\quad
X = Y = \R^n,
\quad
\text{and}
\quad
D = \R^2,
\end{equation}
and that 
$x_d = \{\xx_{\;d}^k\}_{k=1}^n \in X$ 
is an arbitrary but fixed training data vector which satisfies
$\xx_{\;d}^1 < \xx_{\;d}^2  < ... < \xx_{\;d}^n $.
Let us further assume, for a start, that 
we are given an approximation scheme $\psi\colon \R^2 \times \R \to \R$, $(\alpha, \xx) \mapsto \yy$,
that is linear in the sense that the function $\psi$
is linear in the parameter vector $\alpha$. Then, 
we trivially have 
\[
\Psi(\alpha, x_d) = \alpha_1 \Psi(e_1, x_d) + \alpha_2 \Psi(e_2, x_d),
\]
where $e_1, e_2$ denote the standard basis vectors of $\R^2$,
and the training problem \eqref{eq:trainingpropprot2}
can also be written as 
\begin{equation}
\label{eq:toyprob}
\min_{(\alpha_1, \alpha_2) \in \R^2}\, 
\left \|\alpha_1 \Phi_1 + \alpha_2 \Phi_2 - y_d\right \|_Y^2
\end{equation}
for every arbitrary but fixed $y_d = \{\yy_d^k\}_{k=1}^n \in Y$, 
where $\|\cdot\|_Y$ is the Euclidean norm on $Y = \R^n$ scaled with the factor $1/\sqrt{2n}$
and where $\Phi_j := \Psi(e_j, x_d) \in \R^n$, $j = 1,2$.
For a linear scheme $\psi$,
\eqref{eq:trainingpropprot2} thus boils down to a standard approximation problem 
which aims to find a function $\phi\colon \R \to \R$ in the linear subspace 
spanned by the set $\{\xx \mapsto \psi(e_1, \xx),\, \xx \mapsto \psi(e_2, \xx)\}$ 
that fits the $n$ given function values $\yy_d^k \in \R$
at the locations $\xx_{\;d}^k\in \R$ optimally in the least-squares sense.  
Note that the structure of \eqref{eq:toyprob} in particular implies that,
regardless of which linear scheme $\psi\colon \R^2 \times \R \to \R$ we consider here, there are 
always nontrivial choices of $y_d$ for which the problem \eqref{eq:toyprob} 
possesses the optimal solution $\bar \alpha = (0,0)^T$ so that $\psi$
does not provide an approximation of $y_d$ that is better than the trivial guess $\bar y = 0$. 
Indeed, for all $y_d$ in the orthogonal complement of the space $\span\{\Phi_1, \Phi_2\} \subset \R^n$
w.r.t.\ the Euclidean scalar product,
we clearly have 
\begin{equation*}
\argmin_{(\alpha_1, \alpha_2) \in \R^2}\, 
\left \|\alpha_1 \Phi_1 + \alpha_2 \Phi_2 - y_d\right \|_Y^2
=
\argmin_{(\alpha_1, \alpha_2) \in \R^2}\, 
\left \|\alpha_1 \Phi_1 + \alpha_2 \Phi_2\right \|_Y^2
+
\left \|y_d\right \|_Y^2
\supset 
\{0\}. 
\end{equation*}
Using the notation in \cref{def:basicdefnotation},
this observation can also be expressed in the more compact form
\begin{equation}
\label{eq:linearproperty263535}
\exists y_d \in Y \setminus \{0\}:
\qquad  \inf_{\alpha \in D}  \|\Psi(\alpha, x_d) - y_d\|_Y^2 = \|y_d\|_Y^2.
\end{equation}
For comparison, let us now consider
the nonlinear approximation scheme 
\begin{equation}
\label{eq:nonlinearapproxexample}
\psi \colon \R^2 \times \R \to \R,
\qquad
(\alpha, \xx) \mapsto \sigma(\alpha_1 \xx + \alpha_2),
\end{equation}
with $\sigma\colon \R \to \R$ given by 
\begin{equation*}
\sigma(s) := \min(0, |s +1| - 1) + \max(0, 1 - |s - 1|)
\end{equation*}
and define $\bar \alpha_{l, \delta} \in \R^2$, $l \in \{1,...,n\}$, $|\delta| \leq 1$,  by
\begin{equation*}
\bar \alpha_{l, \delta}^1 
:= 3\left (\min_{k=2,...,n} \xx_{\;d}^k - \xx_{\;d}^{k-1}\right )^{-1},
\qquad
\bar \alpha_{l, \delta}^2  := - \bar \alpha_{l, \delta}^1 \xx_{\;d}^l + \delta. 
\end{equation*}
Then, from the properties of $\psi$, it follows straightforwardly that 
\begin{equation*}
\psi(\bar \alpha_{l, \delta}, \xx_{\;d}^k)
=
\begin{cases}
\delta  & \text{ for } k = l
\\
0 & \text{ for all } k \neq l
\end{cases}
\end{equation*}
holds for all $\delta$ with $|\delta| \leq 1$. This implies in particular that, for every 
arbitrary but fixed label vector $y_d \in \R^n \setminus \{0\}$,
there exists a parameter $\bar \alpha \in D$
such that the nonlinear approximation scheme \eqref{eq:nonlinearapproxexample} 
satisfies 
\begin{equation*}
\|\Psi(\bar \alpha, x_d) - y_d\|_Y^2 < \|y_d\|_Y^2,
\end{equation*}
namely, in the case $\yy_d^l \neq 0$, the vector $\bar \alpha_{l, \delta}$
with $\delta :=  \sgn(\yy_d^l)\min(|\yy_d^l|, 1)$.
In short, 
\begin{equation}
\label{eq:improvedapproxprop32}
\forall y_d \in Y \setminus \{0\}:
\qquad  \inf_{\alpha \in D}  \|\Psi(\alpha, x_d) - y_d\|_Y^2 < \|y_d\|_Y^2.
\end{equation}
The above result shows that the nonlinearity of the function $\psi$ in \eqref{eq:nonlinearapproxexample}
indeed allows this map to possess better approximation properties than the 
linear schemes considered at the beginning of this section 
in the sense that, for every arbitrary but fixed nonzero $y_d \in \R^n$, 
we can find an $\alpha \in D$ such that $\Psi(\alpha, x_d)$
provides a loss that is smaller than that of the trivial 
guess $\alpha = 0$ and the associated vector $\Psi(0, x_d) = 0$.
The map $\psi$ in \eqref{eq:nonlinearapproxexample} is thus able to approximate 
every given label vector $y_d$ at least to a small extent 
even in those situations where the problem \eqref{eq:trainingpropprot2} is grossly underparameterized,
i.e., satisfies $m \ll n$---a feature that is not obtainable 
with a scheme that is linear in $\alpha$ and possesses the parameter space $D = \R^2$
as we have seen in \eqref{eq:linearproperty263535}.
Note  that this property can also be interpreted as a ``relaxed''
version of realizability that holds for all label vectors $y_d \in Y$ regardless of the choice of $n$ and $m$, 
cf.\ the analysis in \cref{sec:5}.

However, the example  \eqref{eq:nonlinearapproxexample} also immediately shows that 
the improved expressiveness in \eqref{eq:improvedapproxprop32} does not come for free. 
If we consider, for instance, the image of the parameter space $D = \R^2$ under the function $\Psi(\cdot, x_d)$
associated with the nonlinear approximation scheme in \eqref{eq:nonlinearapproxexample} 
in the case $n=3$ 
for the training data vector $x_d = (-1/2, 1/2, 1)^T$, then it is readily seen 
that this set is a nontrivial union of numerous segments of two-dimensional subspaces, cf.\ \cref{fig:graphplots}. 
This implies in particular that the projection $P_\Psi^{x_d}$ onto $\closure_{Y}(\Psi(D, x_d))$
is not single-valued at all points and, as a consequence, 
that the best approximating element provided by $\closure_{Y}(\Psi(D, x_d))$ 
is not uniquely determined for all possible choices of the training label vector $y_d$. 
It is moreover easy to check that the locally affine-linear structure of
the set $\closure_{Y}(\Psi(D, x_d))$ entails
that the optimization landscape of the training problem \eqref{eq:trainingpropprot2} 
for the approximation scheme $\psi$ in \eqref{eq:nonlinearapproxexample}
possesses spurious local minima and saddle points for various choices of $y_d$, 
cf.\ \cref{prop:existencehotspurs,theorem:badcone} below. 
The intuitive reason behind all these effects is that the same geometric properties of the 
image $ \Psi(D, x_d)$, that allow $\psi$ to satisfy \eqref{eq:improvedapproxprop32},
also imply that this set is folded in a way that causes the normal cones of various points
on $\closure_{Y}(\Psi(D, x_d))$ to intersect. 
\vspace{-0.1cm}
\begin{figure}[H]
\centering
\begin{subfigure}{.5\textwidth}
  \centering
  \includegraphics[width=.85\linewidth]{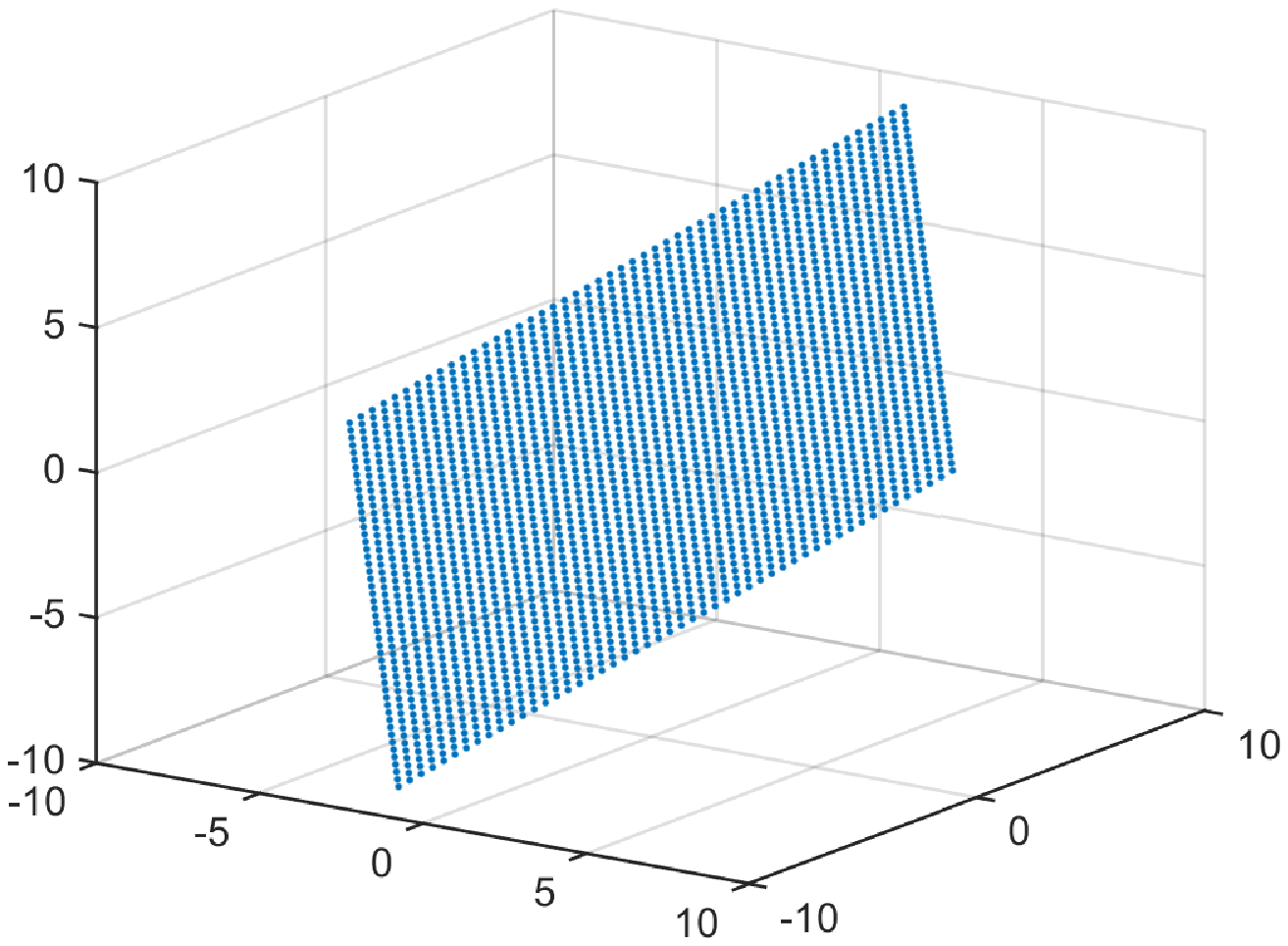}
  \caption{ $\Psi(D, x_d)$ for a linear scheme $\psi$}
  \label{fig:sub1}
\end{subfigure}%
\begin{subfigure}{.5\textwidth}
  \centering
  \includegraphics[width=.85\linewidth]{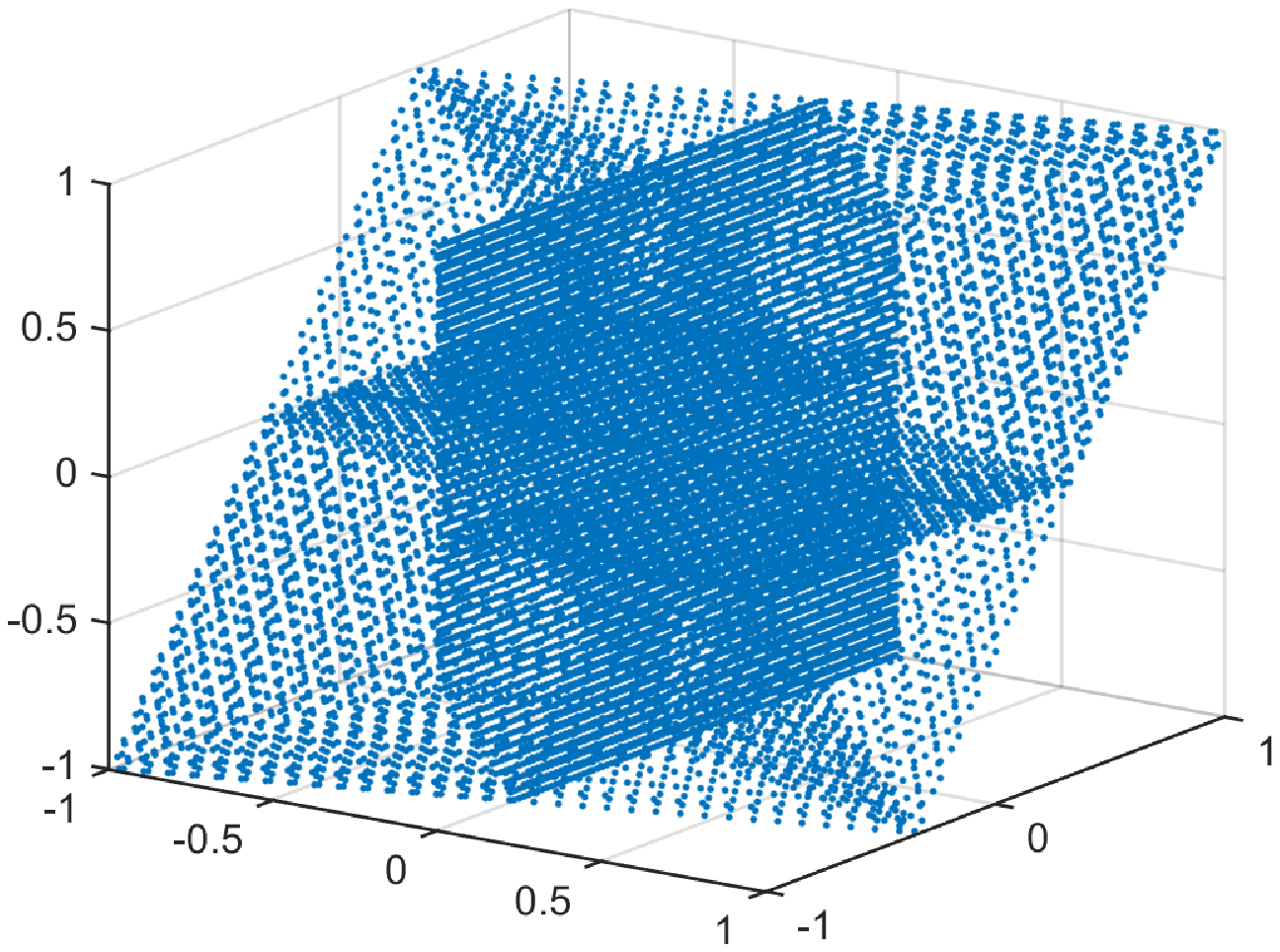}
  \caption{$\Psi(D, x_d)$ for the scheme $\psi$ in \eqref{eq:nonlinearapproxexample}}
  \label{fig:sub2}
\end{subfigure}
\caption{Scatter plot of the image $\Psi(D, x_d)$
for the linear, polynomial approximation scheme $\psi\colon \R^2 \times \R \to \R$, $\psi(\alpha, \xx) = \alpha_1 \xx + \alpha_2$,
(left) and the nonlinear function $\psi$ in \eqref{eq:nonlinearapproxexample} (right)
for $n = 3$ and the training data vector $x_d = (-1/2, 1/2, 1)^T$.}
\label{fig:graphplots}
\end{figure}

In the remainder of this paper, we will prove that the above undesirable properties of the function $P_\Psi^{x_d}$ 
and the optimization problem  \eqref{eq:trainingpropprot2} indeed inevitably appear
when the considered approximation scheme $\psi$ satisfies \eqref{eq:improvedapproxprop32} and is conic in the sense 
that the set $\Psi(D, x_d)$ is a cone. We will moreover demonstrate that 
nearly all commonly used nonlinear approximation instruments
(and in particular neural networks) are covered by this setting
and are thus subject to the above effects. 
Note that this also shows that \eqref{eq:improvedapproxprop32} is, in fact, a quite fundamental property. 

Before we demonstrate  that the above observations 
indeed carry over to a far more general setting, we would like to point out that 
the example that we have studied in this section is a rather academic one.
It is easy to check that the approximation scheme \eqref{eq:nonlinearapproxexample} possesses 
various properties that are highly undesirable and thus would never be 
a sensible choice for a practical application.
Moreover, the scheme $\psi$ in \eqref{eq:nonlinearapproxexample} 
is clearly not conic and thus violates one of the main assumptions of the subsequent analysis.
We remark that this second deficit can be fixed easily by adding 
a further parameter $\alpha_3 \in \R$ to $\psi$, i.e., 
by considering the modified function $\tilde \psi\colon \R^3 \times \R \to \R$,
$(\alpha, \xx) \mapsto \alpha_3\sigma(\alpha_1 \xx + \alpha_2)$. 
In fact, after this modification, the resulting approximation instrument $\tilde \psi$
is nothing else than a simple neural network with a single neuron and 
a lightning-shaped activation function, cf.\ \cref{set:NNN}.
We have considered the function $\psi$ in \eqref{eq:nonlinearapproxexample} in this section since, on the one hand, 
it possesses the property \eqref{eq:improvedapproxprop32} and, on the other hand, 
satisfies $\closure_{Y}(\Psi(D, x_d)) \neq Y$ for $n=3$---thus enabling the visualization in \cref{fig:graphplots}. 
For the function $\tilde \psi$, one has to consider at least $n = 4$ samples to 
achieve that  $\closure_{Y}(\Psi(D, x_d)) \neq Y$ holds and for this dimension
an illustration as in \cref{fig:graphplots} is not possible anymore.
(It seems to be difficult to construct an example of a function $\psi$
that satisfies both \ref{fundass:I} and \ref{fundass:II} and simultaneously $\closure_{Y}(\Psi(D, x_d)) \neq Y$
for $n=3$.)
As already mentioned, we will see in \cref{sec:6} that various
commonly used approximation schemes (and in particular general neural networks) exhibit a behavior that is very 
similar to that of the function $\psi$ in \eqref{eq:nonlinearapproxexample}.
In \cref{subsec:5.2}, we will moreover see that, 
at least as far as the existence of saddle points 
and spurious local minima is concerned, it is not essential that it holds $n > m$ as in the situation of \eqref{eq:examplesituation323}.

\section{Analysis and Rigorous Proofs in the Abstract Setting}
\label{sec:5}
The aim of this section is to study the behavior of the function 
$P_\Psi^{x_d}$ and the loss landscape of the training problem \eqref{eq:trainingpropprot2}
for a general, nonlinear, conic approximation scheme satisfying
\eqref{eq:improvedapproxprop32}. 
Motivated by the observations made in \cref{sec:4} and by what is encountered in practical applications, 
we will consider the following setting: 

\begin{assumption}
{\hspace{-0.05cm}\bf(Standing Assumptions for the Analysis of \cref{sec:5})}\label[assumption]{ass:standingassumpssec4}
Let $\XX$, $\YY$, etc.\ be defined as in \cref{sec:3}. 
We assume that an approximation scheme $\psi\colon D \times \XX \to \YY$ and 
an arbitrary but fixed training data vector $x_d \in X$ are given such that the following two conditions are satisfied:
\begin{enumerate}[label=\textup{\Roman*)}]
\item  
\label{fa:I}
\emph{(Conicity)} The set $\Psi(D, x_d)$ is a cone in the sense that
\begin{equation*}
y \in \Psi(D, x_d),\,\,s \in (0, \infty)\quad \Rightarrow \quad s y \in \Psi(D, x_d).
\end{equation*}

\item  
\label{fa:II}
\emph{(Improved Expressiveness)} The map $\Psi(\cdot, x_d)$ satisfies
\begin{equation*}
\forall y_d \in Y \setminus \{0\}:
\quad  \inf_{\alpha \in D}  \|\Psi(\alpha, x_d) - y_d\|_Y^2 < \|y_d\|_Y^2.
\end{equation*}
\end{enumerate}
\end{assumption}

As already mentioned, various examples of approximation schemes $\psi\colon D \times \XX \to \YY$ satisfying the conditions 
in \cref{ass:standingassumpssec4} will be presented in \cref{sec:6}. 
Henceforth, the basic idea of our analysis will be to prove that the properties \ref{fa:I} and \ref{fa:II}---although very desirable from the approximation point of view---also automatically 
imply that training problems of the form \eqref{eq:trainingpropprot2} possess various 
disadvantageous properties.  
We begin with some basic observations:

\begin{lemma}[Reformulation of the Improved Expressiveness Property]
\label[lemma]{lemma:reformulatedcondition}
In the situation of \cref{ass:standingassumpssec4}, 
the property in \ref{fa:II} is equivalent to the condition
\begin{equation}
\label{eq:randomeq172636}
\min_{y \in  \closure_Y\left (\Psi(D, x_d)\right )} \|y - y_d\|_Y^2 < \|y_d\|_Y^2\qquad \forall y_d \in Y\setminus \{0\}.
\end{equation}
\end{lemma}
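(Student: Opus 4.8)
The plan is to show that the two conditions are not merely equivalent but are literally the same pointwise statement, reached by rewriting the quantity $\inf_{\alpha \in D} \|\Psi(\alpha, x_d) - y_d\|_Y^2$ in three elementary steps. First I would note that, as $\alpha$ ranges over $D$, the vector $\Psi(\alpha, x_d)$ ranges over exactly the image set $\Psi(D, x_d) \subset Y$. Consequently, for every fixed $y_d \in Y$ the two families of real numbers $\{\|\Psi(\alpha, x_d) - y_d\|_Y^2 : \alpha \in D\}$ and $\{\|y - y_d\|_Y^2 : y \in \Psi(D, x_d)\}$ coincide, so that
\[
\inf_{\alpha \in D} \|\Psi(\alpha, x_d) - y_d\|_Y^2 = \inf_{y \in \Psi(D, x_d)} \|y - y_d\|_Y^2 .
\]

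Next I would pass from the image to its closure. Since the map $y \mapsto \|y - y_d\|_Y^2$ is continuous on $Y$, its infimum over any set $S \subset Y$ agrees with its infimum over $\closure_Y(S)$: the inequality ``$\leq$'' over the larger set is trivial because $S \subset \closure_Y(S)$, and the reverse inequality follows by approximating an arbitrary point of $\closure_Y(S)$ by a sequence in $S$ and invoking continuity. Applying this with $S = \Psi(D, x_d)$ yields
\[
\inf_{y \in \Psi(D, x_d)} \|y - y_d\|_Y^2 = \inf_{y \in \closure_Y\left(\Psi(D, x_d)\right)} \|y - y_d\|_Y^2 .
\]
Finally, I would invoke \cref{lemma:nonemptyproj}, which guarantees that the metric projection problem on the right-hand side possesses a nonempty solution set, so that this last infimum is in fact attained and may be written as a minimum.

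Chaining the three identities gives, for every $y_d \in Y$,
\[
\inf_{\alpha \in D} \|\Psi(\alpha, x_d) - y_d\|_Y^2 = \min_{y \in \closure_Y\left(\Psi(D, x_d)\right)} \|y - y_d\|_Y^2 ,
\]
and since this equality holds pointwise in $y_d$, the requirement ``$< \|y_d\|_Y^2$ for all $y_d \in Y \setminus \{0\}$'' in \ref{fa:II} is identical to the requirement in \eqref{eq:randomeq172636}. I do not expect a genuine obstacle here; the only points needing care are the passage to the closure (which relies on continuity of the norm) and the replacement of the infimum by a minimum (which relies on \cref{lemma:nonemptyproj}), both of which are standard. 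The conceptual content is simply that $\inf_{\alpha \in D}\|\Psi(\alpha,x_d)-y_d\|_Y^2$ is by definition the squared distance of $y_d$ to the set $\closure_Y(\Psi(D,x_d))$, so \ref{fa:II} and \eqref{eq:randomeq172636} express the same geometric fact about this distance.
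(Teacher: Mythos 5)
Your proposal is correct and uses exactly the same ingredients as the paper's proof (the reparametrization of the infimum over the image, continuity of the norm to pass to the closure, and \cref{lemma:nonemptyproj} to replace the infimum by a minimum); the only difference is that you organize these into a single pointwise identity, whereas the paper argues the two implications separately, which is purely presentational. No gap.
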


\begin{proof}
The implication ``\ref{fa:II} $\Rightarrow$ \eqref{eq:randomeq172636}'' is trivial. To prove ``\eqref{eq:randomeq172636} $\Rightarrow$ \ref{fa:II}'', it suffices to note that,
for every arbitrary but fixed $y_d \in Y \setminus \{0\}$, there exists a $\bar y \in \closure_Y\left (\Psi(D, x_d)\right )$
with 
\begin{equation*}
\|\bar y - y_d\|_Y^2   = \min_{y \in  \closure_Y\left (\Psi(D, x_d)\right )} \|y - y_d\|_Y^2  
\end{equation*}
by \cref{lemma:nonemptyproj} and to subsequently exploit the definition of the closure and 
the continuity of the norm $\|\cdot\|_Y$. This also shows that it indeed makes sense 
to write ``$\min$'' on the left-hand side of \eqref{eq:randomeq172636} instead of ``$\inf$''.
\end{proof}

Note that
\cref{lemma:reformulatedcondition} implies that \ref{fa:II}
is a property of the closure of the image $\Psi(D, x_d)$ of the map $\Psi(\cdot, x_d)\colon D \to Y$ 
and completely independent of how this image is parameterized by the variable $\alpha \in D$. 
To measure the extent to which condition \ref{fa:II} is satisfied by 
a given approximation scheme, we introduce: 

\begin{definition}[Error Bound $\boldsymbol{\Theta(\Psi, x_d)}$]
\label[definition]{def:thetadef}
In the situation of \cref{ass:standingassumpssec4}, we define
\begin{equation}
\label{eq:defTheta}
\Theta(\Psi, x_d) :=
\sup_{y_d \in Y,\, \|y_d\|_Y = 1} \left ( \inf_{y \in  \closure_Y\left (\Psi(D, x_d)\right )} \|y - y_d\|_Y^2 \right). 
\end{equation}
\end{definition}

\begin{remark}
The number $\Theta(\Psi, x_d)$ is precisely the square of the deviation 
of the unit sphere in $(Y, \|\cdot\|_Y)$ from the closure of the 
image of the map $\Psi(\cdot, x_d)\colon D \to Y$
in the sense of nonlinear approximation theory, 
see \cite[Section II]{Kurkova2002}. It corresponds to the squared
worst-case approximation error achieved by the map $\Psi(\cdot, x_d)$
w.r.t.\ the norm $\|\cdot\|_Y$
for label vectors $y_d$ chosen from the unit sphere in $Y$.
\end{remark}

Using our assumptions \ref{fa:I} and \ref{fa:II} and the closedness of the set 
$\closure_Y\left (\Psi(D, x_d)\right )$, it is easy to establish the following:

\begin{lemma}[Properties of the Bound $\boldsymbol{\Theta(\Psi, x_d)}$]\label[lemma]{lemma:thetaprops}%
In the situation of \cref{ass:standingassumpssec4},
it holds $\Theta(\Psi, x_d) \in [0, 1)$. 
Further, for all label vectors $y_d \in Y$, the optimal value of the loss function in \eqref{eq:trainingpropprot2} satisfies 
\begin{equation}
\label{eq:randomeq2736352}
\inf_{\alpha \in D}  \|\Psi(\alpha, x_d) - y_d\|_Y^2 \leq  \Theta(\Psi, x_d) \|y_d\|_Y^2,
\end{equation}
and there exists at least one ``worst-case'' unit label vector $\bar y_d \in Y$ with the properties
\begin{equation}
\label{eq:randomeq273635}
\|\bar y_d\|_Y = 1
\qquad
\text{and}
\qquad
\inf_{\alpha \in D}  \|\Psi(\alpha, x_d) - \bar y_d\|_Y^2 =  \Theta(\Psi, x_d). 
\end{equation}
\end{lemma}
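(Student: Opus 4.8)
The plan is to reduce the entire statement to elementary properties of the squared distance-to-cone function. I would write $C := \closure_Y(\Psi(D, x_d))$ and first record that $C$ is a nonempty closed cone: it is nonempty because $D$ is, it is a cone because the closure of a cone is again a cone, and it contains $0$ since $s y \to 0$ as $s \to 0^+$ for any $y \in \Psi(D, x_d)$. I would then define $d\colon Y \to [0,\infty)$ by $d(y_d) := \min_{y \in C}\|y - y_d\|_Y^2$, where the minimum is attained by \cref{lemma:nonemptyproj}. Since the infimum of the continuous function $\|\cdot - y_d\|_Y^2$ over $\Psi(D, x_d)$ and over its closure coincide, one has $\inf_{\alpha \in D}\|\Psi(\alpha, x_d) - y_d\|_Y^2 = d(y_d)$, and by definition $\Theta(\Psi, x_d) = \sup_{\|y_d\|_Y = 1} d(y_d)$.

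Next I would establish the existence of the worst-case vector and the strict upper bound simultaneously. Because the distance to a fixed nonempty set is $1$-Lipschitz, the map $y_d \mapsto \min_{y \in C}\|y - y_d\|_Y$ is continuous, and hence so is $d$. The unit sphere $S := \{y_d \in Y : \|y_d\|_Y = 1\}$ is compact since $Y$ is finite-dimensional, so by Weierstrass $d$ attains its maximum over $S$ at some $\bar y_d$; this proves \eqref{eq:randomeq273635} and shows that the supremum in \eqref{eq:defTheta} is in fact a maximum. As $\bar y_d \in S$ is nonzero, \cref{lemma:reformulatedcondition} (the reformulation of \ref{fa:II}) gives $d(\bar y_d) < \|\bar y_d\|_Y^2 = 1$, whence $\Theta(\Psi, x_d) = d(\bar y_d) < 1$. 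Together with the trivial bound $\Theta(\Psi, x_d) \geq 0$, this yields $\Theta(\Psi, x_d) \in [0, 1)$.

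It then remains to prove the estimate \eqref{eq:randomeq2736352}, which I would obtain from the positive homogeneity of $d$ forced by conicity \ref{fa:I}. For $y_d = 0$ both sides vanish (the left-hand side because $0 \in C$). For $y_d \neq 0$, set $t := \|y_d\|_Y > 0$ and $\hat y_d := y_d / t \in S$, and, using that $C$ is a cone together with the substitution $y = t y'$ in the minimization, compute
\begin{equation*}
d(y_d) = \min_{y \in C}\|y - t \hat y_d\|_Y^2 = t^2 \min_{y' \in C}\|y' - \hat y_d\|_Y^2 = \|y_d\|_Y^2 \, d(\hat y_d) \leq \|y_d\|_Y^2 \, \Theta(\Psi, x_d),
\end{equation*}
where the final inequality is just the definition of $\Theta(\Psi, x_d)$ as a supremum over $S$. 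Recalling $d(y_d) = \inf_{\alpha \in D}\|\Psi(\alpha, x_d) - y_d\|_Y^2$ completes the argument.

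I expect the only genuinely delicate point to be the strict inequality $\Theta(\Psi, x_d) < 1$. The reformulated expressiveness property \ref{fa:II} delivers only a pointwise strict bound $d(y_d) < 1$ for each unit $y_d$, and a supremum of quantities each strictly below $1$ need not itself lie below $1$. The essential step is therefore to upgrade this pointwise bound to a uniform one, which is precisely what compactness of $S$ combined with continuity of $d$ supplies through attainment of the maximum.
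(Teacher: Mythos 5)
Your proof is correct and follows essentially the same route as the paper's: continuity of the distance function plus compactness of the unit sphere to get attainment of the supremum at a worst-case $\bar y_d$, condition \ref{fa:II} to force $\Theta(\Psi, x_d) < 1$ at that maximizer, and the conic rescaling $y = \|y_d\|_Y \tilde y$ to derive \eqref{eq:randomeq2736352}. Your closing remark correctly identifies the one delicate point (pointwise versus uniform strictness), which is exactly what the attainment argument resolves in the paper as well.
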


\begin{proof}
Using the distance function 
$\dist(\cdot, \closure_Y\left (\Psi(D, x_d)\right))$ to the set $\closure_Y\left (\Psi(D, x_d)\right )$ 
w.r.t.\ the norm $\|\cdot\|_Y$,
the identity in \eqref{eq:defTheta} can also be written as
\begin{equation*}
\begin{aligned}
\Theta(\Psi, x_d) &= 
\sup_{y_d \in Y,\, \|y_d\|_Y = 1}   \dist(y_d, \closure_Y\left (\Psi(D, x_d)\right ))^2.
\end{aligned}
\end{equation*}
Since the map $Y \ni y \mapsto  \dist(y, \closure_Y\left (\Psi(D, x_d)\right )) \in \R$ is continuous,
since the unit sphere 
$\{ y \in Y \mid \|y\|_Y = 1 \}$ is compact due to the finite-dimensionality of $Y$,
and since
\begin{equation}
\label{eq:randomeq123635}
 \dist(y_d, \closure_Y\left (\Psi(D, x_d)\right ))^2 = \min_{y \in  \closure_Y\left (\Psi(D, x_d)\right )} \|y - y_d\|_Y^2
=  \inf_{\alpha \in D}  \|\Psi(\alpha, x_d) - y_d\|_Y^2
\end{equation}
holds for all $y_d \in Y$ by exactly the same arguments as in the proof of \cref{lemma:reformulatedcondition}, 
 it now follows immediately that there 
exists at least one $\bar y_d \in Y$ with the properties in \eqref{eq:randomeq273635}. 
Note that, in combination with \ref{fa:II}, this also yields
\begin{equation*}
0 \leq \Theta(\Psi, x_d)  
=
\inf_{\alpha \in D}  \|\Psi(\alpha, x_d) - \bar y_d\|_Y^2 
 < \|\bar y_d\|_Y^2 = 1
\end{equation*}
so that $\Theta(\Psi, x_d)$ is an element of the interval $[0, 1)$ as claimed. 
It remains to prove \eqref{eq:randomeq2736352}. To this end, we note that
\eqref{eq:randomeq123635} and the cone property of the set 
$\closure_Y\left (\Psi(D, x_d)\right )$,
which follows immediately from \ref{fa:I}, imply that 
$y_d = 0$ is an element of $\closure_Y\left (\Psi(D, x_d)\right )$ and that
\begin{equation*}
\begin{aligned}
\inf_{\alpha \in D}  \|\Psi(\alpha, x_d) - y_d\|_Y^2 
&= 
\min_{y \in  \closure_Y\left (\Psi(D, x_d)\right )} \|y - y_d\|_Y^2
=
\min_{\tilde y \in  \closure_Y\left (\Psi(D, x_d)\right )} \|\|y_d\|_Y \tilde y - y_d\|_Y^2
\\
&=
\|y_d\|_Y^2 \min_{\tilde y \in  \closure_Y\left (\Psi(D, x_d)\right )} \left \| \tilde y - \frac{y_d}{\|y_d\|_Y} \right \|_Y^2
\leq 
\Theta(\Psi, x_d) \|y_d\|_Y^2 
\end{aligned}
\end{equation*}
holds for all $y_d \in Y \setminus \{0\}$. Combining the last two observations gives the desired 
estimate \eqref{eq:randomeq2736352}. This completes the proof. 
\end{proof}

As \cref{lemma:thetaprops} shows, the smaller the number $\Theta(\Psi, x_d)$, 
the better the ability of the function $\Psi(\cdot, x_d)$ to fit arbitrarily chosen label vectors 
$y_d \in Y$. However, since $\Theta(\Psi, x_d)$ is also a measure for the nonlinearity of 
the considered approximation scheme (at least in the case $\closure_Y\left (\Psi(D, x_d)\right ) \neq Y$),
one also has to expect that the optimization landscape 
of the problem \eqref{eq:trainingpropprot2} worsens as $\Theta(\Psi, x_d)$ tends to zero,
cf.\ the observations made in \cref{sec:4}. 
In \cref{subsec:5.2}, we will see that such an effect is indeed present and that 
the value of $\Theta(\Psi, x_d)$ also gives an estimate 
on how likely it is to encounter vectors $y_d$ for which the problem \eqref{eq:trainingpropprot2}
possesses spurious local minima and saddle points, cf.\ \cref{theorem:existencestatpts,theorem:badcone}. 

Before we turn our attention to this topic, we study the:

\subsection{Set-Valuedness and Discontinuity of the Best Approximation Map}
\label{subsec:5.1}

Recall that we have defined $P_\Psi^{x_d}$ to be the function that 
maps a label vector $y_d \in Y$ to the set of elements of the closure $\closure_Y\left (\Psi(D, x_d)\right )$
that attain the minimal loss in \eqref{eq:trainingpropprot2}, i.e., 
\begin{equation*}
P_{\Psi}^{x_d}\colon Y \rightrightarrows Y,\qquad y_d \mapsto \argmin_{y \in  \closure_Y\left (\Psi(D, x_d)\right )} \|y - y_d\|_Y^2. 
\end{equation*}
The purpose of this subsection is to analyze which consequences the properties 
\ref{fa:I} and \ref{fa:II} in \cref{ass:standingassumpssec4} have for this metric projection 
onto the set $\closure_Y\left (\Psi(D, x_d)\right )$ 
and the stability properties  of the training problem \eqref{eq:trainingpropprot2}. 
As talking about the map $P_\Psi^{x_d}$ is only
sensible when the closure of the image of $\Psi(\cdot, x_d)\colon D \to Y$ is not the whole of $Y$
(otherwise $P_\Psi^{x_d}$ is just the identity map),
throughout this subsection, we always assume the following:

\begin{assumption}
{\hspace{-0.05cm}\bf(Existence of Unrealizable Vectors)}\label[assumption]{ass:unrealizable_ass}
It holds $\closure_Y(\Psi(D, x_d)) \neq Y$.
\end{assumption}

Note that \cref{ass:unrealizable_ass} expresses that 
there exist label vectors $y_d \in Y$ that are unrealizable 
in the sense that they cannot be approximated by the function $\Psi(\cdot, x_d)\colon D \to Y$
up to an arbitrary tolerance and thus yield a positive optimal value of the loss in \eqref{eq:trainingpropprot2}.
Such situations occur when optimization problems \eqref{eq:trainingpropprot2} are considered 
that are (roughly speaking) not sufficiently overparameterized, i.e., problems in which 
the number of training samples is too high relative to the approximation 
capabilities of the considered approximation instrument $\psi$. Compare also with the comments after 
\cref{cor:NNinfiniteNonUniqueness} in this context.
We would like to emphasize that \cref{ass:unrealizable_ass} is only needed for the analysis of this subsection
and \cref{th:stevaluedspurious} in \cref{subsec:5.2}. 
For the derivation of our other results on stationary points and spurious local minima,
it is sufficient to assume that a local approximation of the function $\Psi(\cdot, x_d)\colon D \to Y$ is unable to 
fit arbitrary label vectors $y_d$ precisely, cf.\ \cref{prop:existencehotspurs,theorem:badcone,cor:instabilityoverpara}. 
The starting point for our study of the properties of the function $P_\Psi^{x_d}$ is the following observation: 

\begin{lemma}
\label[lemma]{lemma:propertiesydnonoverpar}
Suppose that \cref{ass:standingassumpssec4,ass:unrealizable_ass} hold. Then, 
we have  $\Theta(\Psi, x_d) \in (0, 1)$ and, 
for every 
$\bar y_d \in Y$  with the properties in \eqref{eq:randomeq273635}, the following is true:
\begin{enumerate}[label=\roman*)]
\item\label{item:propyd:i}
The set $P_\Psi^{x_d}(\bar y_d)$ contains more than one element.
\item\label{item:propyd:ii}
The set $P_\Psi^{x_d}(\bar y_d)$ is a subset of the affine-linear space 
\begin{equation}
\label{eq:Hdef}
H := (1 - \Theta(\Psi, x_d) ) \bar y_d + \bar y_d^\perp.
\end{equation}
Here,
$\bar y_d^\perp$
denotes the orthogonal complement
$\bar y_d^\perp := \{z \in Y \mid (\bar y_d, z)_Y = 0\}$.
\item\label{item:propyd:iii}
 It holds 
$\left (1 - \Theta(\Psi, x_d)\right) \bar y_d \in  \conv(P_\Psi^{x_d}(\bar y_d))$,
where $\conv(\cdot)$ denotes the convex hull.
\end{enumerate}
\end{lemma}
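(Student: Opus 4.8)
The plan is to work throughout with the closed cone $C := \closure_Y(\Psi(D, x_d))$, which is a cone by \ref{fa:I} (the closure of a cone is again a cone) and is a closed set containing $0$ with $C \neq Y$ by \cref{ass:unrealizable_ass}. Since $\dist(y_d, C)^2 = \inf_{\alpha \in D}\|\Psi(\alpha,x_d) - y_d\|_Y^2$ for every $y_d$ (cf.\ the proof of \cref{lemma:thetaprops}) and $P_\Psi^{x_d}(y_d)$ is precisely the metric projection of $y_d$ onto $C$, all three assertions are statements about this projection for the worst-case vector $\bar y_d$ from \eqref{eq:randomeq273635}. I would prove them in the order \ref{item:propyd:ii}, \ref{item:propyd:iii}, \ref{item:propyd:i}, because the membership \ref{item:propyd:iii} together with \ref{item:propyd:ii} makes \ref{item:propyd:i} a two-line corollary. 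First I would upgrade the bound $\Theta(\Psi, x_d) \in [0,1)$ of \cref{lemma:thetaprops} to $\Theta(\Psi, x_d) \in (0,1)$: if $\Theta(\Psi, x_d) = 0$, then every unit vector lies in the closed set $C$, and conicity forces $C = Y$, contradicting \cref{ass:unrealizable_ass}.

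For \ref{item:propyd:ii}, fix $\bar y \in P_\Psi^{x_d}(\bar y_d)$, so $\bar y \in C$ and $\|\bar y - \bar y_d\|_Y^2 = \Theta(\Psi, x_d)$; note $\bar y \neq 0$ since $\|0 - \bar y_d\|_Y^2 = 1 \neq \Theta(\Psi, x_d)$. The key idea is to exploit conicity along the ray through $\bar y$: as $s \bar y \in C$ for all $s \geq 0$, the strictly convex polynomial $s \mapsto \|s\bar y - \bar y_d\|_Y^2$ attains its minimum over $[0,\infty)$ at the interior point $s = 1$, so its derivative there vanishes and $(\bar y, \bar y_d)_Y = \|\bar y\|_Y^2$. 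Writing $\bar y = t \bar y_d + w$ with $t = (\bar y, \bar y_d)_Y$ and $w \in \bar y_d^\perp$, this identity reads $t = t^2 + \|w\|_Y^2$, and combining it with $(t-1)^2 + \|w\|_Y^2 = \Theta(\Psi, x_d)$ yields $t = 1 - \Theta(\Psi, x_d)$ (and, as a byproduct, $\|w\|_Y^2 = \Theta(\Psi, x_d)(1-\Theta(\Psi,x_d))$). Thus $\bar y \in H$, proving \ref{item:propyd:ii}.

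For \ref{item:propyd:iii} I would pass to the convex reformulation $\dist(\cdot, C)^2 = \|\cdot\|_Y^2 - h$, where $h(y) := \sup_{c \in C}\big(2(y,c)_Y - \|c\|_Y^2\big)$ is convex and finite (a supremum of affine functions, finite because $\dist(\cdot,C)^2 < \infty$). Since $2(y,c)_Y - \|c\|_Y^2 = \|y\|_Y^2 - \|y-c\|_Y^2$, the maximizers in the defining supremum at $y$ are exactly the elements of $P_\Psi^{x_d}(y)$, so the max-function subdifferential satisfies $\partial h(y) = 2\conv\big(P_\Psi^{x_d}(y)\big)$. By \eqref{eq:defTheta} and \eqref{eq:randomeq273635} the vector $\bar y_d$ maximizes $\dist(\cdot, C)^2$ over the unit sphere, equivalently minimizes the convex function $h$ there. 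Examining one-sided directional derivatives along sphere-tangent directions $v \in \bar y_d^\perp$ gives $h'(\bar y_d; v) = \max_{\xi \in \partial h(\bar y_d)}(\xi, v)_Y \geq 0$ for all such $v$; hence the support function of the (compact, convex) orthogonal projection of $\partial h(\bar y_d)$ onto $\bar y_d^\perp$ is nonnegative in every direction, so the origin lies in that projection, i.e.\ there is $\xi \in \partial h(\bar y_d)$ with $\xi = \mu \bar y_d$ for some $\mu \in \R$. Therefore $\tfrac{\mu}{2}\bar y_d \in \conv(P_\Psi^{x_d}(\bar y_d))$, and because \ref{item:propyd:ii} forces every element of this convex hull to have $\bar y_d$-component equal to $1 - \Theta(\Psi, x_d)$, we get $\tfrac{\mu}{2} = 1 - \Theta(\Psi, x_d)$, which is precisely \ref{item:propyd:iii}.

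Finally, \ref{item:propyd:i} drops out: if $P_\Psi^{x_d}(\bar y_d)$ were a singleton $\{\bar y\}$, then \ref{item:propyd:iii} would give $\bar y = (1-\Theta(\Psi, x_d))\bar y_d$, whence $\|\bar y - \bar y_d\|_Y^2 = \Theta(\Psi, x_d)^2$; but this distance equals $\Theta(\Psi, x_d)$, forcing $\Theta(\Psi, x_d) \in \{0,1\}$ and contradicting $\Theta(\Psi, x_d) \in (0,1)$. I expect the main obstacle to be the rigorous justification of the two analytic ingredients behind \ref{item:propyd:iii}: the subdifferential formula $\partial h(\bar y_d) = 2\conv(P_\Psi^{x_d}(\bar y_d))$ (a Danskin-type statement for a supremum over the \emph{non-compact} cone $C$, which I would reduce to the compact case via the coercivity $2(y,c)_Y - \|c\|_Y^2 \to -\infty$ as $\|c\|_Y \to \infty$) and the first-order condition on the nonconvex sphere constraint; everything else is elementary. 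One could instead prove \ref{item:propyd:i} directly, bypassing subdifferentials, by observing that a unique projection makes $\dist(\cdot, C)^2$ differentiable at $\bar y_d$ with gradient $2(\bar y_d - \bar y)$ and invoking the Lagrange condition on the sphere, but the subdifferential route is preferable since it delivers \ref{item:propyd:iii} simultaneously.
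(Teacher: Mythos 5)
Your argument is correct, and it splits naturally into a part that mirrors the paper and a part that does not. The preliminary step $\Theta(\Psi,x_d)\in(0,1)$, your proof of ii) (exploiting that $s\mapsto\|s\bar y-\bar y_d\|_Y^2$ is minimized over $[0,\infty)$ at the interior point $s=1$, so that $(\bar y-\bar y_d,\bar y)_Y=0$), and the derivation of i) from ii) and iii) all coincide in substance with the paper's proof; your orthogonal decomposition $\bar y=t\bar y_d+w$ even recovers the identity $\|w\|_Y^2=\Theta(\Psi,x_d)-\Theta(\Psi,x_d)^2$ that the paper only extracts later, in the proof of \cref{theorem:abstractinstability}. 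Part iii) is where you genuinely diverge. The paper argues by contradiction: if $(1-\Theta(\Psi,x_d))\bar y_d\notin\conv(P_\Psi^{x_d}(\bar y_d))$, strong hyperplane separation yields a $z\in\bar y_d^\perp$ with $(z,\cdot)_Y\geq\varepsilon>0$ on the convex hull; perturbing the label vector to $y_d^\tau=(\bar y_d-\tau z)/\|\bar y_d-\tau z\|_Y$ and estimating $(z,w_\tau)_Y$ for projections $w_\tau\in P_\Psi^{x_d}(y_d^\tau)$ shows that these stay uniformly away from $P_\Psi^{x_d}(\bar y_d)$, while boundedness and closedness force a cluster point inside that set --- a contradiction. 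You instead pass to the convex function $h(y)=\|y\|_Y^2-\dist(y,C)^2=\sup_{c\in C}\big(2(y,c)_Y-\|c\|_Y^2\big)$, identify $\partial h(\bar y_d)=2\conv\big(P_\Psi^{x_d}(\bar y_d)\big)$ by a Danskin-type theorem (legitimately reduced to a compact index set via the coercivity in $c$), and read off iii) as the first-order optimality condition for $\bar y_d$ as a minimizer of $h$ on the unit sphere. Both routes are sound. The paper's is self-contained, needing only the separation theorem and compactness, which fits a text that otherwise avoids convex-analysis machinery, but it is computationally intricate. Yours is conceptually cleaner and explains \emph{why} iii) holds --- it is precisely stationarity of the worst-case deviation problem --- at the cost of importing the subdifferential formula for a supremum over a non-compact family and the tangential first-order condition on the sphere; the latter does go through because $h$, being a finite convex function on a finite-dimensional space, is locally Lipschitz, so the $O(t^2)$ deviation of the spherical curve $t\mapsto(\bar y_d+tv)/\|\bar y_d+tv\|_Y$ from its tangent line is harmless. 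You correctly flag both of these as the points requiring care, and neither constitutes a gap.
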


\begin{proof}
Since $\closure_Y(\Psi(D, x_d)) \neq Y$ holds and since the set $\closure_Y(\Psi(D, x_d))$ is a closed cone,
there exists at least one $y  \in Y$ with $\|y\|_Y = 1$ and $\dist(y, \closure_Y\left (\Psi(D, x_d)\right )) > 0$.
This shows that the error bound $\Theta(\Psi, x_d)$ has to be positive in the situation of the lemma
and, in combination with \cref{lemma:thetaprops}, that $\Theta(\Psi, x_d) \in (0, 1)$ holds as claimed. 
To prove the remaining assertions \ref{item:propyd:i}, \ref{item:propyd:ii}, and \ref{item:propyd:iii}, 
let us assume that 
an arbitrary but fixed worst-case unit label vector
$\bar y_d \in Y$ as in \eqref{eq:randomeq273635}
is given. (Recall that the existence of such a $\bar y_d$ is guaranteed by \cref{lemma:thetaprops}.)
Then, we obtain from \cref{lemma:nonemptyproj} that the set $P_\Psi^{x_d}(\bar y_d)$
contains at least one element $\bar y \in Y$ and it follows from 
the second equality in \eqref{eq:randomeq273635},
the fact that $\Theta(\Psi, x_d)$ is smaller than one, 
and the definition of $P_\Psi^{x_d}(\bar y_d)$
that $\bar y \neq 0$ has to hold. 
Since $\bar y$ is a solution of the problem 
\begin{equation*}
\min_{y \in  \closure_Y\left (\Psi(D, x_d)\right )} \|y - \bar y_d\|_Y^2
\end{equation*}
and again due to the cone property of the set $\closure_Y(\Psi(D, x_d))$, we moreover have
\begin{equation}
\label{eq:randomeq283645}
\|\bar y - \bar y_d\|_Y^2 \leq \|s \bar y - \bar y_d\|_Y^2\qquad \forall s >0.
\end{equation}
Choosing parameters of the form $s = 1 - \delta$ with $0 < |\delta| < 1$ in \eqref{eq:randomeq283645},
using the binomial identities, dividing by $\delta$, and passing to the limit $\delta \to 0$
yields that $(\bar y - \bar y_d, \bar y)_Y = 0$ has to hold
and, as a consequence, 
\begin{equation*}
\left (\bar y - (1 - \Theta(\Psi, x_d) ) \bar y_d, \bar y_d \right )_Y
=
\left (\bar y -  \bar y_d, \bar y_d - \bar y \right )_Y + 
\Theta(\Psi, x_d) \left \| \bar y_d \right \|_Y^2
= 
0.
\end{equation*}
The above shows that $\bar y$ is contained in the affine subspace $H$ in \eqref{eq:Hdef} and,
since $\bar y$ was an arbitrary element of $P_\Psi^{x_d}(\bar y_d)$, that 
$P_\Psi^{x_d}(\bar y_d) \subset H$.
This establishes \ref{item:propyd:ii}.
To prove \ref{item:propyd:iii}, we use a contradiction argument:
Suppose that the vector $(1 - \Theta(\Psi, x_d) ) \bar y_d \in H$
is not an element of the convex hull $\conv(P_\Psi^{x_d}(\bar y_d)) \subset H$. Then, 
by noting that the set $\conv(P_\Psi^{x_d}(\bar y_d))$ is
compact due to the finite-dimensionality of $Y$ and the compactness of $P_\Psi^{x_d}(\bar y_d)$, see \cref{lemma:nonemptyproj}, 
and by applying the strong hyperplane separation theorem to the sets 
$(1 - \Theta(\Psi, x_d) ) \bar y_d + \R \bar y_d $ and 
$\conv(P_\Psi^{x_d}(\bar y_d)) + \R \bar y_d $, see  \cite[Theorem 2.39]{RockafellarWets1998}, we obtain that  
there exist a nonzero $z \in Y$ and constants $c \in \R$ and $\varepsilon > 0$ such that 
\begin{equation*}
\left ( z, v_1\right )_Y \leq c - \varepsilon  < c \leq \left ( z, v_2\right )_Y\qquad 
\end{equation*}
holds for all $v_1 \in (1 - \Theta(\Psi, x_d) ) \bar y_d + \R \bar y_d $ and $v_2 \in  \conv(P_\Psi^{x_d}(\bar y_d)) + \R \bar y_d$.
Note that the above is only possible if $z \in \bar y_d^\perp$ and $c \geq \varepsilon$.
We may thus conclude that $z$, $c$, and $\varepsilon$
satisfy 
\begin{equation}
\label{eq:z}
\varepsilon \leq c  \leq \left ( z, \bar y\right )_Y\qquad \forall \bar y \in \conv(P_\Psi^{x_d}(\bar y_d)). 
\end{equation}
Consider now for 
all sufficiently small $\tau > 0$
the vectors
$y_d^\tau := (\bar y_d - \tau z)/ \|\bar y_d - \tau z\|_Y$ 
and select arbitrary but fixed $w_\tau \in  P_\Psi^{x_d}( y_d^\tau)$.
(Recall that the sets $P_\Psi^{x_d}( y_d^\tau)$ are nonempty by \cref{lemma:nonemptyproj}.)
Then, it follows from \eqref{eq:randomeq2736352},
the definition of $P_\Psi^{x_d}$, and the second property in \eqref{eq:randomeq273635}
that $w_\tau$, $y_d^\tau$, and $\bar y_d$ satisfy 
\begin{equation}
\label{eq:randomeq276335}
\|y_d^\tau - w_\tau\|_Y^2 \leq \Theta(\Psi, x_d) \leq \|\bar y_d- w_\tau\|_Y^2.
\end{equation}
Using the binomial identities, the properties 
$\|y_d^\tau\|_Y = \|\bar y_d\|_Y = 1$ and $z \in \bar y_d^\perp$, 
and the definition of $y_d^\tau$ in \eqref{eq:randomeq276335} yields
\begin{equation*}
\begin{aligned}
0 &\geq \left (y_d^\tau - \bar y_d , - w_\tau \right )_Y 
\\
&= \left (\frac{\bar y_d - \tau z}{\|\bar y_d - \tau z\|_Y} - \bar y_d , - w_\tau \right )_Y 
\\
&= 
\frac{\tau }{\|\bar y_d - \tau z\|_Y}  \left (z  , w_\tau \right )_Y 
+
\left (\frac{\|\bar y_d - \tau z\|_Y - 1}{\|\bar y_d - \tau z\|_Y}\right )
\left ( \bar y_d ,  w_\tau \right )_Y 
\\
&= 
\frac{\tau }{\|\bar y_d - \tau z\|_Y}  
\left (\left (z  , w_\tau \right )_Y 
+
 \frac{\tau \|z\|_Y^2}{ 1 +  \|\bar y_d - \tau z\|_Y} 
\left ( \bar y_d ,  w_\tau \right )_Y \right ). 
\end{aligned}
\end{equation*}
Since the family $\{w_\tau\}$ is necessarily bounded (see the first inequality in Equation \ref{eq:randomeq276335}),
the above implies that there exists a $\tau_0 > 0$ such that
$\left (z  , w_\tau \right )_Y  \leq \varepsilon/2$
holds for all $0 < \tau < \tau_0$, where $\varepsilon$ is the constant in \eqref{eq:z},
and, again by \eqref{eq:z}, that there exists an $\tilde \varepsilon > 0$ with 
$\dist(w_\tau, P_\Psi^{x_d}(\bar y_d)) \geq \tilde \varepsilon$ for all $0 < \tau < \tau_0$.
However, from the boundedness of $\{w_\tau\}$
and the closedness of the set $\closure_Y\left (\Psi(D, x_d)\right )$,
we also obtain that we can find a sequence $\{w_{\tau_i}\}$ 
with $(0, \tau_0) \ni \tau_i \to 0$ and 
$w_{\tau_i} \to w$ for some $w \in \closure_Y\left (\Psi(D, x_d)\right ) $, and,
due to the convergence $y_d^\tau \to \bar y_d$ for $\tau \to 0$ and \eqref{eq:randomeq276335}, such a $w$ 
clearly has to satisfy
\begin{equation*}
\|\bar y_d- w\|_Y^2 =   \Theta(\Psi, x_d) = \inf_{\alpha \in D}  \|\Psi(\alpha, x_d) - \bar y_d\|_Y^2.
\end{equation*}
The above yields $w \in P_\Psi^{x_d}(\bar y_d)$ and, as a consequence, 
\begin{equation*}
0 < \tilde \varepsilon \leq \dist(w , P_\Psi^{x_d}(\bar y_d)) = 0
\end{equation*}
which is  not possible. 
The vector $(1 - \Theta(\Psi, x_d) ) \bar y_d$ thus has to be an element of the 
set $\conv(P_\Psi^{x_d}(\bar y_d))$ and the proof of \ref{item:propyd:iii} is complete. 
Since the assertion in \ref{item:propyd:i} is a trivial consequence of  \ref{item:propyd:ii},  \ref{item:propyd:iii},
and the fact that $ \Theta(\Psi, x_d)$ is smaller than one by \cref{lemma:thetaprops}, 
this concludes the proof of the lemma. 
\end{proof}

Note that \cref{lemma:propertiesydnonoverpar} implies that the
cone $\closure_Y(\Psi(D, x_d))$ can only be convex if it is equal to the whole space $Y$.
By exploiting the properties \ref{fa:I} and \ref{fa:II} directly, 
we can also establish the following, stronger result
on the geometry of this set:
 
\begin{proposition}[Nonexistence of Solar Points]
\label[proposition]{prop:solar}~
Let \cref{ass:standingassumpssec4,ass:unrealizable_ass} hold.
Then, for every $y_d \in Y \setminus \closure_Y(\Psi(D, x_d))$ and every 
arbitrary but fixed $\bar y \in  P_\Psi^{x_d}(y_d)$, it is true that
\begin{equation}
\label{eq:randomeq17363eb36wgs}
\bar y \not \in P_\Psi^{x_d}\left (\bar y + s\frac{(y_d- \bar y)}{\|y_d- \bar y\|_Y}\right )
\qquad \forall s \in \R \text{ with } |s| > \left (\frac{\Theta(\Psi, x_d)}{1 - \Theta(\Psi, x_d)}\right)^{1/2} \|\bar y\|_Y .
\end{equation}
In particular, the set $\closure_Y(\Psi(D, x_d))$ does not admit any solar points, i.e., 
there do not exist any $y_d \in Y \setminus \closure_Y(\Psi(D, x_d))$ such that 
there is a $\bar y \in P_\Psi^{x_d}(y_d)$ with
\begin{equation*}
\bar y \in P_\Psi^{x_d}(\bar y + s (y_d- \bar y))
\qquad \forall s \in (0, \infty). 
\end{equation*}
\end{proposition}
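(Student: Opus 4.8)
The plan is to reduce the whole statement to the global error estimate $\dist(y_d, C)^2 \le \Theta \|y_d\|_Y^2$ from \cref{lemma:thetaprops} together with the orthogonality relation that the cone property forces on best approximations. Throughout I write $C := \closure_Y(\Psi(D, x_d))$ and $\Theta := \Theta(\Psi, x_d)$, recalling that $\Theta \in (0,1)$ by \cref{lemma:propertiesydnonoverpar}. Fix $y_d \in Y \setminus C$ and $\bar y \in P_\Psi^{x_d}(y_d)$.

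First I would record two elementary facts about $\bar y$. Since $y_d \notin C$ but $0 \in C$ (because $C$ is a closed cone), we have $y_d \neq 0$, and $\|\bar y - y_d\|_Y = \dist(y_d, C) \le \Theta^{1/2}\|y_d\|_Y < \|y_d\|_Y = \|0 - y_d\|_Y$; hence $0$ is not a best approximation and $\bar y \neq 0$. Next, exactly as in the proof of \cref{lemma:propertiesydnonoverpar}, the cone property gives $\|\bar y - y_d\|_Y^2 \le \|s\bar y - y_d\|_Y^2$ for all $s > 0$, and differentiating this quadratic in $s$ at $s = 1$ yields $(\bar y - y_d, \bar y)_Y = 0$. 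Setting $u := (y_d - \bar y)/\|y_d - \bar y\|_Y$, this says precisely $(\bar y, u)_Y = 0$, i.e. the iterate $\bar y$ is orthogonal to the search direction.

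The core of the argument is then a Pythagorean computation. For $s \in \R$ put $p_s := \bar y + s u$. Orthogonality $\bar y \perp u$ and $\|u\|_Y = 1$ give $\|p_s - \bar y\|_Y = |s|$ and $\|p_s\|_Y^2 = \|\bar y\|_Y^2 + s^2$. Suppose, toward a contradiction, that $\bar y \in P_\Psi^{x_d}(p_s)$. By definition of the best approximation map this means $\dist(p_s, C) = \|p_s - \bar y\|_Y = |s|$, while \cref{lemma:thetaprops} applied to $p_s$ gives $\dist(p_s, C)^2 \le \Theta \|p_s\|_Y^2$. Combining the two yields $s^2 \le \Theta(\|\bar y\|_Y^2 + s^2)$, i.e. $(1 - \Theta)s^2 \le \Theta\|\bar y\|_Y^2$, and since $\Theta < 1$ this forces $|s| \le (\Theta/(1 - \Theta))^{1/2}\|\bar y\|_Y$. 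Taking the contrapositive establishes \eqref{eq:randomeq17363eb36wgs}.

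Finally I would derive the ``in particular'' statement. A solar point would mean $\bar y \in P_\Psi^{x_d}(\bar y + s(y_d - \bar y))$ for all $s \in (0, \infty)$; writing $\bar y + s(y_d - \bar y) = p_t$ with $t := s\|y_d - \bar y\|_Y$ and letting $s \to \infty$ (hence $t \to \infty$) drives $|t|$ above the threshold $(\Theta/(1 - \Theta))^{1/2}\|\bar y\|_Y$, contradicting the main estimate. I expect no serious obstacle here; the only points needing care are confirming $\bar y \neq 0$ (so that the first-order orthogonality step is valid and the threshold is meaningful) and the bookkeeping that $y_d - \bar y$ is a positive multiple of $u$, so that the solar ray is exactly the family $\{p_t : t > 0\}$. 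As a consistency check, one can verify that $y_d$ itself equals $p_r$ with $r = \|y_d - \bar y\|_Y$ and that the same estimate gives $r \le (\Theta/(1-\Theta))^{1/2}\|\bar y\|_Y$, so the threshold in \eqref{eq:randomeq17363eb36wgs} is exactly the one at which $y_d$ sits.
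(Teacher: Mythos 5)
Your proof is correct and follows essentially the same route as the paper's: both arguments rest on the orthogonality relation $(\bar y - y_d, \bar y)_Y = 0$ forced by the cone property and on the global error bound $\dist(\cdot\,,\closure_Y(\Psi(D,x_d)))^2 \le \Theta(\Psi,x_d)\,\|\cdot\|_Y^2$ from \cref{lemma:thetaprops}, and your Pythagorean contradiction $s^2 \le \Theta(\|\bar y\|_Y^2 + s^2)$ is just a repackaging of the paper's inequality \eqref{eq:randomeq26353672}. The auxiliary observation $\bar y \neq 0$ and the closing consistency check are not needed but do no harm.
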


\begin{proof}
Consider an arbitrary but fixed $y_d \in Y \setminus \closure_Y(\Psi(D, x_d))$ and some $\bar y \in  P_\Psi^{x_d}(y_d)$.
Then, it necessarily holds $y_d \neq \bar y$, 
and we obtain from the same arguments as in the proof of \cref{lemma:propertiesydnonoverpar}
that $(\bar y - y_d, \bar y)_Y = 0$ has to hold. 
Define $v := (y_d- \bar y)/\|y_d- \bar y\|_Y$
and $y_d^s := \bar y + s v \in Y$ for all $s \in \R$,
and let $\bar y_s \in Y$ be arbitrary but fixed elements of the sets $P_\Psi^{x_d}(y_d^s)$ for all $s \in \R$.
Then, from \eqref{eq:randomeq2736352}, the definition of $y_d^s$, the orthogonality 
between $v$ and $\bar y$, and the equation $\|v\|_Y = 1$, we obtain that
\begin{equation}
\label{eq:randomeq26353672}
\begin{aligned}
\|\bar y_s - y_d^s\|_Y^2  - \|\bar y - y_d^s \|_Y^2 
&\leq
\Theta(\Psi, x_d) \|y_d^s\|_Y^2   - \|s v\|_Y^2
\\
&=
\Theta(\Psi, x_d) \|\bar y\|_Y^2 + (\Theta(\Psi, x_d) - 1)s^2
\end{aligned}
\end{equation}
holds for all $s \in \R$. 
Since the set $\Psi(D, x_d)$ is dense in $\closure_Y(\Psi(D, x_d))$ and since 
$\Theta(\Psi, x_d)$ is an element of the interval $(0, 1)$ by \cref{lemma:propertiesydnonoverpar},
the above shows that, for all $s \in \R$ with
\begin{equation*}
|s| > \left (\frac{\Theta(\Psi, x_d)}{1 - \Theta(\Psi, x_d)}\right)^{1/2} \|\bar y\|_Y,
\end{equation*}
we have 
\begin{equation*}
\inf_{\alpha \in D} \|\Psi( \alpha, x_d) - y_d^s\|_Y^2 < \|\bar y - y_d^s\|_Y^2
\end{equation*}
and, as a consequence, $\bar y \not \in P_\Psi^{x_d}(\bar y + s v)$. This establishes the first claim
of the proposition. The second one is an immediate consequence. 
\end{proof}

It is easy to check that the property \eqref{eq:randomeq17363eb36wgs} 
implies that the set $\closure_Y(\Psi(D, x_d))$ does not admit
\emph{any} supporting hyperplanes in the situation of \cref{prop:solar}.
This shows that the cone $\closure_Y(\Psi(D, x_d))$ indeed has to be highly nonconvex 
if the map $\Psi(\cdot, x_d)\colon D \to Y$ satisfies \ref{fa:I} and \ref{fa:II}
and there exist unrealizable vectors. 
Compare also with the geometry of the set in \cref{fig:sub2})
in this context. 
For further details on solar points and their role in the field of nonlinear approximation theory, 
we refer the reader to \cite{Braess1986}. We remark that arguments very similar to those in the  proof of  \cref{prop:solar}
will also be used in \cref{subsec:5.2} for the derivation of our results on saddle points and spurious minima. 

To study which consequences the inclusion 
in point \ref{item:propyd:iii} of \cref{lemma:propertiesydnonoverpar} has for 
the continuity properties of the map $P_\Psi^{x_d}$, we need:
\begin{lemma}[A Variant of Jung's Inequality]
\label[lemma]{lemma:jung}
Suppose that $H$ is an affine-linear subspace of $Y$
with dimension $d \in \{1,2,..., \dim(Y)\}$. Assume further that a point $\bar y \in H$, a
compact set $E \subset H$, and a number $r > 0$ satisfying 
$\bar y \in \conv(E)$ and $\|\bar y - z\|_Y = r$ for all $z \in E$
are given. Then, it is true that\\[-0.3cm]
\begin{equation}
\label{eq:jung}
\sup_{z_1, z_2 \in E} \|z_1 - z_2\|_Y \geq \left ( \frac{2 d + 2}{d}\right )^{1/2}r.
\end{equation}
\end{lemma}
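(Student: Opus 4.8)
The plan is to derive \eqref{eq:jung} from the classical inequality of Jung (as stated, e.g., in \cite[Theorem 11.1.1]{Burago1988}) by combining it with a lower bound on the circumradius of $E$. Since all the relevant quantities live inside the $d$-dimensional affine subspace $H$, I would first fix an isometry identifying $H$ with the Euclidean space $\R^d$; this preserves the inner product $(\cdot,\cdot)_Y$, all distances $\|\cdot\|_Y$, the diameter $\Delta := \sup_{z_1, z_2 \in E}\|z_1 - z_2\|_Y$, and the convex-hull relation $\bar y \in \conv(E)$. After this reduction, $E$ is a compact (hence bounded) subset of $\R^d$ all of whose points lie on the sphere of radius $r$ centered at $\bar y$.

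With this in place, Jung's inequality provides a closed ball $B \subset H$ of radius $R \leq \Delta\,(d/(2(d+1)))^{1/2}$ that contains $E$. The key remaining step is to show that the radius of any such enclosing ball cannot be smaller than $r$, i.e.\ that $R \geq r$. Granting this, the two estimates combine immediately into
\begin{equation*}
r \leq R \leq \Delta \left(\frac{d}{2(d+1)}\right)^{1/2},
\end{equation*}
and squaring and rearranging yields $\Delta \geq r\left((2d+2)/d\right)^{1/2}$, which is exactly \eqref{eq:jung}.

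To prove $R \geq r$, I would exploit the two hypotheses $\bar y \in \conv(E)$ and $\|\bar y - z\|_Y = r$ for all $z \in E$ simultaneously. By Carath\'eodory's theorem applied in $H$, the point $\bar y$ can be written as a finite convex combination $\bar y = \sum_{i} \lambda_i z_i$ with $z_i \in E$, $\lambda_i \geq 0$, $\sum_i \lambda_i = 1$, and therefore $\sum_i \lambda_i (z_i - \bar y) = 0$. For an arbitrary center $c \in H$, expanding $\|z_i - c\|_Y^2 = \|z_i - \bar y\|_Y^2 + 2(z_i - \bar y, \bar y - c)_Y + \|\bar y - c\|_Y^2$ and taking the $\lambda_i$-weighted average makes the first-order cross term vanish and leaves
\begin{equation*}
\max_{z \in E} \|z - c\|_Y^2 \geq \sum_i \lambda_i \|z_i - c\|_Y^2 = r^2 + \|\bar y - c\|_Y^2 \geq r^2.
\end{equation*}
Since $c$ was arbitrary, every ball containing $E$ has radius at least $r$, and in particular $R \geq r$.

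The main obstacle is precisely this circumradius lower bound: Jung's inequality by itself only controls $R$ from above in terms of $\Delta$, so without the observation that the equidistance of $E$ from $\bar y$ together with $\bar y \in \conv(E)$ forces $R \geq r$ (via the vanishing of the linear term in the weighted expansion), there would be nothing to play off against it. Everything else---the isometric reduction to $\R^d$, the invocation of Jung's theorem, and the final algebraic rearrangement---is routine.
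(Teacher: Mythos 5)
Your proposal is correct and follows essentially the same route as the paper's proof: reduce isometrically to $\R^d$, establish the circumradius lower bound $R \geq r$ by writing $\bar y$ as a convex combination of points of $E$ via Carath\'eodory's theorem and expanding $\|z_i - c\|_Y^2$ about $\bar y$, and then combine with the classical Jung inequality. The only (cosmetic) difference is that you conclude $R \geq r$ by averaging the expansion over all Carath\'eodory weights so that the cross term vanishes identically, whereas the paper selects a single index $j$ with $(\bar y - v, z_j - \bar y)_2 \geq 0$; both arguments are valid.
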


\begin{proof}
Note that, 
by introducing a suitably defined orthonormal basis and by
restricting the attention to the 
space of directions of the affine subspace $H$, 
we can always transform the situation considered in the lemma into that
with $Y = H = \R^d$ and $\|\cdot\|_Y = \|\cdot\|_2$, where $\|\cdot\|_2$ denotes the Euclidean norm. 
It thus suffices to prove \eqref{eq:jung} in the space $(\R^d, \|\cdot\|_2)$ for
all $\bar y \in \R^d$, compact sets $E \subset \R^d$, and constants $r>0$ 
that satisfy $\bar y \in \conv(E)$ and $\|\bar y - z\|_2 = r$ for all $z \in E$.
So let us assume that such $\bar y$, $E$, and $r$ are given, 
and suppose further that $B_R(v)$ is a closed ball in $(\R^d, \|\cdot\|_2)$
with center $v$ and radius $R$ that covers the set $E$.
Then, in the case $v=\bar y$, our assumption $\|\bar y - z\|_2 = r$ for all $z \in E$
immediately yields that $R \geq r$ has to hold. In what follows, we will show that
this inequality is also true for $v \neq \bar y$. To this end, 
we note that the inclusion $\bar y \in \conv(E)$ and
Carathéodory's theorem, see \cite[Theorem 1.2.5]{Borwein2010},
imply that there exist $\lambda_1,..., \lambda_{d+1} \in [0, 1]$ 
and $z_1,...,z_{d+1} \in E$ satisfying  $\sum_{i=1}^{d+1}\lambda_i = 1$
and $\sum_{i=1}^{d+1}\lambda_i z_i = \bar y$, and, as a consequence, 
\[
0 = \left (\bar y - v, \bar y - \bar y  \right )_2 = \sum_{i=1}^{d+1} \lambda_i \left (\bar y - v, z_i - \bar y  \right )_2.
\]
Here, $(\cdot, \cdot)_2$ denotes the Euclidean scalar product. The above implies in particular that 
there has to be at least one $j \in \{1,..., d+1\}$ with $\left (\bar y - v, z_j - \bar y  \right )_2 \geq 0$, 
and from this inequality and the inclusion $E \subset B_R(v)$, it follows straightforwardly that 
\begin{equation*}
R^2 \geq \|z_j - v\|_2^2 
= \|z_j - \bar y + \bar y- v\|_2^2 
= \|z_j - \bar y\|_2^2 + 2 \left (\bar y - v, z_j - \bar y  \right )_2 +  \|\bar y- v\|_2^2
\geq r^2. 
\end{equation*}
Thus, $R \geq r$ as claimed.
In summary, we have now proved that every closed ball $B \subset \R^d$ with $E \subset B$
has to have radius at least $r$. In combination with the classical 
inequality of Jung, see \cite[Theorem 11.1.1]{Burago1988}, this yields
\begin{equation*}
\sup_{z_1, z_2 \in E} \|z_1 - z_2\|_2  \left ( \frac{d}{2d + 2}\right)^{1/2} \geq r. 
\end{equation*}
Rearranging the above establishes \eqref{eq:jung} and completes the proof. 
\end{proof}\\[-1cm]\pagebreak[1]

By combining \cref{lemma:propertiesydnonoverpar,lemma:jung} and by using elementary properties of the map $P_\Psi^{x_d}$,
we now arrive at the following main result of this subsection:

\begin{theorem}{\hspace{-0.05cm}\bf(Nonuniqueness and Instability of Best Approximations)}\label{theorem:abstractinstability}
Suppose that \cref{ass:standingassumpssec4,ass:unrealizable_ass} hold.
Then, the best approximation map 
\begin{equation*}
P_{\Psi}^{x_d}\colon Y \rightrightarrows Y,\qquad y_d \mapsto \argmin_{y \in  \closure_Y\left (\Psi(D, x_d)\right )} \|y - y_d\|_Y^2,
\end{equation*}
associated with the training problem \eqref{eq:trainingpropprot2} has the following properties:
\begin{enumerate}[label=\roman*)]

\item\label{item:stabth:i}
There are uncountably many $y_d$ such that $P_{\Psi}^{x_d}(y_d)$ contains more than 
one element.

\item\label{item:stabth:ii}
The function $P_{\Psi}^{x_d}$  is discontinuous in the following sense:
For every arbitrary but fixed $C > 0$, there exists an uncountable set $\MM_C \subset Y$
such that, for every label vector $y_d \in \MM_C$, there exist sequences $\{y_d^l\}, \{\tilde y_d^l\} \subset Y$ with
\begin{equation}
\label{eq:randomeq2837}
\begin{gathered}
y_d^l \to y_d  \text{ for }l \to \infty,
\qquad
\tilde y_d^l \to y_d \text{ for }l \to \infty,
\\
|P_\Psi^{x_d}(y_d^l)| =  |P_\Psi^{x_d}(\tilde y_d^l)|  = 1 \quad \forall l,
\quad \text{and}\quad
\|P_\Psi^{x_d}(y_d^l) - P_\Psi^{x_d}(\tilde y_d^l)\|_Y \geq C\quad \forall l.
\end{gathered}
\end{equation}
Here, $|\cdot|$ denotes the cardinality of a set and with $\|P_\Psi^{x_d}(y_d^l) - P_\Psi^{x_d}(\tilde y_d^l)\|_Y$
we mean the distance between the elements of the singletons $P_\Psi^{x_d}(y_d^l)$ and $P_\Psi^{x_d}(\tilde y_d^l)$. 
Further, for every $C>0$, there exists at least one $y_d \in Y$ with the properties 
\begin{equation}
\label{eq:thetabound61253} 
[1, \infty)y_d \subset\MM_C\quad\text{and}\quad
\|y_d\|_Y = C \left ( \frac{\dim(Y) - 1}{2\dim(Y) (\Theta(\Psi, x_d) - \Theta(\Psi, x_d)^2)} \right )^{1/2}.
\end{equation}
\end{enumerate}
\end{theorem}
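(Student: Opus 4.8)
The plan is to leverage the structural results already established in \cref{lemma:propertiesydnonoverpar,lemma:jung}, together with the conicity property, and to bootstrap the single worst-case vector into an uncountable family. First I would recall from \cref{lemma:thetaprops,lemma:propertiesydnonoverpar} that there exists a worst-case unit vector $\bar y_d$ with $P_\Psi^{x_d}(\bar y_d)$ containing more than one element; the conicity of $\closure_Y(\Psi(D, x_d))$ then forces $P_\Psi^{x_d}(s\bar y_d) = s P_\Psi^{x_d}(\bar y_d)$ for all $s > 0$, so the entire ray $(0,\infty)\bar y_d$ yields non-unique best approximations. This gives assertion \ref{item:stabth:i} once I argue that such worst-case directions are not isolated---which follows because, along any direction realizing the supremum in \eqref{eq:defTheta}, scaling produces non-uniqueness, and the supremum is attained on a compact sphere by a set that cannot be a single point (otherwise \cref{lemma:propertiesydnonoverpar}\ref{item:propyd:iii} would be violated by convexity).

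For the quantitative instability in \ref{item:stabth:ii}, the key idea is to apply \cref{lemma:jung} to the set $E := P_\Psi^{x_d}(\bar y_d)$, which by \cref{lemma:propertiesydnonoverpar}\ref{item:propyd:ii},\ref{item:propyd:iii} lies in the affine space $H$, contains $(1-\Theta)\bar y_d$ in its convex hull, and is equidistant from $(1-\Theta)\bar y_d$ at radius $r = (\Theta - \Theta^2)^{1/2}\|\bar y_d\|_Y$ (the equidistance coming from the fact that every best approximation attains the same optimal loss $\Theta$ and lies in $H$). With $d = \dim(Y) - 1$ being the dimension of the direction space of $H$, \cref{lemma:jung} yields two best approximations $\bar y_1, \bar y_2 \in E$ whose separation is at least $(\frac{2d+2}{d})^{1/2} r$. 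Rescaling $\bar y_d$ by the appropriate factor then makes this separation exactly $C$, which after a short computation reproduces the norm formula in \eqref{eq:thetabound61253}.

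The remaining work is to produce, near such a $y_d$, the two approximating sequences $\{y_d^l\}, \{\tilde y_d^l\}$ with \emph{single-valued} projections converging to $y_d$ but whose images stay separated by $C$. Here I would perturb $y_d$ slightly in the directions $\bar y_1 - y_d$ and $\bar y_2 - y_d$ respectively; for a metric projection onto a closed set in finite dimensions, single-valuedness holds generically (on a dense set, indeed off a Lebesgue-null set by the differentiability of the distance function), so one can choose perturbations $y_d^l \to y_d$ at which $P_\Psi^{x_d}$ is a singleton close to $\bar y_1$ and $\tilde y_d^l \to y_d$ at which it is a singleton close to $\bar y_2$. The upper semicontinuity of the projection (which follows from closedness of $\closure_Y(\Psi(D, x_d))$ and boundedness, essentially the limit argument already used at the end of the proof of \cref{lemma:propertiesydnonoverpar}) guarantees these singletons cluster only at elements of $P_\Psi^{x_d}(y_d)$, so for $l$ large they land near $\bar y_1$ and $\bar y_2$ and hence remain at distance at least $C$. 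Finally, scaling $y_d$ along its ray preserves all of these properties by conicity, which upgrades the single constructed $y_d$ to the whole ray $[1,\infty)y_d \subset \MM_C$ and, varying the worst-case direction, to uncountably many such rays.

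The main obstacle I anticipate is the genericity-of-single-valuedness step: I must ensure that arbitrarily close to $y_d$ there are points where the projection is a genuine singleton \emph{and} that I can steer these singletons toward two prescribed, well-separated elements of $P_\Psi^{x_d}(y_d)$ rather than toward some other best approximation. The clean way to handle this is to exploit that the distance function $\dist(\cdot, \closure_Y(\Psi(D, x_d)))$ is convex-regular enough that its gradient exists a.e.\ and points from the projection, so perturbing $y_d$ infinitesimally in the direction of $\bar y_1 - y_d$ (or $\bar y_2 - y_d$) selects the corresponding projection; combined with upper semicontinuity this pins down the limiting behaviour. Controlling the cardinality constraint $|P_\Psi^{x_d}(y_d^l)| = 1$ simultaneously with the convergence and separation requirements is the delicate bookkeeping, but it reduces to a standard measure-theoretic selection argument once the geometric picture from \cref{lemma:jung} is in place.
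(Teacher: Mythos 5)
Your overall strategy coincides with the paper's: take a worst-case unit vector $\bar y_d$, use \cref{lemma:propertiesydnonoverpar} to place $E := P_\Psi^{x_d}(\bar y_d)$ on the sphere of radius $r=(\Theta(\Psi,x_d)-\Theta(\Psi,x_d)^2)^{1/2}$ about $(1-\Theta(\Psi,x_d))\bar y_d$ inside $H$ with the center in $\conv(E)$, invoke \cref{lemma:jung} to extract two best approximations $z_1,z_2$ separated by at least $\bigl(2\dim(Y)(\Theta-\Theta^2)/(\dim(Y)-1)\bigr)^{1/2}$, and then scale by conicity to obtain the ray $[1,\infty)y_d\subset\MM_C$ and the norm identity in \eqref{eq:thetabound61253}. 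Part \ref{item:stabth:i} also matches (the single ray $(0,\infty)\bar y_d$ is already uncountable, so your extra argument about worst-case directions not being isolated is unnecessary).

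The one place where your write-up does not close is the construction of the sequences with $|P_\Psi^{x_d}(y_d^l)|=1$. The genericity-plus-upper-semicontinuity route you lean on is insufficient as stated: knowing that single-valuedness holds off a Lebesgue-null set lets you find nearby points with singleton projections, and upper semicontinuity forces those singletons to cluster in $P_\Psi^{x_d}(y_d)$, but nothing in that argument controls \emph{which} element of $P_\Psi^{x_d}(y_d)$ they approach, so you cannot guarantee one sequence lands near $z_1$ and the other near $z_2$. (The distance function is not differentiable at $y_d$ itself, so the "gradient points away from the projection" heuristic does not apply there.) The fix is the direct construction you mention only in passing: take $y_d^l := (1-1/l)\bar y_d + (1/l)z_1$ on the segment toward $z_1$. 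The triangle inequality gives $\|y_d^l - z\|_Y \ge \|\bar y_d - z\|_Y - (1/l)\|\bar y_d - z_1\|_Y \ge (1-1/l)\Theta(\Psi,x_d)^{1/2}$ for all $z\in\closure_Y(\Psi(D,x_d))$, with equality throughout if and only if $z=z_1$; hence $P_\Psi^{x_d}(y_d^l)=\{z_1\}$ exactly, with no measure-theoretic selection needed. This is precisely what the paper does in \eqref{eq:randomeq23636}. If you replace your genericity step by this computation, your proof becomes the paper's proof.
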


\begin{proof}
Let $\bar y_d \in Y$ be an arbitrary but fixed 
worst-case unit label vector as in \eqref{eq:randomeq273635}.
Then, from \cref{lemma:propertiesydnonoverpar}, it follows that 
$| P_\Psi^{x_d}(\bar y_d)| > 1$ holds, 
and we obtain from the conicity of the set $\closure_Y(\Psi(D, x_d))$ 
that $P_\Psi^{x_d}$ satisfies $P_\Psi^{x_d}(s y_d) = s P_\Psi^{x_d}(y_d)$ for all 
$y_d \in Y$ and all $s > 0$. 
Combining these two observations yields $|P_\Psi^{x_d}(s \bar y_d)|> 1$ for all $s > 0$
which proves the assertion of \ref{item:stabth:i}.
To establish \ref{item:stabth:ii}, we recall that, by \cref{lemma:propertiesydnonoverpar},
the compact set $E := P_\Psi^{x_d}(\bar y_d)$ has to satisfy 
$\left (1 - \Theta(\Psi, x_d)\right) \bar y_d \in  \conv(E) \subset H$,
where $H$ again denotes the affine subspace in \eqref{eq:Hdef},
and that the definition of $P_\Psi^{x_d}(\bar y_d)$ yields 
$(z - \bar y_d, z)_Y = 0$ for all $z \in E$ (see the first part of the proof of \cref{lemma:propertiesydnonoverpar}).
The latter implies, in combination with the properties of $\bar y_d$, that 
\begin{equation*}
\begin{aligned}
\left \|z -  \left (1 - \Theta(\Psi, x_d)\right) \bar y_d\right \|_Y^2
&=
\|z - \bar y_d\|_Y^2 
+ 2 \Theta(\Psi, x_d)\left (z - \bar y_d, \bar y_d \right )_Y
+ \Theta(\Psi, x_d)^2 
\\
&= \Theta(\Psi, x_d) - \Theta(\Psi, x_d)^2\qquad \forall z \in E.
\end{aligned}
\end{equation*}
The vector $\bar y:= \left (1 - \Theta(\Psi, x_d)\right) \bar y_d $ and the 
number $r := (\Theta(\Psi, x_d) - \Theta(\Psi, x_d)^2)^{1/2} > 0$
thus satisfy 
\begin{equation*}
\bar y \in  \conv(E) \subset H
\quad \text{and}\quad
\|\bar y - z\|_Y = r\quad \forall z \in E,
\end{equation*}
and we may invoke \cref{lemma:jung} to deduce that 
there exist $z_1, z_2 \in P_\Psi^{x_d}(\bar y_d)$ with 
\begin{equation*}
\|z_1 - z_2\|_Y \geq  \left ( \frac{2\dim(Y) (\Theta(\Psi, x_d) - \Theta(\Psi, x_d)^2)}{\dim(Y) - 1}\right )^{1/2}.
\end{equation*}
Consider now the sequence $y_d^l := (1 - 1/l) \bar y_d + (1/l) z_1$, $l \in \mathbb{N}$. 
Then, we clearly have $y_d^l  \to \bar y_d$ for $l \to \infty$ and 
it holds
\begin{equation}
\label{eq:randomeq23636}
\begin{aligned}
\|y_d^l - z\|_Y 
&=
\|(1 - 1/l) \bar y_d + (1/l) z_1  - z\|_Y
\\
&\geq \| \bar y_d  - z\|_Y - (1/l) \| \bar y_d - z_1 \|_Y
\\
&\geq (1 - 1/l) \Theta(\Psi, x_d)^{1/2}\qquad \forall z \in \closure_Y(\Psi(D, x_d))
\end{aligned}
\end{equation}
with equality everywhere if and only if $z = z_1$. 
In combination with the definition of $P_\Psi^{x_d}$,
this implies in particular that $P_\Psi^{x_d}(y_d^l) = \{z_1\}$ holds for all $l \in \mathbb{N}$.
Completely analogously, 
we also obtain that the vectors $\tilde y_d^l := (1 - 1/l) \bar y_d + (1/l) z_2$, $l \in \mathbb{N}$,
satisfy  $\tilde y_d^l  \to \bar y_d$ for $l \to \infty$ and $P_\Psi^{x_d}(\tilde y_d^l) = \{z_2\}$ for all $l \in \mathbb{N}$.
By again exploiting the positive homogeneity of 
the map $P_{\Psi}^{x_d}\colon Y \rightrightarrows Y$ and by combining all of the above, it now follows immediately 
that, for every arbitrary but fixed $C>0$ and all 
\begin{equation*}
s \geq C \left ( \frac{\dim(Y) - 1}{2\dim(Y) (\Theta(\Psi, x_d) - \Theta(\Psi, x_d)^2)}\right )^{1/2},
\end{equation*}
we have 
\begin{equation*}
s y_d^l \to s \bar y_d  \text{ for }l \to \infty,
\quad
s \tilde y_d^l \to s \bar y_d \text{ for }l \to \infty,
\end{equation*}
and
\begin{equation*}
P_\Psi^{x_d}(s y_d^l) = \{s z_1\},\qquad P_\Psi^{x_d}(s \tilde y_d^l) = \{s z_2\},
\qquad 
\|s z_1 - s z_2\|_Y \geq  C\quad \forall l \in \mathbb{N}. 
\end{equation*}
Since $\|\bar y_d\|_Y = 1$ holds by \eqref{eq:randomeq273635}, this establishes 
\ref{item:stabth:ii} and completes the proof. 
\end{proof}

Several remarks are in order regarding the last result: 

\begin{remark}~\label[remark]{rem:stability}
\begin{itemize}

\item \Cref{theorem:abstractinstability} shows that, if there exist label vectors $y_d$ that cannot be approximated 
up to arbitrary tolerances and if \ref{fa:I} and \ref{fa:II} hold, 
then the approximation scheme $\psi$ is always unable to provide 
unique best approximations for all possible choices of $y_d$ (see point one)
and arbitrarily small perturbations in $y_d$ can change the set of best approximations to an
arbitrarily large extent (see point two). This implies in particular that,
in the situation of \cref{theorem:abstractinstability}, the problem of finding best approximations for 
a given $y_d$ is always ill-posed in the sense of Hadamard for certain choices of $y_d$.

\item 
As already mentioned in the introduction, for neural networks with one hidden layer,
instability results similar to those in \cref{theorem:abstractinstability} 
have already been proved in the $L^p$-spaces by \cite{Kainen1999,Kainen2001}
by exploiting classical instruments from nonlinear approximation theory. 
The finite-dimensionality of the training problem \eqref{eq:trainingpropprot2} 
allows us to show---not only for one-hidden-layer networks but 
for all approximation schemes satisfying the conditions \ref{fa:I} and \ref{fa:II} and $\closure_Y(\Psi(D, x_d)) \neq Y$---that 
the instability of the best approximation map $P_\Psi^{x_d}$ associated with \eqref{eq:trainingpropprot2}  is directly linked to the 
number $\Theta(\Psi, x_d)$ in \eqref{eq:defTheta} which also measures the worst-case 
approximation error achievable with the function $\Psi(\cdot, x_d)\colon D \to Y$, 
see \eqref{eq:randomeq2736352}  and \eqref{eq:thetabound61253}. 
(Note that the arguments that we have used 
to establish \eqref{eq:thetabound61253} indeed only work in the finite-dimensional setting, cf.\ 
the proofs of \cref{lemma:propertiesydnonoverpar,lemma:jung}.)

\item The instability 
properties in \cref{theorem:abstractinstability} are of a different type than those 
arising, e.g., in a least-squares problem of the form 
\begin{equation*}
\min_{\alpha \in \R^m} \left \| A \alpha - y_d \right \|_2^2
\end{equation*}
with given $y_d \in \R^n$, $A \in \R^{n \times m}$, and $n \geq m$,
when the matrix $A^TA \in \R^{m \times m}$ (i.e., the matrix in the normal equation)
is ill-conditioned or singular. Indeed, as we have seen in \cref{sec:4}, 
for approximation schemes that depend linearly on 
$\alpha$, the map $P_\Psi^{x_d}$ is always a metric projection 
onto a linear subspace of $Y$ and thus necessarily single-valued and 
globally one-Lipschitz. The set-valuedness and the discontinuity of the function 
$P_\Psi^{x_d}$ in \cref{theorem:abstractinstability} are effects that 
can only be encountered in the nonlinear setting as they 
stem from curvature properties of the set $\closure_Y(\Psi(D, x_d))$. 
Instability properties arising from a particular choice of 
the parameterization of the set $\closure_Y(\Psi(D, x_d))$ 
via the parameter $\alpha$ come on top of the effects documented in \cref{theorem:abstractinstability}. 

\item
It is easy to check (e.g., by means of the examples 
$\closure_Y(\Psi(D, x_d)) = \R\bar y$, $\bar y \in Y$ arbitrary but fixed, and $\closure_Y(\Psi(D, x_d)) = B_1^Y(0)$, 
and by observing that $P_\Psi^{x_d}$ is the identity map when $\closure_Y\left (\Psi(D, x_d)\right ) = Y$ holds)
that neither the conditions in \cref{ass:standingassumpssec4}
nor the assumption $\closure_Y\left (\Psi(D, x_d)\right ) \neq Y$ can 
be dropped for \cref{theorem:abstractinstability} to be true. 

\item Note that the right-hand side of the identity in \eqref{eq:thetabound61253} tends to infinity when
$\Theta(\Psi, x_d)$ goes to  zero or one, respectively. This makes sense as
the function $\psi$ behaves more and more like a linear approximation scheme
when $\Theta(\Psi, x_d)$ converges to one (at least as far as the 
worst-case approximation error is concerned, cf.\ \cref{sec:4}), and since, in the limit $\Theta(\Psi, x_d) \to 0$, 
one recovers the case with $\closure_Y(\Psi(D, x_d)) = Y$, so that, for both 
$\Theta(\Psi, x_d) \to 0$ and $\Theta(\Psi, x_d) \to 1$, the
setting considered in \cref{theorem:abstractinstability} approximates 
a situation in which the map $P_\Psi^{x_d}$ is single-valued and continuous. 

\item 
The nonuniqueness in point \ref{item:stabth:i} of \cref{theorem:abstractinstability}
has nothing to do with, e.g., a non-injective parameterization of the set 
$\Psi(D, x_d)$ via the variable $\alpha$ as present, for instance, in neural networks due to symmetries. 
On the contrary, as $P_{\Psi}^{x_d}(y_d)$ is defined as the set of best approximations for a given $y_d$ in 
the space $Y$,
the set-valuedness of $P_{\Psi}^{x_d}$ implies (just by taking preimages) that for some choices of $y_d$ 
there are different 
parameters $\alpha$ (or, at least, minimizing sequences) which yield the same optimal loss in \eqref{eq:trainingpropprot2}
but give rise to maps $\psi(\alpha, \cdot)\colon \XX \to \YY$ that behave 
differently not only on unseen data but even on the data in $x_d$
that the approximation scheme is trained on. Compare with the 
remarks after \cref{lemma:nonemptyproj} in this context and also with the next comment.

\item The discontinuity properties of the map $P_\Psi^{x_d}$  in point \ref{item:stabth:ii} 
of \cref{theorem:abstractinstability} imply that,
if we solve the training problem \eqref{eq:trainingpropprot2} with a descent method and, 
by doing so, obtain a sequence of parameters $\{\alpha_i\}$ satisfying \eqref{eq:infsequence}
and $\Psi(\alpha_i, x_d) \to \bar y$ for some $\bar y \in Y$, then an arbitrarily small perturbation 
of the training label vector $y_d$ can cause 
the solution algorithm to produce a different sequence  $\{\tilde \alpha_i\}$,
which again satisfies \eqref{eq:infsequence} and, in the limit $i \to \infty$,
yields a loss that is arbitrarily close to that obtained with $\{\alpha_i\}$, 
but satisfies $\Psi(\tilde \alpha_i, x_d) \to \tilde y$ with a vector $\tilde y \in Y$ that is 
arbitrarily far away from $\bar y$. Note that the latter again implies that 
the functions $\psi(\alpha_i, \cdot)\colon \XX \to \YY$ and $\psi(\tilde \alpha_i, \cdot)\colon  \XX \to \YY$ 
behave differently on the training data as $i$ tends to infinity. 
\end{itemize}
\end{remark}

\subsection{Existence of Spurious Local Minima and Saddle Points}
\label{subsec:5.2}
Having discussed the properties of the map $ P_\Psi^{x_d}$,
we now turn our attention to the question of whether the 
problem \eqref{eq:trainingpropprot2} possesses saddle points and spurious local minima. 
We begin with a result that
builds upon the findings of \cref{theorem:abstractinstability} 
and shows that, in the presence of unrealizable vectors, 
the training problem \eqref{eq:trainingpropprot2} can only lack spurious 
local minima and non-optimal basins stretching to 
the boundary of the parameter set $D$ 
for all $y_d \in Y$ if the image of the function $Y\ni y_d \mapsto | P_\Psi^{x_d}(y_d)| \in \mathbb{N} \cup \{\infty\}$
is equal to $\{1, \infty\}$, i.e., if the space $Y$ can be decomposed into 
two nonempty disjoint sets $Y_1$ and $Y_2$ such that every $y_d \in Y_1$
possesses exactly one best approximation in $\closure_Y(\Psi(D, x_d))$
and such that, for every $y_d \in Y_2$, there are infinitely many best approximations in $\closure_Y(\Psi(D, x_d))$. 

\begin{theorem}{\hspace{-0.05cm}\bf(Relation Between Set-Valuedness and Spurious Minima/Basins)}\label{th:stevaluedspurious}
Let \cref{ass:standingassumpssec4,ass:unrealizable_ass} hold.
Assume further that the function $\Psi(\cdot, x_d)\colon D \to Y$ is continuous
and that the image of the map 
$Y\ni y_d \mapsto | P_\Psi^{x_d}(y_d)| \in \mathbb{N} \cup \{\infty\}$ is not equal to $\{1, \infty\}$
(where $|\cdot|$ again denotes the cardinality of a set).
Then, there exist an open nonempty cone $K \subset Y$ and a number $M \in \mathbb{N}$ with $M \geq 2$
such that, for every $y_d \in K$, there are nonempty, disjoint, relatively closed subsets $D_1,..., D_M$ 
of the set $D \subset \R^m$ with 
\begin{equation}
\label{eq:spurpro1}
\inf_{\alpha \in D_1} \|\Psi( \alpha, x_d) - y_d\|_Y^2
< \inf_{\alpha \in D_i} \|\Psi( \alpha, x_d) - y_d\|_Y^2\quad \forall i=2,...,M
\end{equation}
and
\begin{equation}
\label{eq:spurpro2}
\sup_{\alpha \in D_1 \cup ... \cup D_M} \|\Psi( \alpha, x_d) - y_d\|_Y^2 < \|\Psi( \tilde \alpha, x_d) - y_d\|_Y^2 
\quad \forall \tilde \alpha \in D \setminus (D_1 \cup ... \cup D_M).
\end{equation}
\end{theorem}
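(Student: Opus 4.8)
The plan is to extract from the hypotheses a label vector possessing finitely many ($M \geq 2$) best approximations, to realize the sets $D_1, \dots, D_M$ as the pieces of a sublevel set of the loss for a slightly perturbed vector, and then to propagate the construction to an open cone by continuity and positive homogeneity. First I would argue that the cardinality function $y_d \mapsto |P_\Psi^{x_d}(y_d)|$ attains some finite value $M \geq 2$: every $y_d \in \closure_Y(\Psi(D,x_d))$ satisfies $P_\Psi^{x_d}(y_d) = \{y_d\}$, so $1$ lies in the image, while \cref{theorem:abstractinstability}\ref{item:stabth:i} shows that some value exceeding $1$ is attained; since the image is assumed not to equal $\{1,\infty\}$, it must therefore contain a finite $M \geq 2$. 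I fix $\bar y_d$ with $|P_\Psi^{x_d}(\bar y_d)| = M$, write $P_\Psi^{x_d}(\bar y_d) = \{\bar y_1, \dots, \bar y_M\}$ and $\rho := \dist(\bar y_d, \closure_Y(\Psi(D,x_d))) > 0$, and note $\bar y_d \neq 0$ since $P_\Psi^{x_d}(0) = \{0\}$.

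Working first at a single perturbed vector, I would choose $\Delta \in (0, \tfrac12 \min_{i \neq j}\|\bar y_i - \bar y_j\|_Y)$ so that the closed balls $\bar B_i := B_\Delta^Y(\bar y_i)$ are pairwise disjoint, and set $W_i := \closure_Y(\Psi(D,x_d)) \cap \bar B_i$; each $\bar y_i$ is then the unique nearest point of $W_i$ to $\bar y_d$. I pick a unit vector $u$ for which the numbers $(u, \bar y_i)_Y$ are pairwise distinct, relabel so that $(u, \bar y_1)_Y$ is the largest, and put $y_d^0 := \bar y_d + \tau u$ with $\tau > 0$ small. The sets will be $D_i := \Psi(\cdot, x_d)^{-1}(V_i)$, where $V_i := V \cap \bar B_i$ and $V := \{ y \in \closure_Y(\Psi(D,x_d)) : \|y - y_d^0\|_Y^2 \leq t \}$, and the threshold $t$ I would select in the interval between $\max_i \dist(y_d^0, W_i)^2$ and $\dist(y_d^0, \closure_Y(\Psi(D,x_d)) \setminus \bigcup_i B_\Delta^Y(\bar y_i))^2$, which is nonempty for small $\tau$ because the two endpoints converge to $\rho^2$ and to a value strictly larger than $\rho^2$, respectively. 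The lower endpoint forces every $V_i$, hence every $D_i$, to be nonempty; the upper endpoint forces $V \subseteq \bigcup_i \bar B_i$, so that $V = \bigsqcup_i V_i$ splits into disjoint sets that are closed in $Y$. Using the continuity of $\Psi(\cdot, x_d)$ it follows that $D_1, \dots, D_M$ are disjoint, relatively closed in $D$, and satisfy $D_1 \cup \dots \cup D_M = \{\alpha \in D : \|\Psi(\alpha,x_d) - y_d^0\|_Y^2 \leq t\}$.

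With this structure, \eqref{eq:spurpro2} is automatic: every parameter outside $\bigcup_i D_i$ has loss strictly greater than $t$, whereas the loss on $\bigcup_i D_i$ is at most $t$. The crux is \eqref{eq:spurpro1}. Here I would use conicity and the density of $\Psi(D,x_d)$ in its closure to identify $\inf_{\alpha \in D_i} \|\Psi(\alpha,x_d) - y_d^0\|_Y^2$ with $\phi_i(\tau) := \dist(y_d^0, W_i)^2$, and then expand $\phi_i$ to first order. Writing $\hat y_i$ for the projection of $y_d^0$ onto $W_i$ and using $\|\bar y_d - y\|_Y \geq \rho$ throughout the image together with $\hat y_i \to \bar y_i$ as $\tau \to 0$, one gets $\phi_i(\tau) = \rho^2 + 2\tau(u, \bar y_d - \bar y_i)_Y + o(\tau)$; since $(u, \bar y_1)_Y$ is strictly largest, $(u, \bar y_d - \bar y_1)_Y$ is strictly smallest, whence $\phi_1(\tau) < \phi_i(\tau)$ for all $i \geq 2$ once $\tau$ is small, which is exactly \eqref{eq:spurpro1}. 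I expect this first-order tie-breaking to be the main obstacle: the naive bound $\dist(y_d^0, W_i) \leq \|y_d^0 - \bar y_i\|_Y$ together with a fixed ball radius is too lossy (it fails, for instance, when the image is locally flat and meets $\bar y_d$ orthogonally), so the argument must genuinely capture the linear term of the squared distance, exploiting that each $\bar y_i$ is the \emph{unique} nearest point of $W_i$.

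Finally, to obtain the cone I would observe that all inequalities used are strict and that $\dist(\,\cdot\,, W_i)^2$ and $\dist(\,\cdot\,, \closure_Y(\Psi(D,x_d)) \setminus \bigcup_i B_\Delta^Y(\bar y_i))^2$ are continuous in the label vector. Hence there is an open neighborhood $N$ of $y_d^0$ on which $\dist(\,\cdot\,, W_1)^2 < \min_{i \geq 2}\dist(\,\cdot\,, W_i)^2$ and $\max_i \dist(\,\cdot\,, W_i)^2 < \dist(\,\cdot\,, \closure_Y(\Psi(D,x_d)) \setminus \bigcup_i B_\Delta^Y(\bar y_i))^2$, so that the whole construction, with a threshold $t$ chosen pointwise in the valid interval, carries over verbatim to every $y_d \in N$. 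The positive homogeneity $P_\Psi^{x_d}(s y_d) = s P_\Psi^{x_d}(y_d)$ and the scaling of the loss under the conicity assumption \ref{fa:I} then show that the asserted properties are invariant under $y_d \mapsto s y_d$, $s > 0$ (rescaling the $\bar y_i$, the balls, and $t$ accordingly), so that $K := (0,\infty)\cdot N$, after shrinking $N$ so that $0 \notin N$, is an open, nonempty cone on which the conclusion holds with the same $M$.
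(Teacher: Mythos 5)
Your proposal is correct and follows the same overall architecture as the paper's proof: extract a label vector $\bar y_d$ with $1 < |P_\Psi^{x_d}(\bar y_d)| = M < \infty$, perturb it slightly to break the tie among the $M$ best approximations, realize $D_1,\dots,D_M$ as preimages under $\Psi(\cdot,x_d)$ of the pieces into which a sublevel set of $y \mapsto \|y-y_d\|_Y^2$ on $\closure_Y(\Psi(D,x_d))$ decomposes, and propagate to an open cone via continuity of the relevant distance functions and the conicity in \ref{fa:I}. The one genuine divergence is the tie-breaking step, which you correctly identify as the crux of your variant. You perturb in a generic direction $u$ chosen so that the inner products $(u,\bar y_i)_Y$ are pairwise distinct and then need the first-order expansion $\dist(y_d^0,W_i)^2 = \rho^2 + 2\tau(u,\bar y_d-\bar y_i)_Y + \oo(\tau)$, which in turn requires the convergence of the local projections $\hat y_i(\tau) \to \bar y_i$; this all goes through (the lower bound uses $\|\bar y_d - \hat y_i\|_Y \geq \rho$, the upper bound uses $\hat y_i = \bar y_i$ as a competitor, and the linear gap $2\tau(u,\bar y_1-\bar y_i)_Y>0$ dominates the error terms), but it is the delicate part of your argument. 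The paper instead perturbs along the segment toward one projection, $\tilde y_d := \bar y_d + \tfrac{\delta}{8}(\bar y_1-\bar y_d)/\|\bar y_1-\bar y_d\|_Y$, and invokes the elementary triangle-inequality computation of \eqref{eq:randomeq23636} to conclude $P_\Psi^{x_d}(\tilde y_d)=\{\bar y_1\}$ outright; this makes the strict separation $\dist(\tilde y_d,E_1)<\dist(\tilde y_d,E_i)$ immediate and entirely avoids the expansion you flag as the main obstacle. Both routes are valid; the paper's choice of direction buys a shorter and more robust tie-break, while yours shows that essentially any direction that separates the $\bar y_i$ works, at the cost of a genuinely first-order argument.
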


\begin{figure}[H]
\centering
\begin{tikzpicture}[scale=0.9]

\draw[arrows={-Triangle[angle=60:4pt, black,fill=black]},  line width=1pt]  (0, -3) -- (0, 3);
\draw[arrows={-Triangle[angle=60:4pt, black,fill=black]},  line width=1pt]  (-4, 0) -- (4, 0);

\draw [line width=1pt, dashed] plot [smooth] coordinates {
(0.2,2.75)
(0.7, 0.75)
(4, 1.2)
};
\draw [line width=1pt, dashed] plot [smooth] coordinates {
(-0.5,2.75)
(-0.1, -0.4)
(4, -2.8)
};
\draw [line width=1pt, dashed] plot [smooth] coordinates {
(-1.5,2.75)
(-1.1,2)
(-1.1, 0)
(2,  -3)
};
\draw [line width=1pt, dashed] plot [smooth] coordinates {
(-4, 0.1)
(-3, -0.2)
(-1.2, -1)
(-0.1,  -3)
};

\draw [line width=1pt, fill=gray!45] plot [smooth] coordinates {
(1,2.75)
(0.9, 1)
(3.75, 2.75)
};
\draw [line width=1pt, dashed] plot [smooth] coordinates {
(1.3,2.75)
(1.2, 1.5)
(3.4, 2.75)
};
\draw [line width=1pt, dashed] plot [smooth] coordinates {
(1.5,2.75)
(1.5, 2)
(3.15, 2.75)
};
\draw [line width=1pt, dashed] plot [smooth] coordinates {
(1.7,2.75)
(1.8, 2.4)
(2.7, 2.75)
};
\draw[arrows={-Triangle[angle=60:4pt, black,fill=black]},  line width=1pt] (0.55,0.7) --  (1, 1.2);
\node at (3.1, 0.4) {$D_1$ containing $\{\alpha_i\}$ as in \eqref{eq:infsequence}};

\draw [line width=1pt, fill=gray!45] plot [smooth cycle] coordinates {
(-2,2.75)
(-1.5, 1.5)
(-3.75, 2)
};
\draw [line width=1pt, dashed] plot [smooth cycle] coordinates {
(-2.2,2.4)
(-1.8, 1.7)
(-3.25, 2)
};
\draw [line width=1pt, dashed] plot [smooth cycle] coordinates {
(-2.3,2.2)
(-2.2, 1.9)
(-2.7, 2)
};

\draw [line width=1pt, fill=gray!45] plot [smooth] coordinates {
(-4, 1.7)
(-2, 1)
(-4, 0.9)
};
\draw [line width=1pt, dashed] plot [smooth] coordinates {
(-4, 1.5)
(-2.8, 1.15)
(-4, 1.1)
};
\node at (-4.2, 2.65) {$D_2$};
\draw[arrows={-Triangle[angle=60:4pt, black,fill=black]},  line width=1pt] (-4.2, 2.45) --  (-3.5, 2.0);
\draw[arrows={-Triangle[angle=60:4pt, black,fill=black]},  line width=1pt] (-4.2, 2.45) --  (-3.9, 1.5);
\node at (-2.8, 0.4) {spurious basin};
\draw[arrows={-Triangle[angle=60:4pt, black,fill=black]},  line width=1pt] (-2.8, 0.6) --  (-3.2, 1.2);

\draw [line width=1pt, fill=gray!45] plot [smooth cycle] coordinates {
(-2,-1)
(-1, -2)
(-2, -2.5)
(-3, -1)
};
\draw [line width=1pt, dashed] plot [smooth cycle] coordinates {
(-1.8,-1.4)
(-1.3, -2)
(-2, -2.2)
(-2.65, -1.15)
};
\draw [line width=1pt, dashed] plot [smooth cycle] coordinates {
(-1.7, -1.7)
(-1.9, -2)
(-2.25, -1.4)
};
\draw[arrows={-Triangle[angle=60:4pt, black,fill=black]},  line width=1pt] (0.3, -0.45) --  (-2.0, -1.7);
\draw[arrows={-Triangle[angle=60:4pt, black,fill=black]},  line width=1pt] (0.3, -0.45) --  (-2.4, 2.1);
\node at (2.45, -0.45) {spurious local minima};
\draw[arrows={-Triangle[angle=60:4pt, black,fill=black]},  line width=1pt] (2.05, -1.5)  --  (1.75, -1.6) ;
\node at (2.85, -1.5) {level set};
\node at (-2.95, -2.5) {$D_3$};
\draw[arrows={-Triangle[angle=60:4pt, black,fill=black]},  line width=1pt] (-2.8, -2.3) --  (-2.3, -1.9);

\end{tikzpicture}
\caption{Geometric meaning of \eqref{eq:spurpro1} and \eqref{eq:spurpro2} for $D = \R^2$ and $M=3$.
Condition \eqref{eq:spurpro2} implies that the sublevel set 
$\Omega_L(c) := \{\alpha \in D \mid \| \Psi( \alpha, x_d) - y_d\|_Y^2 \leq c\}$ (gray)
of the loss function associated with the number 
${c := \sup_{\alpha \in D_1 \cup ... \cup D_M} \|\Psi( \alpha, x_d) - y_d\|_Y^2}$ is equal to the 
union of the nonempty, disjoint, and closed sets $D_1, ..., D_M$. 
This entails that each $D_i$ contains (at least) one nonempty connected 
component of the sublevel set $\Omega_L(c)$. Condition \eqref{eq:spurpro1} yields 
 that only the connected components of $\Omega_L(c)$ that are subsets of $D_1$
 can contain a minimizing sequence $\{\alpha_i\}$ as in \eqref{eq:infsequence}. 
Thus, all connected components of the sets $D_2,..., D_M$ contain spurious local minima or spurious basins.  }
\label{fig:spuriousvalleyillustration}
\end{figure}
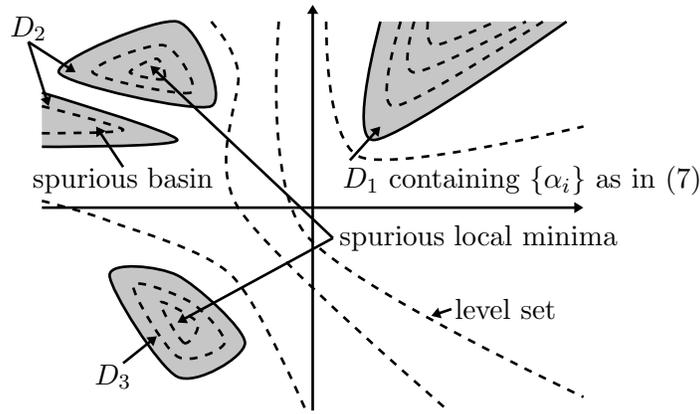

\begin{proof}
If the image of the map $Y\ni y_d \mapsto | P_\Psi^{x_d}(y_d)| \in \mathbb{N} \cup \{\infty\}$ is not equal to $\{1, \infty\}$,
then 
it follows from \cref{lemma:nonemptyproj} and \cref{theorem:abstractinstability}\ref{item:stabth:i} that there has to exist 
at least one $\bar y_d \in Y \setminus \{0\}$ with $1 < |P_{\Psi}^{x_d}(\bar y_d)| < \infty$.
Define $M := |P_{\Psi}^{x_d}(\bar y_d)|$ and $r := \dist(\bar y_d, P_{\Psi}^{x_d}(\bar y_d)) > 0$,
and let us denote the 
$M$ distinct elements of the set $P_{\Psi}^{x_d}(\bar y_d)$
with $\bar y_i$, $i=1,...,M$. Consider further an $\varepsilon > 0$
such that the closed balls $B_\varepsilon^Y(\bar y_i)$, $i=1,...,M$, 
satisfy $\dist(B_\varepsilon^Y(\bar y_i), B^Y_\varepsilon(\bar y_j)) > 2\varepsilon$
for all $i \neq j$.
Then, it follows from the definition of 
$P_{\Psi}^{x_d}(\bar y_d)$ that
there exists a number $\delta \in (0, \varepsilon]$ with 
$\dist(B_r^Y(\bar y_d) \setminus (B_\varepsilon^Y(\bar y_1) \cup ... \cup B_\varepsilon^Y(\bar y_M)),\closure_Y(\Psi(D, x_d))) \geq \delta$.
Using this $\delta$, we define 
$E_i := B_{\varepsilon + \delta}^Y(\bar y_i) \cap B^Y_{r + \delta/2}(\bar y_d) \cap \closure_Y(\Psi(D, x_d))$.
Note that this construction ensures that the sets $E_i$, $i=1,...,M$, 
are nonempty, compact, and disjoint.
From the choice of $\delta$ and $\varepsilon$, we further obtain that the sets $E_i$, $i=1,...,M$, satisfy
\begin{equation}
\label{eq:randomeq2736365}
B_{r + \delta/2}^Y(\bar y_d) \cap \closure_Y(\Psi(D, x_d)) = \bigcup_{i=1}^M E_i. 
\end{equation}
Indeed, 
the inclusion ``$\supset$'' in the equality \eqref{eq:randomeq2736365} follows immediately from the definition of the sets $E_i$,
and if there was a $\smash{\bar y \in B_{r + \delta/2}^Y(\bar y_d) \cap \closure_Y(\Psi(D, x_d)) \setminus \bigcup_{i=1}^M E_i}$,
then this vector $\bar y$ would satisfy $\dist(\bar y, B_r^Y(\bar y_d)) \leq \delta/2$ 
and $\|\bar y - \bar y_i\|_Y > \varepsilon + \delta$ for all $i=1,...,M$ which would 
imply the existence of a $\tilde y \in B_r^Y(\bar y_d) \setminus (B^Y_\varepsilon(\bar y_1) \cup ... \cup B^Y_\varepsilon(\bar y_M))$
with $\|\bar y - \tilde y\|_Y \leq \delta/2$ and thus contradict the definition of $\delta$. 

To prove the claim of the theorem, we now consider the vector 
\begin{equation*}
\tilde y_d := \bar y_d + \frac{\delta}{8}\frac{\bar y_1 - \bar y_d}{\|\bar y_1 - \bar y_d\|_Y}.
\end{equation*}
Note that, by exactly the same arguments as in \eqref{eq:randomeq23636}, we obtain that this $\tilde y_d$
satisfies $P_{\Psi}^{x_d}(\tilde y_d) = \{\bar y_1\}$ and, as a consequence, 
\begin{equation*}
\begin{aligned}
\|\bar y_1 - \tilde y_d\|_Y 
&= \min_{y \in  E_1} \|y - \tilde y_d\|_Y 
\\
&= 
\min_{y \in  \closure_Y\left (\Psi(D, x_d)\right )} \|y - \tilde y_d\|_Y =
r - \frac{\delta}{8}.
\end{aligned}
\end{equation*}
The above implies in particular that 
\begin{equation}
\label{eq:randomeq36354}
\begin{gathered}
\dist(\tilde y_d, E_1) = \dist(\tilde y_d, \closure_Y\left (\Psi(D, x_d)\right )) < r,
\\
\dist(\tilde y_d, E_1) < \dist(\tilde y_d, E_i)\quad \forall i=2,...,M.
\end{gathered}
\end{equation}
Due to the Lipschitz continuity of the 
distance functions in \eqref{eq:randomeq36354}, the definitions of $\tilde y_d$
and $E_i$, and \eqref{eq:randomeq2736365}, 
the estimates in \eqref{eq:randomeq36354} remain 
valid for all $y_d$ that are sufficiently close to $\tilde y_d$.
We can thus find a $\tau  \in (0, \delta/8)$ such that, 
for every $y_d \in Y$ with $\|y_d - \tilde y_d\|_Y < \tau$, we have
\begin{equation}
\label{eq:randomeq36354-2}
\begin{gathered}
\dist(y_d, E_1) = \dist(y_d, \closure_Y\left (\Psi(D, x_d)\right )) < r,
\\
\dist(y_d, E_1) < \dist(y_d, E_i)\quad \forall i=2,...,M.
\end{gathered}
\end{equation}
Note that the choice $\tau \in (0, \delta/8)$ ensures that the closed ball $\smash{B^Y_{r + \delta/4}(y_d)}$
is contained in the interior of $\smash{B^Y_{r + \delta/2}(\bar y_d)}$ for every $y_d$ with $\|y_d - \tilde y_d\|_Y < \tau$,
and that the intersection of the interior of the ball $B^Y_{r + \delta/4}(y_d)$ with each $E_i$
is nonempty. 

The latter property implies, in combination with the fact that every vector in $E_i$ can be approximated by elements of
the image $\Psi(D, x_d)$, the compactness and disjointness of the sets $E_i$, and 
\eqref{eq:randomeq2736365}, that the estimates in \eqref{eq:randomeq36354-2} remain true when we intersect the 
sets $E_i$ with $B^Y_{r + \delta/4}(y_d) \cap \Psi(D, x_d)$,
i.e., it holds
\begin{equation}
\label{eq:randomeq18363292}
\dist(y_d, E_1 \cap B^Y_{r + \delta/4}(y_d) \cap \Psi(D, x_d)) = \dist(y_d, \closure_Y\left (\Psi(D, x_d)\right )) < r
\end{equation}
and
\begin{equation}
\label{eq:randomeq18363292-3535}
\dist(y_d, E_1 \cap B^Y_{r + \delta/4}(y_d) \cap \Psi(D, x_d)) 
< \dist(y_d, E_i \cap B^Y_{r + \delta/4}(y_d) \cap \Psi(D, x_d))
\end{equation}
for all $i=2,...,M$. Consider now an arbitrary but fixed $y_d$ with $\|y_d - \tilde y_d\|_Y < \tau$
and define
$D_i := \Psi(\cdot, x_d)^{-1}( E_i \cap B^Y_{r + \delta/4}(y_d))$.
Then, the continuity of the map $\Psi(\cdot, x_d)$ and 
the properties discussed above imply 
that the sets $D_i$, $i=1,...,M$, are relatively closed, disjoint, and nonempty subsets of $D$ which satisfy 
\begin{equation*}
\inf_{\alpha \in D_1} \|\Psi( \alpha, x_d) - y_d\|_Y^2
< \inf_{\alpha \in D_i} \|\Psi( \alpha, x_d) - y_d\|_Y^2\quad \forall i=2,...,M.
\end{equation*}
From the definition of the sets $D_i$, we further obtain that,
for every arbitrary but fixed $\tilde \alpha \in D \setminus (D_1 \cup ... \cup D_M)$,
we have $\Psi(\tilde \alpha, x_d) \not \in (E_1 \cup ... \cup E_M) \cap B^Y_{r + \delta/4}(y_d)$.
Since \eqref{eq:randomeq2736365} and the inclusion $\smash{B^Y_{r + \delta/4}(y_d) \subset B^Y_{r + \delta/2}(\bar y_d)}$
yield
\begin{equation*}
B^Y_{r + \delta/4}( y_d)  \cap \closure_Y(\Psi(D, x_d)) = 
\bigcup_{i=1}^M B^Y_{r + \delta/4}( y_d)  \cap  E_i,
\end{equation*}
this implies in particular that $\Psi(\tilde \alpha, x_d) \not \in B^Y_{r + \delta/4}(y_d)$
holds for all $\tilde \alpha \in D \setminus (D_1 \cup ... \cup D_M)$
and, as a consequence, that
\begin{equation*}
\sup_{\alpha \in D_1 \cup ... \cup D_M} \|\Psi( \alpha, x_d) -  y_d\|_Y^2
\leq \left (r + \frac{\delta}{4}\right)^2 < \|\Psi( \tilde \alpha, x_d) -  y_d\|_Y^2\quad 
\end{equation*}
for all $\tilde \alpha \in D \setminus (D_1 \cup ... \cup D_M)$.
The vector $y_d$ and the sets $D_i$
thus indeed satisfy \eqref{eq:spurpro1} and \eqref{eq:spurpro2}. 
As $y_d$ was an arbitrary vector with $\|y_d - \tilde y_d\|_Y < \tau$,
the existence of an open set $K$ with the properties in \cref{th:stevaluedspurious} 
now follows immediately. To see that the set $K$ can be chosen to be an open cone, 
it suffices to note that,
since all of the above arguments up to the estimates \eqref{eq:randomeq18363292} and 
\eqref{eq:randomeq18363292-3535} only rely on geometric properties of the 
set $ \closure_Y\left (\Psi(D, x_d)\right )$
and since the set 
$\closure_Y(\Psi(D, x_d))$ is a cone by \ref{fa:I}, 
by rescaling, 
we also obtain the claim for all $y_d \in Y$ 
which satisfy $\|y_d - s \tilde y_d\|_Y < s \tau$ for some $s > 0$. 
This completes the proof. 
\end{proof}

\begin{figure}[H]
~\\[-0.8cm]
\centering
\begin{tikzpicture}[scale=0.95]

\draw [line width=1pt, fill=gray!90] plot [smooth] coordinates {
(-0.34202014332*3.5 -1, -0.93969262078*3.5-2)
(-0.34202014332*3.5, -0.93969262078*3.5)
(-0.34202014332*3.5 + 0.5, -0.93969262078*3.5-2.2)
};

\draw [line width=1pt, fill=gray!90] plot [smooth] coordinates {
 (-0.64278760968 * 3.5 +0.1, 0.76604444311*3.5 + 2.2)
 (-0.64278760968 * 3.5, 0.76604444311*3.5)
 (-0.64278760968 * 3.5 - 1.5, 0.76604444311*3.5 +1.9)
};

\draw [line width=1pt, fill=gray!90] plot [smooth] coordinates {
(0.64278760968*3.5 + 2.0, 0.76604444311*3.5 + 0.6) 
(0.64278760968*3.5, 0.76604444311*3.5) 
(0.64278760968*3.5 + 2.1, 0.76604444311*3.5 + 0.1) 
};

\def\bigrectangle{(-5,-5.5) rectangle (5,4.9)}
\def\firstcircle{(0,0) circle (3.5 + 0.4)}
\scope
\clip \firstcircle \bigrectangle;

\draw [line width=1pt, fill=gray!45] plot [smooth] coordinates {
(-0.34202014332*3.5 -1, -0.93969262078*3.5-2)
(-0.34202014332*3.5, -0.93969262078*3.5)
(-0.34202014332*3.5 + 0.5, -0.93969262078*3.5-2.2)
};

\draw [line width=1pt, fill=gray!45] plot [smooth] coordinates {
 (-0.64278760968 * 3.5 +0.1, 0.76604444311*3.5 + 2.2)
 (-0.64278760968 * 3.5, 0.76604444311*3.5)
 (-0.64278760968 * 3.5 - 1.5, 0.76604444311*3.5 +1.9)
};

\draw [line width=1pt, fill=gray!45] plot [smooth] coordinates {
(0.64278760968*3.5 + 2.0, 0.76604444311*3.5 + 0.6) 
(0.64278760968*3.5, 0.76604444311*3.5) 
(0.64278760968*3.5 + 2.1, 0.76604444311*3.5 + 0.1) 
};

\endscope

\draw[line width=1pt] (-0.34202014332*3.5, -0.93969262078*3.5) circle (1cm); 
\draw[line width=1pt] (-0.64278760968 * 3.5, 0.76604444311*3.5)  circle (1cm); 
\draw[line width=1pt] (0.64278760968*3.5, 0.76604444311*3.5) circle (1cm); 

\draw[line width=1pt, dashed] (-0.34202014332*3.5, -0.93969262078*3.5) circle (1.8cm); 
\draw[line width=1pt, dashed] (-0.64278760968 * 3.5, 0.76604444311*3.5)  circle (1.8cm); 
\draw[line width=1pt, dashed] (0.64278760968*3.5, 0.76604444311*3.5) circle (1.8cm); 

\draw[line width=1pt, red] (0,0) circle (3.5cm);

\draw[line width=1pt, dashed, red] (0,0) circle (3.5cm + 0.4cm);

\draw[line width=1pt, dashed] (0, 0) -- (-0.34202014332*3.5, -0.93969262078*3.5);
\node at (-0.7, 0.1) {$\tilde y_d$};
\draw[fill=black] (-0.34202014332*0.125, -0.93969262078*0.125) circle (0.05cm);
\draw[arrows={-Triangle[angle=60:4pt, black,fill=black]},  line width=1pt]  
(-0.5, 0.05) -- (-0.34202014332*0.125 - 0.08, -0.93969262078*0.125 + 0.01);

\draw[fill=red, red] (0,0) circle (0.05cm);
\node at (0.1,0.7) {\textcolor{red}{$\bar y_d$}};
\draw[arrows={-Triangle[angle=60:4pt,red,fill=red]},  line width=1pt, red]  (0.08,0.5) -- (0.02,0.06);

\draw[fill=blue, blue] (-0.34202014332*0.125 + 0.1, -0.93969262078*0.125 - 0.05) circle (0.05cm);
\draw[line width=1pt, blue] (-0.34202014332*0.125 + 0.1, -0.93969262078*0.125 - 0.05)  circle (3.7cm);
\node at (0.3, -0.75) {\textcolor{blue}{$y_d$}};
\draw[arrows={-Triangle[angle=60:4pt,blue,fill=blue]},  line width=1pt, blue]  
(0.2,-0.55) -- (-0.34202014332*0.125 + 0.1 + 0.03, -0.93969262078*0.125 - 0.05 - 0.08);

\draw[fill=black] (-0.34202014332*3.5, -0.93969262078*3.5) circle (0.05cm); 
\node at (-0.34202014332*3.5 + 0.35, -0.93969262078*3.5 + 0.15) {$\bar y_1$};
\draw[fill=black] (-0.64278760968 * 3.5, 0.76604444311*3.5) circle (0.05cm); 
\node at (-0.64278760968 * 3.5 + 0.35, 0.76604444311*3.5 -0.1) {$\bar y_2$};
\draw[fill=black] (0.64278760968*3.5, 0.76604444311*3.5) circle (0.05cm); 
\node at (0.64278760968*3.5-0.25, 0.76604444311*3.5-0.1) {$\bar y_3$};

\draw[arrows={-Triangle[angle=60:4pt,red,fill=red]},  line width=1pt, red] (0.6, 0) -- (3.45, 0);
\draw[arrows={-Triangle[angle=60:4pt,red,fill=red]},  line width=1pt, red] (3, 0) -- (0.1, 0);
\node at (1.75, 0.2) {\textcolor{red}{$r$}};

\draw[arrows={-Triangle[angle=60:4pt,red,fill=red]},  line width=1pt, red] (3.7, 0) -- (3.85, 0);
\draw[arrows={-Triangle[angle=60:4pt,red,fill=red]},  line width=1pt, red] (3.7, 0) -- (3.55, 0);
\node at (4.35, 0.7) {\textcolor{red}{$\delta/2$}};
\draw[arrows={-Triangle[angle=60:4pt,red,fill=red]},  line width=1pt, red]  (4.0, 0.7) -- (3.7, 0.1);

\draw[arrows={-Triangle[angle=60:4pt,blue,fill=blue]},  line width=1pt, blue] 
(-0.34202014332*0.125 + 0.1 + 0.2*1, -0.93969262078*0.125 - 0.05 + 0.2*-0.3) 
-- 
(-0.34202014332*0.125 + 0.1 + 3.5*1, -0.93969262078*0.125 - 0.05 + 3.5*-0.3);
\draw[arrows={-Triangle[angle=60:4pt,blue,fill=blue]},  line width=1pt, blue] 
(-0.34202014332*0.125 + 0.1 + 0.5*1, -0.93969262078*0.125 - 0.05 + 0.5*-0.3) 
-- 
(-0.34202014332*0.125 + 0.1 + 0.08*1, -0.93969262078*0.125 - 0.05 + 0.08*-0.3);
\node at (2.65, -0.55) {\textcolor{blue}{$r + \delta/4$}};

\draw[arrows={-Triangle[angle=60:4pt, black,fill=black]},  line width=1pt]  
(-0.34202014332*3.5 -0.875 , -0.93969262078*3.5 - 0.2 -0.7) -- (-0.34202014332*3.5, -0.93969262078*3.5 - 0.15);
\node at (-0.34202014332*3.5 -1.1, -0.93969262078*3.5 - 0.2 -0.7) {$E_1$};

\draw[arrows={-Triangle[angle=60:4pt, black,fill=black]},  line width=1pt]  
(-0.64278760968 * 3.5 - 1.1, 0.76604444311*3.5 + 0.4) --(-0.64278760968 * 3.5 - 0.05, 0.76604444311*3.5 + 0.2);
\node at (-0.64278760968 * 3.5 - 1.35, 0.76604444311*3.5 + 0.4) {$E_2$};

\draw[arrows={-Triangle[angle=60:4pt, black,fill=black]},  line width=1pt]  
(0.64278760968*3.5 + 0.2, 0.76604444311*3.5 + 1.2)  -- (0.64278760968*3.5 + 0.3, 0.76604444311*3.5 + 0.03);
\node at(0.64278760968*3.5 + 0.2, 0.76604444311*3.5 + 1.4) {$E_3$};

\draw[arrows={-Triangle[angle=60:4pt, fill=black]},  line width=1pt] 
(-0.64278760968 * 3.5 + 0.3*-0.2, 0.76604444311*3.5 + 0.3*-0.9) -- (-0.64278760968 * 3.5 + 1.05*-0.2, 0.76604444311*3.5 + 1.05*-0.9);
\draw[arrows={-Triangle[angle=60:4pt, fill=black]},  line width=1pt] 
(-0.64278760968 * 3.5 + 0.7*-0.2, 0.76604444311*3.5 + 0.7*-0.9) -- (-0.64278760968 * 3.5 + 0.1*-0.2, 0.76604444311*3.5 + 0.1*-0.9);
\node at(-0.64278760968 * 3.5 + 0.6*-0.2 + 0.185, 0.76604444311*3.5 + 0.6*-0.9) {$\varepsilon$};

\draw[arrows={-Triangle[angle=60:4pt, fill=black]},  line width=1pt] 
(-0.64278760968 * 3.5 + 1.4*-0.2, 0.76604444311*3.5 + 1.4*-0.9) -- (-0.64278760968 * 3.5 + 1.9*-0.2, 0.76604444311*3.5 + 1.9*-0.9);
\draw[arrows={-Triangle[angle=60:4pt, fill=black]},  line width=1pt] 
(-0.64278760968 * 3.5 + 1.7*-0.2, 0.76604444311*3.5 + 1.7*-0.9) -- (-0.64278760968 * 3.5 + 1.15*-0.2, 0.76604444311*3.5 + 1.15*-0.9);
\node at(-0.64278760968 * 3.5 + 1.5*-0.2 + 0.185, 0.76604444311*3.5 + 1.5*-0.9) {$\delta$};
\end{tikzpicture}
\caption{Geometric situation in the proof of \cref{th:stevaluedspurious} in the case $M = 3$.
The set $\closure_Y(\Psi(D, x_d))$ is depicted in gray and the sets 
$E_1$, $E_2$, and $E_3$ in dark gray. 
The vector $\bar y_d$ and the circles $\smash{B^Y_{r}(\bar y_d)}$ and  $\smash{B^Y_{r + \delta/2}(\bar y_d)}$ 
centered at $\bar y_d$ are shown in red and the vector $y_d$ and the circle $\smash{B^Y_{r + \delta/4}(y_d)}$
centered at $y_d$ are shown in blue. The essential idea of the proof is that, 
if $\bar y_d \in Y$ satisfies $P_{\Psi}^{x_d}(\bar y_d) = \{\bar y_1, ..., \bar y_M\}$,
then by perturbing $\bar y_d$ slightly in the direction of $\bar y_1$, one 
obtains a vector $y_d$ for which the projection problem in the variable $y$ associated 
with the right-hand side of \eqref{eq:Ppsidef} possesses spurious local minima in each of the sets $E_i$, $i=2,...,M$.
These minima translate into spurious minima/basins of the optimization landscape 
of \eqref{eq:trainingpropprot2} by taking preimages 
under the function $\Psi(\cdot, x_d)\colon D \to Y$. }
\label{fig:1}
\end{figure}
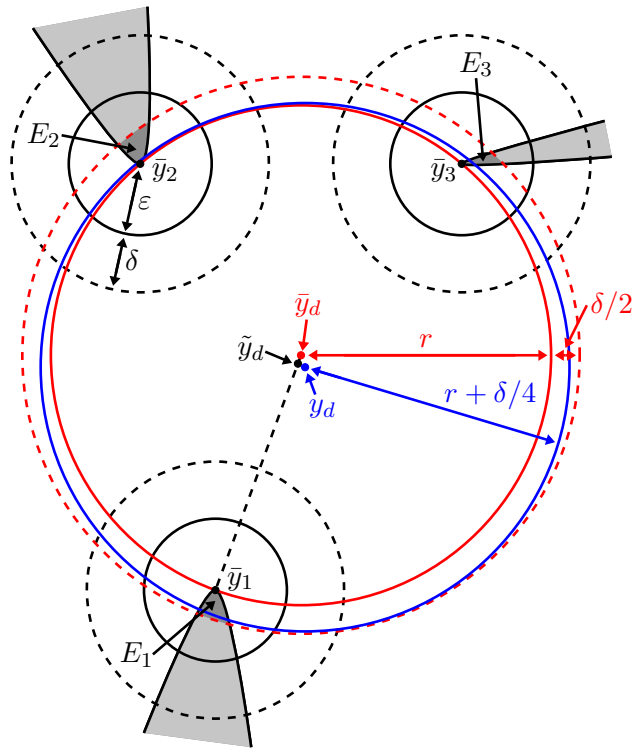~\\[-1.15cm]

Note that the result in \cref{th:stevaluedspurious} 
is, in fact, slightly stronger than that stated in point two of \cref{subsec:2.2} as 
it not only expresses that, in the presence of unrealizability, one 
cannot simultaneously get rid of both vectors $y_d$ with infinitely many best approximations 
and vectors $y_d$ for which \eqref{eq:trainingpropprot2} possesses
spurious minima/basins, but even that 
the only situation, in which spurious minima/basins 
can be completely absent in problem \eqref{eq:trainingpropprot2} for all $y_d \in Y$ in the presence 
of unrealizable vectors,
is that where the image of the function
$ y_d \mapsto | P_\Psi^{x_d}(y_d)|$ is equal to $\{1, \infty\}$.
(Recall that, if there exists a $y_d \in Y$ with $| P_\Psi^{x_d}(y_d)| = \infty$,
then there are automatically uncountably many such vectors
by the conicity in \ref{fa:I}.)
We remark that, as the cone $\closure_Y(\Psi(D, x_d))$
has to have very special geometric properties 
for the map $y_d \mapsto | P_\Psi^{x_d}(y_d)|$
to only take the values one and infinity, 
cases without spurious local minima and/or basins seem to be very rare in the above context. 
(An example of a cone $\closure_Y(\Psi(D, x_d)) \neq Y$ that 
satisfies $|P_\Psi^{x_d}(y_d)| \in \{1, \infty\}$
for all $y_d \in Y$ is the complement of a Lorentz cone in~$\R^3$.) 
This impression is also confirmed by the results 
on the existence of spurious valleys in
one-hidden-layer neural networks with 
non-polynomial non-negative activation functions proved by \cite{Venturi2019}.
Our analysis complements the findings of these authors, cf.\ the comments after \cref{cor:NNinfiniteNonUniqueness}.
Moreover, \cref{th:stevaluedspurious} is also in good accordance 
with the results on the absence of spurious valleys in overparameterized 
neural networks of \cite{Nguyen2018} and \cite{LiDawei2021}.
Indeed, as the condition $\closure_Y(\Psi(D, x_d)) \neq Y$ in \cref{ass:unrealizable_ass} 
can be expected to hold in the non-overparameterized regime, our result indicates
that some sort of overparameterization assumption is necessarily needed
to establish that spurious valleys cannot occur. 
For related work on the existence and role of bad basins in the loss 
landscape of training problems, see also \cite{Cooper2020}.

Regarding the comparison with the results of \cite{Venturi2019}, 
we would like to point out that 
the properties \eqref{eq:spurpro1} and \eqref{eq:spurpro2}
immediately imply that the problem \eqref{eq:trainingpropprot2}
possesses spurious valleys in the sense of \cite[Definition 1]{Venturi2019}.
To see this, define
\begin{equation}
\label{eq:cdef2635}
c := \sup_{\alpha \in D_1 \cup ... \cup D_M} \|\Psi( \alpha, x_d) - y_d\|_Y^2.
\end{equation}
From \eqref{eq:spurpro2}, we obtain that the sublevel set
$\Omega_L(c) := \{\alpha \in D \mid \| \Psi( \alpha, x_d) - y_d\|_Y^2 \leq c\}$ of the 
loss function in  \eqref{eq:trainingpropprot2} associated with the number $c$ in \eqref{eq:cdef2635}
is identical to the union of the nonempty, disjoint, and relatively closed sets 
$D_1,..., D_M$. 
Consider now a continuous path  $\gamma\colon [0, 1] \to \Omega_L(c) = D_1 \dot \cup ... \dot \cup D_M$.
Then, by taking preimages, we obtain that the interval $[0, 1]$
can be written as the disjoint union of the closed sets $\gamma^{-1}(D_i)$, $i=1,...,M$.
Since the interval $[0, 1]$ is connected, this is only possible if all of the sets $\gamma^{-1}(D_i)$ 
but one are empty. In particular, there cannot be a continuous path 
$\gamma\colon [0, 1] \to \Omega_L(c)$ satisfying $\gamma(0) \in D_i$ and $\gamma(1) \in D_j$ for some $i \neq j$,
and we may conclude that points $\alpha \in D_i$ and $\tilde \alpha  \in D_j$ with $i \neq j$
cannot be in the same path-connected component of the sublevel set $\Omega_L(c)$.
Since the sets $D_1,..., D_M$ are nonempty, this implies that each $D_i$ contains at least 
one path-connected component of the sublevel set $\Omega_L(c)$. 
However, 
from \eqref{eq:spurpro1}, we also obtain that only path-connected components 
of $\Omega_L(c)$ contained in the set $D_1$ can contain a global minimizer of \eqref{eq:trainingpropprot2}.
This shows that there exist path-connected components of the sublevel set $\Omega_L(c)$
(namely all those contained in the sets $D_2,...,D_M$),
on which the global optimum of \eqref{eq:trainingpropprot2} is not attained, 
and that
 the two conditions \eqref{eq:spurpro1} and \eqref{eq:spurpro2} indeed imply that 
there exist spurious valleys in the sense of \cite[Definition 1]{Venturi2019}.

Checking whether \cref{th:stevaluedspurious} is applicable in a 
certain situation or not is, of course, typically far from trivial.
Because of this and since \cref{th:stevaluedspurious} does not yield 
any information about how far away spurious local minima can be from
global solutions of \eqref{eq:trainingpropprot2} (should they exist),
in what follows, we will prove criteria for the existence 
of non-optimal stationary points that do not rely on the 
geometric properties of the set $\closure_Y(\Psi(D, x_d))$ but rather exploit the condition 
\ref{fa:II} directly.
As we will see below, this approach 
has the additional advantage that it does not require the condition 
$\closure_Y(\Psi(D, x_d)) \neq Y$ and is thus also applicable in the overparameterized regime.
The starting point of our analysis is 
the following lemma whose proof follows the lines of that of \cref{prop:solar}:
\begin{lemma}
\label[lemma]{lemma:alldirectionsleadtosuboptimality}
Suppose that \cref{ass:standingassumpssec4} holds
and that $\bar \alpha \in D$ is arbitrary but fixed. 
Assume further that a vector $v \in Y$ satisfying $\|v\|_Y = 1$ and $(v, \Psi(\bar \alpha, x_d))_Y = 0$ is given,
and define $y_d^s := \Psi(\bar \alpha, x_d) + s v$ for all $s \in \R$. 
Then, it holds
\begin{equation}
\label{eq:randomeq1273535dd3}
\bar \alpha \not \in \argmin_{\alpha \in D} \|\Psi( \alpha, x_d) - y_d^s\|_Y^2
\quad \forall s \in \R \text{ with } |s| > \left (\frac{\Theta(\Psi, x_d)}{1 - \Theta(\Psi, x_d)}\right)^{1/2} \|\Psi(\bar \alpha, x_d)\|_Y,
\end{equation}
i.e., $\bar \alpha$ is not a global minimum of \eqref{eq:trainingpropprot2} for all $s \in \R$
that satisfy the condition in \eqref{eq:randomeq1273535dd3}.
\end{lemma}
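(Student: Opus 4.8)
The plan is to reduce the assertion to the worst-case approximation estimate \eqref{eq:randomeq2736352} from \cref{lemma:thetaprops}, exactly as in the proof of \cref{prop:solar}. The guiding idea is that the orthogonality $(v, \Psi(\bar\alpha, x_d))_Y = 0$ together with $\|v\|_Y = 1$ makes both the loss attained at $\bar\alpha$ and the norm of the perturbed label $y_d^s$ computable in closed form via the Pythagorean identity, after which a one-line comparison settles the claim.

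First I would abbreviate $w := \Psi(\bar\alpha, x_d)$ and record the two elementary consequences of $\|v\|_Y = 1$ and $(v, w)_Y = 0$, namely
\begin{equation*}
\|\Psi(\bar\alpha, x_d) - y_d^s\|_Y^2 = \|{-s v}\|_Y^2 = s^2
\qquad\text{and}\qquad
\|y_d^s\|_Y^2 = \|w + s v\|_Y^2 = \|w\|_Y^2 + s^2.
\end{equation*}
The first identity gives the exact value of the objective of \eqref{eq:trainingpropprot2} at the candidate point $\bar\alpha$.

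Next I would apply the bound \eqref{eq:randomeq2736352} to the label vector $y_d = y_d^s$ and subtract the value at $\bar\alpha$. This produces the estimate
\begin{equation*}
\inf_{\alpha \in D} \|\Psi(\alpha, x_d) - y_d^s\|_Y^2 - \|\Psi(\bar\alpha, x_d) - y_d^s\|_Y^2 \leq \Theta(\Psi, x_d)\|w\|_Y^2 + \big(\Theta(\Psi, x_d) - 1\big) s^2,
\end{equation*}
which is the direct analogue of \eqref{eq:randomeq26353672}. Since $\Theta(\Psi, x_d) \in [0,1)$ by \cref{lemma:thetaprops}, the right-hand side is a downward-opening quadratic in $s$ and is strictly negative exactly when $s^2 > \Theta(\Psi, x_d)\|w\|_Y^2 / (1 - \Theta(\Psi, x_d))$, i.e.\ precisely under the threshold on $|s|$ stated in \eqref{eq:randomeq1273535dd3}.

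For such $s$ the global infimum of the loss therefore lies strictly below the value attained at $\bar\alpha$, so $\bar\alpha$ cannot be a global minimizer. I expect no genuine obstacle here; the only point deserving attention is that this conclusion remains valid even though \eqref{eq:trainingpropprot2} need not admit a global minimizer at all, since a strict gap between the infimum and the value at $\bar\alpha$ already excludes $\bar\alpha$ from $\argmin_{\alpha \in D}$ regardless of attainment. Note also that the conicity \ref{fa:I} enters only indirectly, through its role in the proof of \eqref{eq:randomeq2736352}.
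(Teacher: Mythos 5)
Your proposal is correct and follows essentially the same route as the paper: both apply the worst-case bound \eqref{eq:randomeq2736352} to $y_d^s$, use the orthogonality of $v$ and $\Psi(\bar\alpha,x_d)$ to evaluate $\|\Psi(\bar\alpha,x_d)-y_d^s\|_Y^2 = s^2$ and $\|y_d^s\|_Y^2 = \|\Psi(\bar\alpha,x_d)\|_Y^2 + s^2$, and observe that the resulting gap $\Theta(\Psi,x_d)\|\Psi(\bar\alpha,x_d)\|_Y^2 + (\Theta(\Psi,x_d)-1)s^2$ is strictly negative precisely under the stated threshold on $|s|$. Your remark that the conclusion does not require attainment of the infimum is also consistent with the paper's phrasing.
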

\begin{proof}
Let $\bar y_s$ denote an arbitrary but fixed element of $P_\Psi^{x_d}(y_d^s)$ for all $s \in \R$. 
Then, we may use \eqref{eq:randomeq2736352} and the properties of $v$ to compute
completely analogously to \eqref{eq:randomeq26353672}  that
\begin{equation*}
\begin{aligned}
\|\bar y_s - y_d^s\|_Y^2  - \|  \Psi(\bar \alpha, x_d) - y_d^s \|_Y^2 
\leq
\Theta(\Psi, x_d) \|\Psi(\bar \alpha, x_d) \|_Y^2 + (\Theta(\Psi, x_d) - 1)s^2
\end{aligned}
\end{equation*}
holds for all $s \in \R$. The above implies 
\begin{equation*}
\inf_{\alpha \in D} \|\Psi( \alpha, x_d) - y_d^s\|_Y^2 < \|  \Psi(\bar \alpha, x_d)  - y_d^s\|_Y^2
\end{equation*}
for all $s \in \R$ that satisfy the condition in \eqref{eq:randomeq1273535dd3}. This proves the claim. 
\end{proof}

By exploiting the observation in \cref{lemma:alldirectionsleadtosuboptimality}, 
we readily obtain: 
\begin{theorem}{\hspace{-0.05cm}\bf(Criterion for the Existence of Non-Optimal Stationary Points)}\label{theorem:existencestatpts}
Suppose that \cref{ass:standingassumpssec4} holds,
that $\bar \alpha \in D$ is an arbitrary but fixed 
element of the interior of the set $D$, and that the function $\Psi(\cdot, x_d)\colon D \to Y$ 
is differentiable at $\bar \alpha$. Assume further that the 
linear hull 
$V := \span(\Psi(\bar \alpha, x_d), \partial_1 \Psi(\bar \alpha, x_d),...,\partial_m \Psi(\bar \alpha, x_d)) \subset Y$
of the vector $\Psi(\bar \alpha, x_d)$ and the partial derivatives $\partial_i \Psi(\bar \alpha, x_d)$, $i=1,...,m$, of the map
$\Psi(\cdot, x_d)\colon D \to Y$  at $\bar \alpha$
is not equal to $Y$.
Then, for every arbitrary but fixed element $v$
of the $(\cdot, \cdot)_Y$-orthogonal complement of $V$ with $\|v\|_Y = 1$
and every $s \in \R$ satisfying 
\begin{equation}
\label{eq:scondition}
|s| > \left (\frac{\Theta(\Psi, x_d)}{1 - \Theta(\Psi, x_d)}\right)^{1/2} \|\Psi(\bar \alpha, x_d)\|_Y,
\end{equation}
there exists a $\tau \in \{-1,1\}$ such that $\bar \alpha$ is a 
spurious local minimum or a saddle point of the 
training problem \eqref{eq:trainingpropprot2} with label vector $y_d^{\tau s} := \Psi(\bar \alpha, x_d) + \tau s v$. 
Moreover, for every arbitrary but fixed $C> 0$, there 
exist uncountably many label vectors $y_d \in Y$ such that 
$\bar \alpha$ is a saddle point or a spurious local minimum 
of  \eqref{eq:trainingpropprot2},
 such that
\begin{equation}
\label{eq:errorestimates}
\inf_{\bar y \in P_\Psi^{x_d}(y_d)}
\|\Psi(\bar \alpha, x_d) - \bar y\|_Y \geq C
\qquad\text{and}\qquad 
\inf_{\bar y \in P_\Psi^{x_d}(y_d)}
\frac{\|\Psi(\bar \alpha, x_d) - \bar y\|_Y}{\|\bar y\|_Y} \geq 1 - \frac{1}{C},
\end{equation}
and such that 
\begin{equation}
\label{eq:lossestimate42}
\inf_{\alpha \in D} \|\Psi( \alpha, x_d) - y_d\|_Y^2 + C \leq \|\Psi( \bar \alpha, x_d) - y_d\|_Y^2
\end{equation}
holds. 
The absolute error between $\Psi(\bar \alpha, x_d)$ 
and every true best approximation of  $y_d$ can thus be made arbitrarily large,
the relative error between $\Psi(\bar \alpha, x_d)$ 
and every true best approximation of $y_d$ can be made larger than $1 - \varepsilon$ for all $\varepsilon > 0$, 
and the difference between the value of the loss function at $\bar \alpha$
and the optimal loss can be made arbitrarily large. 
\end{theorem}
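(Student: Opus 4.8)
Write $p := \Psi(\bar\alpha, x_d)$ and, for $\tau \in \{-1,1\}$, let $f_\tau(\alpha) := \|\Psi(\alpha, x_d) - y_d^{\tau s}\|_Y^2$ be the loss associated with the label vector $y_d^{\tau s} = p + \tau s v$. The plan is to first show that $\bar\alpha$ is a stationary, non-globally-optimal point of $f_\tau$ for \emph{both} signs, and then to use a sign-averaging argument to promote this to ``saddle point or spurious local minimum'' for at least one choice of $\tau$. Since $v$ lies in the orthogonal complement of $V = \span(p, \partial_1\Psi(\bar\alpha, x_d), \ldots, \partial_m\Psi(\bar\alpha, x_d))$ and $\Psi(\bar\alpha, x_d) - y_d^{\tau s} = -\tau s v$, the differentiability of $\Psi(\cdot, x_d)$ at $\bar\alpha$ gives
\begin{equation*}
\partial_i f_\tau(\bar\alpha) = 2\left(\partial_i \Psi(\bar\alpha, x_d),\, -\tau s v\right)_Y = 0 \qquad \forall i = 1, \ldots, m,
\end{equation*}
so $\nabla f_\tau(\bar\alpha) = 0$; as $\bar\alpha$ lies in the interior of $D$, the ambient requirements in \cref{def:minima} for $\bar\alpha$ to be a saddle point are in place. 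Because $v \perp p$ and $\|v\|_Y = 1$, \cref{lemma:alldirectionsleadtosuboptimality} applies to $y_d^{\tau s}$ (with the role of its parameter played by $\tau s$, so that \eqref{eq:scondition} is exactly its hypothesis) and yields that $\bar\alpha$ is \emph{not} a global minimizer of $f_\tau$ for either sign $\tau$.

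For a fixed $\tau$, three cases can occur at this stationary, non-globally-optimal point: $\bar\alpha$ is a local minimum (hence a spurious local minimum), it is neither a local minimum nor a local maximum (hence a saddle point), or it is a local maximum that is not a local minimum --- the only ``bad'' case. The crux is to rule out that the bad case occurs for both signs at once. Here I would expand the squared norms and cancel the cross terms via $\|v\|_Y = 1$ to obtain the identity
\begin{equation*}
f_{+1}(\alpha) + f_{-1}(\alpha) = 2\|\Psi(\alpha, x_d) - p\|_Y^2 + 2 s^2, \qquad f_{\pm 1}(\bar\alpha) = s^2 .
\end{equation*}
If $\bar\alpha$ were a local maximum of both $f_{+1}$ and $f_{-1}$, then on a common neighborhood the inequalities $f_{\pm1}(\alpha) \leq s^2$ would force $\|\Psi(\alpha, x_d) - p\|_Y^2 \leq 0$, i.e.\ $\Psi(\cdot, x_d) \equiv p$ locally; but then each $f_\tau$ is locally constant, making $\bar\alpha$ a local minimum of $f_\tau$ and contradicting the ``not a local minimum'' part of the bad case. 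Hence at least one $\tau$ avoids the bad case, which establishes the first assertion.

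For the quantitative second part I would fix $v \in V^\perp$ with $\|v\|_Y = 1$ and let $s$ range over a half-line $(s_0, \infty)$. For each such $s$ the first part supplies a sign $\tau(s)$ making $\bar\alpha$ a saddle point or spurious local minimum for $y_d := p + \tau(s) s v$; since $\|y_d\|_Y^2 = \|p\|_Y^2 + s^2$ (using $p \perp v$) is strictly increasing in $s$, these $y_d$ are pairwise distinct, giving uncountably many. All three estimates are $\tau$-independent and follow once $s_0$ is large. The loss at $\bar\alpha$ equals $s^2$, while \eqref{eq:randomeq2736352} bounds the optimal loss by $\Theta(\Psi, x_d)(\|p\|_Y^2 + s^2)$, so the gap is at least $(1 - \Theta(\Psi, x_d)) s^2 - \Theta(\Psi, x_d)\|p\|_Y^2$, yielding \eqref{eq:lossestimate42}. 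For any $\bar y \in P_\Psi^{x_d}(y_d)$ one has $\|\bar y - y_d\|_Y \leq \Theta(\Psi, x_d)^{1/2}\|y_d\|_Y$, hence $\|\bar y\|_Y \geq (1 - \Theta(\Psi, x_d)^{1/2})(\|p\|_Y^2 + s^2)^{1/2}$; combining this with the reverse triangle inequality $\|p - \bar y\|_Y \geq \|\bar y\|_Y - \|p\|_Y$ gives the absolute bound and, after dividing by $\|\bar y\|_Y$, the relative bound $\|p - \bar y\|_Y/\|\bar y\|_Y \geq 1 - \|p\|_Y/\|\bar y\|_Y \to 1$. Since $\Theta(\Psi, x_d) \in [0,1)$ by \cref{lemma:thetaprops}, all three right-hand sides can be pushed past $C$ (respectively $1 - 1/C$) by enlarging $s_0$, completing the plan. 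The main obstacle is the second paragraph: guaranteeing a \emph{genuine} saddle or spurious local minimum rather than a local maximum, for which the sign-averaging identity is the decisive tool.
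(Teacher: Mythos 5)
Your proposal is correct and follows essentially the same route as the paper: stationarity from the orthogonality of $v$ to $V$, non-global-optimality from \cref{lemma:alldirectionsleadtosuboptimality}, and the exclusion of the double-local-maximum case by summing the two loss functions (your identity $f_{+1}+f_{-1}=2\|\Psi(\cdot,x_d)-p\|_Y^2+2s^2$ is precisely what the paper obtains by adding the two local-maximum inequalities, forcing $\Psi(\cdot,x_d)$ to be locally constant and hence $\bar\alpha$ to be a local minimum after all). The quantitative estimates via \eqref{eq:randomeq2736352} and the reverse triangle inequality likewise coincide with the paper's computation.
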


\begin{proof}
Suppose that an $\bar \alpha \in D$ with the properties in the theorem is given
and that $v \in Y$ is an arbitrary but fixed element of the orthogonal 
complement of $V$ satisfying $\|v\|_Y = 1$.
Define  $y_d^s := \Psi(\bar \alpha, x_d) + s v$ for all $s \in \R$.
Then, the differentiability of the map  $\Psi(\cdot, x_d)\colon D \to Y$
at $\bar \alpha$, the properties of $v$, and the definition of $y_d^s$ imply that 
\begin{equation}
\label{eq:randomeq236363}
\begin{aligned}
& \|\Psi(\bar \alpha + h, x_d) - y_d^s\|_Y^2 - \|\Psi( \bar \alpha, x_d) - y_d^s \|_Y^2
\\
&\quad 
= 
2\left ( 
\Psi(\bar \alpha + h, x_d) - \Psi( \bar \alpha, x_d), \frac{\Psi(\bar \alpha + h, x_d)  - \Psi(\bar \alpha  , x_d)}{2}- s v
\right )_Y
\\
&\quad 
= 
2\left ( 
\sum_{i=1}^m h_i \partial_i \Psi(\bar \alpha, x_d) , \frac{\Psi(\bar \alpha + h, x_d)  - \Psi(\bar \alpha  , x_d)}{2}- s v
\right )_Y + \oo(\|h\|_2)
=\oo(\|h\|_2)
\end{aligned}
\end{equation}
holds for all $s \in \R$ and all sufficiently small $h \in \R^m$, 
where the Landau symbol refers to the limit $\|h\|_2 \to 0$.
Dividing by $\|h\|_2$ and passing to the limit in \eqref{eq:randomeq236363} 
yields that $\bar \alpha$ is a stationary point of \eqref{eq:trainingpropprot2}
for every $y_d^s$, $s \in \R$, i.e., 
the gradient of 
the loss function 
of \eqref{eq:trainingpropprot2} vanishes at $\bar \alpha$. 
In combination with \cref{lemma:alldirectionsleadtosuboptimality},
it now follows immediately that $\bar \alpha$
has to be a spurious local minimum, a local maximum,
or a saddle point of  the training problem \eqref{eq:trainingpropprot2} with label vector $y_d^s$
for all $s \in \R$ satisfying \eqref{eq:scondition}. 
Next, we show that,
for every $s \in \R$ with \eqref{eq:scondition},
the point $\bar \alpha$ is a saddle point or a spurious local minimum of \eqref{eq:trainingpropprot2}
for one of the vectors $y_d^s$ and $y_d^{-s}$. To see this, 
let us assume that there exists an $s \in \R$ with \eqref{eq:scondition} such that 
the latter is not the case. Then, $\bar \alpha$ has to be a local maximum 
of  \eqref{eq:trainingpropprot2} for both $y_d^s$ and $y_d^{-s}$
and we obtain from the same 
calculation as in \eqref{eq:randomeq236363} that
\begin{equation*}
\begin{aligned}
&\|\Psi(\bar \alpha + h, x_d) - y_d^s\|_Y^2 - \|\Psi( \bar \alpha, x_d) - y_d^s \|_Y^2
\\
&\qquad= 
\|\Psi(\bar \alpha + h, x_d)  - \Psi( \bar \alpha, x_d) \|_Y^2 +
2\left ( 
\Psi(\bar \alpha + h, x_d) - \Psi( \bar \alpha, x_d), - s v
\right )_Y \leq 0
\end{aligned}
\end{equation*}
and
\begin{equation*}
\begin{aligned}
&\|\Psi(\bar \alpha + h, x_d) - y_d^{-s}\|_Y^2 - \|\Psi( \bar \alpha, x_d) - y_d^{-s} \|_Y^2
\\
&\qquad=
\|\Psi(\bar \alpha + h, x_d)  - \Psi( \bar \alpha, x_d) \|_Y^2 +
2\left ( 
\Psi(\bar \alpha + h, x_d) - \Psi( \bar \alpha, x_d), s v
\right )_Y \leq 0
\end{aligned}
\end{equation*}
for all $h \in \R^m$ in a sufficiently small open ball around zero. Adding the above yields 
\begin{equation*}
2 \|\Psi(\bar \alpha + h, x_d)  - \Psi( \bar \alpha, x_d) \|_Y^2  \leq 0
\end{equation*}
for all small $h$ which can only be true if the function $\alpha \mapsto \Psi(\alpha, x_d)$
is constant in a small open neighborhood of $\bar \alpha$. But if this is the case, 
then $\bar \alpha$ is trivially also a local minimum of \eqref{eq:trainingpropprot2}
(for both $y_d^s$ and $y_d^{-s}$). The point $\bar \alpha$ is thus indeed always 
a spurious local minimum or a saddle point of \eqref{eq:trainingpropprot2} for at least one of the label vectors 
$y_d^s$ and $y_d^{-s}$ for all $s \in \R$ with \eqref{eq:scondition}.
This proves the first claim of the theorem. To see
that we can also achieve \eqref{eq:errorestimates} and \eqref{eq:lossestimate42}
for all arbitrary but fixed $C>0$,
we note that \eqref{eq:randomeq2736352} implies that 
\begin{equation*}
\|y_d^s\|_Y - \|\bar y_s\|_Y  \leq \|\bar y_s - y_d^s\|_Y \leq \Theta(\Psi, x_d)^{1/2}\|y_d^s\|_Y
\end{equation*}
holds for all $\bar y_s \in P_\Psi^{x_d}(y_d^s)$ and that, as a consequence,
\begin{equation*}
\inf_{\bar y \in P_\Psi^{x_d}(y_d^s)}\|\bar y\|_Y 
\geq \left (1 - \Theta(\Psi, x_d)^{1/2} \right ) \left (\| \Psi( \bar \alpha, x_d)\|_Y^2 + s^2 \right )^{1/2}\qquad \forall s \in \R.
\end{equation*}
We thus have 
\begin{equation*}
\begin{aligned}
\inf_{\bar y \in P_\Psi^{x_d}(y_d^s)} \|\Psi(\bar \alpha, x_d) - \bar y\|_Y 
\geq 
\left (1 - \Theta(\Psi, x_d)^{1/2} \right ) \left (\| \Psi( \bar \alpha, x_d)\|_Y^2 + s^2 \right )^{1/2} - \|\Psi( \bar \alpha, x_d)\|_Y
\to \infty
\end{aligned}
\end{equation*}
as well as 
\begin{equation*}
\inf_{\bar y \in P_\Psi^{x_d}(y_d^s)} \frac{\|\Psi(\bar \alpha, x_d) - \bar y\|_Y}{\|\bar y\|_Y} 
\geq 
1 - 
\frac{\| \Psi( \bar \alpha, x_d)\|_Y}{\left (1 - \Theta(\Psi, x_d)^{1/2} \right ) \left (\| \Psi( \bar \alpha, x_d)\|_Y^2 + s^2 \right )^{1/2}}
\to 1
\end{equation*}
and, again by \eqref{eq:randomeq2736352},
\begin{equation*}
\begin{aligned}
&\inf_{\alpha \in D} \|\Psi( \alpha, x_d) - y_d^s\|_Y^2  - \|\Psi( \bar \alpha, x_d) - y_d^s\|_Y^2
\\
&\qquad\leq  
 \Theta(\Psi, x_d) \|y_d^s\|_Y^2    - \|s v\|_Y^2
=  \Theta(\Psi, x_d) \|\Psi(\bar \alpha, x_d) \|_Y^2 + (\Theta(\Psi, x_d) - 1)s^2  \to - \infty
\end{aligned}
\end{equation*}
for $|s| \to \infty$. This shows that  \eqref{eq:errorestimates} and \eqref{eq:lossestimate42}
hold for every arbitrary but fixed constant $C>0$ provided $|s|$ is large enough. 
In combination with what we already know about the vectors $y_d^s$, 
this establishes the second assertion of the theorem and completes the proof.\end{proof}
~\\[-1cm]
\begin{remark}\label[remark]{rem:scaling}
As already pointed out in the introduction, 
the fact that the saddle points and spurious local minima
in \cref{theorem:existencestatpts} can be made arbitrarily bad is not a
mere consequence of the conicity condition \ref{fa:I}. 
Indeed, if we naively scale the vectors appearing in \cref{theorem:existencestatpts} by a factor $\gamma > 0$,
then this factor clearly cancels out in the second estimate of \eqref{eq:errorestimates}
and it is not possible to ensure that the relative error in \eqref{eq:errorestimates} becomes 
larger than $1 - \varepsilon$ for every $\varepsilon > 0$
by passing to the limit $\gamma \to \infty$.
\end{remark}

Note that the assumptions on the linear hull
$\span(\Psi(\bar \alpha, x_d), \partial_1 \Psi(\bar \alpha, x_d),...,\partial_m \Psi(\bar \alpha, x_d))$ in \cref{theorem:existencestatpts}
are trivially satisfied if $m + 1 < \dim(Y) = n \dim(\YY)$ holds, i.e., if
the product of the number of training pairs in \eqref{eq:trainingpropprot} 
and the dimension of the output space $\YY$ 
exceeds the number of parameters in the considered approximation scheme
by more than one. In this non-overparameterized case, \cref{theorem:existencestatpts} yields that 
\emph{every} point of differentiability of the function $\Psi(\cdot, x_d)\colon D \to Y$
is an arbitrarily bad saddle point or spurious local minimum of  \eqref{eq:trainingpropprot2} 
for uncountably many choices of the label vector $y_d$ in the situation of \cref{ass:standingassumpssec4}. 
Compare also with \cref{cor:saddlenonover} in \cref{subsec:6.2} in this context. However, 
as already mentioned, such a non-overparameterization is not necessary to be able to apply the last theorem. 
To do so, it suffices to show that a local affine-linear approximation of the function $\Psi(\cdot, x_d)\colon D \to Y$ is not surjective
(in contrast to, e.g., \cref{th:stevaluedspurious} which requires that the image of the map $\Psi(\cdot, x_d)\colon D \to Y$ 
itself is not dense in $Y$).  
In \cref{lemma:polystruct}, we will prove that, for approximation schemes
on an Euclidean space $\XX = \R^{d_\xx}$,
that, after reordering the entries of the vector $\alpha \in \R^m$
as a tuple $(\beta, A)  \in \R^p \times \R^{q \times d_\xx}$
with some $p, q \in \mathbb{N}$ satisfying $ m = p + q d_\xx$,
can be written in the form $\psi(\alpha, \xx) = \phi(\beta, A\xx)$ with a 
differentiable function $\phi$
(and thus in particular for neural networks with differentiable activations), 
points with the property ${\span(\Psi(\bar \alpha, x_d), \partial_1 \Psi(\bar \alpha, x_d),...,\partial_m \Psi(\bar \alpha, x_d)) \neq Y}$
always exist if the number $d_\xx + 1$ is smaller than $n$. 
This shows that \cref{theorem:existencestatpts} can be used to establish the existence of spurious local minima or 
saddle points that are arbitrarily far away
from global optima in many situations arising in practice. 

If we not only know that the linear hull
$\span(\Psi(\bar \alpha, x_d), \partial_1 \Psi(\bar \alpha, x_d),...,\partial_m \Psi(\bar \alpha, x_d))$
is not equal to $Y$, but even that $\bar \alpha$ possesses an open neighborhood $U \subset D$ such that
the image of $U$ under $\Psi(\cdot, x_d)\colon D \to Y$ is contained in a 
proper subspace $V$ of $Y$, then we can prove that the construction in \cref{theorem:existencestatpts}
always produces spurious local minima:
\begin{proposition}{\hspace{-0.05cm}\bf(Criterion for the Existence of Spurious Local Minima)}\label[proposition]{prop:existencehotspurs}
Consider the situation in \cref{ass:standingassumpssec4} 
and suppose that a point $\bar \alpha \in D$ is given
such that there exist an open set $U \subset D$ with $\bar \alpha \in U$
and a subspace $V$ of $Y$ satisfying  $\Psi(U, x_d) \subset V \neq Y$. 
Then, for every arbitrary but fixed element $v$
of the $(\cdot, \cdot)_Y$-orthogonal complement of $V$ with $\|v\|_Y = 1$
and every $s \in \R$ satisfying \eqref{eq:scondition},
the point $\bar \alpha$ is a spurious local minimum of the 
training problem \eqref{eq:trainingpropprot2} with label vector $y_d^{s} := \Psi(\bar \alpha, x_d) + s v$
that satisfies a quadratic growth condition of the form 
\begin{equation}
\label{eq:quadgrowthstates}
 \|\Psi(\alpha, x_d) - y_d^s\|_Y^2 - \|\Psi( \bar \alpha, x_d) - y_d^s \|_Y^2
\geq 
 \|\Psi(\alpha, x_d) - \Psi( \bar \alpha, x_d)\|_Y^2\qquad \forall \alpha \in U.
\end{equation}
Further, for every arbitrary but fixed $C> 0$, there 
exist uncountably many $y_d \in Y$ such that 
$\bar \alpha$ is a spurious local minimum 
of  \eqref{eq:trainingpropprot2} and such that 
\eqref{eq:errorestimates}, \eqref{eq:lossestimate42}, and \eqref{eq:quadgrowthstates} hold. 
\end{proposition}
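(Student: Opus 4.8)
The plan is to exploit the strong structural hypothesis $\Psi(U, x_d) \subset V \neq Y$ to upgrade the stationarity argument behind \cref{theorem:existencestatpts} into a genuine local-minimum statement. First I would fix a unit vector $v$ in the $(\cdot,\cdot)_Y$-orthogonal complement of $V$ and an $s \in \R$ satisfying \eqref{eq:scondition}, and set $y_d^s := \Psi(\bar \alpha, x_d) + s v$. The crucial observation is that, for every $\alpha \in U$, both $\Psi(\alpha, x_d)$ and $\Psi(\bar \alpha, x_d)$ lie in $V$, whereas $v \perp V$; hence the difference $\Psi(\alpha, x_d) - \Psi(\bar \alpha, x_d)$ is orthogonal to $v$. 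Expanding the two squared norms, the cross term therefore vanishes and the $s^2$ contributions cancel, so that for all $\alpha \in U$ one obtains
\[
\|\Psi(\alpha, x_d) - y_d^s\|_Y^2 - \|\Psi(\bar \alpha, x_d) - y_d^s\|_Y^2 = \|\Psi(\alpha, x_d) - \Psi(\bar \alpha, x_d)\|_Y^2,
\]
which is exactly the growth condition \eqref{eq:quadgrowthstates}, in fact holding with equality. Since $U$ is an open neighborhood of $\bar \alpha$, this already shows that $\bar \alpha$ is a local minimizer of \eqref{eq:trainingpropprot2} for the label vector $y_d^s$.

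To turn this local minimizer into a \emph{spurious} one, I would invoke \cref{lemma:alldirectionsleadtosuboptimality}. Its hypotheses hold for free here: $\|v\|_Y = 1$ by construction, and $(v, \Psi(\bar \alpha, x_d))_Y = 0$ because $\Psi(\bar \alpha, x_d) \in V$ while $v \perp V$. Moreover the threshold imposed in \eqref{eq:scondition} coincides precisely with the one in \eqref{eq:randomeq1273535dd3}. Consequently, for every $s$ meeting \eqref{eq:scondition}, the lemma yields $\bar \alpha \notin \argmin_{\alpha \in D} \|\Psi(\alpha, x_d) - y_d^s\|_Y^2$, so $\bar \alpha$ is not a global minimizer. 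Combined with the local minimality established above, $\bar \alpha$ is therefore a spurious local minimum of \eqref{eq:trainingpropprot2} with label vector $y_d^s$ satisfying \eqref{eq:quadgrowthstates}, which proves the first assertion.

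For the second assertion I would reuse verbatim the asymptotic estimates derived in the proof of \cref{theorem:existencestatpts}. Those estimates rest only on \eqref{eq:randomeq2736352} and on the orthogonality $(v, \Psi(\bar \alpha, x_d))_Y = 0$, both of which are in hand; thus, letting $|s| \to \infty$ along the family $y_d^s$, the absolute error $\inf_{\bar y \in P_\Psi^{x_d}(y_d^s)} \|\Psi(\bar \alpha, x_d) - \bar y\|_Y$ tends to infinity, the associated relative error tends to $1$, and the loss gap $\|\Psi(\bar \alpha, x_d) - y_d^s\|_Y^2 - \inf_{\alpha \in D} \|\Psi(\alpha, x_d) - y_d^s\|_Y^2$ tends to infinity. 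Choosing $|s|$ large enough then secures \eqref{eq:errorestimates} and \eqref{eq:lossestimate42} for any prescribed $C > 0$, while spuriousness and \eqref{eq:quadgrowthstates} already hold for \emph{every} $s$ satisfying \eqref{eq:scondition}; as these $s$ sweep out an unbounded interval, the corresponding $y_d^s$ form an uncountable family, and uncountability may additionally be obtained by varying $v$ over the unit sphere of $V^\perp$ whenever $\dim V^\perp \geq 2$.

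I do not anticipate a genuine obstacle. Once the inclusion $\Psi(U, x_d) \subset V$ is used to produce the exact orthogonal splitting, the growth inequality holds automatically and even as an identity, so the only nontrivial ingredient—the global suboptimality of $\bar \alpha$—is imported wholesale from \cref{lemma:alldirectionsleadtosuboptimality}, which itself encapsulates the expressiveness property \ref{fa:II} via the bound $\Theta(\Psi, x_d) < 1$. The sole point requiring mild care is the bookkeeping behind the ``uncountably many $y_d$'' claim, but this reduces to scaling $s$ over an unbounded interval.
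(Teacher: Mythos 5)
Your proposal is correct and follows essentially the same route as the paper: the orthogonal splitting induced by $\Psi(U,x_d)\subset V$ yields \eqref{eq:quadgrowthstates} with equality (hence local minimality), \cref{lemma:alldirectionsleadtosuboptimality} supplies global suboptimality, and the badness estimates are imported from the proof of \cref{theorem:existencestatpts} by letting $|s|\to\infty$. The only difference is the order in which the two halves of the argument are presented, which is immaterial.
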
\pagebreak

\begin{proof}
Suppose that an $\bar \alpha \in D$ satisfying the assumptions of the proposition
is given and that $v \in Y$ is an arbitrary but fixed vector with $\|v\|_Y = 1$ that 
is $(\cdot, \cdot)_Y$-orthogonal to $V$.
Then, it follows from the inclusion $\Psi(\bar \alpha, x_d) \in \Psi(U, x_d) \subset V$ 
that $v$ is orthogonal to $\Psi(\bar \alpha, x_d)$ and we may invoke \cref{lemma:alldirectionsleadtosuboptimality} 
to deduce that $\bar \alpha$ is not a global minimum of \eqref{eq:trainingpropprot2}  
when the training label vector is chosen as $y_d^s = \Psi(\bar \alpha, x_d) + s v$ 
with an $s \in \R$ satisfying \eqref{eq:scondition}. From
exactly the same calculation as in \eqref{eq:randomeq236363}, we further obtain that 
\begin{equation*}
\begin{aligned}
\|\Psi(\alpha, x_d) - y_d^s\|_Y^2 - \|\Psi( \bar \alpha, x_d) - y_d^s \|_Y^2
&
= 
\|\Psi(\alpha, x_d) - \Psi( \bar \alpha, x_d) - s v \|_Y^2 - \|sv\|_Y^2
\\
&= 
\|\Psi(\alpha, x_d) - \Psi( \bar \alpha, x_d) \|_Y^2 
\end{aligned}
\end{equation*}
holds for all $\alpha \in U$. This shows that $\bar \alpha$ satisfies the growth condition \eqref{eq:quadgrowthstates} for all $y_d^s$
and, in combination with our first observation, that
$\bar \alpha$ is indeed a spurious local minimum of \eqref{eq:trainingpropprot2} 
for all $y_d^s$ with an $s \in \R$ satisfying \eqref{eq:scondition}. 
To complete the proof, it remains to show that, for every $C>0$, there 
exist uncountably many vectors $y_d$ such that $\bar \alpha$ is a spurious local minimum 
of \eqref{eq:trainingpropprot2} and such that the estimates
\eqref{eq:errorestimates}, \eqref{eq:lossestimate42}, and \eqref{eq:quadgrowthstates} hold. 
This, however, follows completely analogously to the proof of \cref{theorem:existencestatpts}. 
\end{proof}

We would like to emphasize that the last result does not require any form of differentiability. 
Under slightly stronger assumptions on the function $\Psi(\cdot, x_d)\colon D \to Y$,
we can also analyze the size of the set of label vectors $y_d$ that give 
rise to training problems \eqref{eq:trainingpropprot2} with spurious local minima
in the situation of \cref{prop:existencehotspurs}: 

\begin{theorem}{\hspace{-0.05cm}\bf (An Open Cone of Label Vectors with Spurious Local Minima)}\label{theorem:badcone}
Consider the situation in \cref{ass:standingassumpssec4} and suppose that 
there exists a subspace $V$ of $Y$ with $V \neq Y$ such that, for 
every $z \in V$, there exist an $\bar \alpha \in D$ and an open set $U \subset D$
satisfying $\bar \alpha \in U$, $z = \Psi(\bar \alpha, x_d)$, and $\Psi(U, x_d) \subset V$.
Denote the orthogonal complement of $V$ in $Y$ with $V^\perp$
(so that $Y = V \oplus V^\perp$). 
Then, the training problem \eqref{eq:trainingpropprot2} possesses at least one spurious local minimum
satisfying a growth condition of the form \eqref{eq:quadgrowthstates}
for all label vectors $y_d \in Y$ that are elements of the open cone
\begin{equation}
\label{eq:Kspurdef}
K :=
\left \{
y_d^1 + y_d^2 \in Y
\,
\Bigg |
\,
y_d^1 \in V,\, y_d^2 \in V^\perp,\, \|y_d^2\|_Y > \left (\frac{\Theta(\Psi, x_d)}{1 - \Theta(\Psi, x_d)}\right)^{1/2} \|y_d^1\|_Y
\right \}.
\end{equation}
Further, for every arbitrary but fixed $C>0$, there exist uncountably many $y_d \in K$
such that at least one of the spurious local minima of \eqref{eq:trainingpropprot2} satisfies 
\eqref{eq:errorestimates}, \eqref{eq:lossestimate42}, and \eqref{eq:quadgrowthstates},
and if $\closure_Y(\Psi(D, x_d)) = Y$ holds, then the cone $K$ in \eqref{eq:Kspurdef} 
is equal to $Y \setminus V$ and \eqref{eq:trainingpropprot2} 
possesses spurious local minima for all $y_d$ that are not elements of $V$.
\end{theorem}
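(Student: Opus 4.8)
The plan is to deduce the entire statement from \cref{prop:existencehotspurs} by exploiting the orthogonal splitting $Y = V \oplus V^\perp$. First I would fix an arbitrary $y_d \in K$ and write it as $y_d = y_d^1 + y_d^2$ with $y_d^1 \in V$ and $y_d^2 \in V^\perp$, so that the defining inequality of $K$ in \eqref{eq:Kspurdef} reads $\|y_d^2\|_Y > (\Theta(\Psi,x_d)/(1 - \Theta(\Psi,x_d)))^{1/2}\|y_d^1\|_Y$. Since $\Theta(\Psi, x_d) < 1$ by \cref{lemma:thetaprops}, this strict inequality forces $y_d^2 \neq 0$, so that $v := y_d^2/\|y_d^2\|_Y$ is a well-defined unit vector in $V^\perp$. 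Invoking the hypothesis of the theorem for the element $z := y_d^1 \in V$ then furnishes a point $\bar\alpha \in D$ and an open set $U \ni \bar\alpha$ with $\Psi(\bar\alpha, x_d) = y_d^1$ and $\Psi(U, x_d) \subset V \neq Y$, which is exactly the starting configuration required by \cref{prop:existencehotspurs}.

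The key observation I would then record is that, with $s := \|y_d^2\|_Y$, the perturbed label vector appearing in \cref{prop:existencehotspurs} is $\Psi(\bar\alpha, x_d) + s v = y_d^1 + y_d^2 = y_d$ itself, while condition \eqref{eq:scondition} for this $s$ and $\bar\alpha$ reduces to $\|y_d^2\|_Y > (\Theta(\Psi,x_d)/(1-\Theta(\Psi,x_d)))^{1/2}\|y_d^1\|_Y$, i.e.\ precisely the membership relation $y_d \in K$. Thus \cref{prop:existencehotspurs} applies verbatim and yields that $\bar\alpha$ is a spurious local minimum of \eqref{eq:trainingpropprot2} for the given $y_d$, satisfying the growth condition \eqref{eq:quadgrowthstates}. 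This establishes the first assertion for every $y_d \in K$.

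For the structural claims about $K$ I would rewrite its defining inequality, using $\|y_d\|_Y^2 = \|y_d^1\|_Y^2 + \|y_d^2\|_Y^2$ and multiplying through by $1 - \Theta(\Psi,x_d) > 0$, in the transparent form $K = \{y_d \in Y : \|P_{V^\perp} y_d\|_Y^2 > \Theta(\Psi, x_d)\|y_d\|_Y^2\}$, where $P_{V^\perp}$ denotes the orthogonal projection onto $V^\perp$; openness is then immediate as the strict preimage of an open condition under a continuous map, and the conic property follows by homogeneity. The ``uncountably many'' statement with the bounds \eqref{eq:errorestimates} and \eqref{eq:lossestimate42} I would obtain by fixing one admissible $\bar\alpha$ realizing $y_d^1 = 0 \in V$ and letting $s$ range over a half-line; the vectors $y_d^s = s v$ then lie in $K$ for every $s > 0$ (since $\|P_{V^\perp} y_d^s\|_Y^2 = s^2 > \Theta(\Psi,x_d)\,s^2$), and the quantitative estimates are inherited word for word from the large-$|s|$ analysis in the proofs of \cref{prop:existencehotspurs} and \cref{theorem:existencestatpts}. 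Finally, if $\closure_Y(\Psi(D, x_d)) = Y$ then $\Theta(\Psi, x_d) = 0$, so the inequality defining $K$ collapses to $\|y_d^2\|_Y > 0$, giving $K = Y \setminus V$ and spurious minima for every $y_d \notin V$.

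The argument is essentially a matching exercise onto \cref{prop:existencehotspurs}, so I do not anticipate a genuine obstacle; the only points needing care are the degenerate cases, namely verifying $y_d^2 \neq 0$ so that the unit direction $v$ exists, and confirming that the exemplary vectors produced for the uncountability claim truly lie inside $K$ rather than merely satisfying \eqref{eq:scondition} for some auxiliary $\bar\alpha$. Both are resolved by the strictness of the inequality defining $K$ together with $\Theta(\Psi,x_d) \in [0,1)$.
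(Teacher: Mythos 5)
Your proposal is correct and follows essentially the same route as the paper: decompose $y_d = y_d^1 + y_d^2$, use the hypothesis to realize $y_d^1 = \Psi(\bar\alpha, x_d)$ with $\Psi(U, x_d) \subset V$, set $v = y_d^2/\|y_d^2\|_Y$ and $s = \|y_d^2\|_Y$ so that membership in $K$ is exactly condition \eqref{eq:scondition}, and then apply \cref{prop:existencehotspurs}; the remaining claims follow from that proposition and the definition of $\Theta(\Psi, x_d)$. Your additional remarks on the openness of $K$ and the explicit choice $y_d^1 = 0$ for the uncountability claim only flesh out details the paper leaves implicit.
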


\begin{proof}
Consider an arbitrary but fixed element $y_d = y_d^1 + y_d^2 \in V \oplus V^\perp$ of the cone $K$.
Then, our assumptions on the map 
$\Psi(\cdot, x_d)\colon D \to Y$ imply that we can find an $\bar \alpha \in D$ and 
an open set $U \subset D$ satisfying $\bar \alpha \in U$, $y_d^1 = \Psi(\bar \alpha, x_d)$, and $\Psi(U, x_d) \subset V \neq Y$.
Define $v := y_d^2 / \|y_d^2\|_Y$ and $s := \|y_d^2\|_Y$. (Note that $y_d^2$ cannot be zero by the definition of $K$.) 
Then, it clearly holds $v \in V^\perp$, $\|v\|_Y = 1$, $y_d = \Psi(\bar \alpha, x_d)+ sv$, and 
\begin{equation*}
\left (\frac{\Theta(\Psi, x_d)}{1 - \Theta(\Psi, x_d)}\right)^{1/2} \|\Psi(\bar \alpha, x_d)\|_Y
=
\left (\frac{\Theta(\Psi, x_d)}{1 - \Theta(\Psi, x_d)}\right)^{1/2} \|y_d^1\|_Y
<
\|y_d^2\|_Y = s.
\end{equation*}
By applying \cref{prop:existencehotspurs}, it now follows immediately that $\bar \alpha$ is a spurious 
local minimum of the problem \eqref{eq:trainingpropprot2} with label vector $y_d$ 
that satisfies a growth condition of the form \eqref{eq:quadgrowthstates}. 
This proves the first part of the theorem. 
To establish that there exist uncountably many vectors $y_d \in K$
with arbitrarily bad spurious local minima and 
that the identity $K = Y \setminus V$ holds in the case $\closure_Y(\Psi(D, x_d)) = Y$,
it suffices to invoke \cref{prop:existencehotspurs} and
the definition of the number $\Theta(\Psi, x_d)$ in \eqref{eq:defTheta}.
This completes the proof.  
\end{proof}\\[-1cm]

Note that the inequalities in \eqref{eq:scondition} and \eqref{eq:Kspurdef}
again link the approximation properties of the map $\Psi(\cdot, x_d)\colon D \to Y$
to  properties of the loss landscape of the training problem \eqref{eq:trainingpropprot2}
(cf.\ \cref{def:thetadef}). 
If $\Theta(\Psi, x_d) \to 1$ holds, i.e., if the behavior of ${\Psi(\cdot, x_d)\colon D \to Y}$ approximates that 
of a linear approximation scheme, then the distance between the point $\Psi(\bar \alpha, x_d)$
and the label vectors $y_d$
that cause the parameter $\bar \alpha$ to be
a saddle point or a spurious local minimum of \eqref{eq:trainingpropprot2} in the situation of 
\cref{theorem:existencestatpts} tends to infinity
and the cone $K$ in \cref{theorem:badcone} degenerates.
If, on the other hand,  $\Theta(\Psi, x_d)$ tends to zero, i.e., if the expressiveness of 
the considered nonlinear approximation scheme relative to $Y$ increases and we approach 
the case $\closure_Y(\Psi(D, x_d)) = Y$, then the vectors $y_d$ in \cref{theorem:existencestatpts} can be chosen
arbitrarily close to the point $\Psi(\bar \alpha, x_d)$ and 
the cone $K$ in \cref{theorem:badcone} exhausts the set $Y \setminus V$. 
We remark that, in combination with the results 
on the stability properties of the best approximation map $P_{\Psi}^{x_d}$
in \cref{theorem:abstractinstability},
the above observations give a quite good impression of the issues that one has to deal with 
when considering training problems with squared loss for nonlinear 
approximation schemes satisfying the conditions in \cref{ass:standingassumpssec4}
and of how these issues are related to the error bound $\Theta(\Psi, x_d)$ in \cref{def:thetadef}.
We will get back to this topic in \cref{subsec:6.2}, where we 
demonstrate that \cref{theorem:abstractinstability,theorem:badcone}
in particular apply to neural networks 
that involve activation functions with an affine segment. 

We conclude this subsection with a result that demonstrates that 
the subspace property in \cref{prop:existencehotspurs} 
is not only relevant for the existence of spurious local minima but 
also for the stability and uniqueness of global solutions 
of  \eqref{eq:trainingpropprot2} in the case $\closure_Y(\Psi(D, x_d)) = Y$, i.e., 
in the situation where every vector $y_d \in Y$ is realizable. 
\begin{corollary}{\hspace{-0.05cm}\bf(Instability and Nonuniqueness in the Realizable Case)}\label[corollary]{cor:instabilityoverpara}
Consider the situation in \cref{ass:standingassumpssec4} and
suppose that a point $\bar \alpha \in D$ is given
such that there exist an open set $U \subset D$ with $\bar \alpha \in U$
and a subspace $V$ of $Y$ satisfying  $\Psi(U, x_d) \subset V \neq Y$.
Assume further that $\closure_Y(\Psi(D, x_d)) = Y$ holds
and that $v$ is an arbitrary but fixed element 
of the orthogonal complement of $V$ in $Y$ with $\|v\|_Y = 1$. Define 
$\bar y_d :=  \Psi(\bar \alpha, x_d)$ and $y_d^{s} := \bar y_d + s v$ for all $s \in \R \setminus \{0\}$. 
Then, the solution map
\begin{equation*}
 Y \ni
y_d \mapsto \argmin_{\alpha \in D} \|\Psi( \alpha, x_d) - y_d\|_Y^2 \subset D
\end{equation*}
of the problem \eqref{eq:trainingpropprot2} is discontinuous at $\bar y_d$ in the sense that the following is true:
\begin{equation}
\label{eq:randomeq182636535}
\bar \alpha \in  \argmin_{\alpha \in D} \|\Psi( \alpha, x_d) - \bar y_d\|_Y^2,
~~~
y_d^s \xrightarrow{s \to 0} \bar y_d,
~~~
U \, \cap\, \argmin_{\alpha \in D} \|\Psi( \alpha, x_d) - y_d^s\|_Y^2  = \emptyset~\,\forall s \in \R \setminus \{0\}.
\end{equation}
Further, in this situation, there exists a family of parameters $\{\alpha_s\}_{s \in \R \setminus \{0\}}$ satisfying
\begin{equation*}
\{\alpha_s\}_{s \in \R \setminus \{0\}} \subset D \setminus U
\qquad
\text{and}
\qquad
\lim_{s \to 0}  \|\Psi(\alpha_s, x_d) - \bar y_d\|_Y^2  = 0 = \|\Psi(\bar \alpha, x_d) - \bar y_d\|_Y^2.
\end{equation*}
The problem \eqref{eq:trainingpropprot2} with label vector $\bar y_d$ is thus 
not uniquely solvable in the generalized sense that it 
possesses at least one minimizing sequence that does not converge to $\bar \alpha$. 
\end{corollary}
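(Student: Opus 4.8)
The plan is to reduce everything to a single orthogonal-splitting computation together with the realizability hypothesis $\closure_Y(\Psi(D, x_d)) = Y$, bypassing differentiability and the finer machinery of the preceding lemmas entirely. First I would dispose of the two easy assertions in \eqref{eq:randomeq182636535}: since $\bar y_d = \Psi(\bar\alpha, x_d)$, the loss of \eqref{eq:trainingpropprot2} at $\bar\alpha$ equals $\|\Psi(\bar\alpha, x_d) - \bar y_d\|_Y^2 = 0$, which is trivially the global minimum, so $\bar\alpha \in \argmin_{\alpha \in D}\|\Psi(\alpha, x_d) - \bar y_d\|_Y^2$; and $\|y_d^s - \bar y_d\|_Y = |s|\,\|v\|_Y = |s|$ gives $y_d^s \to \bar y_d$ as $s \to 0$.

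The core is the third statement in \eqref{eq:randomeq182636535}. The key step is to observe that for every $\alpha \in U$ one has $\Psi(\alpha, x_d) \in V$ and also $\bar y_d = \Psi(\bar\alpha, x_d) \in V$, so $\Psi(\alpha, x_d) - \bar y_d \in V$ is orthogonal to $sv \in V^\perp$; Pythagoras then yields
\begin{equation*}
\|\Psi(\alpha, x_d) - y_d^s\|_Y^2 = \|\Psi(\alpha, x_d) - \bar y_d\|_Y^2 + s^2 \geq s^2 > 0 \qquad \forall \alpha \in U.
\end{equation*}
On the other hand, the assumption $\closure_Y(\Psi(D, x_d)) = Y$ forces the optimal value $\inf_{\alpha \in D}\|\Psi(\alpha, x_d) - y_d^s\|_Y^2$ to equal zero. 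Since every element of the $\argmin$ must attain this optimal value $0$, while every $\alpha \in U$ has loss at least $s^2 > 0$, no minimizer can lie in $U$; this gives $U \cap \argmin_{\alpha \in D}\|\Psi(\alpha, x_d) - y_d^s\|_Y^2 = \emptyset$ for all $s \neq 0$.

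For the minimizing sequence I would exploit the same bound in reverse. Since the optimal value with label $y_d^s$ is $0 < s^2$, I can select $\alpha_s \in D$ with $\|\Psi(\alpha_s, x_d) - y_d^s\|_Y < |s|$; the displayed identity then forces $\alpha_s \notin U$ (otherwise that norm would be $\geq |s|$), so $\{\alpha_s\}_{s \in \R \setminus \{0\}} \subset D \setminus U$ as required. The triangle inequality gives $\|\Psi(\alpha_s, x_d) - \bar y_d\|_Y \leq \|\Psi(\alpha_s, x_d) - y_d^s\|_Y + \|y_d^s - \bar y_d\|_Y < 2|s|$, whence $\|\Psi(\alpha_s, x_d) - \bar y_d\|_Y^2 \to 0$ as $s \to 0$, which is exactly the optimal loss for $\bar y_d$. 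Letting $s \to 0$ along any null sequence $s_i \to 0$ of nonzero reals thus produces a minimizing sequence $\{\alpha_{s_i}\}$ for the problem with label $\bar y_d$; since each term lies in the closed set $\R^m \setminus U$ while $U$ is an open neighborhood of $\bar\alpha$, this sequence stays bounded away from $\bar\alpha$ and cannot converge to it, establishing the claimed generalized nonuniqueness.

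The main subtlety, and essentially the only delicate point, is that the approximating family must be chosen \emph{strictly} inside the $|s|$-ball about $y_d^s$: this strict inequality is precisely what ejects $\alpha_s$ from $U$, and it is available only because realizability makes the optimal value equal to $0$, which is strictly below $s^2$ whenever $s \neq 0$. I would also remark that in this regime $\Theta(\Psi, x_d) = 0$, so condition \eqref{eq:scondition} degenerates to $|s| > 0$ and the construction is fully consistent with \cref{prop:existencehotspurs}; note in particular that no differentiability of $\Psi(\cdot, x_d)$ enters anywhere in the argument.
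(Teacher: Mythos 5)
Your proof is correct and follows essentially the same route as the paper: the paper obtains the key inequality $\|\Psi(\tilde\alpha,x_d)-y_d^s\|_Y^2 \geq s^2 > 0 = \inf_{\alpha\in D}\|\Psi(\alpha,x_d)-y_d^s\|_Y^2$ for $\tilde\alpha\in U$ by invoking \cref{prop:existencehotspurs} (with $\Theta(\Psi,x_d)=0$), whereas you simply inline the underlying Pythagoras computation on $V\oplus V^\perp$ -- the same argument in substance. The construction of the family $\{\alpha_s\}\subset D\setminus U$ via the strict bound $\|\Psi(\alpha_s,x_d)-y_d^s\|_Y<|s|$ and the triangle inequality is identical to the paper's.
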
\pagebreak

\begin{proof}
In the situation of the corollary, 
it follows from \cref{def:thetadef} and the assumption $\closure_Y(\Psi(D, x_d)) = Y$ that $\Theta(\Psi, x_d) = 0$ holds, 
and we obtain from \cref{prop:existencehotspurs} 
that $\bar \alpha$ is a local minimum of the problem \eqref{eq:trainingpropprot2} with 
label vector $y_d^s$ for all $s \in \R \setminus \{0\}$ that satisfies
\begin{equation}
\label{eq:randomeq173635}
\inf_{\alpha \in D} \|\Psi( \alpha, x_d) - y_d^s\|_Y^2 = 0
< s^2 =  \|\Psi(\bar \alpha, x_d) - y_d^s\|_Y^2 \leq  \|\Psi(\tilde \alpha, x_d) - y_d^s\|_Y^2
\end{equation}
for all $\tilde \alpha \in U$ and all $s \in \R\setminus \{0\}$.
The above implies in particular that none of the points in $U$ can be a global minimizer of 
\eqref{eq:trainingpropprot2} with label vector $y_d^s$ for all $s \in \R\setminus \{0\}$.
 This establishes \eqref{eq:randomeq182636535}.
On the other hand, \eqref{eq:randomeq173635} also yields that, for every $s \in \R \setminus \{0\}$, 
we can find an $\alpha_s \in D \setminus U$ with 
\begin{equation*}
|s|
> \|\Psi( \alpha_s, x_d) - y_d^s\|_Y
\geq \|\Psi( \alpha_s, x_d) - \bar y_d\|_Y  - |s|.
\end{equation*}
The existence of a family $\{\alpha_s\}_{s \in \R \setminus \{0\}}$ with the properties in the
second part of the corollary now follows immediately. 
This completes the proof. 
\end{proof}

\subsection{Spurious Local Minima in the Presence of a Regularization Term}
\label{subsec:5.3}
A standard technique
to overcome
the ill-posedness of an inverse problem 
(i.e., the nonexistence or instability of solutions)
is to add a regularization term to the objective function 
that penalizes the size of the involved parameters.
In the context of training problems of the type \eqref{eq:trainingpropprot2}, 
this approach
has the additional advantage that it 
allows to promote desirable sparsity properties 
of the vectors $\alpha \in D$ that are obtained from the optimization procedure, cf.\ 
\citep{Poerner2018,Hofmann2013,Pieper2020,Wen2016,Yoon2017} and the references therein.  
The aim of this subsection is to demonstrate that, as far as the existence of 
spurious local minima and the expressiveness of 
nonlinear approximation schemes are concerned, 
adding a regularization term to the loss function in \eqref{eq:trainingpropprot2}
can also have detrimental effects. 
The main idea in the following is
to exploit that 
many commonly used approximation instruments 
possess a ``linear'' lowest level 
in the sense that they depend on the product of 
the input variable $\xx$ and a matrix $A$ whose entries are 
part of the parameter vector $\alpha$. In the situation 
of our standing \cref{ass:standingassumpssec4},
this structural property can be expressed as follows: 

\begin{assumption}[Linearity of the Lowest Level]
\label[assumption]{ass:linearlowlev}~
\begin{itemize}
\item It holds $D = \R^m$, $\XX = \R^{d_\xx}$, 
and $(\YY, \|\cdot\|_\YY) = (\R^{d_\yy}, \|\cdot\|_2)$ with some $d_\xx, d_\yy \in \mathbb{N}$. 
\item There exists a function $\phi \colon \R^{p} \times \R^{q} \to \R^{d_\yy}$ such that, 
after reordering the vector $\alpha$ and reshaping it into a tuple $(\beta, A)  \in \R^p \times \R^{q \times d_\xx}$
with $ m = p + q d_\xx$, we can write $\psi(\alpha, \xx) = \phi(\beta, A\xx)$ for all
$\alpha \in \R^m$ and all $\xx \in \R^{d_\xx}$. 
\end{itemize}
\end{assumption}

Note that standard neural networks trivially satisfy the above conditions 
as they involve an affine-linear transformation on the lowest level, see \cref{subsec:6.2}.
A main feature of approximation schemes satisfying \cref{ass:linearlowlev} 
is that they behave polynomially when linearized at points $\alpha \in D$ that,
after reordering, yield the matrix $A = 0$. More precisely, 
we have the following result: 

\begin{lemma}{\hspace{-0.05cm}\bf(Polynomial First- and Second-Order Approximations)}\label[lemma]{lemma:polystruct}
Suppose that \cref{ass:standingassumpssec4,ass:linearlowlev} hold.
Consider further an arbitrary but fixed $\bar \alpha \in \R^m$
which, after the reshaping procedure in \cref{ass:linearlowlev}, 
takes the form $(\bar \beta, 0) \in \R^p \times \R^{q \times d_\xx}$.
Then, the following is true:
\begin{enumerate}[label=\roman*)]
\item\label{item:lowtang:i}
If the map $\phi \colon \R^{p} \times \R^{q} \to \R^{d_\yy}$ in \cref{ass:linearlowlev} 
is differentiable at $(\bar \beta, 0)$, then there exists a subspace $V_1 \subset Y$
of dimension at most $d_\yy(d_\xx + 1)$ satisfying 
\begin{equation*}
\Psi(\bar \alpha, x_d) + \partial_\alpha \Psi(\bar \alpha, x_d)\left \langle h \right \rangle \in V_1\quad \forall h \in \R^m.
\end{equation*}

\item\label{item:lowtang:ii}
If the map $\phi \colon \R^{p} \times \R^{q} \to \R^{d_\yy}$ in \cref{ass:linearlowlev} 
is continuously differentiable in an open neighborhood of
the point $(\bar \beta, 0)$ and twice differentiable at $(\bar \beta, 0)$, 
then there exist a subspace $V_1 \subset Y$
of dimension at most $d_\yy(d_\xx + 1)$ 
and a subspace $V_2 \subset Y$ of dimension
at most $\frac12 d_\yy(d_\xx + 2)(d_\xx + 1)$ satisfying
\begin{equation}
\label{eq:randomeq283672329-1}
V_1 \subset V_2,\qquad 
\Psi(\bar \alpha, x_d) + \partial_\alpha \Psi(\bar \alpha, x_d)\left \langle h \right \rangle \in V_1\quad \forall h \in \R^m,
\end{equation}
and
\begin{equation}
\label{eq:randomeq283672329-2}
\Psi(\bar \alpha, x_d) + \partial_\alpha \Psi(\bar \alpha, x_d)\left \langle h \right \rangle
+ \frac12 \partial_\alpha^2 \Psi(\bar \alpha, x_d)\left \langle h,h \right \rangle \in V_2\quad \forall h \in \R^m.
\end{equation}
\end{enumerate}
Here, $\partial_\alpha \Psi(\bar \alpha, x_d)\left \langle h \right \rangle$
and $\partial_\alpha^2 \Psi(\bar \alpha, x_d)\left \langle h,h \right \rangle$ denote the
first and the second derivative of the function $\Psi$ w.r.t.\  the variable $\alpha$ at $\bar \alpha$ evaluated 
at $h$ and $(h,h)$, respectively.  
\end{lemma}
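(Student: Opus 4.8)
The plan is to exploit the structural assumption $\psi(\alpha, \xx) = \phi(\beta, A\xx)$ together with the fact that at $\bar\alpha$ the matrix block vanishes ($A = 0$), so that the second argument of $\phi$ equals $0$ for \emph{every} training input $\xx_d^k$. The decisive consequence is that the value $\phi(\bar\beta, 0)$ and all derivative tensors $\partial\phi(\bar\beta, 0)$, $\partial^2\phi(\bar\beta, 0)$ are taken at one and the same base point, independently of $k$. Hence the only way in which the Taylor coefficients of $\Psi(\cdot, x_d)$ can depend on the sample index $k$ is through the vector $A\xx_d^k$, which at $\bar\alpha$ reduces to a polynomial in $\xx_d^k$ of the same order as the derivative being computed. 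Counting the available monomials will then produce the claimed dimension bounds.

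First I would write a direction as $h \leftrightarrow (h_\beta, H) \in \R^p \times \R^{q\times d_\xx}$ and apply the chain rule to $t\mapsto \psi(\bar\alpha + th, \xx_d^k) = \phi(\bar\beta + th_\beta, tH\xx_d^k)$. Differentiating once at $t=0$ shows that the $k$-th block of $\partial_\alpha\Psi(\bar\alpha, x_d)\langle h\rangle$ equals $\partial_\beta\phi(\bar\beta, 0)\langle h_\beta\rangle + \partial_z\phi(\bar\beta, 0)\langle H\xx_d^k\rangle$, where $\partial_z\phi$ denotes the derivative in the $\R^q$-argument. Writing $H\xx_d^k = \sum_{i=1}^{d_\xx}(\xx_d^k)_i H^{(i)}$ in terms of the columns $H^{(i)}$ of $H$, this block takes the form $u_0 + \sum_{i=1}^{d_\xx}(\xx_d^k)_i u_i$ with coefficient vectors $u_0, \dots, u_{d_\xx} \in \R^{d_\yy}$, where $u_0$ also absorbs the constant block $\phi(\bar\beta, 0)$ of $\Psi(\bar\alpha, x_d)$ itself. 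I would therefore define
\[
V_1 := \Big\{ \big\{ u_0 + \textstyle\sum_{i=1}^{d_\xx}(\xx_d^k)_i u_i \big\}_{k=1}^n \in Y \ \Big|\ u_0, \dots, u_{d_\xx} \in \R^{d_\yy} \Big\},
\]
which is the image of a linear map from $(\R^{d_\yy})^{d_\xx+1}$ into $Y$ and hence a subspace of dimension at most $d_\yy(d_\xx+1)$. By construction $\Psi(\bar\alpha, x_d) + \partial_\alpha\Psi(\bar\alpha, x_d)\langle h\rangle \in V_1$ for every $h$, which proves \ref{item:lowtang:i}.

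For \ref{item:lowtang:ii} I would differentiate a second time: $\tfrac{d^2}{dt^2}\big|_{t=0}\phi(\bar\beta + th_\beta, tH\xx_d^k)$ equals the Hessian quadratic form of $\phi$ at $(\bar\beta,0)$ applied twice to $(h_\beta, H\xx_d^k)$, i.e.\ $\partial_{\beta\beta}\phi\langle h_\beta, h_\beta\rangle + 2\partial_{\beta z}\phi\langle h_\beta, H\xx_d^k\rangle + \partial_{zz}\phi\langle H\xx_d^k, H\xx_d^k\rangle$, all tensors evaluated at $(\bar\beta,0)$. Substituting $H\xx_d^k = \sum_i(\xx_d^k)_i H^{(i)}$ once more, these three summands are respectively constant, linear, and quadratic in the entries of $\xx_d^k$, so the $k$-th block of the second-order approximation is an $\R^{d_\yy}$-valued polynomial of degree at most $2$ in $\xx_d^k$, of the shape
\[
\tilde u_0 + \sum_{i=1}^{d_\xx}(\xx_d^k)_i \tilde u_i + \sum_{1 \le i \le i' \le d_\xx}(\xx_d^k)_i(\xx_d^k)_{i'}\tilde w_{ii'}.
\]
I would then set $V_2$ to be the space of all vectors of $Y$ whose $k$-th block has this form as the coefficients range over $\R^{d_\yy}$. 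Since the number of monomials of degree at most $2$ in $d_\xx$ variables is $\binom{d_\xx+2}{2} = \tfrac12(d_\xx+1)(d_\xx+2)$, the space $V_2$ is the image of a linear map on $(\R^{d_\yy})^{(d_\xx+1)(d_\xx+2)/2}$ and thus has dimension at most $\tfrac12 d_\yy(d_\xx+2)(d_\xx+1)$; the inclusion $V_1 \subset V_2$ is immediate from setting the quadratic coefficients to zero, and \eqref{eq:randomeq283672329-1}--\eqref{eq:randomeq283672329-2} follow from the computations above.

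The chain-rule expansions and the monomial count are routine; the only point that genuinely uses the hypotheses is the vanishing $A = 0$ at $\bar\alpha$, which forces every block to be governed by the derivative tensors at the \emph{single} base point $(\bar\beta, 0)$ and thereby makes the block-wise Taylor coefficients polynomials in $\xx_d^k$ of controlled degree. I expect the main care to be needed in justifying the second-order chain rule under the stated regularity of $\phi$ (continuously differentiable near $(\bar\beta,0)$ and twice differentiable there), which is legitimate because $\alpha\mapsto(\beta, A\xx_d^k)$ is linear so the composition inherits twice differentiability at $\bar\alpha$, and in keeping the bookkeeping of the coefficient vectors $u_i$ and $\tilde w_{ii'}$ consistent across all blocks.
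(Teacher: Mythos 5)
Your proposal is correct and follows essentially the same route as the paper's proof: both exploit that $A=0$ at $\bar\alpha$ forces all derivative tensors of $\phi$ to be evaluated at the single base point $(\bar\beta,0)$, so that the first- and second-order Taylor blocks become $\R^{d_\yy}$-valued polynomials of degree at most one and two in $\xx_d^k$, and then bound the dimensions by counting monomials. The only cosmetic difference is that the paper phrases $V_1$ and $V_2$ as the evaluations at $x_d$ of the spaces of affine/quadratic vector-valued polynomials, whereas you describe them equivalently as images of linear coefficient maps into $Y$.
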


\begin{proof}
Suppose that an $\bar \alpha \in \R^m$ satisfying the assumptions 
in point \ref{item:lowtang:i} of the lemma is given. 
Then, it holds
\begin{equation*}
\begin{aligned}
\psi(\bar \alpha + h, \xx)
&= \phi(\bar \beta + \tilde h, 0 + H\xx)
= \phi(\bar \beta, 0) + \phi'(\bar \beta, 0)\langle \tilde h, H\xx  \rangle 
+
\oo\big(\| (\tilde h, H\xx)\|_2 \big)
\\
&=
\phi(\bar \beta, 0) 
+
\phi'(\bar \beta, 0)\langle \tilde h, 0 \rangle 
+ \sum_{i=1}^{d_\xx}\phi'(\bar \beta, 0)  \langle 0, H e_i \rangle \xx_{\;i}
+\oo\big (\| (\tilde h, H\xx)\|_2 \big)
\end{aligned}
\end{equation*}
for all arbitrary but fixed $\xx \in \R^{d_\xx}$ and all $h \in \R^m$ which, after the reshaping
procedure in \cref{ass:linearlowlev},
take the form $(\tilde h, H) \in \R^p \times \R^{q \times d_\xx}$. 
Here, 
with $\phi'(\bar \beta, 0)\left \langle \cdot \right \rangle \colon \R^{p} \times \R^{q} \to \R^{d_\yy}$
we mean the first derivative of the function $\phi$ at $(\bar \beta, 0)$,
the Landau symbol refers to the limit $\|(\tilde h, H\xx)\|_2 \to 0$,
and $e_i$, $i=1,..., d_\xx$, are the unit vectors in $\R^{d_\xx}$.  
Note that the above implies in particular that 
\begin{equation*}
\begin{aligned}
&\psi(\bar \alpha, \xx) + \partial_\alpha \psi(\bar \alpha, \xx)\left \langle h \right \rangle
=
\phi(\bar \beta, 0) 
+
\phi'(\bar \beta, 0)\langle \tilde h, 0 \rangle 
+ \sum_{i=1}^{d_\xx}\phi'(\bar \beta, 0)  \langle 0, H e_i \rangle \xx_{\;i}
= P_{\bar \beta, h}(\xx)
\end{aligned}
\end{equation*}
holds for all $h \in \R^m$ and all arbitrary but fixed $\xx \in \R^{d_\xx}$, where 
$P_{\bar \beta, h}\colon \R^{d_\xx} \to \R^{d_\yy}$ is an affine map that depends only on $\bar \beta$ and $h$. 
The function $\xx \mapsto \psi(\bar \alpha, \xx) + \partial_\alpha \psi(\bar \alpha, \xx)\left \langle h \right \rangle$
is thus contained in a subspace of dimension $\smash{d_\yy(d_\xx + 1)}$ 
that is independent of $h$, namely the space of vector-valued polynomials of degree at most one
which map $\R^{d_\xx}$ to $\R^{d_\yy}$. Since the function $\Psi$ is defined
by $\Psi(\alpha, x_d) :=\left \{\psi(\alpha, \xx_{\;d}^k) \right \}_{k=1}^n$, 
i.e., by plugging in certain values of $\xx$, the first claim of the lemma now follows immediately. 

To establish the assertion in \ref{item:lowtang:ii}, we can proceed completely 
along the same lines as in the first part of the proof. By Taylor's formula, we obtain that
\begin{equation*}
\begin{aligned}
&\psi(\bar \alpha + h, \xx)
= \phi(\bar \beta + \tilde h, 0 + H\xx)
\\
&~=
\phi(\bar \beta, 0) + \phi'(\bar \beta, 0) \langle \tilde h, H\xx \rangle 
+
\frac12 \phi''(\bar \beta, 0) \langle (\tilde h, H\xx) , (\tilde h, H\xx) \rangle 
+
\oo\big (\| (\tilde h, H\xx)\|_2^2 \big)
\\
&~=
\phi(\bar \beta, 0) 
+ 
\phi'(\bar \beta, 0) \langle \tilde h, 0\rangle 
+
\frac12 \phi''(\bar \beta, 0) \langle (\tilde h, 0) , (\tilde h, 0) \rangle 
\\
&\qquad 
+  \sum_{i=1}^{d_\xx}
\left ( \phi'(\bar \beta, 0)  \langle 0, H e_i \rangle
+
\frac12 \phi''(\bar \beta, 0) \langle (0, H e_i) , (\tilde h, 0) \rangle 
+
\frac12 \phi''(\bar \beta, 0) \langle (\tilde h, 0) , (0, H e_i) \rangle 
\right ) \xx_{\;i}
\\
&\qquad 
+
\frac12 \sum_{i=1}^{d_\xx} \sum_{j=1}^{d_\xx} \phi''(\bar \beta, 0) \langle (0, H e_i) , (0, H e_j) \rangle 
\xx_{\;i} \xx_{\;j}
+\oo\big (\| (\tilde h, H\xx)\|_2^2 \big)
\end{aligned}
\end{equation*}
holds for all arbitrary but fixed $\xx \in \R^{d_\xx}$ and all vectors $h \in \R^m$ which, after reshaping,
take the form $(\tilde h, H) \in \R^p \times \R^{q \times d_\xx}$. Here,
$\phi'(\bar \beta, 0)\left \langle \cdot \right \rangle \colon \R^{p} \times \R^{q} \to \R^{d_\yy}$
again denotes the first and
$\phi''(\bar \beta, 0)\left \langle \cdot , \cdot \right \rangle \colon  (\R^{p} \times \R^{q})^2 \to \R^{d_\yy}$
the second derivative of $\phi$ at $(\bar \beta, 0)$. The above yields
\begin{equation*}
\begin{aligned}
&\psi(\bar \alpha, \xx) + \partial_\alpha \psi(\bar \alpha, \xx)\left \langle h \right \rangle
= P_{\bar \beta, h}(\xx)\quad \forall \xx \in \R^{d_\xx}
\end{aligned}
\end{equation*}
and
\begin{equation*}
\psi(\bar \alpha, \xx) + \partial_\alpha \psi(\bar \alpha, \xx)\left \langle h \right \rangle
+ \frac12 \partial_\alpha^2 \psi(\bar \alpha, \xx)\left \langle h,h \right \rangle
=
Q_{\bar \beta, h}(\xx)\quad \forall \xx \in \R^{d_\xx}
\end{equation*}
for all $h \in \R^m$, where 
$P_{\bar \beta, h}\colon \R^{d_\xx} \to \R^{d_\yy}$ and 
$Q_{\bar \beta, h}\colon \R^{d_\xx} \to \R^{d_\yy}$ 
are vector-valued polynomials of degree at most one and two, respectively, that depend only on $\bar \beta$ and $h$. 
The functions $\xx \mapsto \psi(\bar \alpha, \xx) + \partial_\alpha \psi(\bar \alpha, \xx)\left \langle h \right \rangle$
and $\xx \mapsto \psi(\bar \alpha, \xx) + \partial_\alpha \psi(\bar \alpha, \xx)\left \langle h \right \rangle
+ \frac12 \partial_\alpha^2 \psi(\bar \alpha, \xx)\left \langle h,h \right \rangle$
are thus contained in subspaces of dimension $d_\yy(d_\xx + 1)$ and $\frac12 d_\yy(d_\xx + 2)(d_\xx + 1)$, respectively, 
that are independent of $h$, namely the spaces of polynomials of degree at most one and two, respectively, 
which map $\R^{d_\xx}$ to $\R^{d_\yy}$. 
By again exploiting the definition of  $\Psi$, the assertion of \ref{item:lowtang:ii} now follows immediately. 
This completes the proof. 
\end{proof}

We would like to point out that \cref{lemma:polystruct} 
is not only interesting for the study of regularized training problems but also for the 
results on the existence of non-optimal critical points that we have derived in \cref{subsec:5.2}. Indeed, as a straightforward consequence 
of \cref{theorem:existencestatpts,lemma:polystruct}, we obtain: 

\begin{corollary}{\hspace{-0.05cm}\bf(Critical Points in the Presence of a Linear Lowest Level)}\label[corollary]{corollary:structstatpts}
Suppose that \cref{ass:standingassumpssec4,ass:linearlowlev} are satisfied and that  $d_\xx + 1 < n$ holds. 
Consider further an arbitrary but fixed $\bar \alpha \in \R^m$
which, after reshaping, 
takes the form $(\bar \beta, 0) \in \R^p \times \R^{q \times d_\xx}$,
and assume that the map $\phi \colon \R^{p} \times \R^{q} \to \R^{d_\yy}$ in \cref{ass:linearlowlev} 
is differentiable at $(\bar \beta, 0)$. 
Then, for every $\varepsilon > 0$, there exist uncountably many label vectors $y_d \in Y$ satisfying
\begin{equation*}
\left (\frac{\Theta(\Psi, x_d)}{1 - \Theta(\Psi, x_d)}\right)^{1/2} \|\Psi(\bar \alpha, x_d)\|_Y < 
\|\Psi(\bar \alpha, x_d) - y_d\|_Y < \left (\frac{\Theta(\Psi, x_d)}{1 - \Theta(\Psi, x_d)}\right)^{1/2} \|\Psi(\bar \alpha, x_d)\|_Y + \varepsilon
\end{equation*}
such that $\bar \alpha$ 
is a spurious local minimum or a saddle point of 
\eqref{eq:trainingpropprot2}. Further, for every arbitrary but fixed $C>0$,
there exist uncountably many $y_d \in Y$ such that $\bar \alpha$ 
is a spurious local minimum or a saddle point of 
\eqref{eq:trainingpropprot2} and such that \eqref{eq:errorestimates}
and \eqref{eq:lossestimate42} hold.
\end{corollary}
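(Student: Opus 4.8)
The plan is to deduce the corollary directly from \cref{theorem:existencestatpts}, verifying its hypotheses with the help of \cref{lemma:polystruct}\ref{item:lowtang:i}. First I would dispose of the regularity requirements: since $D = \R^m$ by \cref{ass:linearlowlev}, the point $\bar \alpha$ automatically lies in the interior of $D$, and the representation $\psi(\alpha, \xx) = \phi(\beta, A\xx)$ exhibits $\alpha \mapsto \psi(\alpha, \xx)$ as the composition of the linear map $\alpha \mapsto (\beta, A\xx)$ (which sends $\bar \alpha$, i.e.\ $(\bar \beta, 0)$, to $(\bar \beta, 0)$ for every fixed $\xx$ because the lowest-level matrix vanishes) with the function $\phi$, which is differentiable at $(\bar \beta, 0)$ by assumption. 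By the chain rule this makes $\Psi(\cdot, x_d)\colon D \to Y$ differentiable at $\bar \alpha$. It thus only remains to check that the linear hull $V := \span(\Psi(\bar \alpha, x_d), \partial_1 \Psi(\bar \alpha, x_d),\ldots,\partial_m \Psi(\bar \alpha, x_d))$ is a proper subspace of $Y$.

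To do this I would invoke \cref{lemma:polystruct}\ref{item:lowtang:i} to obtain a subspace $V_1 \subset Y$ with $\dim V_1 \leq d_\yy(d_\xx + 1)$ and $\Psi(\bar \alpha, x_d) + \partial_\alpha \Psi(\bar \alpha, x_d)\langle h \rangle \in V_1$ for all $h \in \R^m$. Choosing $h = 0$ gives $\Psi(\bar \alpha, x_d) \in V_1$; subtracting this and using that $V_1$ is a subspace then yields $\partial_\alpha \Psi(\bar \alpha, x_d)\langle h \rangle \in V_1$ for all $h$, and in particular $\partial_i \Psi(\bar \alpha, x_d) \in V_1$ for each $i$. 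Hence $V \subset V_1$. Since $Y = \YY^n = \R^{n d_\yy}$ has $\dim Y = n d_\yy$, and since the hypothesis $d_\xx + 1 < n$ gives $\dim V_1 \leq d_\yy(d_\xx + 1) < n d_\yy = \dim Y$, we conclude that $V \subset V_1 \neq Y$, so $V \neq Y$ as required by \cref{theorem:existencestatpts}.

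With $V \neq Y$ established, the rest is a translation of \cref{theorem:existencestatpts}. Fix any unit vector $v$ in the (now nontrivial) orthogonal complement of $V$. For every $s > 0$ satisfying \eqref{eq:scondition}, the theorem supplies a sign $\tau \in \{-1, 1\}$ such that $\bar \alpha$ is a spurious local minimum or a saddle point of \eqref{eq:trainingpropprot2} with label vector $y_d := \Psi(\bar \alpha, x_d) + \tau s v$, and here $\|\Psi(\bar \alpha, x_d) - y_d\|_Y = s\|v\|_Y = s$. Restricting $s$ to the open interval whose endpoints are $(\Theta(\Psi, x_d)/(1 - \Theta(\Psi, x_d)))^{1/2}\|\Psi(\bar \alpha, x_d)\|_Y$ and this same quantity plus $\varepsilon$ therefore produces, for each such $s$, a label vector $y_d$ obeying the displayed two-sided bound with $\bar \alpha$ a spurious local minimum or saddle point; as distinct values of $s$ yield distinct $y_d$ (their distance to $\Psi(\bar \alpha, x_d)$ being $s$) and the interval is uncountable, there are uncountably many such $y_d$, which proves the first assertion. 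The second assertion is immediate from the final part of \cref{theorem:existencestatpts}, which already guarantees, for each $C > 0$, uncountably many $y_d$ for which $\bar \alpha$ is a saddle point or spurious local minimum satisfying \eqref{eq:errorestimates} and \eqref{eq:lossestimate42}.

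I do not expect a genuine obstacle here: the corollary is a packaging result, and the only place requiring care is the dimension bookkeeping $\dim V_1 \leq d_\yy(d_\xx + 1) < \dim Y = n d_\yy$. This is exactly where the structural assumption \ref{ass:linearlowlev} pays off, since it confines the first-order image of $\Psi(\cdot, x_d)$ at a point with vanishing lowest-level matrix to the low-dimensional space of affine vector-valued polynomials, and it is also the step in which the hypothesis $d_\xx + 1 < n$ enters and replaces the cruder non-overparameterization condition $m + 1 < \dim Y$ that suffices for the unconditional application of \cref{theorem:existencestatpts}.
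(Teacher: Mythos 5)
Your proposal is correct and follows essentially the same route as the paper: apply \cref{lemma:polystruct}\ref{item:lowtang:i} to confine $\span(\Psi(\bar\alpha,x_d),\partial_1\Psi(\bar\alpha,x_d),\ldots,\partial_m\Psi(\bar\alpha,x_d))$ to a subspace of dimension at most $d_\yy(d_\xx+1)<nd_\yy=\dim(Y)$, and then invoke \cref{theorem:existencestatpts}. The additional details you supply (differentiability of $\Psi(\cdot,x_d)$ at $\bar\alpha$ via the chain rule, and the parameterization of the label vectors by $s$ in an open interval to obtain the two-sided bound) are correct and merely make explicit what the paper leaves implicit.
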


\begin{proof}
From \cref{lemma:polystruct}, we obtain that, in the considered situation, 
the linear hull of the vectors $\Psi(\bar \alpha, x_d)$ 
and $\partial_i \Psi(\bar \alpha, x_d)$, $i=1,...,m$, is contained in a 
subspace of dimension $d_\yy(d_\xx + 1) < d_\yy n = \dim(Y) = \dim(\YY^n)$. 
This shows that $\bar \alpha$ satisfies the assumptions of \cref{theorem:existencestatpts}.
By invoking this theorem, the claim of the corollary follows immediately. 
\end{proof}

Note that, in the case $d_\xx + 1 \geq n$, for every training set $(\xx_{\;d}^k, \yy_d^k)$, $k=1,...,n$,
with $\dim(\span(\xx_{\;d}^2 - \xx_{\;d}^1,...,\xx_{\;d}^n - \xx_{\;d}^1)) = n-1$, we can find an 
affine-linear $T\colon \R^{d_\xx} \to \R^{d_\yy}$ with $T(\xx_{\;d}^k) = \yy_d^k$ for all $k=1,...,n$. 
The condition $d_\xx + 1 < n$ 
in \cref{corollary:structstatpts} is thus directly related to the 
approximation properties of affine functions.
In \cref{subsec:6.2}, \cref{corollary:structstatpts} will in particular allow us to show
that, for neural networks with differentiable activations, there is always a 
subspace of parameters $\alpha$ in $\R^m$ that can be turned into 
arbitrarily bad saddle points or spurious local minima of \eqref{eq:trainingpropprot2}
by choosing appropriate label vectors $y_d \in Y$, see \cref{cor:saddleoverpar}.

To show that regularized versions of the training problem 
\eqref{eq:trainingpropprot2} can indeed possess spurious local minima, we will use that,
by adding a regularization term to the loss function of \eqref{eq:trainingpropprot2},
the stationary points in \cref{corollary:structstatpts} can be transformed into 
local minimizers. 
This leads to:
\begin{theorem}{\hspace{-0.05cm}\bf(Spurious Local Minima in the Presence of Regularization Terms)}\label{theorem:spuriousregprob}
Suppose that \cref{ass:standingassumpssec4,ass:linearlowlev} 
are satisfied and that $\frac12(d_\xx + 2)(d_\xx + 1) < n$ holds. 
Consider an arbitrary but fixed vector $\bar \alpha \in \R^m$
which, after the reshaping procedure in \cref{ass:linearlowlev},
takes the form $(\bar \beta, 0) \in \R^p \times \R^{q \times d_\xx}$,
and assume that the function $\phi$ in \cref{ass:linearlowlev} 
is continuously differentiable in an open neighborhood of $(\bar \beta, 0)$ and twice differentiable at $(\bar \beta, 0)$.
Assume further that a function $g\colon \R^m \to [0, \infty)$ with $g(0) = 0$
is given such that there exist a constant $c>0$ and an open neighborhood $U \subset \R^m$
of the origin with $g(z) \geq c \|z\|_2^2$ for all $z \in U$.
Then, for every arbitrary but fixed $C>0$, there exist uncountably many combinations 
of training label vectors $y_d \in Y$ and regularization parameters $\nu > 0$ such that 
$\bar \alpha$ is a spurious local minimum of the regularized training problem
\begin{equation}
\label{eq:trainingpropprot-reg}
\min_{\alpha \in D} \|\Psi( \alpha, x_d) - y_d\|_Y^2 + \nu g(\alpha - \bar \alpha)
\end{equation}
that satisfies a local quadratic growth condition of the form
\begin{equation}
\label{eq:quadgrowth253543}
\|\Psi( \alpha, x_d) - y_d\|_Y^2 + \nu g(\alpha - \bar \alpha)  
\geq \|\Psi(\bar \alpha, x_d) - y_d\|_Y^2 + \nu g(0) + \varepsilon \|\alpha - \bar \alpha\|_2^2 ~~\quad \forall \alpha \in B_r(\bar \alpha)
\end{equation}
for some $\varepsilon, r > 0$ and 
\begin{equation}
\label{eq:randomeq28h73}
\inf_{\alpha \in D} \|\Psi( \alpha, x_d) - y_d\|_Y^2 + \nu g(\alpha - \bar \alpha)
+ C 
\leq 
 \|\Psi( \bar \alpha, x_d) - y_d\|_Y^2 + \nu g(0).
\end{equation}
\end{theorem}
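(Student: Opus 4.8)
The plan is to reuse the critical point constructed for the unregularized problem in \cref{theorem:existencestatpts}/\cref{corollary:structstatpts} and to observe that a regularizer with quadratic lower growth supplies exactly the curvature that was missing there, turning a degenerate stationary point into a strict local minimizer, while the improved expressiveness \ref{fa:II} certifies (as before) that this minimizer is not global. First I would apply \cref{lemma:polystruct}: since $\phi$ is continuously differentiable near $(\bar\beta,0)$ and twice differentiable at $(\bar\beta,0)$, there is a subspace $V_2 \subset Y$ with $\dim V_2 \le \tfrac12 d_\yy(d_\xx+2)(d_\xx+1)$ such that the second-order Taylor polynomial of $\Psi(\cdot,x_d)$ at $\bar\alpha$ lies in $V_2$, cf.\ \eqref{eq:randomeq283672329-2}. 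The hypothesis $\tfrac12(d_\xx+2)(d_\xx+1) < n$ then gives $\dim V_2 < n d_\yy = \dim Y$, so $V_2 \ne Y$; I fix a unit vector $v$ in the orthogonal complement $V_2^\perp$ and consider the label vectors $y_d^s := \Psi(\bar\alpha,x_d) + s v$ for $s > 0$. Since $\Psi(\bar\alpha,x_d) \in V_2$, the regularized objective value at $\bar\alpha$ is $\|{-s v}\|_Y^2 + \nu g(0) = s^2$.

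For the local-minimum part I would use the expansion $\Psi(\bar\alpha+h,x_d) = \Psi(\bar\alpha,x_d) + w(h) + \rho(h)$ underlying \cref{lemma:polystruct}, where $w(h)$ collects the first- and second-order terms and $\rho(h) = \oo(\|h\|_2^2)$; by \eqref{eq:randomeq283672329-2} one has $w(h) \in V_2$ and hence $w(h) \perp v$. The residual is then $w(h) - s v + \rho(h)$, and expanding the square while using $w(h) \perp v$ gives $\|\Psi(\bar\alpha+h,x_d) - y_d^s\|_Y^2 - s^2 = \|w(h) + \rho(h)\|_Y^2 - 2 s (v, \rho(h))_Y \ge -2|s|\,\|\rho(h)\|_Y$. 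Adding $\nu g(h) \ge \nu c \|h\|_2^2$, valid on the neighborhood $U$ from the hypothesis on $g$, and choosing $r > 0$ small enough that the $\oo(\|h\|_2^2)$ remainder is dominated, yields the quadratic growth \eqref{eq:quadgrowth253543} with $\varepsilon := \tfrac12 \nu c$; in particular $\bar\alpha$ is a local minimizer of \eqref{eq:trainingpropprot-reg}. Conceptually this merely records that $\bar\alpha$ is a critical point of the unregularized loss whose Hessian is only positive semidefinite, and that the strongly growing regularizer restores the missing definiteness.

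The decisive and most delicate step is spuriousness. The obstacle is that the competitor furnished by \ref{fa:II} in the earlier arguments is only a minimizing sequence, which may leave $U$ and drive $g$ to infinity, so that the penalty could cancel any gain in loss. I would circumvent this by selecting a single \emph{fixed} competitor: applying \ref{fa:II} to the nonzero vector $v$ produces an $\alpha_1 \in D$ with $\|\Psi(\alpha_1,x_d) - v\|_Y^2 < \|v\|_Y^2 = 1$, which rearranges to $(\Psi(\alpha_1,x_d), v)_Y > \tfrac12 \|\Psi(\alpha_1,x_d)\|_Y^2 > 0$. Evaluating the regularized objective at $\alpha_1$ and subtracting its value $s^2$ at $\bar\alpha$ yields, using $\Psi(\bar\alpha,x_d) \perp v$, the quantity $\|\Psi(\alpha_1,x_d) - \Psi(\bar\alpha,x_d)\|_Y^2 + \nu g(\alpha_1 - \bar\alpha) - 2 s (\Psi(\alpha_1,x_d), v)_Y$, which is affine and strictly decreasing in $s$ once $\nu$ is fixed. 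Hence for every fixed $\nu > 0$ and all sufficiently large $s$ this difference is negative, so $\bar\alpha$ is not a global minimizer of \eqref{eq:trainingpropprot-reg} and is therefore a genuine spurious local minimum; moreover $s^2 - \inf_{\alpha}(\|\Psi(\alpha,x_d) - y_d^s\|_Y^2 + \nu g(\alpha - \bar\alpha))$ grows linearly in $s$, so enlarging $s$ forces the loss gap \eqref{eq:randomeq28h73} for the prescribed $C$. Finally, letting $(\nu, s)$ range over a product of intervals with $s$ beyond the ($\nu$- and $C$-dependent) threshold produces uncountably many admissible pairs $(y_d^s, \nu)$, and since \eqref{eq:quadgrowth253543} only asserts existence of some $r > 0$, the dependence of $r$ on $s$ and $\nu$ is harmless.
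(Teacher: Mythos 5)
Your proposal is correct, and the first half (the local quadratic growth) is essentially the paper's argument: both rest on \cref{lemma:polystruct}\ref{item:lowtang:ii}, the choice of $v \perp V_2$, the observation that the first- and second-order Taylor terms of $\Psi(\cdot,x_d)$ at $\bar\alpha$ are orthogonal to $v$ so that only the $\oo(\|h\|_2^2)$ remainder couples to $sv$, and the quadratic lower bound on $g$ to absorb that remainder. Where you genuinely diverge is the spuriousness step. The paper first drives $s$ beyond a threshold $M$ using the worst-case bound $\inf_\alpha\|\Psi(\alpha,x_d)-y_d^s\|_Y^2 \le \Theta(\Psi,x_d)\|y_d^s\|_Y^2$ (so the unregularized gap exceeds $C+2$), extracts a near-minimizer $\tilde\alpha_s$ from the minimizing sequence, and only then shrinks $\nu_s$ so that $\nu_s g(\tilde\alpha_s-\bar\alpha)<1$; the admissible $\nu$ thus depend on $s$ and must be small. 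You instead fix a single competitor $\alpha_1$ once and for all by applying \ref{fa:II} to the direction $v$ itself, extract the strict inequality $(\Psi(\alpha_1,x_d),v)_Y > \tfrac12\|\Psi(\alpha_1,x_d)\|_Y^2 > 0$ (note $\Psi(\alpha_1,x_d)\neq 0$ is forced, since otherwise the loss at $\alpha_1$ would equal $\|v\|_Y^2$), and let the cross term $-2s(\Psi(\alpha_1,x_d),v)_Y$ overwhelm the \emph{fixed} penalty $\nu g(\alpha_1-\bar\alpha)$. This buys you a stronger quantifier structure — for \emph{every} $\nu>0$ there is an $s$-threshold beyond which $\bar\alpha$ is spurious with gap at least $C$ — whereas the paper's construction naturally produces only small $\nu$ for each large $s$; both comfortably yield the uncountably many pairs the theorem asserts, and your remark that $r$ in \eqref{eq:quadgrowth253543} may depend on $(s,\nu)$ correctly disposes of the only remaining uniformity issue.
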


\begin{proof}
Suppose that a point $\bar \alpha$ satisfying the 
assumptions of the theorem is given and let $V_2$ denote 
the subspace from part \ref{item:lowtang:ii} of \cref{lemma:polystruct}.
Then, it follows from the inequality $\frac12(d_\xx + 2)(d_\xx + 1) < n$ 
that
$\dim(V_2) < \dim (Y)$ holds and that 
 there exists a $v \in Y$ that is orthogonal to $V_2$ and satisfies $\|v\|_Y = 1$.
Let us again define  $y_d^s := \Psi(\bar \alpha, x_d) + s v \in Y$, $s \in \R$,
and assume that $C>0$ is an arbitrary but fixed constant. 
Then, we obtain completely analogously to the proof of \cref{theorem:existencestatpts} that
there exists an $M > 0$ with 
\begin{equation*}
\inf_{\alpha \in D} \|\Psi( \alpha, x_d) - y_d^s\|_Y^2 + C + 2 \leq  \|\Psi( \bar \alpha, x_d) - y_d^s\|_Y^2
\end{equation*}
for all $s \in \R$ with $|s| > M$. Note that the above in particular implies that,
for all $s \in \R$ with $|s| > M$, we can find an $\tilde \alpha_s \in D$ with 
\begin{equation*}
\|\Psi( \tilde \alpha_s, x_d) - y_d^s\|_Y^2 + C + 1\leq  \|\Psi( \bar \alpha, x_d) - y_d^s\|_Y^2.
\end{equation*}
If we now choose a $\nu_s > 0$ for all $s \in \R$ with $|s| > M$ such that 
$\nu_s g(\tilde \alpha_s - \bar \alpha) < 1$ holds and exploit the identity $g(0) = 0$, 
then it readily follows that 
\begin{equation*}
\|\Psi( \tilde \alpha_s, x_d) - y_d^s\|_Y^2 + \nu_s g(\tilde \alpha_s - \bar \alpha) + C
\leq  \|\Psi( \bar \alpha, x_d) - y_d^s\|_Y^2 + \nu_s g(0).
\end{equation*}
This establishes \eqref{eq:randomeq28h73}. It remains to prove that,
for all of the above $y_d^s$, $\nu_s$, and $s$, the vector $\bar \alpha$ is indeed a 
spurious local minimum of \eqref{eq:trainingpropprot-reg} that satisfies a local quadratic growth condition
of the form \eqref{eq:quadgrowth253543}. 
To this end, we note that the binomial identities, the definition of $y_d^s$, the choice of $v$,
and our assumptions on $g$ and $\bar \alpha$ imply that 
\begin{equation}
\label{eq:randomeq726353}
\begin{aligned}
&\|\Psi( \alpha, x_d) - y_d^s\|_Y^2 + \nu_s g(\alpha - \bar \alpha)  
-
\|\Psi( \bar \alpha, x_d) - y_d^s\|_Y^2 - \nu_s g(0)  
\\
&\quad\geq
\|\Psi( \alpha, x_d) - \Psi(\bar \alpha, x_d) - s v  \|_Y^2  - \| s v \|_Y^2 
+ \nu_s c \|\alpha - \bar \alpha \|_2^2
\\
&\quad=
\|\Psi( \alpha, x_d) - \Psi(\bar \alpha, x_d) \|_Y^2  
- 2\left ( \Psi( \alpha, x_d) - \Psi(\bar \alpha, x_d), s v  \right )_Y
+ \nu_s c \|\alpha - \bar \alpha \|_2^2
\\
&\quad=
\|\Psi( \alpha, x_d) - \Psi(\bar \alpha, x_d) \|_Y^2  + \nu_s c \|\alpha - \bar \alpha \|_2^2
\\
&\quad\quad~~- 2\left ( \Psi( \alpha, x_d) 
- \Psi(\bar \alpha, x_d) 
- \partial_\alpha \Psi(\bar \alpha, x_d)\left \langle \alpha - \bar \alpha \right \rangle
- \frac12 \partial_\alpha^2 \Psi(\bar \alpha, x_d)\left \langle \alpha - \bar \alpha, \alpha - \bar \alpha \right \rangle, s v  \right )_Y
\\
&\quad= \|\Psi( \alpha, x_d) - \Psi(\bar \alpha, x_d) \|_Y^2  + \nu_s c \|\alpha - \bar \alpha \|_2^2 + \oo(\|\alpha - \bar \alpha\|_2^2)
\end{aligned}
\end{equation}
holds for all $\alpha \in D$ with $\alpha - \bar \alpha \in U$, 
where the Landau symbol refers to the limit $\|\alpha - \bar \alpha\|_2 \to 0$. 
By choosing sufficiently small $\varepsilon, r > 0$, it now follows immediately that 
$\bar \alpha$ is a spurious local minimum of \eqref{eq:trainingpropprot-reg} that satisfies \eqref{eq:quadgrowth253543}.
This completes the proof. 
\end{proof}
\begin{remark}~\label[remark]{rem:Oneighborhood}
\begin{itemize}
\item Analogously to the condition $d_\xx + 1 < n$ in \cref{corollary:structstatpts},
which is directly related to the approximation capabilities of affine functions $T\colon \R^{d_\xx} \to \R^{d_\yy}$
(cf.\ the comments after \cref{corollary:structstatpts}), the assumption 
 $\frac12(d_\xx + 2)(d_\xx + 1) < n$ in \cref{theorem:spuriousregprob}
is directly related to the approximation properties of second-order polynomials. 
Indeed, as seen in the proof of \cref{lemma:polystruct}, this inequality expresses 
that the dimension $n d_\yy$ of the overall output space $Y$ should be larger than 
the dimension of the space of 
vector-valued polynomials of degree at most two mapping $\XX = \R^{d_\xx}$ to $\YY = \R^{d_\yy}$.
The condition $\frac12(d_\xx + 2)(d_\xx + 1) < n$
thus---roughly speaking---ensures that there exist non-pathological choices of the training data that 
cannot be fit precisely by a quadratic vector-valued polynomial. Such an assumption 
is necessary in \cref{theorem:spuriousregprob} because its proof relies on the non-surjectivity of
a second-order Taylor approximation of the function $\Psi(\cdot, x_d)$, 
see the choice of the vector $v$ used in the construction 
of the spurious local minima in \eqref{eq:quadgrowth253543}. 
We remark that, 
although the condition $\frac12(d_\xx + 2)(d_\xx + 1) < n$ prevents \cref{theorem:spuriousregprob}
from being applicable in some applications, in the context of classical regression problems
(as arising, e.g., in the field of partial differential equations),
in which quadratic ansatz functions are typically unable to fit the training data precisely, 
this assumption is not overly restrictive. 
\item 
If the function $g$ is continuous in an open neighborhood of the origin, then 
the quadratic growth condition \eqref{eq:quadgrowth253543} and the fact that the function values of the objective 
in \eqref{eq:trainingpropprot-reg} depend continuously on $y_d$ and $\nu$ imply that,
in the situation of \cref{theorem:spuriousregprob}, there 
always exists an open nonempty subset $O$ of $Y \times (0, \infty)$ such that 
the regularized problem \eqref{eq:trainingpropprot-reg} possesses a spurious local minimum for all $(y_d, \nu) \in O$. 
To see this, let us suppose that 
$(\tilde y_d, \tilde \nu) \in Y \times (0, \infty)$ is a tuple 
such that the assertion of  \cref{theorem:spuriousregprob} holds for some $\bar \alpha \in \R^m$,
$C>0$, $\varepsilon > 0$, and $r>0$, and let $\delta \in (0, r)$ be chosen such that 
the functions $g(\cdot - \bar \alpha)\colon \R^m \to \R$ 
and $\Psi(\cdot, x_d)\colon D \to Y$ are continuous on the closed ball $B_{2\delta}(\bar \alpha)$.
(Such a $\delta$ can be found due to the regularity assumptions on $g$ and $\phi$.)
Then, it follows from \eqref{eq:quadgrowth253543} and \eqref{eq:randomeq28h73} that  
\begin{equation}
\label{eq:randomeq3674638}
\begin{aligned}
&\min_{\alpha \in  \R^m,\, \|\alpha - \bar \alpha\|_2 = \delta}
\big (  \|\Psi( \alpha, x_d) - \tilde y_d\|_Y^2 + \tilde \nu g(\alpha- \bar \alpha)  \big)
\\
&>  \min_{\alpha \in B_\delta(\bar \alpha)} \big ( \|\Psi(\alpha , x_d) - \tilde y_d\|_Y^2 + \tilde \nu g(\alpha-\bar \alpha) \big)
> \|\Psi(\tilde \alpha, x_d) -  \tilde y_d\|_Y^2 + \tilde \nu g(\tilde \alpha - \bar \alpha)
\end{aligned}
\end{equation}
holds for some $\tilde \alpha \in \R^m$. Since the function 
\[
B_{2\delta}(\bar \alpha) \times Y \times (0, \infty) \ni (\alpha, y_d, \nu) \mapsto \|\Psi( \alpha, x_d) -  y_d\|_Y^2 
+ \nu g( \alpha - \bar \alpha) \in \R
\]
is continuous, it is uniformly continuous on every compact subset of its domain of definition. 
In combination with the compactness of the sets ${\{\alpha \in  \R^m \mid\|\alpha - \bar \alpha\|_2 = \delta \}}$
and $B_\delta(\bar \alpha)$, this implies that all of the inequalities in \eqref{eq:randomeq3674638} 
remain valid for all $(y_d, \nu)$ in a sufficiently small open neighborhood $O \subset Y \times (0, \infty)$
of the tuple $(\tilde y_d, \tilde \nu)$. From the first of these inequalities, however, it follows 
that the loss function in \eqref{eq:trainingpropprot-reg} possesses a local minimum in the interior of the ball $B_\delta(\bar \alpha)$
(namely in that point at which the minimum of the loss on the compact set $B_\delta(\bar \alpha)$ is attained),
and from the second inequality that this local minimum cannot be global. Thus,
we indeed obtain that \eqref{eq:trainingpropprot-reg} possesses a spurious local minimum 
for all $(y_d, \nu) \in O$. Note that
the local quadratic growth condition \eqref{eq:quadgrowth253543} 
is typically lost when passing over from $(\tilde y_d, \tilde \nu)$
to the perturbed tuples $(y_d, \nu) \in O$ here since it is not stable w.r.t.\ uniform convergence. 
 \end{itemize}
\end{remark}

As \cref{theorem:spuriousregprob} demonstrates, the addition of 
a regularization term to the objective function of \eqref{eq:trainingpropprot2}
may create spurious local minima 
by introducing a bias towards certain values of the parameter vector $\alpha$.
The proof of \cref{theorem:spuriousregprob} further shows that these effects are a direct consequence of the 
approximation property \ref{fa:II} 
and will typically appear when the regularization parameter $\nu$ is too small
relative to the size of the training label vector $y_d$
(see the conditions $|s| > M$ and $\nu_s g(\tilde \alpha_s - \bar \alpha) < 1$).
Choosing $\nu$ too large, however, is also not a good idea as the following result demonstrates:

\begin{theorem}{\hspace{-0.05cm}\bf(Loss of Approximation Property \ref{fa:II} by Regularization)}\label{theorem:approxgone}
Suppose that \cref{ass:standingassumpssec4,ass:linearlowlev} hold,
that the function 
$\phi \colon \R^{p} \times \R^{q} \to \R^{d_\yy}$ in \cref{ass:linearlowlev} 
is continuously differentiable in an open neighborhood of the origin and
twice differentiable at the origin,
and that $\frac12(d_\xx + 2)(d_\xx + 1) < n$ and $\phi(0,0) = 0$ holds.
Assume further
that a function $g \colon\R^m \to [0, \infty)$ with $g(0) = 0$ is given
such that there exist constants $c_1, c_2>0$ and an open neighborhood $U \subset \R^m$ of the origin
satisfying $g(z) \geq c_1 \|z\|_2^2$ for all $z \in U$ and 
$g(z) \geq c_2$ for all $z \in \R^m \setminus U$. Then, for every arbitrary but fixed 
$\nu > 0$,  there exist 
uncountably many $y_d \in Y \setminus \{0\}$ such that $\bar \alpha = 0$ is the 
unique global solution of the problem 
\begin{equation}
\label{eq:trainingpropprot-reg2}
\min_{\alpha \in D} \|\Psi( \alpha, x_d) - y_d\|_Y^2 + \nu g(\alpha).
\end{equation}
\end{theorem}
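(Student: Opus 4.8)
The plan is to exploit that the regularizer $\nu g(\alpha)$ forces a strict local quadratic growth around $\bar\alpha = 0$ that cannot be overcome by the (at most second-order) approximation gain available to $\Psi(\cdot, x_d)$ near $0$, and then to choose label vectors $y_d$ whose norm is so small relative to $\nu$ that the penalty also rules out every competitor far from the origin. First I would record the consequences of the hypotheses at $\bar\alpha = 0$: since $\phi(0,0)=0$ and the reshaping of $0 \in \R^m$ is $(\bar\beta, 0)=(0,0)$, we have $\Psi(0, x_d)=0$, so the objective of \eqref{eq:trainingpropprot-reg2} at $\alpha = 0$ equals $\|y_d\|_Y^2$, i.e.\ the value of the trivial guess. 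By part \ref{item:lowtang:ii} of \cref{lemma:polystruct} there is a subspace $V_2 \subset Y$ with $\dim(V_2) \leq \frac12 d_\yy (d_\xx + 2)(d_\xx + 1)$ that contains the second-order Taylor polynomial of $\Psi(\cdot, x_d)$ at $0$; the assumption $\frac12(d_\xx+2)(d_\xx+1) < n$ gives $\dim(V_2) < n d_\yy = \dim(Y)$, so $V_2 \neq Y$ and I may fix a unit vector $v$ in the orthogonal complement $V_2^\perp$.

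The candidate family of label vectors is $y_d = s v$ with $0 < |s| < s_0$ for a threshold $s_0 = s_0(\nu) > 0$ to be fixed; this is already an uncountable set of nonzero vectors. Writing $P$ for the orthogonal projection onto $V_2^\perp$ and using $\Psi(0,x_d)=0$, twice differentiability at $0$ together with \cref{lemma:polystruct} yields $P\Psi(\alpha, x_d) = \oo(\|\alpha\|_2^2)$. Because $v \in V_2^\perp$, this gives $(\Psi(\alpha, x_d), v)_Y = (P\Psi(\alpha, x_d), v)_Y = \oo(\|\alpha\|_2^2)$ and hence, for $y_d = sv$,
\[
\|\Psi(\alpha, x_d) - y_d\|_Y^2 - \|y_d\|_Y^2 = \|\Psi(\alpha, x_d)\|_Y^2 - 2 s\,(\Psi(\alpha, x_d), v)_Y \geq -2|s|\,\oo(\|\alpha\|_2^2).
\]
Adding $\nu g(\alpha) \geq \nu c_1 \|\alpha\|_2^2$ on $U$, the difference between the objective at $\alpha$ and at $0$ is at least $\nu c_1\|\alpha\|_2^2 - 2|s|\,\oo(\|\alpha\|_2^2)$. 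Fixing a radius $\rho_0 > 0$ with $B_{\rho_0}(0)\subset U$ on which the ($s$-independent) remainder is suitably small, I can then choose $s_0$ so that for all $|s| < s_0$ this expression is strictly positive for $0 \neq \alpha \in B_{\rho_0}(0)$.

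For the region away from the origin I would use the two growth bounds on $g$ directly: for $\alpha \in U$ with $\|\alpha\|_2 \geq \rho_0$ one has $\nu g(\alpha) \geq \nu c_1 \rho_0^2$, while for $\alpha \notin U$ one has $\nu g(\alpha) \geq \nu c_2$, so on all of $D \setminus B_{\rho_0}(0)$ the objective is bounded below by $\eta := \nu \min(c_1\rho_0^2, c_2) > 0$. Shrinking $s_0$ so that $s_0^2 < \eta$ guarantees that the objective at every such $\alpha$ strictly exceeds $\|y_d\|_Y^2$, the value at $0$. Combining the local and the far estimates shows that $\bar\alpha = 0$ is the unique global minimizer of \eqref{eq:trainingpropprot-reg2} for every $y_d = sv$ with $0 < |s| < s_0$, which is the claim (an open set of such $y_d$ then follows by continuity). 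The main obstacle is precisely the bookkeeping in the preceding paragraph: one must ensure that the ball $B_{\rho_0}(0)$ on which the regularizer dominates the second-order approximation gain can be chosen uniformly over the whole range of admissible $s$, rather than shrinking to nothing as $s$ varies. This works because the Taylor remainder of $\Psi(\cdot, x_d)$ does not depend on $y_d$, whereas the exploitable loss reduction in the direction $v$ scales linearly with $|s|$ and is therefore rendered negligible against $\nu c_1\|\alpha\|_2^2$ by taking $|s|$ small.
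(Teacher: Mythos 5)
Your proposal is correct and follows essentially the same route as the paper's proof: pick a unit vector $v$ orthogonal to the subspace $V_2$ from \cref{lemma:polystruct}\ref{item:lowtang:ii}, take $y_d = sv$, use the second-order Taylor remainder together with $g(\alpha)\geq c_1\|\alpha\|_2^2$ to dominate near the origin, and use $g\geq \min(c_1 r^2, c_2)$ to dominate away from it, shrinking $|s|$ accordingly. The only cosmetic differences are that you phrase the remainder bound via $\oo(\|\alpha\|_2^2)$ instead of the explicit estimate $\tfrac12\|\alpha\|_2^2$ and add an unneeded parenthetical about an open set of label vectors; the resulting threshold on $|s|$ is the same as the paper's $\min(\nu c_1,(\nu c_2)^{1/2},(\nu c_1)^{1/2}r)$.
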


\begin{proof}
The assumptions of the theorem imply that $\bar \alpha = 0 \cong (0, 0) \in \R^p \times \R^{q \times d_\xx}$ satisfies the 
conditions in part \ref{item:lowtang:ii} of \cref{lemma:polystruct} with $\frac12 d_\yy(d_\xx + 2)(d_\xx + 1) < \dim(Y)$. 
We can thus again find a proper subspace $V_2$ of $Y$ such that 
the first and second derivatives of the function $\Psi(\cdot, x_d)\colon D \to Y$ at $\bar  \alpha = 0$
are contained in $V_2$ in the sense of \eqref{eq:randomeq283672329-1} and \eqref{eq:randomeq283672329-2},
choose a vector $v \in Y$ with $\|v\|_Y = 1$ that is orthogonal to $V_2$, 
and define $y_d^s := \Psi(0, x_d) + s v \in Y$ for all $s \in \R$. 
Since $\phi$ is twice differentiable
at the origin and since $U$ is open, 
we further obtain that there exists an $r > 0$ with $\alpha \in U$ and
\begin{equation*}
\left \| 
\Psi( \alpha, x_d) 
- \Psi(0, x_d) 
- \partial_\alpha \Psi(0, x_d)\left \langle \alpha \right \rangle
- \frac12 \partial_\alpha^2 \Psi(0, x_d)\left \langle \alpha, \alpha\right \rangle
\right \|_Y \leq \frac12 \|\alpha\|_2^2 
\end{equation*}
for all $\alpha \in \R^m$ with $\|\alpha\|_2 \leq r$. Note that this estimate and exactly the same calculation as in 
\eqref{eq:randomeq726353} yield 
\begin{equation*}
\begin{aligned}
&\|\Psi( \alpha, x_d) - y_d^s\|_Y^2 + \nu g(\alpha)  
-
\|\Psi( 0, x_d) - y_d^s\|_Y^2 - \nu g(0)  
\\
&\geq
\nu c_1 \|\alpha \|_2^2
- 2\left ( \Psi( \alpha, x_d) 
- \Psi(0, x_d) 
- \partial_\alpha \Psi(0, x_d)\left \langle \alpha \right \rangle
- \frac12 \partial_\alpha^2 \Psi(0, x_d)\left \langle \alpha , \alpha\right \rangle, s v  \right )_Y
\\
&\geq 
(\nu c_1  - |s|)\|\alpha  \|_2^2
\end{aligned}
\end{equation*}
for all $\alpha$ with $\|\alpha\|_2 \leq r$ and all arbitrary but fixed $\nu > 0$. 
For all $\alpha \in \R^m$ with $\|\alpha\|_2 > r$,
on the other hand, we have 
\begin{equation*}
\|\Psi( \alpha, x_d) - y_d^s\|_Y^2 + \nu g(\alpha)  
-
\|\Psi( 0, x_d) - y_d^s\|_Y^2 - \nu g(0)  
\geq 
\nu \min(c_2, c_1 r^2) - s^2.
\end{equation*}
By combining the last two estimates, it follows that $\bar \alpha = 0$ is
the unique global solution of \eqref{eq:trainingpropprot-reg2} with label vector $y_d^s$ for all $s \in \R$
with $|s|< \min(\nu c_1, (\nu c_2)^{1/2}, (\nu c_1)^{1/2} r)$. 
\end{proof}

\Cref{theorem:approxgone} shows that,
although the function $\Psi(\cdot, x_d)\colon D \to Y$ is able to 
provide a best approximation for every $y_d \neq 0$
that is better than the origin by \ref{fa:II},
the regularized problem \eqref{eq:trainingpropprot-reg2} may very well 
possess the optimal solution $\bar \alpha = 0$ with the associated vector $\Psi(0, x_d) = 0$
for nonzero label vectors $y_d$. (Recall that $\phi(0,0) = 0$ implies $\psi(0, \xx) = 0$ for all $\xx \in \R^{d_\xx}$
so that we indeed have $\Psi(0, x_d) = 0$ here.) 
Adding a regularization term to the objective function of \eqref{eq:trainingpropprot2} thus impairs the approximation 
properties of the map $\Psi(\cdot, x_d)\colon D \to Y$ and compromises the property \ref{fa:II} 
that distinguishes the function $\psi$ from a linear approximation scheme in the first place, cf.\ the discussion in \cref{sec:4}.  
Note that the estimate $|s|< \min(\nu c_1, (\nu c_2)^{1/2}, (\nu c_1)^{1/2} r)$ in the proof of \cref{theorem:approxgone}
suggests that these effects get worse when $\nu$ increases as in this case
the solution $\bar \alpha = 0$ is obtained from  \eqref{eq:trainingpropprot-reg2}
for vectors $y_d$ with larger norms. We would like to point out that 
studying how the approximation properties of the global solutions 
of the problem \eqref{eq:trainingpropprot-reg2} are affected by the choice of the tuple $(\nu, g)$
is in general far from straightforward. The main reason for this is that,
in a nonlinear approximation scheme, there is typically no immediate connection between, e.g., 
the norm of the parameter vector $\alpha$ and the size of the output $\Psi(\alpha, x_d)$
so that it is a-priori often completely unclear which features of the elements 
of the image $\Psi(D, x_d)$ are penalized by a term of the form $\nu g(\alpha)$. 
 
We conclude this section with a result that shows that 
the addition of a regularization term to the objective function of \eqref{eq:trainingpropprot2} does not necessarily 
remove the instability and nonuniqueness of solutions, either:

\begin{theorem}{\hspace{-0.05cm}\bf(Instability and Nonuniqueness in the Regularized Case)}\label{theorem:nonuniquereg}
Suppose that \cref{ass:standingassumpssec4,ass:linearlowlev} 
are satisfied, 
that $\frac12(d_\xx + 2)(d_\xx + 1) < n$ holds,
and that $\bar \alpha$, $\bar \beta$, $\phi$, $g$, $c$, and $U$
are as in \cref{theorem:spuriousregprob}.
Then, there exist uncountably many combinations of regularization parameters $\nu>0$
and label vectors $y_d \in Y$ such that
there exist an $s_0 \geq 0$, a sequence $\{y_d^s\}_{s > s_0} \subset Y$, 
and an open neighborhood $\tilde U \subset \R^m$
of $\bar \alpha$ satisfying $y_d^s \to y_d$ for $s \to s_0$, 
\begin{equation}
\label{eq:randomeq28363636}
\tilde U  \cap   \argmin_{\alpha \in D} 
 \|\Psi( \alpha, x_d) - y_d^s\|_Y^2 + \nu g(\alpha - \bar \alpha)   = \emptyset\qquad \forall s > s_0,
\end{equation}
and
\begin{equation}
\label{eq:randomeq182736}
\bar \alpha \in \argmin_{\alpha \in D} 
\|\Psi( \alpha, x_d) - y_d \|_Y^2 + \nu g(\alpha - \bar \alpha).
\end{equation}
Further, there are uncountably many tuples $(y_d, \nu) \in Y \times (0, \infty)$ with the above properties such that 
there exists a family $\{\alpha_s\}_{s > s_0} \subset D \setminus \tilde U$ satisfying
\begin{equation*}
\lim_{s \to s_0}  \|\Psi(\alpha_s, x_d) - y_d\|_Y^2 
+ \nu g(\alpha_s - \bar \alpha)
 =  \|\Psi(\bar \alpha, x_d) - y_d\|_Y^2
+ \nu g(0).
\end{equation*}
The solutions of the regularized training problem \eqref{eq:trainingpropprot-reg} thus possess the same nonuniqueness 
and instability properties as the optimization problem in \cref{cor:instabilityoverpara}. 
\end{theorem}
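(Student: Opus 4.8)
The plan is to follow the blueprint of \cref{cor:instabilityoverpara}, replacing the exact subspace structure exploited there (unavailable under \cref{ass:linearlowlev}) by the polynomial second-order approximation from \cref{lemma:polystruct}. Concretely, I would first apply part \ref{item:lowtang:ii} of \cref{lemma:polystruct} at the point $\bar\alpha \cong (\bar\beta, 0)$ to obtain a subspace $V_2 \subset Y$ of dimension at most $\frac12 d_\yy(d_\xx+2)(d_\xx+1)$, which the hypothesis $\frac12(d_\xx+2)(d_\xx+1) < n$ forces to be proper. Fixing a unit vector $v$ orthogonal to $V_2$ and writing $\bar y^s := \Psi(\bar\alpha, x_d) + sv$ and $w(\alpha) := \Psi(\alpha, x_d) - \Psi(\bar\alpha, x_d)$, the idea is to locate a threshold $s_0 \geq 0$ at which $\bar\alpha$ is a global minimizer of the regularized objective for the label $y_d := \bar y^{s_0}$, while for every $s > s_0$ the global infimum is attained (or approached) strictly outside a fixed ball $\tilde U = B_r(\bar\alpha)$. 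Since $\Psi(\bar\alpha, x_d) \in V_2$ gives $(w(\alpha), v)_Y = (R(\alpha), v)_Y$, where $R(\alpha)$ is the second-order Taylor remainder of $\Psi(\cdot, x_d)$ at $\bar\alpha$, this is precisely the tie-breaking picture of \cref{cor:instabilityoverpara}, now driven by the curvature term rather than by realizability.

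The first technical step is a local growth estimate that is uniform in $s$ over a large range. Choosing $r$ so small that $\overline{B_r(\bar\alpha)} \subset U$, that $\phi$ is continuously differentiable there, and that $\|R(\alpha)\|_Y \leq \epsilon(r)\|\alpha - \bar\alpha\|_2^2$ on $B_r(\bar\alpha)$ with $\epsilon(r) \to 0$ as $r \to 0$ (possible by twice-differentiability at $(\bar\beta, 0)$), the same computation as in \eqref{eq:randomeq726353} yields
\[
\|\Psi(\alpha, x_d) - \bar y^s\|_Y^2 + \nu g(\alpha - \bar\alpha) - s^2 \geq \bigl(\nu c - 2 s \epsilon(r)\bigr)\|\alpha - \bar\alpha\|_2^2 \qquad \forall \alpha \in B_r(\bar\alpha).
\]
Hence $\bar\alpha$ is the \emph{unique} minimizer of the regularized objective over $\overline{B_r(\bar\alpha)}$, with value $s^2$, for every $s \in [0, S(r))$, where $S(r) := \nu c / (2\epsilon(r))$. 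The crucial observation is that $S(r) \to \infty$ as $r \to 0$: shrinking $\tilde U$ enlarges the range of $s$ on which $\bar\alpha$ dominates the ball.

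Next I would pin down the threshold by a convexity argument. Setting $p_r(s) := \inf_{\|\alpha - \bar\alpha\|_2 \geq r}\bigl(\|\Psi(\alpha, x_d) - \bar y^s\|_Y^2 + \nu g(\alpha - \bar\alpha)\bigr)$, one checks that $h_r(s) := s^2 - p_r(s)$ is a supremum of affine functions of $s$, hence convex, with $h_r(s) \leq s^2 < \infty$ and $h_r(0) \leq 0$. Applying \ref{fa:II} to the unit vector $v$ produces a fixed $\alpha_1$ with $(\Psi(\alpha_1, x_d), v)_Y > 0$, which forces $h_r(s) \to \infty$; convexity then yields a threshold $s_0 := \sup\{s \geq 0 : h_r(s) \leq 0\}$ with $h_r(s_0) = 0$ and $h_r(s) > 0$ for all $s > s_0$. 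The estimate $s_0 \leq (\|w(\alpha_1)\|_Y^2 + \nu g(\alpha_1 - \bar\alpha))/(2(\Psi(\alpha_1, x_d), v)_Y)$ is independent of $r$, so I may finally fix $r$ small enough that $s_0 < S(r)$. With this choice, $h_r(s_0) = 0$ means $p_r(s_0) = s_0^2$, so $\bar\alpha$ realizes the global infimum for $y_d = \bar y^{s_0}$, giving \eqref{eq:randomeq182736}; and for $s \in (s_0, S(r))$ the ball-infimum equals $s^2$ while $p_r(s) < s^2$, so no global minimizer lies in $\tilde U$. Reparametrizing $(s_0, S(r))$ onto $(s_0, \infty)$ by a homeomorphism $\theta$ with $\theta(s) \to s_0$ and setting $y_d^s := \bar y^{\theta(s)}$ produces \eqref{eq:randomeq28363636}. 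The minimizing sequence in the ``further'' part comes directly from $p_r(s_0) = s_0^2$: any exterior sequence $\{\beta_j\}$ with objective tending to $s_0^2$, reindexed over $s > s_0$, furnishes $\{\alpha_s\}_{s>s_0} \subset D \setminus \tilde U$ with the asserted limit. Letting $\nu$ range over a continuum (each admissible value handled with its own radius $r$) gives uncountably many pairs $(y_d, \nu)$.

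The main obstacle is exactly the requirement that a \emph{single} neighborhood $\tilde U$ work for \emph{all} $s > s_0$: the naive growth estimate only controls the ball for $s < \nu c$, which need not exceed the threshold. The resolution is the interplay noted above—shrinking $r$ simultaneously drives the growth range $S(r)$ to infinity and can only lower the (uniformly bounded) threshold $s_0$, so small $r$ guarantees $s_0 < S(r)$—together with the harmless reparametrization of the index that converts the bounded effective range $(s_0, S(r))$ into the full range $(s_0, \infty)$ demanded by the statement. Verifying that the threshold is genuinely attained (the tie $p_r(s_0) = s_0^2$) and that $\bar\alpha$ is an honest global minimizer there, rather than merely a limit of near-minimizers, is the delicate point and is precisely what the convexity and finiteness (hence continuity) of $h_r$ deliver.
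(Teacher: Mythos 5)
Your proposal is correct in substance and reaches the same conclusion, but the key technical device is genuinely different from the paper's. The paper works with the full value function $F(s) := \inf_{\alpha \in D}\bigl(\|\Psi(\alpha,x_d)-y_d^s\|_Y^2 + \nu g(\alpha-\bar\alpha)\bigr)$ on a bounded interval $[0,\bar s]$ supplied by \cref{theorem:spuriousregprob}, proves that $F$ is Lipschitz by a somewhat delicate minimizing-sequence estimate, and defines $s_0$ as the last time $F(s)$ equals the value at $\bar\alpha$; the neighborhood $\tilde U$ is then obtained from the Taylor/growth estimate \eqref{eq:randomeq726353} uniformly over $s\in[0,\bar s)$. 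You instead split $D$ into a ball and its exterior, observe that the ball-infimum is exactly $s^2$ (attained only at $\bar\alpha$) for $s<S(r)$, and replace the Lipschitz argument by the observation that $h_r(s)=s^2-p_r(s)$ is a pointwise supremum of functions affine in $s$, hence finite convex and continuous; the tie $h_r(s_0)=0$ then delivers \eqref{eq:randomeq182736} directly, and your minimizing sequence for $p_r(s_0)$ gives the ``further'' part more cleanly than the paper's triangle-inequality computation. The interplay $s_0\le$ ($r$-independent bound) $<S(r)$ for small $r$ correctly resolves the only real obstacle, and your construction even works for every $\nu>0$ rather than only for the small-$\nu$ regime the paper inherits from \cref{theorem:spuriousregprob}.

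One step needs patching: you write that applying \ref{fa:II} to the unit vector $v$ ``produces a fixed $\alpha_1$ with $(\Psi(\alpha_1,x_d),v)_Y>0$, which forces $h_r(s)\to\infty$.'' That inner product is indeed positive, but for $h_r$ to dominate the affine function generated by $\alpha_1$ you need $\|\alpha_1-\bar\alpha\|_2\ge r$, i.e.\ $\alpha_1$ must lie in the exterior region over which the supremum defining $h_r$ is taken, and bare \ref{fa:II} applied to $v$ does not guarantee this. The fix is to use the quantitative bound \eqref{eq:randomeq2736352} with label $Mv$: near-minimizers $\alpha_M$ satisfy $(\Psi(\alpha_M,x_d)-\Psi(\bar\alpha,x_d),v)_Y\ge \tfrac12(1-\Theta(\Psi,x_d))M - \oo(1)$, whereas inside $B_r(\bar\alpha)$ the orthogonality of $v$ to $V_2$ caps this inner product at $\epsilon(r)r^2$; hence $\alpha_M$ leaves the ball for large $M$ and can serve as your $\alpha_1$. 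With that repair the argument is complete.
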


\begin{proof}
Suppose that $\bar \alpha$ is an arbitrary but fixed point satisfying the assumptions of the theorem 
and let $v \in Y$ and $y_d^s$, $s \in \R$, be defined as in the proof of \cref{theorem:spuriousregprob}.
Then, from exactly the same construction as in the proof of \cref{theorem:spuriousregprob},
we obtain that there exist uncountably many tuples $(\nu, y_d^{\bar s})$, $\nu > 0$, $\bar s >0$,
with w.l.o.g.\ different $\nu$
such that 
\begin{equation}
\label{eq:randomeq273653}
\inf_{\alpha \in D} \|\Psi( \alpha, x_d) - y_d^{\bar s}\|_Y^2 + \nu g(\alpha - \bar \alpha)
<
 \|\Psi( \bar \alpha, x_d) - y_d^{\bar s}\|_Y^2 + \nu g(0)
\end{equation}
holds. 
Let us fix such a tuple $(\nu, y_d^{\bar s})$ and consider the auxiliary function
\begin{equation*}
F\colon  [0, \bar s] \to [0, \infty),\qquad s \mapsto \inf_{\alpha \in D} \|\Psi( \alpha, x_d) - y_d^{s}\|_Y^2 + \nu g(\alpha - \bar \alpha).
\end{equation*}
We claim that this function is Lipschitz continuous. 
Indeed, for all $s_1, s_2 \in [0, \bar s]$ and every sequence $\{\alpha_i\} \subset D$
with 
\begin{equation*}
\lim_{i \to \infty} \|\Psi( \alpha_i, x_d) - y_d^{s_1}\|_Y^2 + \nu g(\alpha_i - \bar \alpha) = F(s_1),
\end{equation*}
we obtain from the non-negativity of $g$ and the definitions of $F$, $y_d^{s_1}$, and $\{\alpha_i\}$ that
\begin{equation*}
\begin{aligned}
0 &\leq \limsup_{i \to \infty }\| \Psi( \alpha_i, x_d) -  \Psi(\bar \alpha, x_d)\|_Y
\\
&\leq 
\limsup_{i \to \infty } \left ( \| \Psi( \alpha_i, x_d) -  y_d^{s_1}\|_Y^2 + \nu g(\alpha_i - \bar \alpha) \right )^{1/2}+ s_1\|v\|_Y
\\
&=
 \left (  \inf_{\alpha \in D} \|\Psi( \alpha, x_d) - y_d^{s_1}\|_Y^2 + \nu g(\alpha - \bar \alpha) \right )^{1/2}+ s_1 
\\
&\leq
 \left (   \|\Psi( \bar \alpha, x_d) - y_d^{s_1}\|_Y^2 + \nu g(0) \right )^{1/2}+ s_1 
\\
&\leq 2\bar s,
\end{aligned}
\end{equation*}
and, as a consequence,
\begin{equation*}
\begin{aligned}
F(s_1)&=  \lim_{i \to \infty} \|\Psi( \alpha_i, x_d) - y_d^{s_1}\|_Y^2 + \nu g(\alpha_i - \bar \alpha)
\\
&= 
\lim_{i \to \infty} \|\Psi( \alpha_i, x_d) - \Psi(\bar \alpha, x_d) - s_2 v - (s_1 - s_2)v\|_Y^2 + \nu g(\alpha_i - \bar \alpha)
\\
&\geq 
\limsup_{i \to \infty} 
\|\Psi( \alpha_i, x_d) -  y_d^{s_2} \|_Y^2 + \nu g(\alpha_i - \bar \alpha)
- 2 \| \Psi( \alpha_i, x_d) -  \Psi(\bar \alpha, x_d) - s_2 v\|_Y  |s_1 - s_2| 
\\
&\geq 
F(s_2)
- 6 \bar s \, |s_1 - s_2|. 
\end{aligned}
\end{equation*}
After exchanging the roles of $s_1$ and $s_2$, we thus have 
$
 | F(s_1) - F(s_2) | \leq 6 \bar s \, |s_1 - s_2|
$
and $F$ is Lipschitz continuous on $[0, \bar s]$ as claimed. Consider now the value
\begin{equation}
\label{eq:defs0}
s_0 := \inf 
\left \{ 
\tilde s \in [0, \bar s]
\,
\big |
\,
F(s) < \|\Psi( \bar \alpha, x_d) - y_d^{s}\|_Y^2 + \nu g(0)
~~
\forall s \in (\tilde s, \bar s]
\right \}.
\end{equation}
Then, it follows from the continuity of $F$, the definition of $y_d^s$, \eqref{eq:randomeq273653},
and the trivial identity $F(0) = 0 = \|\Psi( \bar \alpha, x_d) - y_d^{0}\|_Y^2 + \nu g(0)$ that 
$s_0$ satisfies $0 \leq s_0 < \bar s$ and
\begin{equation*}
\inf_{\alpha \in D} \|\Psi( \alpha, x_d) - y_d^{s_0}\|_Y^2 + \nu g(\alpha - \bar \alpha) 
=  \|\Psi( \bar \alpha, x_d) - y_d^{s_0}\|_Y^2 + \nu g(0).
\end{equation*}
This shows that $\bar \alpha$ satisfies \eqref{eq:randomeq182736} 
for $y_d := y_d^{s_0}$. To see that the above construction also yields \eqref{eq:randomeq28363636},
we note that the same estimates as in \eqref{eq:randomeq726353} imply that
\begin{equation*}
\begin{aligned}
&\|\Psi( \alpha, x_d) - y_d^s\|_Y^2 + \nu  g(\alpha - \bar \alpha)  
-
\|\Psi( \bar \alpha, x_d) - y_d^s\|_Y^2 - \nu g(0)  
\\
&\qquad \geq
\|\Psi( \alpha, x_d) - \Psi(\bar \alpha, x_d) \|_Y^2  + \nu c \|\alpha - \bar \alpha \|_2^2
\\
&\qquad\qquad - 2\bar s \left \| \Psi( \alpha, x_d) 
- \Psi(\bar \alpha, x_d) 
- \partial_\alpha \Psi(\bar \alpha, x_d)\left \langle \alpha - \bar \alpha \right \rangle
- \frac12 \partial_\alpha^2 \Psi(\bar \alpha, x_d)\left \langle \alpha - \bar \alpha, \alpha - \bar \alpha \right \rangle \right \|_Y 
\\
&\qquad\geq 0
\end{aligned}
\end{equation*}
holds for all 
$0 \leq s < \bar s$ and all $\alpha \in \R^m$ in a sufficiently small open neighborhood $\tilde U \subset \R^m$ of $\bar \alpha$ 
that depends only on $\Psi$, $g$, $\nu$, and the bound $\bar s$. As $\bar \alpha$ is not a global minimizer of the problem \eqref{eq:trainingpropprot-reg}
for all label vectors $y_d^s$ with $s_0 < s < \bar s$ by the definition of $s_0$ in \eqref{eq:defs0},
this shows that $\bar \alpha$, $\tilde U$, and the vectors $y_d^s$, $s \in (s_0, \bar s)$, indeed satisfy \eqref{eq:randomeq28363636}.
Since the convergence $y_d^s \to y_d^{s_0}$ for $s \to s_0$ is trivial, this proves the first part of the theorem.
(Note that we indeed end up with uncountably many different tuples $(\nu, y_d)$
with the desired properties here since, although we have modified the label vector
during the course of the proof,
we have not altered the regularization parameter $\nu$.)
To establish the second assertion of the theorem,
it suffices to note that the above considerations and the triangle inequality imply that, for all $s \in (s_0, \bar s)$,
there has to exist an $\alpha_s \in D \setminus \tilde U$ with 
\begin{equation*}
\begin{aligned}
\|\Psi( \bar \alpha, x_d) - y_d^{s_0}\|_Y^2 + \nu g(0)
&= \|\Psi( \bar \alpha, x_d) - y_d^{s}\|_Y^2 + \nu g(0) + \oo(1)
\\
&>
\|\Psi(\alpha_s, x_d) - y_d^{s}\|_Y^2 + \nu g(\alpha_s - \bar \alpha) + \oo(1)
\\
&=
\|\Psi(\alpha_s, x_d) - y_d^{s_0}\|_Y^2 + \nu g(\alpha_s - \bar \alpha) + \oo(1)
\\
&\geq 
\inf_{\alpha \in D} \|\Psi( \alpha, x_d) - y_d^{s_0}\|_Y^2 + \nu g(\alpha - \bar \alpha)  + \oo(1)
\\
&= \|\Psi( \bar \alpha, x_d) - y_d^{s_0}\|_Y^2 + \nu g(0) + \oo(1),
\end{aligned}
\end{equation*}
where the Landau symbol refers to the limit $(s_0, \bar s) \ni s \to s_0$. This proves the claim.
\end{proof}

\section{Application to Tangible Examples}
\label{sec:6}
With the abstract results of \cref{sec:5} in place, we are in the position 
to turn our attention to tangible examples and applications. 
In what follows, we will first consider
a classical free-knot spline interpolation scheme that 
is closely related to neural networks with ReLU activation functions, see \cref{subsec:6.1}. 
After this, we turn our attention to training problems 
for neural networks with various architectures, see \cref{subsec:6.2}.

\subsection{Free-Knot Linear Spline Interpolation}
\label{subsec:6.1}
The first example that we consider in this section is a special instance of a
dictionary approximation approach that generalizes classical 
piecewise linear interpolation---the so-called free-knot linear spline interpolation method. 
This technique is based on the idea to not only adapt the function values 
at the nodes of a linear spline to the function that is to be approximated,
but also to vary the nodes of the underlying mesh.
For details on this topic and its background, we refer to \cite{DeVore1998} and \cite{Daubechies2019}.
The setting that we consider in this subsection is the following:

\begin{setting}[Free-Knot Linear Spline Interpolation]~\label[setting]{set:variableknot}
\begin{itemize}
\item $\XX = \YY = \R$ and the norm of $\YY$ is just the absolute value function, 
\item $n \in \mathbb{N}$ and $n \geq 2$,
\item $x_d := \{\xx_{\;d}^k\}_{k=1}^n \in \XX^n$ is an arbitrary but fixed vector 
satisfying  $\xx_{\;d}^1 < \xx_{\;d}^2 <...< \xx_{\;d}^n$, 
\item $m = 2p$, $p \in \mathbb{N}$, $p \geq 3$, and $D \subset \R^m \cong \R^p \times \R^p$ is defined by 
\begin{equation}
\label{eq:Ddefvarknot}
D := 
\left \{
\alpha = (\beta, \gamma) \in \R^p \times \R^p
\,
\Big |
\,
\gamma_1 < \gamma_2 < ... < \gamma_p
\right \},
\end{equation}
\item $\psi\colon D \times \XX \to \YY$ is defined by 
\begin{equation}
\label{eq:variableknotfunction}
\begin{aligned}
\psi((\beta, \gamma), \xx)
:=\begin{cases}
\beta_1 & \text{if } \xx \leq \gamma_1
\\
\displaystyle
\frac{\gamma_{j+1} - \xx}{\gamma_{j+1} - \gamma_j} \beta_{j }
+
\frac{\xx - \gamma_{j}}{\gamma_{j+1} - \gamma_j} \beta_{j + 1}
& \text{if } \gamma_j < \xx \leq \gamma_{j+1},\, j \in \{1,...,p - 1\}
\\
\beta_p & \text{if } \xx > \gamma_p.
\end{cases}
\end{aligned}
\end{equation}
\end{itemize}
\end{setting}

Note that the above situation is trivially covered by our general \cref{ass:notation}.
To simplify the notation, 
in what follows,
we will again use the abbreviations collected in \cref{def:basicdefnotation}.
For every arbitrary but fixed training label vector $y_d = \{ \yy_d^k\}_{k=1}^n \in Y = \YY^n$,
our squared-loss training problem \eqref{eq:trainingpropprot} thus takes the form 
\begin{equation}
\label{eq:trainingpropprot4}
\min_{\alpha = (\beta, \gamma) \in D} \|\Psi( \alpha, x_d) - y_d\|_Y^2 
= \frac{1}{2n}\sum_{k=1}^n  \big ( \psi(\alpha, \xx_{\;d}^k) - \yy_d^k\big )^2. 
\end{equation}
Similarly to the example from \cref{sec:4},
the problem \eqref{eq:trainingpropprot4} models the task 
of finding a vector of breakpoints $\gamma \in \R^p$ and a coefficient vector $\beta \in \R^p$
such that the map in \eqref{eq:variableknotfunction}
possesses function values at the points $\xx_{\;d}^k$, $k=1,...,n$,
that fit the given data vector $y_d := \{\yy_d^k\}_{k=1}^n$
optimally in the least-squares sense. 
Solving \eqref{eq:trainingpropprot4} for the function \eqref{eq:variableknotfunction}
is thus a problem of nonlinear regression. 
As already mentioned, the free-knot interpolation scheme \eqref{eq:variableknotfunction}
is closely related to neural networks 
involving the ReLU activation function. 
In fact, it has been shown by \cite{Daubechies2019} that the image of a ReLU-based network
with a real in- and output
is always contained in the image of the scheme \eqref{eq:variableknotfunction} for a sufficiently 
large $p$ and that the image of \eqref{eq:variableknotfunction} is always contained in the 
image of a ReLU-network of sufficient depth and width. 
See \cite[Sections 3 and 4]{Daubechies2019} for precise results on this topic.  
We will see below that, as far as the optimization landscape and the stability 
of the training problem \eqref{eq:trainingpropprot4} are concerned, ReLU-networks and 
the scheme \eqref{eq:variableknotfunction} share many common properties as well.

To be able to apply the abstract results of \cref{sec:5} to the scheme \eqref{eq:variableknotfunction} and 
the problem \eqref{eq:trainingpropprot4},
we have to check if the conditions in \cref{ass:standingassumpssec4} are satisfied. 
This, however, is an easy task:

\begin{lemma}
{\hspace{-0.05cm}\bf(Conicity and Improved Expressiveness)}\label[lemma]{lemma:varableknotassumptions}
In the situation of \cref{set:variableknot}, 
the map $\Psi(\cdot, x_d)\colon D \to Y$ associated with $x_d$ and 
the free-knot spline interpolation scheme $\psi$ possesses the properties \ref{fa:I} and \ref{fa:II}.
Moreover, the number $\Theta(\Psi, x_d)$ in \eqref{eq:defTheta} 
associated with $x_d$ and the function $\psi$ in \eqref{eq:variableknotfunction} satisfies $\Theta(\Psi, x_d) \leq 1 - 1/n$.
\end{lemma}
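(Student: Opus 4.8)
The plan is to verify the two abstract conditions of \cref{ass:standingassumpssec4} directly and to read off the bound on $\Theta(\Psi, x_d)$ from an explicit ``spike'' construction, exactly in the spirit of the toy example in \cref{sec:4}. First I would dispose of the conicity property \ref{fa:I}. For every fixed breakpoint vector $\gamma$, the map $\beta \mapsto \psi((\beta, \gamma), \xx)$ is linear in $\beta$, since each of the three cases in \eqref{eq:variableknotfunction} is an affine combination of the $\beta_j$ with coefficients depending only on $\xx$ and $\gamma$. Hence $\Psi((s\beta, \gamma), x_d) = s\,\Psi((\beta, \gamma), x_d)$ for all $s > 0$, and because the constraint defining $D$ in \eqref{eq:Ddefvarknot} involves only $\gamma$, the point $(s\beta, \gamma)$ again lies in $D$. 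This shows that $\Psi(D, x_d)$ is a cone.

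The core of the argument is to show that, for each index $l \in \{1, \dots, n\}$ and each value $t \in \R$, there is a parameter $(\beta, \gamma) \in D$ with $\Psi((\beta, \gamma), x_d) = t\, e_l$, where $e_l \in \R^n = Y$ denotes the $l$-th canonical unit vector. I would construct such a parameter by placing breakpoints that isolate $\xx_{\;d}^l$: for an interior index $1 < l < n$, choose $\gamma_1 \in (\xx_{\;d}^{l-1}, \xx_{\;d}^l)$, $\gamma_2 = \xx_{\;d}^l$, $\gamma_3 \in (\xx_{\;d}^l, \xx_{\;d}^{l+1})$ with values $\beta_1 = 0$, $\beta_2 = t$, $\beta_3 = 0$, so that the resulting piecewise linear function equals $t$ at $\xx_{\;d}^l$ and vanishes at every other data point (which lies either in the left constant region $\xx \leq \gamma_1$ or in the flat segment to the right of $\gamma_3$). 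The endpoint cases $l = 1$ and $l = n$ are handled analogously by putting the spike at the boundary node, and any surplus breakpoints (note $p \geq 3$) are placed strictly to the right of $\xx_{\;d}^n$ and assigned the value zero, which preserves both the strict monotonicity of $\gamma$ and the function values at the data points. I expect this case distinction---keeping $\gamma$ strictly increasing while correctly covering the endpoints and the surplus knots---to be the only genuinely fiddly part of the proof, although it is entirely elementary.

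With the spikes available, the remaining assertions follow from a single averaging estimate. Given any $y_d = \{\yy_d^k\}_{k=1}^n \in Y \setminus \{0\}$, pick an index $l$ with $|\yy_d^l| = \max_{k} |\yy_d^k|$; then the pigeonhole bound gives $|\yy_d^l|^2 \geq \tfrac{1}{n}\sum_{k=1}^n |\yy_d^k|^2$. Approximating $y_d$ by the spike $\yy_d^l\, e_l \in \Psi(D, x_d)$ and recalling $\|y\|_Y^2 = \tfrac{1}{2n}\sum_k |y_k|^2$ yields
\[
\inf_{\alpha \in D} \|\Psi(\alpha, x_d) - y_d\|_Y^2
\leq \|\yy_d^l\, e_l - y_d\|_Y^2
= \|y_d\|_Y^2 - \frac{1}{2n}|\yy_d^l|^2
< \|y_d\|_Y^2,
\]
which is precisely the improved expressiveness property \ref{fa:II} (the last inequality being strict because $|\yy_d^l| > 0$). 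Specializing to a unit vector $\|y_d\|_Y = 1$, the pigeonhole bound gives $\tfrac{1}{2n}|\yy_d^l|^2 \geq \tfrac1n$, so the displayed chain produces the value $1 - \tfrac1n$; taking the supremum over the unit sphere in \eqref{eq:defTheta} then establishes $\Theta(\Psi, x_d) \leq 1 - 1/n$, completing the proof.
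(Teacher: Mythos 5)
Your proposal is correct and follows essentially the same route as the paper's proof: conicity via linearity in $\beta$, an explicit spike construction that realizes $t\,e_l$ by isolating $\xx_{\;d}^l$ with nearby breakpoints (the paper sets $\gamma_2 = \xx_{\;d}^l$, $\beta_1 = \beta_p = 0$, $\beta_j = \yy_d^l$ for the middle indices, and pushes all other data points outside $[\gamma_1,\gamma_p]$ --- a cosmetic variant of your placement), and the max-entry/pigeonhole estimate on the unit sphere to obtain $\Theta(\Psi,x_d) \leq 1 - 1/n$. No gaps.
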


\begin{proof}
If we consider an arbitrary but fixed $y \in \Psi(D, x_d)$, then there exists a tuple $(\beta, \gamma) \in D$
with $\Psi((\beta, \gamma), x_d) = y$ and it follows immediately from \eqref{eq:Ddefvarknot} 
and \eqref{eq:variableknotfunction} that we also have $(s\beta, \gamma) \in D$
and $s y \in \Psi(D, x_d)$ for all $s \in \R$. This establishes \ref{fa:I}.
To prove \ref{fa:II}, let us assume that an arbitrary but fixed 
$y_d \in Y \setminus \{0\}$ is given. Then, there exists at least one $l \in \{1,...,n\}$
with $\yy_d^{l} \neq 0$. Consider now a vector $(\beta, \gamma) \in D$ with 
\begin{equation}
\label{eq:randomeq273636}
\begin{gathered}
\beta_1 = \beta_p = 0, \qquad \beta_j = \yy_d^{l}\quad\forall j = 2,...,p - 1,
\qquad \gamma_1 < ... < \gamma_p,
\\
\xx_{\;d}^l = \gamma_2,\qquad \xx_{\;d}^k \not\in [\gamma_1, \gamma_p]\quad\forall k \neq l.
\end{gathered}
\end{equation}
(Note that such a vector always exists by our assumptions on the entries of $x_d$.)
Then, \eqref{eq:variableknotfunction} and the definition of $\Psi$ yield that 
$\Psi((\beta, \gamma), x_d) = \yy_d^{l}\,e_l$ holds, where $e_l$ denotes the $l$-th unit vector 
of $Y = \R^n$, and we obtain from \eqref{eq:trainingpropprot4} that 
\begin{equation*}
\|\Psi((\beta, \gamma), x_d) - y_d\|_Y^2
=
\frac{1}{2n} \sum_{k \neq l} \big| \yy_d^k\big |^2 < \|y_d\|_Y^2.
\end{equation*}
This proves \ref{fa:II}. To finally establish the inequality $\Theta(\Psi, x_d) \leq 1 - 1/n$,
it suffices to note that every $y_d \in Y$ with $\|y_d\|_Y = 1$ possesses at least one entry that 
has an absolute value greater than or equal to $\sqrt{2}$. In combination with the construction in \eqref{eq:randomeq273636}, this yields
\begin{equation*}
\Theta(\Psi, x_d) =
\sup_{y_d \in Y,\, \|y_d\|_Y = 1} \left ( \inf_{y \in  \closure_Y\left (\Psi(D, x_d)\right )} \|y - y_d\|_Y^2 \right)
\\
\leq 
1 - \frac{1}{n} 
\end{equation*}
and completes the proof. 
\end{proof}

Note that the estimate $\Theta(\Psi, x_d) \leq 1 - 1/n$ in \cref{lemma:varableknotassumptions} is very pessimistic. 
(We have, after all, only used one node to establish it.) Deriving better estimates for this quantity 
not only for the scheme \eqref{eq:variableknotfunction} but also for the 
neural networks discussed in the next subsection is an interesting topic
and more precise results on the number $\Theta(\Psi, x_d)$ and its 
dependence on $x_d$ and $n$ would certainly improve the understanding 
of the expressiveness of nonlinear approximation schemes---in particular in 
view of the inequality \eqref{eq:randomeq2736352}. We leave this topic for future research. 

Next, we collect some results on the mapping properties 
of the function $\Psi(\cdot, x_d)\colon D \to Y$
that make it possible to decide which theorems of \cref{sec:5} are applicable to \eqref{eq:variableknotfunction}:
\begin{lemma}
{\hspace{-0.05cm}\bf(Mapping Properties of the Free-Knot Spline Interpolation Scheme)}\label[lemma]{lemma:varknotaux}
In the situation of \cref{set:variableknot}, the following is true:
\begin{enumerate}[label=\roman*)]
\item 
\label{item:lemmavarknot:i}
If $n \leq p$ holds, then we have  $\Psi(D, x_d) =  Y$.
\item 
\label{item:lemmavarknot:ii}
If $n > p$ holds, then we have $\closure_Y(\Psi(D, x_d)) \neq  Y$.
\item 
\label{item:lemmavarknot:iii}
Define 
\begin{equation}
\label{eq:VspaceDef23}
V := 
\left \{
\{\zz_k\}_{k=1}^n \in Y
\,
\big |
\,
\exists a, b \in \R \text{ such that }  \zz_k = a \xx_{\;d}^k + b~~\forall k=1,...,n
\right \}.
\end{equation}
Then, for every element $z$ of the subspace $V$, there exist a point $\bar \alpha \in D$ and an open set $U \subset D$
satisfying $\bar \alpha \in U$, $z = \Psi(\bar \alpha, x_d)$, and $\Psi(U, x_d) \subset V$. 

\item 
\label{item:lemmavarknot:iv}
If $n > 3p$ holds, then, for every $\bar \alpha \in D$, 
there exist an open set $U \subset D$ with $\bar \alpha \in U$
and a subspace $V$ of $Y$ with $V \neq Y$ such that $\Psi(U, x_d) \subset V$ holds. 
\end{enumerate}
\end{lemma}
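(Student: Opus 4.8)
The plan is to handle the four assertions separately, exploiting throughout that each function $\xx\mapsto\psi((\beta,\gamma),\xx)$ is a continuous, piecewise affine function that is clamped to the constant values $\beta_1$ and $\beta_p$ outside $[\gamma_1,\gamma_p]$ and affine on every interval $(\gamma_j,\gamma_{j+1})$. For the first claim I would simply interpolate: since $n\le p$, set $\gamma_k:=\xx_{\;d}^k$ for $k=1,\dots,n$, choose the remaining knots $\gamma_{n+1}<\dots<\gamma_p$ arbitrarily to the right of $\xx_{\;d}^n$ (so that $(\beta,\gamma)\in D$), and put $\beta_k:=\yy_d^k$. The evaluation rule \eqref{eq:variableknotfunction} yields $\psi((\beta,\gamma),\gamma_k)=\beta_k$, hence $\Psi((\beta,\gamma),x_d)=y_d$ and $\Psi(D,x_d)=Y$.

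For the third claim the key observation is that if all data points lie in one interior interval $(\gamma_j,\gamma_{j+1})$, then $\psi((\beta,\gamma),\cdot)$ agrees on $[\xx_{\;d}^1,\xx_{\;d}^n]$ with the affine function through $(\gamma_j,\beta_j)$ and $(\gamma_{j+1},\beta_{j+1})$, so its samples lie in $V$. Since $p\ge 3$, I would use the interval $(\gamma_1,\gamma_2)$: given $z\in V$ with $z_k=a\xx_{\;d}^k+b$, take $\gamma_1<\xx_{\;d}^1$, $\gamma_2>\xx_{\;d}^n$, $\beta_1:=a\gamma_1+b$, $\beta_2:=a\gamma_2+b$, and the other entries arbitrary subject to $\gamma_2<\gamma_3<\dots<\gamma_p$. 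This $\bar\alpha$ reproduces $z$, and the open set $U:=\{(\beta,\gamma)\in D\mid \gamma_1<\xx_{\;d}^1,\ \gamma_2>\xx_{\;d}^n\}$ satisfies $\Psi(U,x_d)\subset V$, because every parameter in $U$ again confines the data to $(\gamma_1,\gamma_2)$ and therefore produces an affine sample.

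The fourth claim I would prove locally by a pigeonhole count that also explains the threshold $3p$. Fix $\bar\alpha\in D$ and shrink $U$ so that each knot $\gamma_j$ stays in a tiny interval around $\bar\gamma_j$ with these intervals pairwise disjoint. Then the only data points that can change their position relative to a knot as $\alpha$ ranges over $U$ are those equal to some $\bar\gamma_j$, of which there are at most $p$. The remaining at least $n-p$ data points sit, for every $\alpha\in U$, in a fixed one among the two tails and the $p-1$ interior intervals. If some interior interval contains $\ge 3$ of these fixed points they are forced to be collinear, and if a tail contains $\ge 2$ their values coincide; either case furnishes a single nontrivial linear relation valid on all of $\Psi(U,x_d)$, hence a proper subspace $V$. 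Such a relation can be avoided only if each interior interval holds at most two and each tail at most one fixed point, i.e.\ for at most $2(p-1)+2=2p$ fixed points; adding the at most $p$ floating points caps $n$ at $3p$, so $n>3p$ forces a relation.

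The hard part will be the second claim, where a dimension count is inconclusive because $2p$ may exceed $n$, and one must instead produce an open set of unreachable vectors. Here I would use a sign-change (variation-diminishing) invariant. For any $z=\Psi(\alpha,x_d)$ the spline derivative is a step function with values $0,s_1,\dots,s_{p-1},0$, so it changes sign at most $p-2$ times; since the discrete slopes $\sigma_k:=(z_{k+1}-z_k)/(\xx_{\;d}^{k+1}-\xx_{\;d}^k)$ are averages of this step function over consecutive disjoint intervals, the variation-diminishing property gives at most $p-2$ sign changes in $(\sigma_1,\dots,\sigma_{n-1})$. A zigzag target such as $z^*_k:=(-1)^k$ has alternating discrete slopes and thus $n-2$ sign changes, and $n>p$ yields $n-2\ge p-1>p-2$. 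As carrying at least $p-1$ strict sign changes is an open condition on $z$, an entire neighborhood of $z^*$ is disjoint from $\Psi(D,x_d)$ and hence from its closure, so $\closure_Y(\Psi(D,x_d))\neq Y$. The two delicate points to pin down are the precise bound on the number of sign changes of the averaged sequence and the openness of the excess-sign-change condition.
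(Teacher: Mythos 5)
Your proposal is correct. Parts \ref{item:lemmavarknot:i}, \ref{item:lemmavarknot:iii}, and \ref{item:lemmavarknot:iv} follow essentially the same route as the paper: exact interpolation at the data sites for \ref{item:lemmavarknot:i}, confining all samples to the single affine piece $(\gamma_1,\gamma_2)$ for \ref{item:lemmavarknot:iii}, and for \ref{item:lemmavarknot:iv} the same pigeonhole count ($1+1+2(p-1)+p=3p$) yielding either two samples in a tail or three samples in one interior interval, hence a linear relation that is stable under small perturbations of $\alpha$. The genuine difference is in part \ref{item:lemmavarknot:ii}. The paper argues by contradiction: if the zigzag $((-1)^k)_k$ were approximable within $0.1$, it locates a breakpoint carrying a local minimum in each interval $(\xx_{\;d}^{2l},\xx_{\;d}^{2l+2})$ and one carrying a local maximum in each $(\xx_{\;d}^{2l-1},\xx_{\;d}^{2l+1})$, and counts that this forces at least $n$ breakpoints. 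You instead bound the number of strict sign changes of the discrete slope sequence $(\sigma_1,\dots,\sigma_{n-1})$ by $p-2$, via the observation that each $\sigma_k$ is the average of the piecewise-constant derivative over $(\xx_{\;d}^k,\xx_{\;d}^{k+1})$ and that averaging over increasing disjoint intervals cannot create sign changes; the two delicate points you flag are both fine (a nonzero average of a given sign forces the step function to attain that sign in the interval, producing an increasing sequence of alternating-sign witnesses, and the excess-sign-change condition is a finite conjunction of strict inequalities $\sigma_k\sigma_{k+1}<0$, hence open). Both arguments exploit the same phenomenon (a $p$-knot spline cannot oscillate $n-1$ times), but yours is more structural and makes the openness of the excluded set immediate, whereas the paper's $0.1$-tolerance version is more hands-on; your version also cleanly exhibits the threshold $n>p$ as $n-2>p-2$.
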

\begin{proof}
If we suppose that $n \leq p$ holds
and that $y = \{ \yy_k\}_{k=1}^n$
is an arbitrary but fixed element of the space $Y = \R^n$,
then every $\alpha = (\beta, \gamma) \in D$
with $\gamma_k := \xx_{\;d}^k$ and $\beta_k := \yy_k$ for all $k=1,...,n$
satisfies $\Psi( \alpha, x_d) = y$. This establishes the equality in \ref{item:lemmavarknot:i}.

To prove \ref{item:lemmavarknot:ii},
let us assume that there is a situation with $p < n$ and $\closure_Y(\Psi(D, x_d)) =  Y$. 
Then, the density of the set $\Psi(D, x_d)$ in $Y$ implies that 
we can find an $\bar \alpha = (\bar \beta, \bar  \gamma) \in D$ 
which satisfies  $|\psi(\bar \alpha, \xx_{\;d}^k) - (-1)^k| < 0.1$ for all $k=1,...,n$. 
Consider now an interval of the form $(\xx_{\;d}^{2l}, \xx_{\;d}^{2l+2})$, 
$l=1,..., \left \lfloor{(n - 2)/2}\right \rfloor$, where $ \lfloor{\cdot \rfloor}$ denotes the floor function. 
Then,  by the properties of $\bar \alpha$, we have 
$\psi(\bar \alpha, \xx_{\;d}^{2l}) > 0.9$, 
$\psi(\bar \alpha, \xx_{\;d}^{2l + 1}) < -0.9$, and 
$\psi(\bar \alpha, \xx_{\;d}^{2l+2}) > 0.9$,
and it follows that the function 
$\R \ni \xx \mapsto \psi(\bar \alpha, \xx) \in \R$ attains its minimum 
on $[\xx_{\;d}^{2l}, \xx_{\;d}^{2l+2}]$ in the open interval $(\xx_{\;d}^{2l}, \xx_{\;d}^{2l+2})$. 
As the map $\xx \mapsto \psi(\bar \alpha, \xx)$ is piecewise linear, we also know that 
this minimum has to be attained at one of the breakpoints $\bar \gamma_j$, $j=1,...,p$.
Note that the cases $j = 1$ and $j = p$ are impossible here since
the map $\xx \mapsto \psi(\bar \alpha, \xx)$ is constant on the left of $\bar \gamma_1$
and on the right of $\bar \gamma_p$, since we know that 
the minimal function value in $[\xx_{\;d}^{2l}, \xx_{\;d}^{2l+2}]$ is smaller than $-0.9$,
and since $\psi(\bar \alpha, \xx_{\;d}^{2l}) > 0.9$ and $\psi(\bar \alpha, \xx_{\;d}^{2l+2}) > 0.9$. 
In summary, we may thus conclude that each of the open intervals $(\xx_{\;d}^{2l}, \xx_{\;d}^{2l+2})$,
$l=1,..., \left \lfloor{(n - 2)/2}\right \rfloor$, 
has to contain (at least) one $\bar \gamma_j$ with $j \in \{2,...,p-1\}$ and $\psi(\bar \alpha, \bar \gamma_j) = \bar \beta_j < - 0.9$.
Using exactly the same arguments (with maxima instead of minima), we also obtain that 
each of the intervals $(\xx_{\;d}^{2l - 1}, \xx_{\;d}^{2l+1})$, $l=1,..., \left \lfloor{(n - 1)/2}\right \rfloor$,
has to contain (at least) one $\bar \gamma_j$ with $j \in \{2,...,p-1\}$ and $\psi(\bar \alpha, \bar \gamma_j) = \bar \beta_j > 0.9$. 
Since the intervals in both of these groups are mutually disjoint and due to the different conditions on the function values,
it now follows immediately that there have to be at least 
$\left \lfloor{(n - 2)/2}\right \rfloor + \left \lfloor{(n - 1)/2}\right \rfloor + 2 = n$ breakpoints in \eqref{eq:variableknotfunction}.
Thus, $p \geq n$ which contradicts our assumption $p < n$. This establishes \ref{item:lemmavarknot:ii}.\pagebreak

Next, we prove \ref{item:lemmavarknot:iii}: Let $z = \{\zz_k\}_{k=1}^n$ be an arbitrary 
but fixed element of the space $V$ in \eqref{eq:VspaceDef23} with associated $a,b \in \R$, i.e., $\zz_k = a \xx_{\;d}^k + b$
for all $k=1,...,n$.
Then, we can clearly find a point $\bar \alpha = (\bar \beta, \bar \gamma) \in D$ with the properties 
\begin{equation}
\label{eq:randomeq273553}
\bar \gamma_1 <  \xx_{\;d}^1 < \xx_{\;d}^2 <...< \xx_{\;d}^n < \bar \gamma_2 < ... < \bar \gamma_p,
\qquad
\bar \beta_1 = a \bar \gamma_1 + b, \quad \text{and}\quad \bar \beta_2 = a \bar \gamma_2 + b.
\end{equation}
Due to \eqref{eq:variableknotfunction}, such an $\bar \alpha \in D$ trivially satisfies $z = \Psi(\bar \alpha, x_d)$,
and from the strictness of the inequalities in \eqref{eq:randomeq273553} 
we obtain that there exists an open neighborhood  $U \subset D$ of $\bar \alpha$
such that $\gamma_1 <  \xx_{\;d}^1 < \xx_{\;d}^2 <...< \xx_{\;d}^n < \gamma_2$ holds for all $\alpha = (\beta, \gamma) \in U$.
Since the latter property implies that the map $\xx \mapsto \psi(\alpha, \xx)$ is affine-linear
on the open interval $(\gamma_1, \gamma_2)$ and that $ \xx_{\;d}^k \in (\gamma_1, \gamma_2)$
holds for all $k=1,...,n$, it follows immediately that $\Psi(U, x_d) \subset V$. 
In summary, we thus have $\bar \alpha \in U$, $z = \Psi(\bar \alpha, x_d)$, and $\Psi(U, x_d) \subset V$
and the proof of \ref{item:lemmavarknot:iii} is complete. 

To finally obtain \ref{item:lemmavarknot:iv},
we note that, in the case $n > 3p$, every $\bar \alpha = (\bar \beta, \bar \gamma) \in D$ 
has to satisfy (at least) one of the following three conditions (as one may easily check by contradiction):
\begin{equation*}
\begin{gathered}
\xx_{\;d}^1 < \xx_{\;d}^2 < \bar \gamma_1,\qquad \bar \gamma_p <  \xx_{\;d}^{n-1} < \xx_{\;d}^{n},
\\
\exists j \in \{1,..., p-1\} \text{ and }l \in \{1,..., n-2\}\colon~\bar \gamma_j < \xx_{\;d}^{l} < \xx_{\;d}^{l+1} < \xx_{\;d}^{l+2} < \bar \gamma_{j+1}.
\end{gathered}
\end{equation*}
Due to the definition of $\psi$ in \eqref{eq:variableknotfunction}
and the strictness of the involved inequalities, the above implies that every 
 $\bar \alpha \in D$ admits an open neighborhood $U \subset D$ such that (at least) one of the following is true:
\begin{equation*}
\begin{gathered}
\Psi(U, x_d) \subset V_1 := \left \{ z \in Y = \R^n \, \big | \, z_1 = z_2 \right \},
\quad~~
\Psi(U, x_d) \subset V_2 := \left \{ z \in Y = \R^n \, \big | \, z_{n-1} = z_n \right \},
\\
\Psi(U, x_d) \subset V_3 := \left \{ z \in Y = \R^n \, \Bigg | 
\, \frac{\xx_{\;d}^{l+2} - \xx_{\;d}^{l+1}}{\xx_{\;d}^{l+2} - \xx_{\;d}^{l}} z_l + \frac{\xx_{\;d}^{l+1} - \xx_{\;d}^{l}}{\xx_{\;d}^{l+2} - \xx_{\;d}^{l}} z_{l + 2} = z_{l+1}   \right \}
\end{gathered}
\end{equation*}
for some $l \in \{1,..., n-2\}$. The assertion of \ref{item:lemmavarknot:iv} now follows immediately. 
This completes the proof of the lemma. 
\end{proof}~\\[-0.8cm]

We remark that, in application problems involving free-knot splines, the number of 
training data samples $n$ typically exceeds the number of nodes $p$ by far. The assumption $n > 3p$
in \cref{lemma:varknotaux}\ref{item:lemmavarknot:iv} is thus not very restrictive in practice.
Compare, e.g., with the comments on this topic and the numerical 
experiments in \cite[Sections 1 and 5]{Schwetlick1995}.
By invoking the abstract results of \cref{sec:5}, we now obtain (for example)
the following for the free-knot linear spline interpolation scheme in \eqref{eq:variableknotfunction}:

\begin{corollary}
{\hspace{-0.05cm}\bf(Properties of Squared-Loss Training Problems)}\label[corollary]{cor:knotapprox}
In the situation of \cref{set:variableknot}, the following is true: 
\begin{enumerate}[label=\roman*)]
\item\label{freebird:item:i}
 {\bf (Nonuniqueness and Instability of Best Approximations)} 
If $n > p$ holds, then the best approximation map 
\begin{equation*}
P_{\Psi}^{x_d}\colon Y \rightrightarrows Y,\qquad y_d \mapsto \argmin_{y \in  \closure_Y\left (\Psi(D, x_d)\right )} \|y - y_d\|_Y^2,
\end{equation*}
associated with the free-knot linear spline interpolation scheme \eqref{eq:variableknotfunction} 
is set-valued and there exist uncountably many training label vectors $y_d \in Y$ satisfying
$|P_{\Psi}^{x_d}(y_d)| > 1$. Moreover, the map $P_{\Psi}^{x_d}\colon Y \rightrightarrows Y$ 
is discontinuous in the sense that, for 
every arbitrary but fixed $C>0$, there exist uncountably many $y_d \in Y$ 
such that there are sequences $\{y_d^l\}, \{\tilde y_d^l\} \subset Y$ with the properties in \eqref{eq:randomeq2837}. 

\item\label{freebird:item:ii}
{\bf (Existence of Spurious Local Minima)}  
If $n > 2$ holds, then
there exists an open nonempty cone $K \subset Y$ such that
the training problem \eqref{eq:trainingpropprot4} possesses 
at least one spurious local minimum satisfying a growth condition of the form \eqref{eq:quadgrowthstates}
for all $y_d \in K$. These spurious local minima can be arbitrarily bad in the sense 
that, for every arbitrary but fixed $C>0$, there exist uncountably many $y_d \in K$
such that at least one of the spurious local minima of \eqref{eq:trainingpropprot4} satisfies 
\eqref{eq:errorestimates}, \eqref{eq:lossestimate42}, and \eqref{eq:quadgrowthstates}.
The size of the cone $K$ depends on the extent to which condition \ref{fa:II} is satisfied,
cf.\ \eqref{eq:Kspurdef}.
If $n \leq p$ holds, then the cone $K$ consists of all vectors in $Y$ that are not affine-linearly fittable,
i.e., it holds $K = Y \setminus V$ with the subspace $V$ in \eqref{eq:VspaceDef23}. 

\item\label{freebird:item:iii}
{\bf (Every Point is a Potential Spurious Local Minimum in the Case $\mathbf{n > 3p}$)}  
If $n > 3p$ holds, then, for every $\bar \alpha \in D$ and every arbitrary but fixed $C>0$,
there exist uncountably many label vectors $y_d$ such that $\bar \alpha$ is a 
spurious local minimum of the training problem \eqref{eq:trainingpropprot4}
that satisfies \eqref{eq:errorestimates}, \eqref{eq:lossestimate42}, and a quadratic 
growth condition of the form \eqref{eq:quadgrowthstates}. 

\item\label{freebird:item:iv}
{\bf (Instability of Solutions in the Case $\mathbf{2 < n \leq p}$)}  
If $2 < n \leq p$ holds, then the solution operator
\begin{equation*}
 Y \ni
y_d \mapsto \argmin_{\alpha \in D} \|\Psi( \alpha, x_d) - y_d\|_Y^2 \subset D
\end{equation*}
of the problem \eqref{eq:trainingpropprot4} is discontinuous
in the sense that there exist points $\bar \alpha \in D$ and vectors $y_d \in Y$
such that there are a family $\{y_d^s\}_{s > 0} \subset Y$ and an open neighborhood 
$U \subset D$ of $\bar \alpha$ satisfying $y_d^s \to y_d$ for $s \to 0$, 
\begin{equation*}
\bar \alpha \in  \argmin_{\alpha \in D} \|\Psi( \alpha, x_d) - y_d\|_Y^2,
\end{equation*}
and 
\begin{equation*}
U \, \cap\, \argmin_{\alpha \in D} \|\Psi( \alpha, x_d) - y_d^s\|_Y^2  = \emptyset\quad \forall s >0.
\end{equation*}

\item\label{freebird:item:v}
{\bf (Nonuniqueness of Solutions in the Case $\mathbf{2 < n \leq p}$)}  
If $2 < n \leq p$ holds, then 
there exist choices of the training label vector $y_d$ such that the problem 
\eqref{eq:trainingpropprot4} is not uniquely solvable in the sense
of minimizing sequences. More precisely, 
there exist vectors $y_d \in Y$ such that there are an $\bar \alpha \in D$, an open set
$U \subset D$, and 
a family $\{\alpha_s\}_{s > 0}$
satisfying $\bar \alpha \in U$, $\{\alpha_s\}_{s > 0} \subset D \setminus U$, and 
\begin{equation*}
\lim_{s \to 0}  \|\Psi(\alpha_s, x_d) - y_d\|_Y^2  =\|\Psi(\bar \alpha, x_d) - y_d\|_Y^2
= \inf_{\alpha \in D} \|\Psi(\alpha, x_d) - y_d\|_Y^2. 
\end{equation*}
 \end{enumerate}
\end{corollary}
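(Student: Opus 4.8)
The plan is to obtain all five assertions as direct consequences of the abstract theory of \cref{sec:5}, using \cref{lemma:varableknotassumptions,lemma:varknotaux} to verify the required hypotheses in each regime. Since \cref{lemma:varableknotassumptions} already establishes that the map $\Psi(\cdot, x_d)$ associated with the free-knot scheme \eqref{eq:variableknotfunction} satisfies \ref{fa:I} and \ref{fa:II}, the standing \cref{ass:standingassumpssec4} holds throughout, and it remains only to check, for each part, the additional geometric or dimensional condition demanded by the corresponding abstract result.

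For part \ref{freebird:item:i}, I would first invoke \cref{lemma:varknotaux}\ref{item:lemmavarknot:ii} to conclude that $n > p$ forces $\closure_Y(\Psi(D, x_d)) \neq Y$, so that \cref{ass:unrealizable_ass} is in force. The set-valuedness and the discontinuity of $P_\Psi^{x_d}$ with sequences as in \eqref{eq:randomeq2837} then follow verbatim from points \ref{item:stabth:i} and \ref{item:stabth:ii} of \cref{theorem:abstractinstability}. For part \ref{freebird:item:ii}, I would take $V$ to be the space of affine-linearly fittable vectors in \eqref{eq:VspaceDef23}; since the nodes $\xx_{\;d}^k$ are distinct, $V$ is two-dimensional, so $V \neq Y$ precisely because $n > 2$. \cref{lemma:varknotaux}\ref{item:lemmavarknot:iii} supplies, for every $z \in V$, the point $\bar\alpha$ and neighborhood $U$ with $z = \Psi(\bar\alpha, x_d)$ and $\Psi(U, x_d) \subset V$ required by \cref{theorem:badcone}, which then yields the open cone $K$, the growth condition \eqref{eq:quadgrowthstates}, and the arbitrary-badness estimates \eqref{eq:errorestimates} and \eqref{eq:lossestimate42}. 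In the case $n \leq p$, \cref{lemma:varknotaux}\ref{item:lemmavarknot:i} gives $\closure_Y(\Psi(D, x_d)) = Y$, and the final clause of \cref{theorem:badcone} then sharpens the cone to $K = Y \setminus V$.

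The remaining parts follow the same template. For part \ref{freebird:item:iii}, the hypothesis $n > 3p$ lets me apply \cref{lemma:varknotaux}\ref{item:lemmavarknot:iv}, which produces, for an arbitrary $\bar\alpha \in D$, an open $U \ni \bar\alpha$ and a proper subspace $V$ with $\Psi(U, x_d) \subset V$; this is exactly the input of \cref{prop:existencehotspurs}, whose conclusion delivers the spurious local minimum at $\bar\alpha$ together with \eqref{eq:errorestimates}, \eqref{eq:lossestimate42}, and \eqref{eq:quadgrowthstates}. For parts \ref{freebird:item:iv} and \ref{freebird:item:v}, the condition $n \leq p$ gives realizability $\closure_Y(\Psi(D, x_d)) = Y$ via \cref{lemma:varknotaux}\ref{item:lemmavarknot:i}, while $n > 2$ again makes the affine space $V$ from \eqref{eq:VspaceDef23} proper; feeding the point, neighborhood, and subspace from \cref{lemma:varknotaux}\ref{item:lemmavarknot:iii} into \cref{cor:instabilityoverpara} then yields both the discontinuity of the solution map (part \ref{freebird:item:iv}) and the failure of unique solvability in the sense of minimizing sequences (part \ref{freebird:item:v}) in a single application.

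Because the genuinely substantive work---the conicity and expressiveness of the scheme and the delicate piecewise-linear counting arguments behind the mapping properties---is already carried out in \cref{lemma:varableknotassumptions,lemma:varknotaux}, the only real care needed here is bookkeeping: matching each hypothesis (unrealizability, properness of $V$, realizability) to the correct range of $n$ and $p$, and in particular noting that properness of the affine fitting space $V$ is equivalent to $n > 2$ rather than to any relation between $n$ and $p$. I expect this dimensional and realizability tracking to be the main, though modest, obstacle.
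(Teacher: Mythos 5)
Your proposal is correct and matches the paper's own proof, which likewise simply combines \cref{lemma:varableknotassumptions,lemma:varknotaux} with \cref{theorem:abstractinstability,prop:existencehotspurs,theorem:badcone,cor:instabilityoverpara}; your part-by-part bookkeeping (in particular, that properness of $V$ hinges on $n>2$ while realizability hinges on $n\leq p$) is exactly the right way to match hypotheses to regimes.
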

\begin{proof}
To prove the various claims of the corollary, it suffices to combine 
\cref{lemma:varableknotassumptions,lemma:varknotaux}
with 
\cref{theorem:abstractinstability,prop:existencehotspurs,theorem:badcone,cor:instabilityoverpara}
in \cref{sec:5}. 
\end{proof}

Using the same ideas as in the proofs of 
\cref{lemma:varableknotassumptions}, \cref{lemma:varknotaux}, and 
\cref{cor:knotapprox},
one can also show that the results of \cref{sec:5} can be applied to other 
nonlinear approximation schemes that 
are based on the idea to not only 
optimize the coefficient vector w.r.t.\ a certain basis 
but also the choice of the basis itself. 
We omit a detailed discussion of this topic
to avoid overloading this paper and will focus on the consequences 
that our results have for neural networks instead. 

\subsection{Neural Networks}
\label{subsec:6.2}
Next, we apply our abstract results
to neural networks with vector-valued in- and outputs. 
The setting that we consider in this subsection is as follows: 
\begin{setting}[Setting for the Study of Neural Networks]~\label[setting]{set:NNN}
\begin{itemize}
\item $\XX := \R^{d_\xx}$, $\YY := \R^{d_\yy}$, $d_\xx, d_\yy \in \mathbb{N}$, and $\YY$ is endowed with 
the Euclidean norm $\|\cdot\|_2$,
\item $n \in \mathbb{N}$, $n \geq 2$, and $x_d := \{\xx_{\;d}^k\}_{k=1}^n \in \XX^n$ is an arbitrary but fixed training data vector 
satisfying $\xx_{\;d}^j \neq \xx_{\;d}^k$ for all $j \neq k$,  
\item $w_i \in \mathbb{N}$, $i=1,..., L$, $L \in \mathbb{N}$, are given numbers, $w_0 := d_\xx$, $w_{L+1} := d_\yy$, and
the set $D$ is defined by 
\begin{equation}
\label{eq:randomeq2635hdg36}
D := 
\left \{
\alpha = 
\{ (A_{i}, b_{i})\}_{i=1}^{L+1}
\, \Big | \,
A_{i} \in \R^{w_{i} \times w_{i-1}},\,
b_{i} \in \R^{w_{i}}~
\forall i=1,...,L+1
\right \},
\end{equation}
\item $\sigma_i\colon \R \to \R$, $i=1,...,L$, are given activation functions,
\item $\varphi_i^{A_i, b_i}\colon \R^{w_{i-1}} \to \R^{w_i}$, $i=1,...,L+1$, are the functions defined by 
\begin{equation}
\label{eq:randomeq27353628hd37wb}
\varphi_i^{A_i, b_i}(z) := \sigma_i\left (A_i z + b_i \right )~\forall i=1,...,L,
\qquad \varphi_{L+1}^{A_{L+1}, b_{L+1}}(z) := A_{L+1}z + b_{L+1},
\end{equation}
where $\sigma_i$ acts componentwise on the entries of the vectors $ A_{i}z + b_{i}$,
\item $\psi \colon D \times \XX \to \YY$ is defined by 
\begin{equation}
\label{eq:NNdef}
\psi(\alpha, \xx) 
:= \left ( \varphi_{L+1}^{A_{L+1}, b_{L+1}} \circ ... \circ \varphi_{1}^{A_{1}, b_{1}} \right )(\xx)
\end{equation}
for all $\xx \in \XX$ and all $\alpha = (A_{L+1}, b_{L+1},..., A_1, b_1)  \in D$. 
\end{itemize}
\end{setting}

Note that,
for the sake of brevity and readability, in the remainder of this section, we will not reorder
elements $ \alpha = \{ (A_{i}, b_{i})\}_{i=1}^{L+1} = (A_{L+1}, b_{L+1},..., A_1, b_1)$ 
of the parameter space $D$ in \eqref{eq:randomeq2635hdg36} as column vectors to conform
with the notation of \cref{sec:5}, i.e., we will not always explicitly state that we use the isomorphism 
\begin{equation}
\label{eq:randomisomorphism363}
\R^{w_{L+1} \times w_{L}} \times \R^{w_{L+1}} \times ... \times \R^{w_{1} \times w_{0}} \times \R^{w_{1}} 
\cong
\R^{m},\quad m := w_{L+1} (w_{L} + 1) + ... + w_{1}(w_{0} + 1),
\end{equation}
when referring to the results of the previous sections. 
We will further again employ the abbreviations introduced in \cref{def:basicdefnotation}
so that the squared-loss training problem for the neural network \eqref{eq:NNdef} reads as follows:
\begin{equation}
\label{eq:trainingpropprot5}
\min_{ \alpha = (A_{L+1}, b_{L+1},..., A_1, b_1) \in D} \|\Psi( \alpha, x_d) - y_d\|_Y^2
=
\frac{1}{2n}\sum_{k=1}^n  \|\psi(\alpha, \xx_{\;d}^k) - \yy_d^k\|_2^2. 
\end{equation}

We would like to point out that the situation in \cref{set:NNN} is a very general one 
as it not only allows for different widths of the $L$ hidden layers of the network 
but also for the use of different activation functions.
Compare, e.g., with the architectures considered by \cite{Daubechies2019} and \cite{Ding2020} in this context. 
As in \cref{subsec:6.1}, we begin our analysis of the 
approximation scheme \eqref{eq:NNdef} by checking whether 
the conditions \ref{fa:I} and \ref{fa:II} are satisfied.
For \ref{fa:I}, we obtain:

\begin{lemma}[Conicity of Neural Networks]
\label[lemma]{lemma:nnconic}
In the situation of \cref{set:NNN}, 
the function $\Psi(\cdot, x_d)\colon D \to Y$ associated with $x_d$ 
and the network $\psi$ in \eqref{eq:NNdef}
satisfies \ref{fa:I}. 
\end{lemma}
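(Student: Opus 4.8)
The plan is to exploit the fact that the topmost layer of the network in \eqref{eq:NNdef} is affine-linear and carries no activation function, so that a global scaling of the output can be realized by rescaling only the weights and bias of this last layer. Concretely, I would fix an arbitrary $y \in \Psi(D, x_d)$ and an arbitrary $s \in (0, \infty)$, choose a parameter $\alpha = \{(A_i, b_i)\}_{i=1}^{L+1} \in D$ with $\Psi(\alpha, x_d) = y$, and then define a new parameter $\tilde \alpha$ by replacing the topmost pair $(A_{L+1}, b_{L+1})$ with $(s A_{L+1}, s b_{L+1})$ while leaving all lower-level weights and biases $(A_i, b_i)$, $i = 1, \ldots, L$, untouched.

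First I would observe that, because only the parameters of the last layer are altered, the output $z := \left ( \varphi_L^{A_L, b_L} \circ \cdots \circ \varphi_1^{A_1, b_1} \right )(\xx)$ of the $L$-th layer coincides for $\alpha$ and $\tilde \alpha$ at every input $\xx$. Using the definition of the affine last layer in \eqref{eq:randomeq27353628hd37wb}, I would then compute
\begin{equation*}
\psi(\tilde \alpha, \xx) = (s A_{L+1}) z + s b_{L+1} = s\left (A_{L+1} z + b_{L+1}\right ) = s\, \psi(\alpha, \xx)
\end{equation*}
for every $\xx \in \XX$, and in particular at each training input $\xx_{\;d}^k$, $k=1,\ldots,n$. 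By the definition of $\Psi$ in \eqref{eq:PsiDef}, this componentwise identity yields $\Psi(\tilde \alpha, x_d) = s\,\Psi(\alpha, x_d) = s y$.

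It then remains only to verify admissibility, i.e., $\tilde \alpha \in D$: since the set $D$ in \eqref{eq:randomeq2635hdg36} imposes no constraints beyond the correct matrix and vector dimensions, and the rescaled pair $(s A_{L+1}, s b_{L+1})$ has exactly the same shape as $(A_{L+1}, b_{L+1})$, the tuple $\tilde \alpha$ is again an element of $D$. Hence $s y = \Psi(\tilde \alpha, x_d) \in \Psi(D, x_d)$, which is precisely the conicity property \ref{fa:I}.

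I do not anticipate any genuine obstacle here; the only point requiring care is the bookkeeping that the modification affects solely the last layer, so that all lower layers---and hence the vector $z$ fed into $\varphi_{L+1}^{A_{L+1}, b_{L+1}}$---remain literally unchanged. I would also remark that positivity of $s$ is not actually needed for the argument to go through, so the statement holds verbatim; the restriction $s \in (0,\infty)$ in \ref{fa:I} is merely the standard form in which conicity is phrased.
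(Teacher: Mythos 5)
Your proof is correct and follows exactly the paper's argument: the paper's one-line proof likewise rescales only the affine top layer, noting $\varphi_{L+1}^{sA_{L+1}, sb_{L+1}} \circ \cdots \circ \varphi_{1}^{A_{1}, b_{1}} = s\, \varphi_{L+1}^{A_{L+1}, b_{L+1}} \circ \cdots \circ \varphi_{1}^{A_{1}, b_{1}}$ for all $s \in \R$. Your closing remark that positivity of $s$ is not needed is also consistent with the paper, which states the identity for all real $s$.
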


\begin{proof}
Since
$\varphi_{L+1}^{sA_{L+1}, sb_{L+1}} \circ ... \circ \varphi_{1}^{A_{1}, b_{1}}
=
s \varphi_{L+1}^{A_{L+1}, b_{L+1}} \circ ... \circ \varphi_{1}^{A_{1}, b_{1}}
$
holds for all $s \in \R$, the cone property in \ref{fa:I} follows immediately. 
\end{proof}

Verifying condition \ref{fa:II} is more delicate. 
In what follows, 
the main idea to establish this approximation property will be to 
first prove \ref{fa:II} for neural networks with Heaviside-type activations of the form
\begin{equation}
\label{eq:binary}
\sigma_i(s)=
\begin{cases}
0 & \text{ if } s < 0
\\
c_i  & \text{ if } s = 0
\\
1 & \text{ if } s > 0
\end{cases},\qquad c_i \in \R, 
\end{equation}
and to subsequently exploit that almost all
activation functions that are currently used in the literature can emulate step functions of the form \eqref{eq:binary}
by saturation. In combination with the observation in \cref{lemma:reformulatedcondition} 
that \ref{fa:II} is a property of the closure $\closure_Y\left (\Psi(D, x_d)\right )$
and completely independent of how the elements in this set are realized or approximated 
by the parameters in $D$, this then immediately yields the desired condition \ref{fa:II}.
We would like to point out that the above approach to the analysis of 
neural networks and, in view of the results of \cref{sec:5}, the existence of saddle points and spurious
local minima is conceptually very different from the techniques used, e.g., 
by \cite{Yun2019} and \cite{Goldblum2020Truth}
which primarily rely on the observation that many neural networks can locally 
imitate linear approximation schemes. 
In our analysis, the main step is not to exploit such a local linearity
but, on the contrary, to reduce the problem
to the situation where the activation functions are essentially binary and thus to the most nonlinear case possible. 
The starting point of our proof of \ref{fa:II} is the following lemma:

\begin{lemma}[A Separation Lemma]\label[lemma]{lemma:coneseparation}
Consider the situation in \cref{set:NNN}. 
Then, for every arbitrary but fixed $l \in \{1,...,n\}$, 
there exist a matrix $A \in \R^{2 \times d_\xx}$ and a vector $b \in \R^2$ satisfying 
\begin{equation*}
A\xx_{\;d}^k + b \in (-\infty, 0)^2 \cup (0, \infty)^2\quad \forall k \neq l
\qquad \text{and}\qquad  
A\xx_{\;d}^l + b\in (0, \infty) \times (-\infty, 0).
\end{equation*} 
\end{lemma}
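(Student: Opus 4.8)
The plan is to reduce the $d_\xx$-dimensional separation problem to a one-dimensional one and then to trap the projection of $\xx_d^l$ inside a bounded gap between two \emph{parallel} affine functionals. First I would choose a single direction $c \in \R^{d_\xx}$ along which the data points project to pairwise distinct reals. Since the $\xx_d^k$ are pairwise distinct, each difference $\xx_d^k - \xx_d^j$ with $k \neq j$ is nonzero, so the set of directions failing this requirement is contained in the finite union of the proper subspaces $\{c : c^{T}(\xx_d^k - \xx_d^j) = 0\}$, which cannot exhaust $\R^{d_\xx}$. Hence a suitable $c$ exists, and writing $t_k := c^{T}\xx_d^k$ yields $n$ pairwise distinct reals with $t_l$ singled out.

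Second, I would isolate $t_l$ on the real line. Setting $t^{-} := \max\{t_k : t_k < t_l\}$ (and $t^{-} := -\infty$ if this set is empty) and $t^{+} := \min\{t_k : t_k > t_l\}$ (and $t^{+} := +\infty$ if empty), I would pick two thresholds $r_1, r_2$ with $t^{-} < r_1 < t_l < r_2 < t^{+}$. By construction $t_l \in (r_1, r_2)$, while every $t_k$ with $k \neq l$ satisfies $t_k \leq t^{-} < r_1$ or $t_k \geq t^{+} > r_2$, so that no $t_k$ with $k \neq l$ lies in the closed interval $[r_1, r_2]$.

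Third, I would assemble the separating data by taking $A$ to be the $2 \times d_\xx$ matrix whose two rows both equal $c^{T}$ and $b := (-r_1, -r_2)^{T}$, which gives
\[
A\xx_d^k + b = (t_k - r_1,\ t_k - r_2) \in \R^2 \quad \text{for all } k .
\]
For $k = l$ this reads $t_l - r_1 > 0$ and $t_l - r_2 < 0$, i.e. $A\xx_d^l + b \in (0,\infty) \times (-\infty, 0)$. For $k \neq l$, either $t_k < r_1$, in which case both coordinates are negative, or $t_k > r_2$, in which case both are positive; in either case $A\xx_d^k + b \in (-\infty,0)^2 \cup (0,\infty)^2$, and the strictness of the chosen inequalities guarantees that no coordinate vanishes.

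The step I expect to be the genuine obstacle is conceptual rather than computational. The naive idea of separating $\xx_d^l$ from the remaining points by a single supporting functional---making $\xx_d^l$ the strict minimizer of some affine map---fails whenever $\xx_d^l$ lies in the relative interior of the convex hull of the data; already for three collinear points no affine functional attains its strict minimum at the middle one. The resolution is to abandon extremality and instead confine the one-dimensional projection $t_l$ to the open gap $(r_1, r_2)$ between two parallel functionals, which converts the required mixed sign pattern $(+,-)$ at $\xx_d^l$ into the elementary statement that $t_l$ lies strictly between the two roots $r_1 < r_2$ while all other projections fall outside $[r_1, r_2]$.
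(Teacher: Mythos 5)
Your proposal is correct and follows essentially the same route as the paper's proof: both pick a generic direction along which the relevant data points project injectively (you via the fact that a finite union of proper subspaces cannot cover $\R^{d_\xx}$, the paper via an equivalent inductive perturbation argument) and then take $A$ with two identical rows and offsets forming a band that contains the projection of $\xx_{\;d}^l$ but no other projection. The only differences are cosmetic (thresholds $r_1,r_2$ between consecutive projections versus the paper's $\pm\varepsilon$ around $a^T\xx_{\;d}^l$).
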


\begin{proof}
To establish the assertion of the lemma, we first prove by induction w.r.t.\ $p \in \mathbb{N}$ that,
for every collection of vectors $\zz_1,...,\zz_p \in \R^{d_\xx} \setminus \{0\}$, 
there exists an $a \in \R^{d_\xx}$ with $a^T\zz_j \neq 0$ for all $j = 1,...,p$. 
For $p=1$, the existence of such an $a$ is trivial as we can simply choose $a := \zz_1$.
So let us assume that $p > 1$ holds. Then, 
the induction hypothesis yields that there exists an $a \in \R^{d_\xx}$
with $a^T\zz_j \neq 0$ for all $j = 1,...,p-1$. If this $a$ also satisfies 
$a^T\zz_p \neq 0$, then there is nothing left to show.
If, on the other hand, $a^T\zz_p = 0$ holds, then we can find a 
small $\varepsilon > 0$ with $(a + \varepsilon \zz_p)^T \zz_j \neq 0$
for all $j=1,...,p-1$, and it follows immediately that the 
vector $\tilde a := a + \varepsilon \zz_p$ has all of the desired properties.
This concludes the induction step. 

Consider now an arbitrary but fixed $l \in \{1,...,n\}$.
Then, the above result and our assumption $\xx_{\;d}^k \neq \xx_{\;d}^j $ for all $k\neq j$
imply that there exists an $a \in \R^{d_\xx}$ with 
$a^T(\xx_{\;d}^k - \xx_{\;d}^l) \neq 0$ for all $k \neq l$,
and we can find an $\varepsilon > 0$ with 
$a^T(\xx_{\;d}^k - \xx_{\;d}^l) \pm \varepsilon \neq 0$ and 
$\sgn(a^T(\xx_{\;d}^k - \xx_{\;d}^l) \pm \varepsilon) = \sgn(a^T(\xx_{\;d}^k - \xx_{\;d}^l))$
for all $k \neq l$. If we use this $\varepsilon$ to define 
\begin{equation*}
A := 
\begin{pmatrix}
a^T
\\
a^T
\end{pmatrix} \in \R^{2 \times d_\xx},
\qquad b := 
\begin{pmatrix}
\varepsilon - a^T\xx_{\;d}^l
\\
-\varepsilon - a^T\xx_{\;d}^l
\end{pmatrix} \in \R^2,
\end{equation*}
then it holds 
\begin{equation*}
A\xx_{\;d}^k + b = 
\begin{pmatrix}
a^T(\xx_{\;d}^k - \xx_{\;d}^l) + \varepsilon
\\
a^T(\xx_{\;d}^k - \xx_{\;d}^l) - \varepsilon
\end{pmatrix}
\in 
\begin{cases}
(-\infty, 0)^2 \cup (0, \infty)^2 & \text{ if } k \neq l 
\\
(0, \infty) \times (-\infty, 0) & \text{ if } k = l 
\end{cases}
\end{equation*}
as desired. This completes the proof. 
\end{proof}

Using \cref{lemma:coneseparation}, we can prove:
\begin{lemma}
{\hspace{-0.05cm}\bf(Approximation Property \ref{fa:II} for Heaviside-Type Activations)}\label[lemma]{lemma:heavisideapprox}
Consider the situation in \cref{set:NNN}. Suppose
that $w_1 \geq 2$ holds and that there exist constants $c_i \in \R$ with 
\begin{equation}
\label{eq:heavisideactivation}
\sigma_i(s)=
\begin{cases}
0 & \text{ if } s < 0
\\
c_i  & \text{ if } s = 0
\\
1 & \text{ if } s > 0
\end{cases}\qquad \forall i=1,...,L. 
\end{equation}
Then, the function  $\Psi(\cdot, x_d)\colon D \to Y$
associated with $x_d$ and the
neural network $\psi$ in \eqref{eq:NNdef} possesses the property \ref{fa:II}
and the number $\Theta(\Psi, x_d)$ in \eqref{eq:defTheta} satisfies $\Theta(\Psi, x_d) \leq 1 - 1/n$.
\end{lemma}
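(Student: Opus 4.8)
The plan is to exhibit, for a suitably chosen index $l$, an explicit parameter $\alpha \in D$ whose network reproduces the single label $\yy_d^l$ at the point $\xx_{\;d}^l$ and vanishes on every other training input, mirroring the construction used for the free-knot scheme in \cref{lemma:varableknotassumptions}. First I would apply \cref{lemma:coneseparation} to obtain, for the fixed $l$, a matrix $A \in \R^{2 \times d_\xx}$ and a vector $b \in \R^2$ for which $A\xx_{\;d}^l + b$ has entries of opposite sign while $A\xx_{\;d}^k + b$ has entries of equal sign for all $k \neq l$. Exploiting $w_1 \geq 2$, I place $A$ into the first two rows of $A_1$ and $b$ into the first two entries of $b_1$, driving the remaining rows to strictly negative pre-activations via a bias of $-1$. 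Because all relevant pre-activations are \emph{strictly} nonzero by the strict inequalities in \cref{lemma:coneseparation}, the ambiguous value $c_i$ at $s = 0$ in \eqref{eq:heavisideactivation} never occurs; the Heaviside activation then makes the first two post-activation entries equal $(1,0)$ at $\xx_{\;d}^l$ and $(0,0)$ or $(1,1)$ at each $\xx_{\;d}^k$, $k \neq l$.

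The key observation is that the difference of these two entries equals $1$ at $\xx_{\;d}^l$ and $0$ at all other data points, so it serves as an indicator of the $l$-th sample. I would propagate this indicator through the hidden layers $2,\dots,L$ by taking the first row of $A_i$ to read off the preceding indicator component and the first entry of $b_i$ equal to $-\tfrac12$, so that the pre-activation equals $+\tfrac12$ at $\xx_{\;d}^l$ and $-\tfrac12$ elsewhere; the Heaviside activation restores a clean $\{0,1\}$-valued indicator in the first component (with all other components sent to $0$ by strictly negative biases), and again the thresholds $\pm\tfrac12 \neq 0$ avoid the value $c_i$. Since $\varphi_{L+1}$ is affine, I finish by setting $b_{L+1} = 0$ and letting the first column of $A_{L+1}$ equal $\yy_d^l$ with all other entries zero, so that $\psi(\alpha, \xx_{\;d}^l) = \yy_d^l$ and $\psi(\alpha, \xx_{\;d}^k) = 0$ for $k \neq l$. (For $L = 1$ there is no hidden layer to threshold, but the single activation already yields the pair $(1,0)$ against $(0,0)/(1,1)$, and the output layer can apply the difference directly by using first two columns $\yy_d^l$ and $-\yy_d^l$.)

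With this $\alpha$ the vector $\Psi(\alpha, x_d)$ has $l$-th block $\yy_d^l$ and vanishes elsewhere, so
\[
\inf_{\alpha \in D}\|\Psi(\alpha, x_d) - y_d\|_Y^2 \leq \frac{1}{2n}\sum_{k \neq l}\|\yy_d^k\|_2^2 = \|y_d\|_Y^2 - \frac{1}{2n}\|\yy_d^l\|_2^2.
\]
For any $y_d \in Y \setminus \{0\}$ I would pick $l$ with $\yy_d^l \neq 0$, which proves \ref{fa:II}. For the bound on $\Theta(\Psi, x_d)$ I would instead choose $l$ maximizing $\|\yy_d^l\|_2$; for a unit vector $y_d$ an averaging argument gives $\|\yy_d^l\|_2^2 \geq \tfrac1n\sum_{k}\|\yy_d^k\|_2^2 = 2$, so the right-hand side above is at most $1 - 1/n$, and \eqref{eq:defTheta} yields $\Theta(\Psi, x_d) \leq 1 - 1/n$.

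The main obstacle --- and the reason both \cref{lemma:coneseparation} and the width hypothesis $w_1 \geq 2$ enter --- is propagating an isolated signal through a cascade of genuinely discontinuous Heaviside layers without ever hitting the ambiguous input $s = 0$, at which $\sigma_i$ takes the value $c_i$. All the care lies in keeping every pre-activation strictly bounded away from zero (at $\pm\tfrac12$), so that \eqref{eq:heavisideactivation} acts as a clean threshold at each layer and the indicator survives intact down to the affine output layer.
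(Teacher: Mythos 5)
Your proposal is correct and follows essentially the same route as the paper's proof: invoke \cref{lemma:coneseparation} to isolate the $l$-th sample, turn the two oppositely-signed pre-activations into a $\{0,1\}$-indicator via the difference of the first two Heaviside outputs, propagate that indicator through the remaining layers with biases $-\tfrac12$, read it out affinely against $\yy_d^l$, and then bound $\Theta(\Psi,x_d)$ by choosing $l$ maximizing $\|\yy_d^l\|_2$ and averaging. The only (immaterial) deviation is that you force the unused first-layer components to strictly negative pre-activations with a bias of $-1$, whereas the paper leaves them at zero and simply ignores the resulting values $c_1$ by zeroing the corresponding weights in the next layer.
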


\begin{proof}
Suppose that a $y_d \in Y\setminus \{0\}$ is given. Then,
there exists at least one $l \in \{1,...,n\}$ with $\yy_d^l \in \R^{d_\yy} \setminus \{0\}$,
and it follows from our assumption $w_1 \geq 2$
and \cref{lemma:coneseparation} that we can find a matrix $A \in \R^{2 \times d_\xx}$
and a vector $b \in \R^2$ such that the parameters 
\begin{equation}
\label{eq:randomeq275353}
A_1 := 
\begin{pmatrix}
A
\\
0_{(w_1 - 2) \times w_0}
\end{pmatrix}
\in \R^{w_1 \times w_0}
\qquad \text{and}
\qquad
b_1 := 
\begin{pmatrix}
b
\\
0_{(w_1 - 2)}
\end{pmatrix}
\in \R^{w_1}
\end{equation}
satisfy 
\begin{equation*}
A_1 \xx_{\;d}^k + b_1
\in 
\begin{cases}
\big [ (-\infty, 0)^2 \times \{0_{(w_1 - 2)}\}\big ] \cup \big [ (0, \infty)^2 \times \{0_{(w_1 - 2)}\}\big ]  & \text{ if } k \neq l 
\\
(0, \infty) \times (-\infty, 0)  \times \{0_{(w_1 - 2)}\} & \text{ if } k = l.
\end{cases}
\end{equation*}
Here and in what follows, the symbols $0_{p \times q}$ and $0_p$ denote the zero matrix in $\R^{p \times q}$
and the zero (column) vector in $\R^p$, respectively, with the convention that these zero-blocks are ignored 
when $p$ or $q$ vanishes.
In combination with \eqref{eq:heavisideactivation}, the above implies in particular that 
\begin{equation*}
\underbrace{(1,-1, 0_{1 \times (w_1 - 2)})}_{\in \R^{1 \times w_1}}
\sigma_1\left (A_1 \xx_{\;d}^k + b_1 \right ) 
=
\begin{cases}
0  & \text{ if } k \neq l 
\\
1   & \text{ if } k = l.
\end{cases}
\end{equation*}
Let us now first consider the case $L=1$, i.e., the situation where the neural network \eqref{eq:NNdef}
possesses precisely one hidden layer. Then, the properties of $A_1$, $b_1$, and $\sigma_1$ and the definitions
\begin{equation*}
A_{L+1} = A_2 := 
\begin{pmatrix}
\yy_d^{l}, 0_{d_\yy \times (d_\yy-1)}
\end{pmatrix}
\begin{pmatrix}
1 , -1, 0_{1 \times (w_1 - 2)}
\\
0_{(w_2-1) \times w_1}
\end{pmatrix}
\in \R^{w_2 \times w_1}
\quad
\text{and}
\quad 
b_2 := 0_{w_2}
\end{equation*}
yield
\begin{equation*}
A_2 \sigma_1\left (A_1 \xx_{\;d}^k + b_1 \right ) + b_2
=
\begin{cases}
0_{d_\yy}  & \text{ if } k \neq l 
\\
\yy_d^l   & \text{ if } k = l.
\end{cases}
\end{equation*}
The parameter $\bar \alpha := (A_2, b_2, A_1, b_1) \in D$ thus satisfies 
\begin{equation}
\label{eq:randomeq162525}
\|\Psi(\bar \alpha, x_d) - y_d\|_Y^2 
=
\frac{1}{2n} \sum_{k \neq l} \| \yy_d^k \|_{2}^2
< \|y_d\|_Y^2
\end{equation}
which establishes \ref{fa:II} as desired. If, on the other hand, $L$ is bigger than one, 
then by defining $A_1$ and $b_1$ as in \eqref{eq:randomeq275353} and by setting
\begin{equation*}
\begin{aligned}
&A_2:=
\begin{pmatrix}
1 , -1, 0_{1 \times (w_1 - 2)}
\\
0_{(w_2-1) \times w_1}
\end{pmatrix},
&&b_2 :=
\begin{pmatrix}
-1/2
\\
...
\\
-1/2
\end{pmatrix} \in \R^{w_2},
\\
&A_i:= 
\begin{pmatrix}
1 , 0_{1 \times (w_{i-1} - 1)}
\\
0_{(w_i-1) \times w_{i-1}}
\end{pmatrix},
\quad
&&b_i :=
\begin{pmatrix}
-1/2
\\
...
\\
-1/2
\end{pmatrix} \in \R^{w_i}
\qquad i=3,..., L,
\\
&A_{L+1}:= 
\begin{pmatrix}
\yy_d^{l}, 0_{d_\yy \times (w_L - 1)}
\end{pmatrix},
\quad
&&b_{L+1} := 0_{d_\yy},
\end{aligned}
\end{equation*}
we obtain a parameter $\bar \alpha := (A_{L+1}, b_{L+1},..., A_1, b_1) \in D$ with
\begin{equation*}
\left ( \varphi_{i}^{A_{i}, b_{i}} \circ ... \circ \varphi_{1}^{A_{1}, b_{1}} \right )(\xx_{\;d}^k)
=
\begin{cases}
0_{w_i} & \text{ if } k \neq l
\\
\begin{pmatrix}
1
\\
0_{w_i - 1}
\end{pmatrix}
&\text{ if } k = l
\end{cases}
\qquad \forall i=2,...,L
\end{equation*}
and, analogously to the case $L=1$,
\begin{equation*}
\psi(\bar \alpha, \xx_{\;d}^k) 
= \left ( \varphi_{L+1}^{A_{L+1}, b_{L+1}} \circ ... \circ \varphi_{1}^{A_{1}, b_{1}} \right )(\xx_{\;d}^k)
=
\begin{cases}
0_{d_\yy}  & \text{ if } k \neq l 
\\
\yy_d^l   & \text{ if } k = l.
\end{cases}
\end{equation*}
Using the same calculation as in \eqref{eq:randomeq162525}, 
\ref{fa:II}  now follows immediately. To finally see that
$\Theta(\Psi, x_d) \leq 1 - 1/n$ holds, it suffices to note that
every $y_d \in Y$ with $\|y_d\|_Y = 1$ has to possess at least one component with $\|\yy_d^k\|_2 \geq \sqrt{2}$
and to use the same arguments as in \cref{lemma:varableknotassumptions}. This completes
the proof. 
\end{proof}

We would like to mention that the estimate $\Theta(\Psi, x_d) \leq 1 - 1/n$ established above
is again very pessimistic. 
We leave the derivation of better bounds 
for the number $\Theta(\Psi, x_d)$ in \eqref{eq:defTheta} for future research.  
Further, we would like to point out that, in the (essentially) binary case studied in \cref{lemma:heavisideapprox},
the task of solving a squared-loss problem of the form \eqref{eq:trainingpropprot5}
is closely related to classical mixed integer programming. For further details on this topic, 
we refer to \cite{Kurtz2020}.
By combining \cref{lemma:reformulatedcondition,lemma:heavisideapprox},
we arrive at:

\begin{theorem}
{\hspace{-0.05cm}\bf(Approximation Property \ref{fa:II} for General Neural Networks)}\label{theorem:generalactiv}
Consider the situation in \cref{set:NNN} and suppose
that the index set $\{1,...,L\}$ can be split into two (possibly empty) disjoint subsets 
$I$ and $J$ such that the following is true:
\begin{enumerate}[label=\roman*)]
\item
\label{item:generalactiv:i}
 For each $i \in I$, the activation function 
$\sigma_i\colon \R \to \R$ is continuous, the limits 
\begin{equation*}
\sigma_i(-\infty) := \lim_{s \to - \infty} \sigma_i(s)
\qquad
\text{and}
\qquad
\sigma_i(\infty) := \lim_{s \to \infty} \sigma_i(s)
\end{equation*}
exist in $\R$, and it holds $\sigma_i(-\infty) \neq \sigma_i(\infty)$. 

\item 
\label{item:generalactiv:ii}
For each $i \in J$, 
the function $\tilde \sigma_i(s) := \sigma_i(s) - \sigma_i(s - 1)$ satisfies the conditions in \ref{item:generalactiv:i}
and it holds $w_i \geq 2$ for all $i \in J$. 

\item The width $w_1 \in \mathbb{N}$ of the lowest hidden layer
satisfies $w_1 \geq 2$ in the case $1 \in I$ and $w_1 \geq 4$ in the case $1 \in J$.
\end{enumerate}
Then, the map $\Psi(\cdot, x_d)\colon D \to Y$ associated with 
$x_d$ and the neural network $\psi$ in \eqref{eq:NNdef} with the activation functions 
$\sigma_i$, $i=1,...,L$, the depth $L$, and the widths $w_i$, $i=1,...,L$, satisfies condition \ref{fa:II}
and the number $\Theta(\Psi, x_d)$ in \eqref{eq:defTheta} is at most $1 - 1/n$.
\end{theorem}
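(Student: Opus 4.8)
The plan is to reduce the general case to the binary case already settled in \cref{lemma:heavisideapprox}. By \cref{lemma:reformulatedcondition}, the property \ref{fa:II} depends only on the closure $\closure_Y(\Psi(D, x_d))$ and is completely independent of the parameterization, so it suffices to show that, for every $y_d \in Y \setminus \{0\}$ and every index $l$ with $\yy_d^l \neq 0$, the vector $\yy_d^l e_l \in \YY^n$ (which carries $\yy_d^l$ in its $l$-th slot and zeros elsewhere) lies in $\closure_Y(\Psi(D, x_d))$. Exactly as in the proof of \cref{lemma:heavisideapprox}, this single inclusion yields both \ref{fa:II} and the bound $\Theta(\Psi, x_d) \leq 1 - 1/n$, the latter because every unit vector in $Y$ has a component of Euclidean norm at least $\sqrt{2}$. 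Thus the whole theorem reduces to approximating, in the topology of $Y$, the Heaviside construction of \cref{lemma:heavisideapprox} by networks built from the activations $\sigma_i$.

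First I would treat the sigmoid-type layers $i \in I$. For such $i$ the limits $a_i := \sigma_i(-\infty)$ and $d_i := \sigma_i(\infty)$ exist and satisfy $a_i \neq d_i$, so the normalized map $s \mapsto (\sigma_i(s) - a_i)/(d_i - a_i)$ tends to $0$ as $s \to -\infty$ and to $1$ as $s \to +\infty$; scaling its argument by a factor $t$ and letting $t \to \infty$ makes it converge pointwise on $\R \setminus \{0\}$ to the Heaviside step. The affine normalization (subtracting $a_i$, dividing by $d_i - a_i$) is linear and can be absorbed into the weight matrix and bias of the subsequent layer. Hence, after replacing each pair $(A_i, b_i)$ appearing in the construction of \cref{lemma:heavisideapprox} by suitably scaled and shifted versions and sending $t \to \infty$, each layer reproduces, in the limit, the same $0/1$ activation pattern as the binary network, provided every argument fed into $\sigma_i$ stays strictly bounded away from the origin. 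That boundedness is exactly what the separation lemma \cref{lemma:coneseparation} and the cascaded $-1/2$ biases of \cref{lemma:heavisideapprox} guarantee.

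For the ReLU-type layers $i \in J$ I would exploit that, although $\sigma_i$ itself need not saturate, the difference $\tilde \sigma_i(s) := \sigma_i(s) - \sigma_i(s-1)$ does satisfy the saturation hypothesis of case \ref{item:generalactiv:i}. Each saturating unit is therefore realized by a \emph{pair} of neurons, one computing $\sigma_i(Az + b)$ and one computing $\sigma_i(Az + b - 1)$, whose difference is formed by the weights of the next layer. This doubling is precisely what forces $w_i \geq 2$ for $i \in J$ and, at the lowest layer, forces $w_1 \geq 4$ when $1 \in J$: the separation construction requires a two-dimensional Heaviside-like signal, i.e.\ two saturating units, hence four ReLU-type neurons, whereas $w_1 \geq 2$ suffices when $1 \in I$. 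With this substitution the same scaled-and-shifted weights drive each $\tilde \sigma_i$ into saturation, and the composite network again converges to the pattern $\yy_d^l e_l$ as $t \to \infty$, placing it in $\closure_Y(\Psi(D, x_d))$.

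The main obstacle is the rigorous propagation of the saturation limit through the full $L$-layer composition. At any finite scale $t$ the output of layer $i-1$ is only \emph{approximately} $0$ or $1$, and one must check that these $\oo(1)$ errors, after being amplified by the $t$-scaling in layer $i$, still leave every pre-activation on the correct side of the origin so that $\sigma_i$ (or $\tilde \sigma_i$) saturates to the intended value. I would handle this by an induction on $i$ showing that the finite-$t$ pre-activations both diverge in modulus and keep their prescribed sign; here one invokes the existence of the limits of $\sigma_i$ at $\pm\infty$ together with continuity (to pass from the approximate $0/1$ inputs of the previous layer to the saturated outputs of the current one) and a careful coupling of the per-layer scaling factors (e.g.\ letting deeper layers saturate fast relative to the error of the shallower ones). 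Once this stability is established, the convergence $\Psi(\alpha_t, x_d) \to \yy_d^l e_l$ is immediate and \cref{lemma:reformulatedcondition} completes the proof.
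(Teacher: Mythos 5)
Your proposal is correct and follows essentially the same strategy as the paper: reduce to the Heaviside case of \cref{lemma:heavisideapprox} via \cref{lemma:reformulatedcondition}, obtain Heaviside-type activations by saturating the sigmoid-type layers (absorbing the affine normalization into the next layer's weights and bias), and realize each saturating unit in a ReLU-type layer by a pair of neurons computing $\sigma_i(Az+b)$ and $\sigma_i(Az+b-1)$, which explains the width requirements $w_i \geq 2$ for $i \in J$ and $w_1 \geq 4$ for $1 \in J$.

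The one place where your execution differs is the organization of the limit, and it is worth noting that the obstacle you flag in your last paragraph --- controlling how the $\oo(1)$ errors of layer $i-1$ interact with the $t$-amplification of layer $i$, via an induction with coupled per-layer scalings --- is sidestepped entirely in the paper. There, the layers are saturated \emph{one at a time}: one first scales only $(A_1, b_1)$ by $\gamma$ and lets $\gamma \to \infty$ while the upper layers keep their original continuous activations; continuity of $\sigma_2, \dots, \sigma_L$ and of the affine maps lets the limit pass through the whole composition, and closedness of $\closure_Y(\Psi(D,x_d))$ places the limit in the closure. Repeating this for layers $2, \dots, L$ in order yields, without any error-propagation estimates, that the image of the fully binarized network on $x_d$ is contained in $\closure_Y(\Psi(D,x_d))$ --- and in fact for \emph{all} parameters $\alpha \in D$, not just the specific ones from \cref{lemma:coneseparation}, which gives the cleaner inclusion $\closure_Y(\bar\Psi(D,x_d)) \subset \closure_Y(\Psi(D,x_d))$. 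Your simultaneous-saturation argument can be made to work (it amounts to a diagonal extraction from the paper's iterated limit), but note two small inaccuracies in your sketch: the padded components of the layer-one pre-activations in the construction of \cref{lemma:heavisideapprox} are exactly zero, not bounded away from zero (this is harmless only because the pointwise saturation limit also exists at $0$, yielding a Heaviside-type function with some value $c_i$ there, which is precisely the form \eqref{eq:binary} permits); and the coupling must ensure the \emph{shallower} layers' errors are already small relative to the margins of the deeper layers' limiting pre-activations, so the phrasing about deeper layers saturating fast has the dependency backwards. Neither point is fatal, but the sequential argument avoids both.
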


\begin{proof}
We first consider the case $I = \{1,...,L\}$ and $J = \emptyset$. 
In this situation, it follows from the properties of the 
activation functions $\sigma_i$, $i=1,...,L$,
that
\begin{equation}
\label{eq:randomeq1273645}
\frac{\sigma_i(\gamma s) - \sigma_i(-\infty)}{\sigma_i(\infty) - \sigma_i(-\infty)}
\to 
\bar \sigma_i (s)
:=
\begin{cases}
0& \text{if } s < 0
\\
\displaystyle
\frac{\sigma_i(0) - \sigma_i(-\infty)}{\sigma_i(\infty) - \sigma_i(-\infty)} & \text{if } s=0
\\
1 & \text{if } s > 0
\end{cases}
\end{equation}
holds for all $s \in \R$ and all $i=1,...,L$ for $0 < \gamma \to \infty$
and, as a consequence, that
\begin{equation*}
\frac{1}{\sigma_i(\infty) - \sigma_i(-\infty)}
\left [ \varphi_i^{\gamma A_i, \gamma b_i}(z) 
- \sigma_i(-\infty) 1_{w_i} \right ]
\to
\bar \sigma_i( A_i z + b_i) =: \bar \varphi_i^{ A_i, b_i}(z)
\end{equation*}
holds for all $z \in \R^{w_{i-1}}$ and all $i=1,...,L$ for $0 < \gamma \to \infty$,
where  $1_{w_i}$ denotes the (column) vector in $\R^{w_i}$ that contains the entry one in each component. 
Due to the definitions of the map $\Psi(\cdot, x_d)\colon D \to Y$ and $D$, we further know that,
for all $\alpha = (A_{L+1}, b_{L+1},..., A_1, b_1)\in D$
and all $\gamma > 0$, we have 
\begin{equation*}
\begin{aligned}
&\left \{
\psi
\left 
( A_{L+1}, b_{L+1},..., 
\frac{A_2}{\sigma_1(\infty) - \sigma_1(-\infty)} , b_2 - \frac{\sigma_1(-\infty) A_2 1_{w_1}}{\sigma_1(\infty) - \sigma_1(-\infty)} , \gamma A_1, \gamma b_1 , \xx_{\;d}^k
\right ) \right \}_{k=1}^n
\\
&=
\Bigg \{
\Big ( \varphi_{L+1}^{A_{L+1}, b_{L+1}} \circ ... \circ \varphi_{3}^{A_{3}, b_{3}} \circ \sigma_2\Big)
\\
&\qquad \Bigg (
A_2\left (
\frac{1}{\sigma_1(\infty) - \sigma_1(-\infty)}
\left [
\varphi_1^{\gamma A_1, \gamma b_1}(\xx_{\;d}^k) 
-  \sigma_1(-\infty) 1_{w_1}
\right ]
\right )
+ b_2
\Bigg )
 \Bigg \}_{k=1}^n
\in \closure_Y\left (\Psi(D, x_d)\right ). 
\end{aligned}
\end{equation*}
(Here, in the borderline case $L=1$, the ``empty''
composition  $\varphi_{L+1}^{A_{L+1}, b_{L+1}} \circ ... \circ \varphi_{3}^{A_{3}, b_{3}} \circ \sigma_2$
has to be interpreted as the identity map.)
Combining the last two results, exploiting that
the functions $\sigma_i$ are continuous, and 
using that the set $ \closure_Y\left (\Psi(D, x_d)\right )$
is closed yields that 
\begin{equation*}
\left \{
\left ( \varphi_{L+1}^{A_{L+1}, b_{L+1}} \circ ... \circ \varphi_2^{A_2, b_2} \circ \bar\varphi_{1}^{A_{1}, b_{1}} \right )(\xx_{\;d}^k)
 \right \}_{k=1}^n \in \closure_Y\left (\Psi(D, x_d)\right )
\end{equation*}
holds for all $\alpha = (A_{L+1}, b_{L+1},..., A_1, b_1)\in D$. 
Note that, here and in what follows, 
with $\Psi(\cdot, x_d)\colon D \to Y$ we still mean the function \eqref{eq:PsiDef} 
associated with the original neural network $\psi$
involving the activation functions $\sigma_i$, $i=1,...,L$. 
By proceeding along exactly the same lines as above for the layers
$i=2,...,L$ (in that order), 
we obtain that, for all $\alpha = (A_{L+1}, b_{L+1},..., A_1, b_1)\in D$,
we have 
\begin{equation*}
\left \{
\left ( \varphi_{L+1}^{A_{L+1}, b_{L+1}} \circ \bar \varphi_{L}^{A_{L}, b_{L}}  \circ ... \circ \bar \varphi_{1}^{A_{1}, b_{1}} \right )(\xx_{\;d}^k)
 \right \}_{k=1}^n \in \closure_Y\left (\Psi(D, x_d)\right ).
\end{equation*}
As  the map $\varphi_{L+1}^{A_{L+1}, b_{L+1}}$ does not depend on any activation function, 
this shows that the set $\closure_Y\left (\Psi(D, x_d)\right )$ associated with the 
neural network $\psi$ involving the activation functions $\sigma_i$, $i=1,...,L$,
is at least as big as the set $\closure_Y\left (\bar \Psi(D, x_d)\right )$
that is associated with the neural network $\bar \psi$
that has the same depth $L$ and widths $w_i$ as $\psi$ and involves the Heaviside-type activation 
functions on the right-hand side of \eqref{eq:randomeq1273645}. 
Since the latter set satisfies \eqref{eq:randomeq172636}
by \cref{lemma:reformulatedcondition,lemma:heavisideapprox} and due to our assumptions on $w_1$, 
it now follows immediately that 
\begin{equation*}
\min_{y \in  \closure_Y\left (\Psi(D, x_d)\right )} \|y - y_d\|_Y^2 
\leq 
\min_{y \in  \closure_Y\left (\bar \Psi(D, x_d)\right )} \|y - y_d\|_Y^2 
< \|y_d\|_Y^2\qquad \forall y_d \in Y\setminus \{0\},
\end{equation*}
and, due to \eqref{eq:defTheta} and again \cref{lemma:heavisideapprox}, 
that $\Theta(\Psi, x_d) \leq \Theta(\bar \Psi, x_d) \leq 1 - 1/n$.
This completes the proof for $I = \{1,...,L\}$ and $J = \emptyset$ (see \cref{lemma:reformulatedcondition}).

Let us now suppose that $J$ is not empty,
and let us first assume that 
an $i \in J$ with $i \geq 2$ and $w_i  = 2\tilde w_i$, $\tilde w_i \in \mathbb{N}$,  is given. Then, for all
parameters of the form 
\begin{equation}
\label{eq:randomeq263535}
A_{i+1}:=
\tilde A_{i+1}
\begin{pmatrix}
\mathrm{Id}_{\tilde w_i \times \tilde w_i}, - \mathrm{Id}_{\tilde w_i \times \tilde w_i}
\end{pmatrix},
\qquad 
A_i := 
\begin{pmatrix}
\tilde A_i
\\
\tilde A_i
\end{pmatrix},
\qquad
b_i := 
\begin{pmatrix}
\tilde b_i
\\
\tilde b_i - 1_{\tilde w_i}
\end{pmatrix},
\end{equation}
with $\tilde A_{i+1} \in \R^{w_{i+1} \times \tilde w_i}$, $\tilde A_i \in \R^{\tilde w_i \times w_{i-1}}$,
and $\tilde b_i \in \R^{\tilde w_i}$, we have
\begin{equation*}
A_{i+1}\sigma_i(A_i z + b_i) = \tilde A_{i+1} \tilde \sigma_i(\tilde A_i z + \tilde b_i)\qquad \forall z \in \R^{w_{i-1}}.
\end{equation*}
Here, $ \mathrm{Id}_{\tilde w_i \times \tilde w_i} \in \R^{\tilde w_i \times \tilde w_i}$ is the identity matrix,
$1_{\tilde w_i}$ again denotes the 
vector in $\R^{\tilde w_i}$ that contains the entry one in every component, 
and $\tilde \sigma_i$ is defined as in point \ref{item:generalactiv:ii} 
of the theorem, i.e., 
$\tilde \sigma_i(s) := \sigma_i(s) - \sigma_i(s - 1)$ for all $s \in \R$. 
In combination with \eqref{eq:NNdef}
and our assumptions on the activation functions $\sigma_i$, $i \in J$, the above shows that every layer 
of $\psi$, that is associated with an index 
$2 \leq i \in J$ and possesses an even width,
can emulate a neural network layer of a smaller width that involves an activation function
of the type studied in point \ref{item:generalactiv:i} of the theorem. 
Note that, in the case $i \in J$ with $i \geq 2$ and $w_i  = 2\tilde w_i + 1$, $\tilde w_i \in \mathbb{N}$,
and in the case $i = 1 \in J$, we can proceed completely analogously to the above 
by adding suitable rows/columns of zeros in \eqref{eq:randomeq263535},
and that, in the case $i = 1 \in J$, we can always achieve that $\tilde w_1 \geq 2$
holds by our assumptions on $w_1$.
In summary, we may thus conclude that, for arbitrary $I$ and $J$, we can always find a neural network
of the type \eqref{eq:NNdef}, 
$\tilde \psi$ lets say, 
which satisfies the assumptions of the first part of this proof and 
$\closure_Y  (\tilde \Psi(D, x_d) ) \subset \closure_Y (\Psi(D, x_d) )$,
where $\tilde \Psi$ and $\Psi$ are the functions in \eqref{eq:PsiDef} associated 
with $\tilde \psi$ and the original network $\psi$, respectively. 
The claim of the theorem now follows immediately from \cref{lemma:reformulatedcondition}
and the definition of the number $\Theta(\Psi, x_d)$ (cf.\ the first part of this proof). 
\end{proof}

Note that the above proof shows that a network $\psi$
with the properties in \cref{theorem:generalactiv} is always at least as expressive 
as a network $\bar \psi$ that involves the Heaviside-type activation
functions on the right-hand side of \eqref{eq:randomeq1273645}
and possesses the widths $w_i$ for all $i \in I$ and $\left \lfloor{w_i/2}\right \rfloor$ for all $i \in J$. 
(Here, with ``at least as expressive'', we mean that the inclusion
$\closure_Y (\bar \Psi(D, x_d)) \subset \closure_Y\left (\Psi(D, x_d)\right )$
holds for all data vectors $x_d$ that satisfy the conditions in \cref{set:NNN}.) 
As an immediate consequence of \cref{theorem:generalactiv}, we obtain \citep[cf.][Chapter 2]{Calin2020}:
\begin{corollary}
{\hspace{-0.05cm}\bf(Approximation Property \ref{fa:II} for Popular Activation Functions)}\label[corollary]{cor:approxpropacti}
Consider the situation in \cref{set:NNN} and suppose
that the set $\{1,...,L\}$ can be split into two (possibly empty) disjoint subsets 
$I$ and $J$ such that the following is true:
\begin{enumerate}
\item For each $i \in I$, the activation function $\sigma_i$ is of one of the following types:
\begin{enumerate}[label=\roman*)]
\item $\sigma_{\textup{sig}}(s) := 1/ (1 + \mathrm{e}^{-s})$ (sigmoid/logistic/soft-step activation),
\item  $\sigma_{\tanh}(s) := \tanh(s)$ (tanh-activation),
\item  $\sigma_{\arctan}(s) := \arctan(s)$ (arctan-activation),
\item $\sigma_{\textup{es}}(s) := s / (1 + |s|)$ (soft-sign/Elliot-sig activation),
\item $\sigma_{\textup{isru}}(s) := s/(1 + c s^2)^{1/2}$ with some $c > 0$ (inverse square root unit),
\item $\sigma_{\textup{sc}}(s) := c^{-1} \log \left ( (1+\mathrm{e}^{cs})/(1 + \mathrm{e}^{c(s - 1)}) \right ) $ with some $c>0$ 
(soft-clip activation),
\Item \begin{equation*}
\sigma_{\textup{sqnl}}(s) :=
\begin{cases}
-1 & \text{ if } s \leq -2
\\
s + s^2 /4& \text{ if } -2 < s \leq 0
\\
s - s^2 /4& \text{ if } 0 < s \leq 2
\\
1 & \text{ if } s > 2
\end{cases}
\qquad \text{(SQNL-activation)}.
\end{equation*}
\end{enumerate}
\item\label{item:activationex} 
It holds $w_i \geq 2$ for all $i \in J$, and, for every $i \in J$, $\sigma_i$ is of one of the following types:
\begin{enumerate}[label=\roman*)]
\item $\sigma_{\textup{relu}}(s) := \max(0, s)$ (rectified linear unit), 
\item $\sigma_{\textup{prelu}}(s) := \max(0, s) + \min(0, cs)$, $|c| \neq 1$ (leaky/parametric ReLU), 
\item $\sigma_{\textup{soft+}}(s) :=\ln(1 + \mathrm{e}^s)$  (soft-plus activation), 
\item $\sigma_{\textup{bentid}}(s) := \frac12 (s^2 + 1)^{1/2} - \frac12 + s$  (bent-identity activation), 
\item $\sigma_{\textup{silu}}(s) := s/(1 + \mathrm{e}^{-s})$  (sigmoid linear unit a.k.a.\ swish-1), 
\Item \begin{equation*}
\sigma_{\textup{isrlu}}(s) :=
\begin{cases}
s/ (1 + c s^2)^{1/2}& \text{ if } s < 0
\\
s & \text{ if } s \geq 0
\end{cases},\quad c > 0
\qquad \text{(ISRL-unit)},
\end{equation*}
\Item   
\begin{equation*}
\sigma_{\textup{elu}}(s) :=
\begin{cases}
c \left (\mathrm{e}^{s} - 1 \right )& \text{ if } s < 0
\\
s & \text{ if } s \geq 0
\end{cases},\quad c \in \R
\qquad \text{(exponential linear unit)}.
\end{equation*}
\end{enumerate}
\item It holds $w_1\geq 2$ in the case $1 \in I$ and $w_1 \geq 4$ in the case $1 \in J$. 
\end{enumerate}
Then, the function $\Psi(\cdot, x_d)\colon D \to Y$ associated with 
$x_d$ and the neural network $\psi$ in \eqref{eq:NNdef} 
possesses the property \ref{fa:II}
and the number $\Theta(\Psi, x_d)$ in \eqref{eq:defTheta} is at most $1 - 1/n$.
\end{corollary}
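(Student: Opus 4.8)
The plan is to deduce the corollary directly from \cref{theorem:generalactiv} by verifying, function by function, that each of the listed activations satisfies the corresponding abstract hypothesis of that theorem. Since the assumptions on $w_1$ and on the widths $w_i$ for $i \in J$ are literally the same in the corollary and in \cref{theorem:generalactiv}, and since the splitting of $\{1,\dots,L\}$ into $I$ and $J$ is assumed to be given, nothing more is required than the following checklist: for each $i \in I$ one must confirm that $\sigma_i$ is continuous with existing, distinct real limits at $\pm\infty$, and for each $i \in J$ one must confirm that the shifted difference $\tilde\sigma_i(s):=\sigma_i(s)-\sigma_i(s-1)$ has this same property. Once both checks are complete, \cref{theorem:generalactiv} yields property \ref{fa:II} together with the bound $\Theta(\Psi,x_d)\le 1-1/n$ with no further work.

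For the sigmoid-type activations in $I$ the verification is entirely elementary, and I would dispatch it by inspection. Each of $\sigma_{\textup{sig}}$, $\sigma_{\tanh}$, $\sigma_{\arctan}$, $\sigma_{\textup{es}}$, $\sigma_{\textup{isru}}$, $\sigma_{\textup{sc}}$, and $\sigma_{\textup{sqnl}}$ is continuous on $\R$, and each has finite limits at $\pm\infty$ that can be read off directly: $(0,1)$ for $\sigma_{\textup{sig}}$ and $\sigma_{\textup{sc}}$, $(-1,1)$ for $\sigma_{\tanh}$, $\sigma_{\textup{es}}$, and $\sigma_{\textup{sqnl}}$, $(-\pi/2,\pi/2)$ for $\sigma_{\arctan}$, and $(-c^{-1/2},c^{-1/2})$ for $\sigma_{\textup{isru}}$. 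In every instance the two limits differ, so condition \ref{item:generalactiv:i} of \cref{theorem:generalactiv} holds. The only mildly nontrivial limits are those of $\sigma_{\textup{sc}}$ and $\sigma_{\textup{sqnl}}$, but computing them is still a routine exercise in taking $s\to\pm\infty$.

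The ReLU-type activations in $J$ carry the actual content of the proof. Here I would first record the structural observation that each listed function is \emph{asymptotically affine}: there exist one-sided slopes $a_\pm$ such that $\sigma_i(s)=a_\pm s+O(1)$ as $s\to\pm\infty$, with $a_+=1$ and $a_-=0$ in the saturating cases $\sigma_{\textup{relu}}$, $\sigma_{\textup{soft+}}$, $\sigma_{\textup{silu}}$, $\sigma_{\textup{isrlu}}$, $\sigma_{\textup{elu}}$, with one-sided slopes $1$ and $c$ (or $1+c$ and $0$, depending on the sign of $c$) for $\sigma_{\textup{prelu}}$, and with slopes $1\pm\tfrac12$ for $\sigma_{\textup{bentid}}$. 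For any such function the shifted difference satisfies $\tilde\sigma_i(s)=\sigma_i(s)-\sigma_i(s-1)\to a_\pm$ as $s\to\pm\infty$, because the $O(1)$ remainders cancel asymptotically; continuity of $\tilde\sigma_i$ is inherited from that of $\sigma_i$. Hence $\tilde\sigma_i(-\infty)=a_-\ne a_+=\tilde\sigma_i(\infty)$, which is exactly condition \ref{item:generalactiv:ii}. The hypothesis $|c|\ne1$ enters in precisely one place, namely $\sigma_{\textup{prelu}}$, where it guarantees that the two one-sided slopes stay distinct regardless of the sign of $c$.

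The step I expect to be the genuine obstacle is the asymptotic-slope computation for $J$: one must argue cleanly that $\sigma_i(s)-\sigma_i(s-1)$ converges to the one-sided slope of an asymptotically affine function, and then check for each of the seven $J$-activations that the two slopes differ. The bent-identity case is the most delicate, since neither slope vanishes and both half-lines contribute a true linear part; there I would expand $\tfrac12(s^2+1)^{1/2}=\tfrac12|s|+O(1/|s|)$ to extract the one-sided slopes $1\pm\tfrac12$. With all activations verified, invoking \cref{theorem:generalactiv} completes the proof.
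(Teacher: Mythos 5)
Your proposal is correct and follows exactly the route the paper takes: the published proof is the one-line remark that the corollary ``follows immediately by checking the assumptions of \cref{theorem:generalactiv}'', and your text simply carries out those routine checks explicitly (including the only two points with any content, namely that $|c|\neq 1$ is needed precisely for the leaky/parametric ReLU and that the bent identity has one-sided slopes $3/2$ and $1/2$). One cosmetic caveat: writing $\sigma_i(s)=a_\pm s+O(1)$ is in general too weak to conclude that $\sigma_i(s)-\sigma_i(s-1)\to a_\pm$ --- the remainder must actually converge, not merely stay bounded --- but for every activation on your list the remainder does converge to a constant, so the argument stands.
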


\begin{proof}
This follows immediately by checking the assumptions of \cref{theorem:generalactiv}.
\end{proof}

We remark that the list of activation functions in \cref{cor:approxpropacti} 
is far from exhaustive and that, even when the assumptions 
of \cref{theorem:generalactiv} are not satisfied, 
it is still often possible to establish \ref{fa:II} by hand.
Compare, e.g., with the calculation in \cref{sec:4} in this context, where 
we have done precisely that. 
Having checked that the properties \ref{fa:I} and \ref{fa:II} hold
under reasonable assumptions on the activation functions and widths in \eqref{eq:NNdef}, 
we can now again apply the abstract results of \cref{sec:5}. 
Before we collect the numerous corollaries that we obtain in this way, 
we prove two lemmas that simplify the application of \cref{theorem:badcone}.

\begin{lemma}
\label[lemma]{lemma:affinesubspace}
Consider the situation in \cref{set:NNN}. 
Suppose further that 
$\min(d_\xx, d_\yy)\leq \min(w_1,...,w_L)$ holds and that, for each $i \in \{1,...,L\}$, 
there exists an open nonempty interval $I_i \subset \R$ such that 
$\sigma_i$ is affine with a non-vanishing derivative on $I_i$. 
Define%
\begin{equation*}
V := 
\left \{
\{\zz_k\}_{k=1}^n \in Y
\,
\Big |
\,
\exists A \in \R^{d_\yy \times d_\xx}, b \in \R^{d_\yy} \text{ such that }  \zz_k = A \xx_{\;d}^k + b~~\forall k=1,...,n
\right \}.
\end{equation*}
Then, for every element $z$ of the subspace $V$, there exist an $\bar \alpha \in D$ and an open set $U \subset D$
satisfying $\bar \alpha \in U$, $z = \Psi(\bar \alpha, x_d)$, and $\Psi(U, x_d) \subset V$. 
\end{lemma}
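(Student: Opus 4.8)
The plan is to show that the network can \emph{exactly emulate} the affine map defining $z$ while operating entirely inside the affine segments $I_i$ of its activations, and that this affine regime is stable under small parameter perturbations, so that $\Psi(U,x_d)\subset V$ follows automatically. This is the neural-network analogue of \cref{lemma:varknotaux}\ref{item:lemmavarknot:iii} and will supply precisely the subspace hypothesis needed to invoke \cref{theorem:badcone}. Throughout, I would write each affine segment as $\sigma_i(s)=p_i s + q_i$ for $s\in I_i$, where $p_i\neq 0$ by the non-vanishing-derivative assumption, and let $c_i$ denote the midpoint of $I_i$.

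So let $z=\{\zz_k\}_{k=1}^n\in V$ with $\zz_k=A\xx_{\;d}^k+b$, $A\in\R^{d_\yy\times d_\xx}$, $b\in\R^{d_\yy}$, and set $r:=\min(d_\xx,d_\yy)$. Since $\operatorname{rank}(A)\le r\le\min(w_1,\dots,w_L)$, I would first fix a rank factorization $A=PQ$ with $Q\in\R^{r\times d_\xx}$ and $P\in\R^{d_\yy\times r}$. The core of the argument is then an explicit construction of $\bar\alpha$ that threads a single $r$-dimensional signal through the network. Concretely, I would take a small parameter $\varepsilon>0$, fix a reference point $\hat\xx$, and set the first layer to $A_1=\bigl(\begin{smallmatrix}\varepsilon Q\\ 0\end{smallmatrix}\bigr)$ with $b_1$ chosen so that the preactivation at $\hat\xx$ equals $c_1 1_{w_1}$; every intermediate layer $2\le i\le L$ to the pass-through map $A_i=\bigl(\begin{smallmatrix}\mathrm{Id}_{r}&0\\0&0\end{smallmatrix}\bigr)$ with $b_i$ recentering the preactivation at $c_i 1_{w_i}$; and the final (activation-free) layer $L+1$ to an amplification $\bigl(\tfrac{1}{\varepsilon\prod_{j=1}^L p_j}P,\,0\bigr)$ together with a bias $b_{L+1}$ correcting the accumulated constants. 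A short induction then shows that, at each training point $\xx_{\;d}^k$, the layer-$i$ preactivation $\zeta_i^k$ equals $c_i 1_{w_i}$ plus a signal of size $O(\varepsilon)$ carried in its first $r$ coordinates, so for $\varepsilon$ small enough all $\zeta_i^k$ lie componentwise strictly inside $I_i$; on this affine regime the accumulated slopes $\prod_j p_j$ cancel against the amplification in layer $L+1$, and one checks that $\psi(\bar\alpha,\xx_{\;d}^k)=PQ\xx_{\;d}^k+b=\zz_k$, i.e.\ $\Psi(\bar\alpha,x_d)=z$.

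It remains to produce the open set $U$. Here I would argue inductively that the preactivation $\zeta_i^k$ of each layer at each training point is a continuous function of $\alpha$ \emph{on the region where all earlier preactivations already lie in their intervals}: since on $I_i$ the activation is the affine map $s\mapsto p_i s + q_i$, the output of layer $i-1$ depends continuously on $\alpha$ there, hence so does $\zeta_i^k$. Because each $\zeta_i^k(\bar\alpha)\in I_i^{w_i}$ and each $I_i$ is \emph{open}, shrinking a neighborhood of $\bar\alpha$ finitely many times yields an open $U\subset D$ on which all preactivations at all training points remain in the $I_i$. For $\alpha\in U$ every activation acts affinely at the points $\xx_{\;d}^k$, so $\psi(\alpha,\cdot)$ agrees at these points with a single composition of affine maps $\R^{d_\xx}\to\R^{d_\yy}$, i.e.\ with some $\xx\mapsto M_\alpha\xx+c_\alpha$; hence $\Psi(\alpha,x_d)=\{M_\alpha\xx_{\;d}^k+c_\alpha\}_k\in V$ and $\Psi(U,x_d)\subset V$.

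I expect the main obstacle to be the dimension bookkeeping that makes the width hypothesis $\min(d_\xx,d_\yy)\le\min_i w_i$ do its work: one must verify that the rank-$r$ signal can be routed through \emph{every} hidden layer without the linear parts forcing the preactivations out of the $I_i$, which is exactly why the construction keeps the signal attenuated ($O(\varepsilon)$) throughout the hidden layers and defers all amplification to the unconstrained output layer. A secondary point worth stressing is that the $\sigma_i$ are only assumed affine on $I_i$ and may be wildly discontinuous elsewhere; the openness argument must therefore rely solely on the local affine behaviour together with the openness of the $I_i$, and never on any global regularity of the activations.
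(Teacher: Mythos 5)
Your proposal is correct and follows essentially the same strategy as the paper's proof: thread an $\varepsilon$-attenuated copy of the affine signal through the hidden layers so that every preactivation at every training point sits strictly inside the affine segments $I_i$, undo the accumulated slopes and the attenuation in the activation-free output layer, and then use the strictness of these open inclusions to obtain the neighborhood $U$ on which $\psi(\alpha,\cdot)$ still acts affinely on the training inputs. The only difference is organizational: the paper treats the cases $d_\xx\le\min_i w_i$ and $d_\yy\le\min_i w_i$ by two separate explicit constructions (routing $\xx$ itself or $A\xx$ through the layers), whereas your factorization $A=PQ$ through $\R^{\min(d_\xx,d_\yy)}$ handles both at once; your explicit remark that the continuity/openness argument must be run layer by layer because the $\sigma_i$ may be irregular outside the $I_i$ is a point the paper leaves implicit.
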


\begin{proof}
Suppose that an arbitrary but fixed element $z$ of the subspace $V$ with 
associated $A \in \R^{d_\yy \times d_\xx}$ and $b \in \R^{d_\yy}$ is given
and
let $a_i \in \R$, $\varepsilon_i > 0$, $\beta_i \in \R \setminus \{0\}$, and $\gamma_i \in \R$
satisfy $I_i = (a_i - \varepsilon_i, a_i + \varepsilon_i)$
and $\sigma_i(s) = \beta_i s + \gamma_i$ for all $s \in I_i$
and all $i=1,...,L$. 
Let us further again use the symbols $0_{p \times q}$ and $0_p$ to denote the zero matrix in $\R^{p \times q}$
and the zero (column) vector in $\R^p$, respectively, with the convention that these blocks are ignored in 
the cases $p=0$ and $q=0$, let $\mathrm{Id}_{p \times p}$ and $1_p$ denote the identity matrix in $\R^{p \times p}$
and the vector in $\R^p$ whose entries are all equal to one, respectively, 
and let $\|\cdot\|_\infty$ be the maximum norm on the Euclidean space. 
Then, in the case $d_\xx \leq \min(w_1,...,w_L)$,
 it is easy to check  that the matrices and vectors
\begin{equation*}
\begin{gathered}
A_1 := 
\frac{\varepsilon_1}{2\max_{k=1,...,n} \|\xx_{\;d}^k\|_\infty}
\begin{pmatrix}
\mathrm{Id}_{d_\xx \times d_\xx} 
\\
0_{(w_1 - d_\xx) \times d_\xx}
\end{pmatrix}
\in \R^{w_1 \times w_{0}},
\qquad 
b_1 := 
a_1 1_{w_1}
\in \R^{w_1},
\\
A_i := 
\frac{\varepsilon_i}{\beta_{i-1} \varepsilon_{i-1}}
\begin{pmatrix}
\mathrm{Id}_{d_\xx \times d_\xx}, 0_{d_\xx \times (w_{i - 1} - d_\xx)}
\\
0_{(w_i - d_\xx) \times w_{i - 1}}
\end{pmatrix}
\in \R^{w_i \times w_{i - 1}},\qquad i=2,...,L,
\\
b_i := 
a_i 1_{w_i}
 - (a_{i - 1}\beta_{i - 1} + \gamma_{i - 1})  A_i 1_{w_{i - 1}}
\in \R^{w_i},\qquad i=2,...,L,
\\
A_{L+1}:= 
\frac{2\max_{k=1,...,n} \|\xx_{\;d}^k\|_\infty}{\beta_{L} \varepsilon_{L}}
\begin{pmatrix}
A, 0_{d_\yy \times (w_{L} - d_\xx)}
\end{pmatrix}
\in \R^{w_{L+1} \times w_{L}},
\\
b_{L+1} := 
b - (a_{L}\beta_L + \gamma_{L})  A_{L+1} 1_{w_{L}}
\in \R^{w_{L+1}}
\end{gathered}
\end{equation*}
satisfy 
\begin{equation}
\label{eq:randomeq273535}
\begin{gathered}
A_1\xx_{\;d}^j + b_1 
=
\frac{\varepsilon_1}{2\max_{k=1,...,n} \|\xx_{\;d}^k\|_\infty}
\begin{pmatrix}
\xx_{\;d}^j
\\
0_{w_{1} - d_\xx}
\end{pmatrix}
+
a_1 1_{w_1}
\in \left (a_1 -  \varepsilon_1,  a_1 +  \varepsilon_1  \right )^{w_1},
\\
A_{i}\left (
\varphi_{i-1}^{A_{i-1}, b_{i-1}} \circ ... \circ \varphi_{1}^{A_{1}, b_{1}}(\xx_{\;d}^j)
\right )
+
b_i
=
\frac{\varepsilon_i}{2\max_{k=1,...,n} \|\xx_{\;d}^k\|_\infty}
\begin{pmatrix}
\xx_{\;d}^j
\\
0_{w_{i} - d_\xx}
\end{pmatrix}
+
a_i 1_{w_i}
\\
\qquad\qquad\qquad\qquad\qquad\qquad\qquad\qquad\qquad 
\in \left (a_i - \ \varepsilon_i,  a_i + \varepsilon_i  \right )^{w_i}
\quad \forall i=2,...,L,
\end{gathered}
\end{equation}
and
\begin{equation*}
\left (\varphi_{L+1}^{A_{L+1}, b_{L+1}} \circ ... \circ \varphi_{1}^{A_{1}, b_{1}}\right )(\xx_{\;d}^j)
=
A \xx_{\;d}^j + b
\end{equation*}
for all $j=1,...,n$. The parameter $\bar \alpha := (A_{L+1}, b_{L+1}, ..., A_1, b_1)$
thus satisfies $\Psi(\bar \alpha, x_d) = z$ as desired. Since 
the inclusions in \eqref{eq:randomeq273535} are stable w.r.t.\ small 
perturbations in the matrices $A_i$ and the vectors $b_i$, we further obtain that
the function $\psi(\alpha, \cdot)\colon \XX \to \YY$ also behaves affine-linearly on the training
data $\xx_{\;d}^k$, $k=1,...,n$, for all $\alpha \in D$ in a small open neighborhood $U$ of $\bar \alpha$.
This shows that there exists an open set $U \subset D$ satisfying 
$\bar \alpha \in U$, $\Psi(\bar \alpha, x_d) = z$, and $\Psi(U, x_d) \subset V$
and proves the claim of the lemma in the case  $d_\xx \leq \min(w_1,...,w_L)$.

In the situation  $d_\yy\leq \min(w_1,...,w_L)$, we can proceed along similar lines as above. 
Given an arbitrary but fixed  $z \in V$ with 
associated $A \in \R^{d_\yy \times d_\xx}$ and $b \in \R^{d_\yy}$, we define
\begin{equation*}
\begin{gathered}
A_1 := 
\frac{\varepsilon_1}{2\max_{k=1,...,n} \|A\xx_{\;d}^k\|_\infty + 1}
\begin{pmatrix}
A
\\
0_{(w_1 - d_\yy) \times d_\xx}
\end{pmatrix}
\in \R^{w_1 \times w_{0}},
\\
b_1 := 
a_1 1_{w_1}
\in \R^{w_1},
\\
A_i := 
\frac{\varepsilon_i}{\beta_{i-1} \varepsilon_{i-1}}
\begin{pmatrix}
\mathrm{Id}_{d_\yy \times d_\yy}, 0_{d_\yy \times (w_{i - 1} - d_\yy)}
\\
0_{(w_i - d_\yy) \times w_{i - 1}}
\end{pmatrix}
\in \R^{w_i \times w_{i - 1}},\qquad i=2,...,L,
\\
b_i := 
a_i 1_{w_i}
 - (a_{i - 1}\beta_{i - 1} + \gamma_{i - 1}) A_i 1_{w_{i - 1}}
\in \R^{w_i},\qquad i=2,...,L,
\\
A_{L+1}:= 
\frac{2\max_{k=1,...,n} \|A\xx_{\;d}^k\|_\infty + 1}{\beta_{L} \varepsilon_{L}}
\begin{pmatrix}
\mathrm{Id}_{d_\yy \times d_\yy}, 0_{d_\yy \times (w_{L} - d_\yy)}
\end{pmatrix}
\in \R^{w_{L+1} \times w_{L}},
\\
b_{L+1} := 
b - (a_{L}\beta_L + \gamma_{L}) A_{L+1} 1_{w_{L}}
\in \R^{w_{L+1}}.
\end{gathered}
\end{equation*}
Then, it is easy to check that it holds
\begin{equation*}
\begin{gathered}
A_1\xx_{\;d}^j + b_1 
=
\frac{\varepsilon_1}{2\max_{k=1,...,n} \|A\xx_{\;d}^k\|_\infty + 1}
\begin{pmatrix}
A\xx_{\;d}^j
\\
0_{w_{1} - d_\yy}
\end{pmatrix}
+
a_1 1_{w_1}
\in \left (a_1 -  \varepsilon_1,  a_1 +  \varepsilon_1  \right )^{w_1},
\\
A_{i}\left (
\varphi_{i-1}^{A_{i-1}, b_{i-1}} \circ ... \circ \varphi_{1}^{A_{1}, b_{1}}(\xx_{\;d}^j)
\right )
+
b_i
=
\frac{\varepsilon_i}{2\max_{k=1,...,n} \|A\xx_{\;d}^k\|_\infty + 1}
\begin{pmatrix}
A\xx_{\;d}^j
\\
0_{w_{i} - d_\yy}
\end{pmatrix}
+
a_i 1_{w_i}
\\
\qquad\qquad\qquad\qquad\qquad\qquad\qquad\qquad\qquad 
\in \left (a_i - \ \varepsilon_i,  a_i + \varepsilon_i  \right )^{w_i}
\quad \forall i=2,...,L,
\end{gathered}
\end{equation*}
and
\begin{equation*}
\left (\varphi_{L+1}^{A_{L+1}, b_{L+1}} \circ ... \circ \varphi_{1}^{A_{1}, b_{1}}\right )(\xx_{\;d}^j)
=
A \xx_{\;d}^j + b
\end{equation*}
for all $j=1,...,n$. The existence of a parameter $\bar \alpha \in D$ and an open set $U \subset D$
with the desired properties now follows completely analogously to the first part of the proof. 
\end{proof}
\begin{lemma}\label[lemma]{lemma:constantsubspace}
Consider the situation in \cref{set:NNN}, 
assume that the functions $\sigma_i$, $i=1,...,L$, are bounded on bounded sets, and
suppose that there exists an index $j \in \{1,...,L\}$
such that the function $\sigma_j$ is constant on an open nonempty interval $I_j \subset \R$.
Define
\begin{equation*}
V := 
\left \{
\{\zz_k\}_{k=1}^n \in Y
\,
\Big |
\,
\zz_k = \zz_l~~\forall k, l \in \{1,...,n\}
\right \}.
\end{equation*}
Then, for every element $z$ of the subspace $V$, there exist an $\bar \alpha \in D$ and an open set $U \subset D$
satisfying $\bar \alpha \in U$, $z = \Psi(\bar \alpha, x_d)$, and $\Psi(U, x_d) \subset V$. 
\end{lemma}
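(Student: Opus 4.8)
The subspace $V$ is the ``diagonal'' copy of $\YY = \R^{d_\yy}$ inside $Y = \YY^n$, consisting of those $\{\zz_k\}_{k=1}^n$ whose components all coincide. The plan is to exploit the layer $j$ on which $\sigma_j$ is constant on an open interval $I_j$ in order to render the output of the $j$-th layer completely \emph{independent of the input} $\xx_{\;d}^k$: if, for every training sample $k$, the pre-activation $A_j(\cdots) + b_j$ lands componentwise in $I_j$, then the output of layer $j$ equals the fixed vector $c_j 1_{w_j}$ (with $c_j$ the constant value of $\sigma_j$ on $I_j$) for \emph{all} $k$, and this sample-independence is inherited by every subsequent layer. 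The affine top layer $\varphi_{L+1}^{A_{L+1}, b_{L+1}}$ then lets one prescribe the resulting common output value freely. Conceptually, this is the mirror image of the construction in \cref{lemma:affinesubspace}: there an affine segment of $\sigma_i$ was used to \emph{preserve} affine-linear behavior, whereas here a constant segment is used to \emph{annihilate} all input dependence.

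To build $\bar\alpha$, I would first set $A_i = 0$ and $b_i = 0$ for all $i < j$, so that the output of the first $j-1$ layers is a fixed vector $v_{j-1} \in \R^{w_{j-1}}$. Next I would choose $A_j := 0$ and $b_j := t_0 1_{w_j}$, where $t_0$ is the midpoint of $I_j = (a, b)$; then the pre-activation at layer $j$ equals $b_j$ and lies strictly inside the open box $I_j^{w_j}$, so that layer $j$ outputs $c_j 1_{w_j}$ and layers $j+1, \ldots, L$ produce a fixed vector $v_L$ that does not depend on $k$. Finally, writing $\zz \in \R^{d_\yy}$ for the common component of the prescribed $z \in V$, I would set $b_{L+1} := \zz - A_{L+1} v_L$ (for instance $A_{L+1} := 0$, $b_{L+1} := \zz$), which yields $\varphi_{L+1}^{A_{L+1}, b_{L+1}}(v_L) = \zz$ and hence $\Psi(\bar\alpha, x_d) = z$.

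The core of the argument is the openness of $U$, and here the boundedness hypothesis enters. Fix a bounded open neighborhood $U_0$ of $\bar\alpha$. Since each $\sigma_i$ is bounded on bounded sets and the inputs $\xx_{\;d}^1, \ldots, \xx_{\;d}^n$ are fixed, a straightforward induction over $i = 1, \ldots, j-1$ shows that the layer-$(j-1)$ outputs $v_{j-1}^k(\alpha)$ remain in one common bounded set as $\alpha$ ranges over $U_0$ and $k$ over $\{1, \ldots, n\}$; let $M > 0$ bound their Euclidean norm. With $\rho := \tfrac12(b - a) > 0$, I would then pass to the open set $U := U_0 \cap \{ \|A_j\|_2 < \rho/(2M) \} \cap \{ \|b_j - t_0 1_{w_j}\|_\infty < \rho/2 \}$, which still contains $\bar\alpha$. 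For every $\alpha \in U$ and every $k$ one has $\|A_j v_{j-1}^k(\alpha) + b_j - t_0 1_{w_j}\|_\infty \le \|A_j\|_2 M + \rho/2 < \rho$, so the layer-$j$ pre-activation lies in $I_j^{w_j}$; exactly as in the construction, this forces the layer-$j$ output to be $c_j 1_{w_j}$ for all $k$ and therefore $\Psi(\alpha, x_d) \in V$. Together with the previous paragraph this gives $\bar\alpha \in U$, $z = \Psi(\bar\alpha, x_d)$, and $\Psi(U, x_d) \subset V$.

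The main subtlety worth flagging is that this argument uses no continuity of the activation functions whatsoever. The inclusion $\Psi(U, x_d) \subset V$ for an \emph{open} $U$ would be delicate if one tried to invoke continuity of $\Psi(\cdot, x_d)$, which need not hold here; instead it is precisely the combination of the \emph{open} interval $I_j$ with the uniform bound $M$ that guarantees a small perturbation of $A_j$ cannot drive the layer-$j$ pre-activation out of $I_j$. Keeping track of this decoupling of openness from continuity --- and, as a minor point, taking $M > 0$ so that $\rho/(2M)$ is well defined --- is the only place where care is genuinely required.
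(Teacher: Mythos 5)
Your proposal is correct and follows essentially the same route as the paper's proof: zero out the weights so that the layer-$j$ pre-activation is a constant vector sitting at the midpoint of $I_j^{w_j}$, use the affine output layer to hit the prescribed common value, and invoke the boundedness of the $\sigma_i$ on bounded sets to show that the pre-activation stays inside the open box $I_j^{w_j}$ for all parameters in a small open neighborhood, so that the layer-$j$ output is sample-independent there. The only difference is that you spell out the quantitative openness argument (the bound $M$ and the thresholds on $\|A_j\|_2$ and $b_j$) that the paper leaves implicit.
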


\begin{proof}
Consider an arbitrary but fixed $z = \{\zz_k\}_{k=1}^n \in V$, let 
$b \in \R^{d_\yy}$ denote the unique vector with $\zz_k = b$ for all $k=1,...,n$, 
and let $a_j \in \R$ and $\varepsilon_j > 0$ satisfy $I_j = (a_j - \varepsilon_j, a_j + \varepsilon_j)$.
Then, by defining
\begin{equation*}
\begin{gathered}
A_i := 0 \in \R^{w_i \times w_{i - 1}}\quad \forall i \in \{1,...,L+1\},
\qquad 
b_i := 0 \in \R^{w_i}\quad \forall i \in \{1,...,L\} \setminus \{j\},
\\
b_j := a_j 1_{w_j} \in \R^{w_j},\quad\text{and}\quad b_{L+1} := b \in \R^{w_{L+1}},
\end{gathered}
\end{equation*}
where $1_{w_j}$ again denotes the vector which contains the number one in each of its entries, we obtain a 
parameter $\bar \alpha := (A_{L+1}, b_{L+1}, ..., A_1, b_1)$ which trivially satisfies 
$\psi(\bar \alpha, \xx_{\;d}^k) = b = \zz_k$ for all $k=1,...,n$ and 
\begin{equation}
\label{eq:randomeq163535}
A_{j}\left (\varphi_{j-1}^{A_{j-1}, b_{j-1}} \circ ... \circ \varphi_{1}^{A_{1}, b_{1}}(\xx_{\;d}^k)
\right )
+
b_j
=
b_j
\in \left (a_j - \ \varepsilon_j,  a_j + \varepsilon_j  \right )^{w_j}
\quad \forall k =1,...,n.
\end{equation}
(Here, the expression $\varphi_{j-1}^{A_{j-1}, b_{j-1}} \circ ... \circ \varphi_{1}^{A_{1}, b_{1}}$ 
again has to be interpreted as the identity map in the borderline case $j = 1$.) 
As the inclusion in \eqref{eq:randomeq163535} remains true for all parameters $\alpha$
in a small neighborhood $U \subset D$ of $\bar \alpha$ and since the activation function $\sigma_j$ is constant 
on $I_j$, it now follows immediately that there exist an $\bar \alpha \in D$ and an open set $U \subset D$
with the desired properties $\bar \alpha \in U$, $z = \Psi(\bar \alpha, x_d)$, and $\Psi(U, x_d) \subset V$. This 
completes the proof. 
\end{proof}

Note that the condition $\min(d_\xx, d_\yy)\leq \min(w_1,...,w_L)$ in \cref{lemma:affinesubspace} is vacuous 
in the case $d_\yy = 1$, i.e., in the situation where the neural network has a scalar output. 
With \cref{lemma:affinesubspace,lemma:constantsubspace} in place,
we are in the position to study the consequences that the analysis of \cref{sec:5}
has for the optimization landscape and the stability properties 
of training problems of the form \eqref{eq:trainingpropprot5}. 
Following the structure of \cref{sec:5}, we begin with a result on the 
set-valuedness and the stability
of the best approximation map $P_\Psi^{x_d}$ associated with \eqref{eq:trainingpropprot5}
in the case where there exist unrealizable vectors: 

\begin{corollary}
{\hspace{-0.05cm}\bf(Nonuniqueness and Instability of Best Approximations)}\label[corollary]{cor:NNbestapproxinstabil}
Consider the situation in \cref{set:NNN} and suppose that the 
widths $w_i$ and the activation functions $\sigma_i$
are such that
\cref{lemma:heavisideapprox}, \cref{theorem:generalactiv}, or \cref{cor:approxpropacti}
can be applied to~$\psi$. 
Assume further that there exist unrealizable vectors $y_d \in Y$, i.e., that $\closure_Y(\Psi(D, x_d)) \neq Y$ holds. 
Then, the best approximation map 
\begin{equation}
\label{eq:bestapproxmaprevNN}
P_{\Psi}^{x_d}\colon Y \rightrightarrows Y,\qquad y_d \mapsto \argmin_{y \in  \closure_Y\left (\Psi(D, x_d)\right )} \|y - y_d\|_Y^2,
\end{equation}
associated with the training problem \eqref{eq:trainingpropprot5}
is set-valued and there exist uncountably many training label vectors $y_d \in Y$ such that
$|P_{\Psi}^{x_d}(y_d)| > 1$ holds. Moreover, the map $P_{\Psi}^{x_d}\colon Y \rightrightarrows Y$ 
is discontinuous in the sense that, for every arbitrary but fixed $C > 0$, there exists an uncountable set $\MM_C \subset Y$
such that, for every $y_d \in \MM_C$, there are sequences $\{y_d^l\}, \{\tilde y_d^l\} \subset Y$ satisfying 
\begin{equation*}
\begin{gathered}
y_d^l \to y_d  \text{ for }l \to \infty,
\qquad
\tilde y_d^l \to y_d \text{ for }l \to \infty,
\\
|P_\Psi^{x_d}(y_d^l)| =  |P_\Psi^{x_d}(\tilde y_d^l)|  = 1 \quad \forall l,
\quad \text{and}\quad
\|P_\Psi^{x_d}(y_d^l) - P_\Psi^{x_d}(\tilde y_d^l)\|_Y \geq C\quad \forall l.
\end{gathered}
\end{equation*}
Further, for every $C > 0$, there exists at least one $y_d \in \MM_C$ satisfying \eqref{eq:thetabound61253}
(with the number $\Theta(\Psi, x_d) \in (0, 1)$ associated with $\psi$ and $x_d$ defined in Equation \ref{eq:defTheta}).
\end{corollary}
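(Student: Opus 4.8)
The plan is to recognize \cref{cor:NNbestapproxinstabil} as an immediate consequence of the abstract instability result in \cref{theorem:abstractinstability}. The entire task reduces to checking that the neural network $\psi$ from \cref{set:NNN}, together with the given data vector $x_d$, satisfies the hypotheses \cref{ass:standingassumpssec4,ass:unrealizable_ass} under which \cref{theorem:abstractinstability} is formulated; once this is done, its two assertions translate verbatim into the claimed set-valuedness and discontinuity of $P_\Psi^{x_d}$.

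First I would verify condition \ref{fa:I}: conicity of $\Psi(D, x_d)$ is exactly the content of \cref{lemma:nnconic}, which holds unconditionally in the setting of \cref{set:NNN} because the topmost layer is affine and positively homogeneous in its parameters. Next I would establish \ref{fa:II}: by the hypothesis of the corollary, the widths and activation functions are such that at least one of \cref{lemma:heavisideapprox,theorem:generalactiv,cor:approxpropacti} applies, and each of these results yields precisely the improved-expressiveness property \ref{fa:II} together with the bound $\Theta(\Psi, x_d) \leq 1 - 1/n$. Thus \cref{ass:standingassumpssec4} is in force. The remaining hypothesis, namely $\closure_Y(\Psi(D, x_d)) \neq Y$, is exactly \cref{ass:unrealizable_ass} and is assumed outright in the statement.

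With both assumptions verified, I would simply invoke \cref{theorem:abstractinstability}. Its part \ref{item:stabth:i} furnishes the uncountably many $y_d$ with $|P_\Psi^{x_d}(y_d)| > 1$, establishing set-valuedness; its part \ref{item:stabth:ii} furnishes, for each $C > 0$, the uncountable set $\MM_C$ together with the approximating sequences $\{y_d^l\}, \{\tilde y_d^l\}$ satisfying the displayed convergence and separation conditions, as well as at least one $y_d \in \MM_C$ for which the quantitative identity \eqref{eq:thetabound61253} holds. I would also note that, since unrealizable vectors exist, \cref{lemma:propertiesydnonoverpar} guarantees $\Theta(\Psi, x_d) \in (0, 1)$, so the square root appearing in \eqref{eq:thetabound61253} is well defined and finite; this is precisely the clause referenced in the final sentence of the corollary.

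There is no genuine obstacle in this corollary: all the substantive difficulty lives in the abstract \cref{theorem:abstractinstability} (built on the Jung-type inequality in \cref{lemma:jung} and the projection geometry in \cref{lemma:propertiesydnonoverpar}) and in the verification of \ref{fa:II} carried out in \cref{lemma:heavisideapprox,theorem:generalactiv,cor:approxpropacti}. The only point requiring care is the bookkeeping of which of the three expressiveness results applies under the three width regimes listed in the hypothesis, but in every case the conclusion \ref{fa:II} and the estimate on $\Theta(\Psi, x_d)$ are identical, so the application of \cref{theorem:abstractinstability} is uniform and the proof is short.
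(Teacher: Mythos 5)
Your proposal is correct and follows essentially the same route as the paper's own proof: conicity from \cref{lemma:nnconic}, property \ref{fa:II} from whichever of \cref{lemma:heavisideapprox}, \cref{theorem:generalactiv}, or \cref{cor:approxpropacti} applies, and then a direct application of \cref{theorem:abstractinstability} under \cref{ass:unrealizable_ass}. Your added remark that $\Theta(\Psi, x_d) \in (0,1)$ follows from \cref{lemma:propertiesydnonoverpar} is a correct and welcome piece of bookkeeping for the final clause.
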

\begin{proof}
As \cref{lemma:heavisideapprox}, \cref{theorem:generalactiv}, or \cref{cor:approxpropacti} can be applied to $\psi$
by assumption and due to \cref{lemma:nnconic}, we know that the 
map $\Psi(\cdot, x_d)\colon D \to Y$ associated with $x_d$ 
and  $\psi$ satisfies \ref{fa:I} and \ref{fa:II}. By invoking \cref{theorem:abstractinstability},
the claim now follows immediately.
\end{proof}

Note that, in the situation of \cref{cor:NNbestapproxinstabil}, the comments made in 
\cref{rem:stability} (e.g., on the potential consequences for the convergence behavior of 
descent methods) apply to the training problem \eqref{eq:trainingpropprot5} as well. 
As a corollary of \cref{th:stevaluedspurious}, we next obtain: 

\begin{corollary}
{\hspace{-0.05cm}\bf(Excessive Nonuniqueness or Spurious Minima/Basins)}\label[corollary]{cor:NNinfiniteNonUniqueness}
Consider the situation in \cref{set:NNN} and suppose that the 
widths $w_i$ and the activation functions $\sigma_i$
satisfy the conditions in \cref{theorem:generalactiv} (or \cref{cor:approxpropacti}, respectively). 
Assume further that the functions $\sigma_i$ are continuous and
that there exist unrealizable label vectors $y_d \in Y$, i.e., that $\closure_Y(\Psi(D, x_d)) \neq Y$ holds. 
Then, there exist uncountably many $y_d \in Y$ such that the best approximation map $P_{\Psi}^{x_d}\colon Y \rightrightarrows Y$
defined in \eqref{eq:bestapproxmaprevNN} satisfies $|P_\Psi^{x_d}(y_d)| = \infty$ or there exist
an open nonempty cone $K \subset Y$ and a number $M \in \mathbb{N}$ with $M \geq 2$
such that, for every $y_d \in K$, there are nonempty disjoint closed subsets $D_1,..., D_M$ 
of the parameter space $D$ in \eqref{eq:randomeq2635hdg36} satisfying 
\begin{equation}
\label{eq:randomeq283636}
\inf_{\alpha \in D_1} \|\Psi( \alpha, x_d) - y_d\|_Y^2
< \inf_{\alpha \in D_i} \|\Psi( \alpha, x_d) - y_d\|_Y^2\quad \forall i=2,...,M
\end{equation}
and
\begin{equation}
\label{eq:randomeq283636-2}
\sup_{\alpha \in D_1 \cup ... \cup D_M} \|\Psi( \alpha, x_d) - y_d\|_Y^2 < \|\Psi( \tilde \alpha, x_d) - y_d\|_Y^2 
\quad \forall \tilde \alpha \in D \setminus (D_1 \cup ... \cup D_M).
\end{equation}
Further, if the second of the above cases applies, then the spurious local minima/basins in \eqref{eq:randomeq283636} 
can be arbitrarily bad in the sense that, for every arbitrary but fixed $C>0$,
there exist uncountably many $y_d \in K$ which not only satisfy \eqref{eq:randomeq283636}  but even
\begin{equation*}
\inf_{\alpha \in D_1} \|\Psi( \alpha, x_d) - y_d\|_Y^2 + C
< \inf_{\alpha \in D_i} \|\Psi( \alpha, x_d) - y_d\|_Y^2\quad \forall i=2,...,M.
\end{equation*}
\end{corollary}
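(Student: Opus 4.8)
The plan is to read this corollary as an essentially direct specialization of \cref{th:stevaluedspurious} to neural networks, so that the substantive work reduces to (a) verifying the abstract hypotheses and (b) promoting the qualitative strict inequalities to the quantitative ``arbitrarily bad'' form. First I would confirm that \cref{ass:standingassumpssec4} is in force for $\psi$ and $x_d$: the conicity \ref{fa:I} holds by \cref{lemma:nnconic}, while the improved-expressiveness property \ref{fa:II} holds by \cref{theorem:generalactiv} (or \cref{cor:approxpropacti}) under the stated hypotheses on the widths $w_i$ and the activations $\sigma_i$. The hypothesis $\closure_Y(\Psi(D, x_d)) \neq Y$ is exactly \cref{ass:unrealizable_ass}, and the continuity of $\Psi(\cdot, x_d)\colon D \to Y$ follows from the assumed continuity of the $\sigma_i$ together with the fact that, by \eqref{eq:NNdef}, $\psi(\cdot, \xx)$ is a finite composition of affine maps and componentwise applications of continuous functions. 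Hence every hypothesis of \cref{th:stevaluedspurious} is available.

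With this in place, I would split into two exhaustive cases according to the image of $Y \ni y_d \mapsto |P_\Psi^{x_d}(y_d)| \in \mathbb{N} \cup \{\infty\}$. If this image equals $\{1, \infty\}$, then the value $\infty$ is attained at some $\bar y_d \neq 0$; the positive homogeneity $P_\Psi^{x_d}(s y_d) = s P_\Psi^{x_d}(y_d)$ (which follows from the conicity of $\closure_Y(\Psi(D, x_d))$, exactly as in the proof of \cref{theorem:abstractinstability}) yields $|P_\Psi^{x_d}(s \bar y_d)| = \infty$ for all $s > 0$, hence uncountably many label vectors with infinitely many best approximations, which is the first alternative. If instead the image is not equal to $\{1, \infty\}$, then \cref{th:stevaluedspurious} applies verbatim and produces the open nonempty cone $K \subset Y$, the integer $M \geq 2$, and, for each $y_d \in K$, the nonempty, disjoint, relatively closed sets $D_1, \dots, D_M \subset D$ obeying \eqref{eq:spurpro1} and \eqref{eq:spurpro2}, which are precisely \eqref{eq:randomeq283636} and \eqref{eq:randomeq283636-2}. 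Since $D = \R^m$ here, ``relatively closed in $D$'' coincides with ``closed'', matching the wording of the corollary.

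It then remains to upgrade the strict inequality \eqref{eq:randomeq283636} to the inequality with the additive gap $C$. Here I would exploit the conic structure a second time. Fixing any $y_d^0 \in K$ with associated sets $D_1, \dots, D_M$, I set $\gamma_0 := \min_{i=2,\dots,M}\big(\inf_{\alpha \in D_i}\|\Psi(\alpha, x_d) - y_d^0\|_Y^2 - \inf_{\alpha \in D_1}\|\Psi(\alpha, x_d) - y_d^0\|_Y^2\big) > 0$, which is strictly positive as a minimum of finitely many positive numbers. Because $\closure_Y(\Psi(D, x_d))$ is a cone, the distance computation underlying each infimum scales by $s$ under $y_d^0 \mapsto s y_d^0$ once the corresponding sets $E_i$ (and hence their preimages $D_i$) are replaced by $s E_i$; consequently both infima, and therefore the gap, scale by $s^2$. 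Choosing $s$ so large that $s^2 \gamma_0 > C$ gives the strengthened inequality, and since $K$ is a cone we have $s y_d^0 \in K$ for every $s > 0$, producing uncountably many such $y_d$.

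The main obstacle I anticipate is the bookkeeping in this final step: one must verify that, upon scaling $y_d^0$ by $s$, the sets $D_1, \dots, D_M$ can be chosen coherently through the scaled sets $s E_i$ and the positive homogeneity of $P_\Psi^{x_d}$, so that both \eqref{eq:randomeq283636} and \eqref{eq:randomeq283636-2} persist and all loss values transform by the factor $s^2$. Everything else is a direct citation of \cref{lemma:nnconic,theorem:generalactiv,th:stevaluedspurious}, so no genuinely new estimate is needed.
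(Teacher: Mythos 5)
Your proposal is correct, and the first half coincides with the paper's proof: both reduce the dichotomy to \cref{lemma:nnconic}, \cref{theorem:generalactiv} (or \cref{cor:approxpropacti}), and \cref{th:stevaluedspurious}, with your explicit case split on the image of $y_d \mapsto |P_\Psi^{x_d}(y_d)|$ simply spelling out what the citation of \cref{th:stevaluedspurious} already contains. The only real difference is in the ``arbitrarily bad'' step. You rescale abstractly in the image space, replacing the sets $E_i$ by $sE_i$ and taking new preimages, which forces the bookkeeping you flag (nonemptiness, disjointness, closedness, and the $s^2$-scaling of the infima all have to be rechecked through the identity $s\,\Psi(D,x_d)=\Psi(D,x_d)$). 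The paper instead exploits the concrete network architecture: for $\widehat y_d := \gamma y_d$ it defines $\widehat D_i$ by multiplying only the top-layer parameters, $\widehat\alpha = (\gamma A_{L+1}, \gamma b_{L+1}, A_L, b_L, \dots, A_1, b_1)$ for $\alpha \in D_i$, so that $\Psi(\widehat\alpha, x_d) = \gamma\,\Psi(\alpha, x_d)$ holds identically and both \eqref{eq:randomeq283636} and \eqref{eq:randomeq283636-2} scale by $\gamma^2$ with no further verification. Both routes are valid; the paper's is slightly cleaner here because the conicity is realized by an explicit parameter map, which sidesteps the preimage argument entirely.
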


\begin{proof}
The first half of the corollary follows immediately from
\cref{lemma:nnconic}, \cref{theorem:generalactiv}, \cref{cor:approxpropacti}, and \cref{th:stevaluedspurious}.
To see that the spurious minima/basins in \eqref{eq:randomeq283636} can get arbitrarily bad, it suffices to note that,
for every $\gamma > 0$ and every $y_d \in K$ that satisfies \eqref{eq:randomeq283636} and \eqref{eq:randomeq283636-2}
for some $M \in \mathbb{N}$ and $D_1,...,D_M$, 
the vector $\widehat y_d := \gamma y_d$ and the (trivially nonempty, disjoint, and closed) sets 
\begin{equation*}
\widehat D_i := 
\left \{\widehat \alpha = (\gamma A_{L+1}, \gamma b_{L+1}, A_L, b_L, ..., A_1, b_1)\, \Big | \, 
 \left ( A_{L+1}, b_{L+1}, ..., A_1, b_1\right ) \in D_i \right \},~i=1,...,M,
\end{equation*}
satisfy 
\begin{equation*}
\begin{aligned}
&\inf_{\widehat \alpha \in \widehat D_1} \|\Psi( \widehat \alpha, x_d) - \widehat y_d\|_Y^2
- \inf_{\widehat \alpha \in \widehat  D_i} \|\Psi( \widehat \alpha, x_d) - \widehat y_d\|_Y^2
\\
&\qquad= \gamma^2 \left ( 
\inf_{\alpha \in D_1} \|\Psi( \alpha, x_d) -  y_d\|_Y^2
- \inf_{\alpha \in  D_i} \|\Psi( \alpha, x_d) -   y_d\|_Y^2
\right ) 
\end{aligned}
\end{equation*}
for all $i=1,...,M$ and 
\begin{equation*}
\sup_{\widehat \alpha \in \widehat D_1 \cup ... \cup \widehat D_M} \|\Psi(\widehat \alpha, x_d) - \widehat y_d\|_Y^2 < 
\|\Psi( \tilde \alpha, x_d) - \widehat y_d\|_Y^2 
\quad \forall \tilde \alpha \in D \setminus (\widehat D_1 \cup ... \cup \widehat D_M)
\end{equation*}
by the architecture of $\psi$. This completes the proof.
\end{proof}\pagebreak

As already mentioned in \cref{sec:5}, the last result complements the findings of \cite{Venturi2019}
on the existence 
of spurious valleys in training problems for one-hidden-layer neural networks with 
non-polynomial non-negative activation functions
in the sense that it shows that the existence of such valleys 
also is to be expected in the multi-layer case provided that there are unrealizable label vectors, 
cf.\ the remarks after \cref{th:stevaluedspurious}.
For a geometric interpretation of the conditions in \eqref{eq:randomeq283636} 
and \eqref{eq:randomeq283636-2}, we refer to \cref{fig:spuriousvalleyillustration}.

Before we proceed, we briefly comment in more detail on
the unrealizability assumption $\closure_Y(\Psi(D, x_d)) \neq Y$ in \cref{cor:NNbestapproxinstabil,cor:NNinfiniteNonUniqueness}.
Checking for which choices of $n$ and $x_d$ this condition is satisfied 
when a neural network $\psi \colon D \times \XX \to \YY$ with an arbitrary but fixed 
architecture is considered is often hard as it requires in-depth knowledge about 
the approximation capabilities of the network. Typically, it is only possible to 
say that $\closure_Y(\Psi(D, x_d)) \neq Y$ holds for all training problems \eqref{eq:trainingpropprot5} 
that involve a sufficiently high number of training samples $n$
and that $\closure_Y(\Psi(D, x_d)) = Y$ holds for all problems \eqref{eq:trainingpropprot5}  
in which $n$ is sufficiently low. Indeed, if we define 
\begin{equation*}
\NN_\psi := \left \{
n \in \mathbb{N}
\,\Big|\, \closure_Y(\Psi(D, x_d)) = Y
 \text{ holds for all } x_d = \{\xx_{\;d}^k\}_{k=1}^n \text{ with } \xx_{\;d}^j \neq \xx_{\;d}^k
\text{ for } j \neq k\right \},
\end{equation*}
then this set clearly contains the number $n=1$ since, for 
every $x_d = \{\xx_{\;d}^1\}$ and every $y_d = \{\yy_d^1\}$,
we can trivially find a parameter $\alpha \in D$ such that $\psi(\alpha, \xx_{\;d}^1) = \yy_d^1$ holds
by choosing the bias $b_{L+1}$ of the last network layer appropriately. 
(Although we have excluded the degenerate case $n=1$ in \cref{set:NNN},
we temporarily allow it here to simplify the discussion.)
On the other hand, it is also obvious that $n-1 \in \NN_\psi$ holds for all $2 \leq n \in \NN_\psi$.
By combining these two observations, we obtain that there exists a number $\bar n_\psi \in \mathbb{N} \cup \{\infty\}$,
which depends non-trivially on the choice of activation functions $\sigma_i$, the widths $w_i$, and the depth $L$ of $\psi$,
such that $\NN_\psi  =\{n \in \mathbb{N} \mid n \leq \bar n_\psi\}$ holds. 
For every network $\psi$, there thus is an architecture-dependent threshold $\bar n_\psi$
such that, for all training problems \eqref{eq:trainingpropprot5} with $n \leq \bar n_\psi$ training samples,
\cref{cor:NNbestapproxinstabil,cor:NNinfiniteNonUniqueness} are guaranteed to be inapplicable and such that,
for all training problems \eqref{eq:trainingpropprot5} with $n > \bar n_\psi$ training samples, 
there exist cases in which \cref{cor:NNbestapproxinstabil,cor:NNinfiniteNonUniqueness} 
apply (cf.\ \cref{lemma:varknotaux}). Note that, as the properties in \cref{cor:NNbestapproxinstabil,cor:NNinfiniteNonUniqueness}  are highly undesirable, 
these considerations indicate that it is very beneficial to train neural networks in a regime in which 
$n \leq \bar n_\psi$ holds, i.e., in which the 
realizability condition $\closure_Y(\Psi(D, x_d)) = Y$ is guaranteed to hold. We remark that 
this observation is also in very good accordance with the findings on the absence of spurious valleys 
of \cite{Nguyen2018,LiDawei2021}. In fact, in both of these papers, 
the used assumptions on the network architecture immediately yield
that the number of training samples satisfies $n \leq \bar n_\psi$, see 
\cite[Theorem 3.4]{Nguyen2018} and \cite[proof of Theorem~4]{LiDawei2021}. 
The analysis of \cite{LiDawei2021} further provides a lower bound for the number $\bar n_\psi$
for networks with continuous activation functions, namely, $\bar n_\psi \geq w_{L}$. 
Our results complement the findings of \cite{Nguyen2018} and \cite{LiDawei2021} in the sense that they show that,
in the training regime $n > \bar n_\psi$, the absence of spurious valleys 
in the optimization landscape cannot be expected.

Next, we turn our attention to results that do not require the unrealizability condition
$\closure_Y(\Psi(D, x_d)) \neq Y$ but exploit the 
approximation property \ref{fa:II} directly, cf.\ \cref{subsec:5.2}. 
Note that the following corollaries offer the advantage that they 
can be invoked without checking whether there exist unrealizable label vectors.
We begin with an observation on non-optimal stationary points:\pagebreak

\begin{corollary}{\hspace{-0.05cm}\bf(Non-Optimal Stationary Points in the Case $\mathbf{m < nd_\yy}$)}\label[corollary]{cor:saddlenonover}
Consider the situation in \cref{set:NNN} and suppose that the 
widths $w_i$ and the activation functions $\sigma_i$
are such that \cref{lemma:heavisideapprox}, \cref{theorem:generalactiv}, or \cref{cor:approxpropacti}
can be applied to~$\psi$. 
Assume further that the number of parameters $m := w_{L+1} (w_{L} + 1) + ... + w_{1}(w_{0} + 1)$
in the neural network $\psi$ is smaller than the product $n d_\yy$. 
Then, for every point $\bar \alpha \in D$ at which the map 
$\Psi(\cdot, x_d)\colon D \to Y$ is differentiable and every arbitrary but fixed $\varepsilon > 0$, 
there exist uncountably many label vectors $y_d \in Y$ satisfying 
\begin{equation}
\label{eq:randomeq1263535-ddd}
\left (\frac{\Theta(\Psi, x_d)}{1 - \Theta(\Psi, x_d)}\right)^{1/2} \|\Psi(\bar \alpha, x_d)\|_Y < 
\|\Psi(\bar \alpha, x_d) - y_d\|_Y < \left (\frac{\Theta(\Psi, x_d)}{1 - \Theta(\Psi, x_d)}\right)^{1/2} \|\Psi(\bar \alpha, x_d)\|_Y + \varepsilon
\end{equation}
such that $\bar \alpha$ is a spurious local minimum or a saddle point of the training problem \eqref{eq:trainingpropprot5}. 
Here, $\Theta(\Psi, x_d) \in [0, 1)$ again denotes the number in \eqref{eq:defTheta}
associated with $\psi$ and $x_d$ that measures the extent to which \ref{fa:II} is satisfied and
the worst-case approximation error in \eqref{eq:randomeq2736352}.
Further,
for every $C>0$,
there exist uncountably many $y_d \in Y$ such that $\bar \alpha$ 
is a spurious local minimum or a saddle point of 
\eqref{eq:trainingpropprot5} and such that \eqref{eq:errorestimates}
and \eqref{eq:lossestimate42} hold.
As a saddle point or spurious local minimum, $\bar \alpha$ can thus be arbitrarily bad. 
\end{corollary}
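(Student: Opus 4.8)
The plan is to obtain this corollary as a direct specialization of \cref{theorem:existencestatpts}, so the work consists entirely in verifying its hypotheses for the neural network $\psi$ and the data vector $x_d$. First I would note that \cref{lemma:nnconic} supplies the conicity \ref{fa:I}, while the standing assumption that one of \cref{lemma:heavisideapprox}, \cref{theorem:generalactiv}, or \cref{cor:approxpropacti} applies supplies the improved expressiveness \ref{fa:II}; together these place us in the setting of \cref{ass:standingassumpssec4} and guarantee $\Theta(\Psi, x_d) \in [0,1)$. Since the parameter set is $D = \R^m$ under the identification \eqref{eq:randomisomorphism363}, every point --- in particular the given $\bar \alpha$ --- lies in the interior of $D$, and differentiability of $\Psi(\cdot, x_d)$ at $\bar \alpha$ is assumed outright.

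The crucial step --- and the only place where the hypothesis $m < n d_\yy$ is used --- is to verify that the linear hull
\[
V := \span\left(\Psi(\bar \alpha, x_d),\, \partial_1 \Psi(\bar \alpha, x_d),\, \ldots,\, \partial_m \Psi(\bar \alpha, x_d)\right)
\]
is a proper subspace of $Y$. A naive dimension count only gives $\dim(V) \leq m + 1$, which is insufficient in the borderline case $m = n d_\yy - 1$. The gap is closed by exploiting that, by \eqref{eq:NNdef} and \eqref{eq:randomeq27353628hd37wb}, the map $\psi$ is \emph{linear} in the parameters $(A_{L+1}, b_{L+1})$ of the topmost layer. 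Writing $\bar \alpha = (\bar A_{L+1}, \bar b_{L+1}, \ldots, \bar A_1, \bar b_1)$ for the components of the given point, a short computation shows that the linear combination of the partial derivatives of $\Psi(\cdot, x_d)$ with respect to the entries of $A_{L+1}$ and $b_{L+1}$, taken with coefficients equal to the corresponding entries of $\bar A_{L+1}$ and $\bar b_{L+1}$, reproduces exactly $\Psi(\bar \alpha, x_d)$ --- this is just the Euler identity for the map $(A_{L+1}, b_{L+1}) \mapsto A_{L+1} z + b_{L+1}$, which is linear in its arguments. Consequently $\Psi(\bar \alpha, x_d)$ already lies in the span of the partial derivatives, so that $V = \span(\partial_1 \Psi(\bar \alpha, x_d), \ldots, \partial_m \Psi(\bar \alpha, x_d))$ and hence $\dim(V) \leq m < n d_\yy = \dim(Y)$. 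This establishes $V \neq Y$.

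With all hypotheses of \cref{theorem:existencestatpts} in place, both claims of the corollary follow at once. For the first claim I would choose $s \in \R$ in the open interval between $\left(\frac{\Theta(\Psi, x_d)}{1 - \Theta(\Psi, x_d)}\right)^{1/2}\|\Psi(\bar \alpha, x_d)\|_Y$ and $\left(\frac{\Theta(\Psi, x_d)}{1 - \Theta(\Psi, x_d)}\right)^{1/2}\|\Psi(\bar \alpha, x_d)\|_Y + \varepsilon$; such an $s$ satisfies \eqref{eq:scondition}, and the associated label vector $y_d := \Psi(\bar \alpha, x_d) + \tau s v$ furnished by \cref{theorem:existencestatpts} obeys $\|\Psi(\bar \alpha, x_d) - y_d\|_Y = |s|$, which is precisely the two-sided bound \eqref{eq:randomeq1263535-ddd}. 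Since distinct values of $|s|$ in this interval yield distinct vectors $y_d$, there are uncountably many of them, each making $\bar \alpha$ a saddle point or spurious local minimum. The second claim --- the estimates \eqref{eq:errorestimates} and \eqref{eq:lossestimate42} for every $C > 0$ --- is the corresponding arbitrarily-bad conclusion of \cref{theorem:existencestatpts} transcribed to the present setting.

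I expect the only genuinely nontrivial point to be the structural argument of the second paragraph: without the observation that the linearity of the topmost layer renders the value $\Psi(\bar \alpha, x_d)$ redundant in the spanning set, one would be forced to assume $m + 1 < n d_\yy$ rather than the sharp condition $m < n d_\yy$. Everything else is bookkeeping and a direct appeal to the abstract theory of \cref{sec:5}.
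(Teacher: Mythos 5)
Your proposal is correct and follows essentially the same route as the paper: the paper's proof likewise reduces everything to \cref{theorem:existencestatpts} after invoking \cref{lemma:nnconic} and one of \cref{lemma:heavisideapprox}, \cref{theorem:generalactiv}, \cref{cor:approxpropacti}, and its one substantive observation is exactly yours, namely that the affine-linearity of the topmost layer places $\Psi(\bar \alpha, x_d)$ in the span of the partial derivatives, so that $\dim(V) \leq m < n d_\yy$ suffices. Your Euler-identity computation for the map $(A_{L+1}, b_{L+1}) \mapsto A_{L+1} z + b_{L+1}$ just makes explicit what the paper leaves as a remark.
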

\begin{proof}
The assertion follows straightforwardly from 
 \cref{lemma:nnconic}, \cref{lemma:heavisideapprox}, \cref{theorem:generalactiv}, \cref{cor:approxpropacti} 
and \cref{theorem:existencestatpts} and by noting that the affine-linearity of the topmost layer of $\psi$ implies that 
$\Psi(\bar \alpha, x_d)$ is an element of the linear hull 
$\span(\partial_1 \Psi(\bar \alpha, x_d),...,\partial_m \Psi(\bar \alpha, x_d))$
for all points of differentiability $\bar \alpha$ of the map $\Psi(\cdot, x_d)\colon D \to Y$. 
\end{proof}

Note that \cref{cor:saddlenonover} provides a strong argument
for the overparameterization of training problems 
of the form \eqref{eq:trainingpropprot5} or, more precisely, for training in the regime $m \geq nd_\yy$. 
This observation also accords well with the observations in 
\cref{cor:NNbestapproxinstabil,cor:NNinfiniteNonUniqueness}.

However, as we have already seen in \cref{sec:5}, 
overparameterization cannot resolve all of the difficulties 
that arise when training problems of the type \eqref{eq:trainingpropprot5} are considered.
This is also illustrated by the following corollary on the existence of 
non-optimal stationary points that also covers the case $n d_\yy \leq m$. 
\begin{corollary}{\hspace{-0.05cm}\bf(Non-Optimal Stationary Points in the Case $\mathbf{d_\xx + 1 < n}$)}\label[corollary]{cor:saddleoverpar}
Consider the situation in \cref{set:NNN} and suppose that the 
widths $w_i$ and the activation functions $\sigma_i$
satisfy the conditions in \cref{theorem:generalactiv} (or \cref{cor:approxpropacti}, respectively). 
Assume further that $d_\xx + 1 < n$ holds and that the activation functions
$\sigma_i$ are differentiable. Then, 
for every $\bar \alpha \in D$ of the form
$\bar \alpha = (A_{L+1}, b_{L+1},..., A_2, b_2, 0,b_1)$
(and thus for all elements of an $(m -  d_\xx w_1)$-dimensional subspace of $D$)
and every arbitrary but fixed $\varepsilon > 0$, 
there exist uncountably many label vectors $y_d \in Y$ satisfying \eqref{eq:randomeq1263535-ddd}
such that $\bar \alpha$ is a spurious local minimum or a saddle point of \eqref{eq:trainingpropprot5}. 
Further, 
for every $\bar \alpha$ of the above type and every arbitrary but fixed $C>0$,
there exist uncountably many $y_d \in Y$ such that $\bar \alpha$ 
is a spurious local minimum or a saddle point of 
\eqref{eq:trainingpropprot5} and such that \eqref{eq:errorestimates}
and \eqref{eq:lossestimate42} hold.
\end{corollary}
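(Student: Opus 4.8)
The plan is to deduce this corollary almost directly from the abstract result \cref{corollary:structstatpts}, which already furnishes saddle points and spurious local minima -- arbitrarily bad in the sense of \eqref{eq:errorestimates} and \eqref{eq:lossestimate42} -- for every point that reshapes to $(\bar\beta, 0)$ in the template of \cref{ass:linearlowlev}, under the sole combinatorial assumption $d_\xx + 1 < n$. Consequently, the bulk of the work is not analytic but structural: I must verify that the network of \cref{set:NNN} falls under both \cref{ass:standingassumpssec4} and \cref{ass:linearlowlev}, and that the parameters $\bar \alpha = (A_{L+1}, b_{L+1},..., A_2, b_2, 0, b_1)$ are precisely those whose lowest-level weight matrix vanishes.

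First I would check \cref{ass:standingassumpssec4}. Conicity \ref{fa:I} is exactly \cref{lemma:nnconic}. The improved-expressiveness property \ref{fa:II} holds because the widths and activation functions are assumed to satisfy the hypotheses of \cref{theorem:generalactiv} (equivalently \cref{cor:approxpropacti}); this simultaneously yields $\Theta(\Psi, x_d) \in [0,1)$ by \cref{lemma:thetaprops}, so that the quantities appearing in \eqref{eq:randomeq1263535-ddd} are well defined.

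Next I would cast the network in the form required by \cref{ass:linearlowlev}. Here $D = \R^m$, $\XX = \R^{d_\xx}$, and $(\YY, \|\cdot\|_\YY) = (\R^{d_\yy}, \|\cdot\|_2)$, exactly as demanded. Taking $A := A_1 \in \R^{w_1 \times d_\xx}$ (so $q := w_1$) and collecting the remaining parameters into $\beta := (A_{L+1}, b_{L+1}, ..., A_2, b_2, b_1)$ (so $p := m - d_\xx w_1$), the definition \eqref{eq:NNdef} gives
\[
\psi(\alpha, \xx) = \big( \varphi_{L+1}^{A_{L+1}, b_{L+1}} \circ \cdots \circ \varphi_2^{A_2, b_2} \big)\big( \sigma_1(A\xx + b_1) \big) =: \phi(\beta, A\xx),
\]
and, since every $\sigma_i$ is differentiable, the composite $\phi$ is differentiable on all of $\R^p \times \R^q$. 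This confirms \cref{ass:linearlowlev} and identifies the reshaping $\alpha \cong (\beta, A)$: the points $\bar \alpha = (A_{L+1}, b_{L+1}, ..., A_2, b_2, 0, b_1)$ are exactly those with $A = A_1 = 0$, i.e.\ those reshaping to $(\bar\beta, 0)$. Fixing the $d_\xx w_1$ entries of $A_1$ to zero while letting all other coordinates vary shows that these $\bar \alpha$ form a linear subspace of $D = \R^m$ of dimension $m - d_\xx w_1$, as claimed.

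With these verifications in place, I would apply \cref{corollary:structstatpts} to each such $\bar \alpha$: its hypotheses ($d_\xx + 1 < n$, and differentiability of $\phi$ at $(\bar\beta, 0)$) are met, and its two conclusions are verbatim the two assertions of the corollary -- the existence, for every $\varepsilon > 0$, of uncountably many $y_d$ obeying \eqref{eq:randomeq1263535-ddd} with $\bar \alpha$ a saddle point or spurious local minimum, and the existence, for every $C > 0$, of uncountably many $y_d$ additionally satisfying \eqref{eq:errorestimates} and \eqref{eq:lossestimate42}. I expect no genuine analytical difficulty here; the only point demanding care is the structural bookkeeping -- correctly matching the ordered parameter tuple of \cref{set:NNN} to the $(\beta, A)$ template of \cref{ass:linearlowlev}, so that the vanishing of $A_1$ corresponds exactly to $(\bar\beta, 0)$ and the dimension count $m - d_\xx w_1$ comes out right.
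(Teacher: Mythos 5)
Your proposal is correct and follows exactly the paper's own route: establish \ref{fa:I} and \ref{fa:II} via \cref{lemma:nnconic} and \cref{theorem:generalactiv} (or \cref{cor:approxpropacti}), observe that \cref{ass:linearlowlev} holds with $A := A_1$ and $\beta$ collecting the remaining parameters (so that $\bar\alpha$ with $A_1 = 0$ reshapes to $(\bar\beta, 0)$ and the subspace has dimension $m - d_\xx w_1$), and then apply \cref{corollary:structstatpts}. The paper's proof is just a terser statement of the same argument, so there is nothing to add.
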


\begin{proof}
This follows from 
 \cref{lemma:nnconic}, \cref{theorem:generalactiv}, \cref{cor:approxpropacti},
and \cref{corollary:structstatpts}. 
Note that \cref{ass:linearlowlev} is trivially satisfied in the situation of \cref{set:NNN}
(up to the isomorphism in Equation \ref{eq:randomisomorphism363}) so that 
\cref{corollary:structstatpts} is indeed applicable here. 
\end{proof}\\[-0.65cm]

Next, we consider neural networks 
with activation functions that are affine-linear on some 
open nonempty subset of their domain of definition. 
We begin with two results on the existence of spurious local minima: 
\begin{corollary}{\hspace{-0.05cm}\bf(Spurious Minima for Activations with an Affine Segment)}\label[corollary]{cor:spurminaffineNN}
Consider the situation in \cref{set:NNN} and suppose that the 
widths $w_i$ and the activation functions $\sigma_i$
are such that  \cref{theorem:generalactiv} (or \cref{cor:approxpropacti}) 
can be applied to $\psi$. Assume further that, for every $i \in \{1,...,L\}$, 
there exists an open nonempty interval $I_i \subset \R$ such that 
$\sigma_i$ is affine-linear with a non-vanishing derivative on $I_i$ and that 
the inequalities  $\min(d_\xx, d_\yy)\leq \min(w_1,...,w_L)$ and $n > d_\xx + 1$ hold.
Define
\begin{equation*}
V := 
\left \{
\{\zz_k\}_{k=1}^n \in Y
\,
\Big |
\,
\exists A \in \R^{d_\yy \times d_\xx}, b \in \R^{d_\yy} \text{ such that }  \zz_k = A \xx_{\;d}^k + b~~\forall k=1,...,n
\right \}
\end{equation*}
and denote the $(\cdot, \cdot)_Y$-orthogonal complement of the space $V$ in $Y$ with $V^\perp$.
Then, the training problem \eqref{eq:trainingpropprot5} possesses at least one spurious local minimum
satisfying a growth condition of the form \eqref{eq:quadgrowthstates}
for all label vectors $y_d \in Y$ that are elements of the open cone
\begin{equation}
\label{eq:Kspurdef-2625sf35vs}
K :=
\left \{
y_d^1 + y_d^2 \in Y
\,
\Bigg |
\,
y_d^1 \in V,\, y_d^2 \in V^\perp,\, \|y_d^2\|_Y > \left (\frac{\Theta(\Psi, x_d)}{1 - \Theta(\Psi, x_d)}\right)^{1/2} \|y_d^1\|_Y
\right \}.
\end{equation}
Here,  $\Theta(\Psi, x_d) \in [0, 1)$ again denotes the number in \eqref{eq:defTheta}
associated with $\psi$ and $x_d$ that measures the extent to which \ref{fa:II} is satisfied and
the worst-case approximation error in \eqref{eq:randomeq2736352}.
Further, for every arbitrary but fixed $C>0$, there exist uncountably many $y_d \in K$
such that at least one of the spurious local minima of \eqref{eq:trainingpropprot5} satisfies 
\eqref{eq:errorestimates}, \eqref{eq:lossestimate42}, and \eqref{eq:quadgrowthstates},
and if $\closure_Y(\Psi(D, x_d)) = Y$ holds, then the cone $K$ in \eqref{eq:Kspurdef-2625sf35vs}
is equal to $Y \setminus V$ and \eqref{eq:trainingpropprot5}
possesses spurious local minima for all $y_d$ that are not affine-linearly fittable. 
\end{corollary}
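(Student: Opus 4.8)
The plan is to verify that \cref{cor:spurminaffineNN} follows by a direct application of the abstract \cref{theorem:badcone} to the neural network setting of \cref{set:NNN}. The entire strategy is to check the hypotheses of \cref{theorem:badcone} one by one and then simply invoke it. First I would note that \cref{lemma:nnconic} guarantees the conicity property \ref{fa:I}, and that the assumption that \cref{theorem:generalactiv} (or \cref{cor:approxpropacti}) applies to $\psi$ guarantees the improved expressiveness property \ref{fa:II}. Thus \cref{ass:standingassumpssec4} is satisfied and the number $\Theta(\Psi, x_d) \in [0,1)$ is well-defined, with the worst-case bound $\Theta(\Psi, x_d) \leq 1 - 1/n$ already available from those results.

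The crucial remaining hypothesis of \cref{theorem:badcone} is the \emph{local subspace parameterization condition}: there must exist a proper subspace $V \subsetneq Y$ such that every $z \in V$ is realized as $z = \Psi(\bar\alpha, x_d)$ for some $\bar\alpha \in D$ possessing an open neighborhood $U$ with $\Psi(U, x_d) \subset V$. Here I would take $V$ to be exactly the space of affine-linearly fittable label vectors defined in the statement. The verification of this condition is precisely the content of \cref{lemma:affinesubspace}, whose hypotheses ($\min(d_\xx, d_\yy) \leq \min(w_1,\dots,w_L)$ and each $\sigma_i$ affine with non-vanishing derivative on some open interval $I_i$) are assumed here. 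So this step reduces to citing \cref{lemma:affinesubspace}. It then remains only to confirm that $V \neq Y$, i.e.\ that $V$ is a \emph{proper} subspace; this is where the assumption $n > d_\xx + 1$ enters, since an affine map $\R^{d_\xx} \to \R^{d_\yy}$ is determined by $d_\yy(d_\xx + 1)$ parameters and, for generic (indeed, all admissible) $x_d$ with distinct entries, cannot fit arbitrary label data once the number of samples $n$ exceeds $d_\xx + 1$. More carefully, $\dim V \leq d_\yy(d_\xx+1) < d_\yy n = \dim Y$ follows directly from the parameter count, which I would state explicitly rather than appeal to genericity.

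With all hypotheses of \cref{theorem:badcone} verified, the conclusions transfer verbatim: the cone $K$ in \eqref{eq:Kspurdef-2625sf35vs} is literally the cone $K$ from \eqref{eq:Kspurdef} with this choice of $V$ and $V^\perp$, so for every $y_d \in K$ the problem \eqref{eq:trainingpropprot5} has a spurious local minimum satisfying \eqref{eq:quadgrowthstates}; the arbitrarily-bad minima satisfying \eqref{eq:errorestimates} and \eqref{eq:lossestimate42} exist for uncountably many $y_d \in K$; and in the realizable case $\closure_Y(\Psi(D, x_d)) = Y$ one has $K = Y \setminus V$, which is exactly the set of label vectors that are not affine-linearly fittable by definition of $V$. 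The only translation needed is cosmetic: matching the abstract subspace $V$ and the abstract training problem \eqref{eq:trainingpropprot2} to their concrete neural-network counterparts, together with the identification of the isomorphism in \eqref{eq:randomisomorphism363} so that \cref{ass:linearlowlev}-style reshaping is unnecessary here.

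The main obstacle—though it is genuinely minor in this corollary—is ensuring that the \emph{same} subspace $V$ simultaneously satisfies both the surjectivity-onto-$V$ requirement of \cref{lemma:affinesubspace} and the properness requirement $V \neq Y$, and that the width condition $\min(d_\xx, d_\yy) \leq \min(w_1, \dots, w_L)$ is actually needed to make \cref{lemma:affinesubspace} applicable (it guarantees the network has enough neurons per layer to carry the affine map through without collapse). I would be careful to point out that the affine-segment hypothesis on the $\sigma_i$ is what lets the network reproduce an affine function on an open neighborhood of $\bar\alpha$, which is exactly the local flatness that \cref{prop:existencehotspurs} (invoked inside \cref{theorem:badcone}) exploits to produce the quadratic growth. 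Beyond these bookkeeping checks, no new estimate is required—the proof is essentially a one-line combination of \cref{lemma:nnconic}, \cref{theorem:generalactiv} (or \cref{cor:approxpropacti}), \cref{lemma:affinesubspace}, and \cref{theorem:badcone}.
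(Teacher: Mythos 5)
Your proposal is correct and follows exactly the route of the paper, whose proof is the one-line combination of \cref{lemma:nnconic}, \cref{theorem:generalactiv}/\cref{cor:approxpropacti}, \cref{lemma:affinesubspace}, and \cref{theorem:badcone} that you describe. Your additional remark that $n > d_\xx + 1$ guarantees $\dim V \leq d_\yy(d_\xx+1) < d_\yy n = \dim Y$, so that $V$ is indeed a proper subspace as required by \cref{theorem:badcone}, is a correct and worthwhile elaboration of a step the paper leaves implicit.
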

\begin{proof}
To establish the assertions of the corollary, it suffices to combine 
\cref{lemma:nnconic}, \cref{theorem:generalactiv}, and \cref{cor:approxpropacti} 
with \cref{lemma:affinesubspace} and \cref{theorem:badcone}. 
\end{proof}~\\[-1cm]

\begin{corollary}{\hspace{-0.05cm}\bf(Spurious Minima for Activations with a Constant Segment)}\label[corollary]{cor:spurminconstantNN}
Consider the situation in \cref{set:NNN} and suppose that the 
widths $w_i$ and the activation functions $\sigma_i$
are such that \cref{lemma:heavisideapprox}, \cref{theorem:generalactiv}, or \cref{cor:approxpropacti}
can be applied to~$\psi$. Assume that 
the functions $\sigma_i$, $i=1,...,L$, are bounded on bounded sets and that there exists a  $j \in \{1,...,L\}$
such that $\sigma_j$ is constant on an open nonempty interval $I_j \subset \R$. Define
\begin{equation*}
V := 
\left \{
\{\zz_k\}_{k=1}^n \in Y
\,
\Big |
\,
\zz_k = \zz_l~~\forall k, l \in \{1,...,n\}
\right \}
\end{equation*}
and let $K$ be defined as in \eqref{eq:Kspurdef-2625sf35vs} (with the above $V$). 
Then, \eqref{eq:trainingpropprot5} possesses at least one spurious local minimum
satisfying a growth condition of the form \eqref{eq:quadgrowthstates}
for all $y_d \in K$ and, for every $C>0$, there exist uncountably many $y_d \in K$
such that at least one of the spurious local minima of \eqref{eq:trainingpropprot5} satisfies 
\eqref{eq:errorestimates}, \eqref{eq:lossestimate42}, and \eqref{eq:quadgrowthstates}.
In particular, in the case $\closure_Y(\Psi(D, x_d)) = Y$, the cone $K$ 
is equal to $Y \setminus V$ and \eqref{eq:trainingpropprot5}
possesses spurious local minima for all $y_d$ that cannot be fitted precisely with a constant function.
\end{corollary}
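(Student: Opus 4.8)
The plan is to reduce the statement to \cref{theorem:badcone} by verifying that the subspace $V$ of constant vectors plays the role of the proper subspace required there. First I would observe that the standing conditions \ref{fa:I} and \ref{fa:II} of \cref{ass:standingassumpssec4} are in force: \ref{fa:I} follows from \cref{lemma:nnconic}, while \ref{fa:II} is guaranteed by whichever of \cref{lemma:heavisideapprox}, \cref{theorem:generalactiv}, or \cref{cor:approxpropacti} is applicable by hypothesis (this also yields $\Theta(\Psi, x_d) \in [0,1)$, so that the cone $K$ in \eqref{eq:Kspurdef-2625sf35vs} is well defined). Next, since $n \geq 2$ holds by \cref{set:NNN}, the subspace $V \subset Y = \YY^n$ of constant vectors is determined by a single common value in $\YY$ and hence has dimension $d_\yy < n d_\yy = \dim(Y)$, whence $V \neq Y$.

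The central point is to check the local-parameterization hypothesis of \cref{theorem:badcone}, namely that for every $z \in V$ there exist a point $\bar\alpha \in D$ and an open set $U \subset D$ with $\bar\alpha \in U$, $z = \Psi(\bar\alpha, x_d)$, and $\Psi(U, x_d) \subset V$. This is precisely the conclusion of \cref{lemma:constantsubspace}, whose hypotheses---boundedness of each $\sigma_i$ on bounded sets together with the existence of an index $j$ for which $\sigma_j$ is constant on an open nonempty interval $I_j$---coincide with those imposed here. The mechanism behind that lemma is to set all weight matrices to zero and all biases except $b_j$ and the topmost bias $b_{L+1}$ to zero, choosing $b_j$ so that the pre-activation of $\sigma_j$ lands inside $I_j$ and $b_{L+1}$ equal to the prescribed common value; the constant segment of $\sigma_j$ then forces $\psi(\alpha, \cdot)$ to take that value on every training input, and the openness of $I_j$ makes this behavior robust under small perturbations of $\alpha$, which supplies the neighborhood $U$ with $\Psi(U, x_d) \subset V$.

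With both ingredients established, a single application of \cref{theorem:badcone} delivers the open cone $K$ (which is exactly \eqref{eq:Kspurdef-2625sf35vs} for the present $V$), the existence for every $y_d \in K$ of at least one spurious local minimum of \eqref{eq:trainingpropprot5} satisfying the growth condition \eqref{eq:quadgrowthstates}, the existence for every $C>0$ of uncountably many $y_d \in K$ whose spurious minima additionally satisfy \eqref{eq:errorestimates} and \eqref{eq:lossestimate42}, and the identity $K = Y \setminus V$ in the realizable case $\closure_Y(\Psi(D, x_d)) = Y$; the reinterpretation of $Y \setminus V$ as the set of label vectors not fittable by a constant function is immediate from the definition of $V$. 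I do not anticipate a genuine analytic obstacle, since the proof is purely a matter of matching hypotheses and citing \cref{theorem:badcone}. The only care needed is to confirm that the regularity assumed in the corollary is simultaneously strong enough to invoke the relevant expressiveness result for \ref{fa:II} and \cref{lemma:constantsubspace}, which it is by construction.
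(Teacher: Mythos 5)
Your proposal is correct and follows exactly the paper's own route: establish \ref{fa:I} via \cref{lemma:nnconic} and \ref{fa:II} via the applicable expressiveness result, verify the local-parameterization hypothesis for the constant subspace $V$ via \cref{lemma:constantsubspace}, and then apply \cref{theorem:badcone}. The paper's proof is a one-line reduction to the proof of \cref{cor:spurminaffineNN} with \cref{lemma:affinesubspace} replaced by \cref{lemma:constantsubspace}, which is precisely the argument you spell out.
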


\begin{proof}
This follows completely analogously to the proof of \cref{cor:spurminaffineNN} 
with \cref{lemma:affinesubspace} replaced by \cref{lemma:constantsubspace}. 
\end{proof}

Some remarks regarding the last two results are in order: 
\begin{remark}~ 
\begin{itemize}

\item \Cref{cor:spurminaffineNN} covers
in particular neural networks with {ReLU-,} leaky \mbox{ReLU-,} ISRL-, and ELU-activation functions. 
\Cref{cor:spurminconstantNN} 
applies, for instance, to networks that involve a binary, ReLU-, or SQNL-layer.

\item 
As the proofs of \cref{cor:spurminaffineNN,cor:spurminconstantNN}
(or \cref{prop:existencehotspurs,theorem:badcone}, respectively) are constructive, 
they can also be used to find explicit examples of data sets 
that give rise to spurious local minima in \eqref{eq:trainingpropprot5}.
We do not pursue this approach here to avoid overloading the paper.

\item In the case $\closure_Y(\Psi(D, x_d)) = Y$, i.e., 
in the situation where all vectors are realizable, \cref{cor:spurminaffineNN} 
yields the same result as \cite[Corollary 1]{Ding2020} (albeit under weaker assumptions 
on the network widths $w_i$, $i=1,...,L$). \Cref{cor:spurminaffineNN,cor:spurminconstantNN}
are further similar in nature to \cite[Theorem 1]{Yun2019} where 
the existence of spurious local minima in squared-loss training problems 
for one-hidden-layer neural networks with leaky ReLU activation functions 
is proved for all label vectors that are not affine-linearly fittable. Note that,
in \cref{cor:spurminaffineNN,cor:spurminconstantNN}, we only obtain 
a result of comparable strength
in the case $\closure_Y(\Psi(D, x_d)) = Y$.  If the assumption of realizability is violated,  
then our analysis only yields that there exists an open nonempty cone $K \subset Y$
of label vectors for which the problem \eqref{eq:trainingpropprot5} possesses 
spurious local minima. However,
in contrast to \cite[Theorem 1]{Yun2019},  
\cref{cor:spurminaffineNN,cor:spurminconstantNN}
also cover neural networks with output dimension $d_\yy > 1$,
depth $L>1$, and activation functions $\sigma_i$ that are not positively homogeneous
and additionally also show that the spurious local minima of \eqref{eq:trainingpropprot5} can 
be arbitrarily bad in relative and absolute terms and in terms of loss. 
The statements on the size of the set of label vectors with spurious local minima in
\Cref{cor:spurminaffineNN,cor:spurminconstantNN} are 
thus weaker than that of \cite[Theorem 1]{Yun2019} but our results
are also far more general. In particular, they also cover the 
analytically very challenging and in practice due to mild 
overparameterization frequently appearing situation where the considered network is deep and 
the assumption of realizability is violated (or, alternatively, simply not verifiable).
At least to the best of the author's knowledge, 
results on the existence of spurious local minima of a similar strength and generality 
can currently not be found in the literature.

\item Recall that the number $\Theta(\Psi, x_d)$ is a measure for the worst-case 
approximation error in the situation of \eqref{eq:trainingpropprot5}
and the extent to which the
approximation property \ref{fa:II} is satisfied, see \eqref{eq:randomeq2736352} and \cref{def:thetadef}. 
\Cref{cor:spurminaffineNN,cor:spurminconstantNN} thus imply that,
for neural networks with activation functions that are affine 
on some open nonempty subset of their domain of definition, 
an improved expressiveness of the map $\Psi(\cdot, x_d)\colon D \to Y$ 
necessarily comes at the price of a larger cone $K$ of label vectors $y_d$
that give rise to spurious local minima in \eqref{eq:trainingpropprot5}. 
This shows that there is indeed ``no free lunch'' in the situation of 
\cref{cor:spurminaffineNN,cor:spurminconstantNN}. 

\item If it can be shown that a neural network parameterizes multiple subspaces 
in the sense of \cref{theorem:badcone}, then one can, of course, also invoke this result multiple times.
This then allows to prove that certain $y_d$ give rise to training problems of the form \eqref{eq:trainingpropprot5}
that possess several different spurious local minima. 
Using \cref{prop:existencehotspurs}, it is further easy to also establish results on the existence of spurious local minima 
for training problems of the form \eqref{eq:trainingpropprot5} that involve neural networks whose 
activation functions are polynomial on an open subset of their domain of definition. 
We omit discussing these extensions of our analysis in detail in this paper. 
\end{itemize}
\end{remark}

For networks satisfying the assumptions of \cref{cor:spurminaffineNN,cor:spurminconstantNN}, 
we also have: 
\begin{corollary}{\hspace{-0.05cm}\bf(Nonuniqueness and Instability in the Presence of Realizability)}\label[corollary]{cor:affineNNinstable}
Consider the situation in \cref{set:NNN} and suppose that the 
widths $w_i$ and the activation functions $\sigma_i$
are such that \cref{lemma:heavisideapprox}, \cref{theorem:generalactiv}, or \cref{cor:approxpropacti}
can be applied to~$\psi$. Assume further that $\closure_Y(\Psi(D, x_d)) = Y$ holds and 
that one of the following is true:
\begin{enumerate}[label=\roman*)]
\item  For every $i \in \{1,...,L\}$, 
there exists an open nonempty interval $I_i$ such that 
$\sigma_i$ is affine and non-constant on $I_i$ and  it holds
 $\min(d_\xx, d_\yy)\leq \min(w_1,...,w_L)$ and $n > d_\xx + 1$.
\item The functions $\sigma_i$, $i=1,...,L$, are bounded on bounded sets and there exists an index  $j \in \{1,...,L\}$
such that $\sigma_j$ is constant on an open nonempty interval $I_j$.
\end{enumerate}
Then, the solution map 
\begin{equation*}
 Y \ni
y_d \mapsto \argmin_{\alpha \in D} \|\Psi( \alpha, x_d) - y_d\|_Y^2 \subset D
\end{equation*}
of the training problem \eqref{eq:trainingpropprot5} is discontinuous 
in the sense that there exist uncountably many $y_d \in Y$ 
such that there are an open set $U \subset D$, an  $\bar \alpha \in D$, and a family $\{y_d^s\}_{s > 0} \subset Y$
satisfying $\bar \alpha \in U$, $y_d^s\to y_d$ for $s \to 0$, 
\begin{equation*}
\bar \alpha \in  \argmin_{\alpha \in D} \|\Psi( \alpha, x_d) -  y_d\|_Y^2,
\end{equation*}
and
\begin{equation*}
U \, \cap\, \argmin_{\alpha \in D} \|\Psi( \alpha, x_d) - y_d^s\|_Y^2  = \emptyset\qquad\forall s >0.
\end{equation*}
Further, in the above situation, there exist uncountably many $y_d \in Y$
such that \eqref{eq:trainingpropprot5} is not uniquely solvable 
in the sense that there are an $\bar \alpha \in D$, an open set
$U \subset D$, and 
a family $\{\alpha_s\}_{s > 0}$
satisfying $\bar \alpha \in U$, $\{\alpha_s\}_{s > 0} \subset D \setminus U$, and 
\begin{equation*}
\lim_{s \to 0}  \|\Psi(\alpha_s, x_d) - y_d\|_Y^2  =\|\Psi(\bar \alpha, x_d) - y_d\|_Y^2
= \inf_{\alpha \in D} \|\Psi(\alpha, x_d) - y_d\|_Y^2. 
\end{equation*}
\end{corollary}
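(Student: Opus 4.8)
The plan is to derive this statement as a direct corollary of \cref{cor:instabilityoverpara} by verifying that its hypotheses are met under either of the two alternative assumptions listed here. First I would invoke \cref{lemma:nnconic} together with the applicable one of \cref{lemma:heavisideapprox,theorem:generalactiv,cor:approxpropacti} to conclude that the map $\Psi(\cdot, x_d)\colon D \to Y$ associated with $x_d$ and the network $\psi$ in \eqref{eq:NNdef} satisfies the standing \cref{ass:standingassumpssec4}, i.e., both \ref{fa:I} and \ref{fa:II}. This is the entry ticket that lets me use any of the abstract results of \cref{sec:5}. Since $\closure_Y(\Psi(D, x_d)) = Y$ is assumed directly, the realizability requirement of \cref{cor:instabilityoverpara} is in place as well.

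The heart of the argument is to produce a point $\bar\alpha \in D$ together with an open neighborhood $U \subset D$ and a proper subspace $V \subsetneq Y$ satisfying $\Psi(U, x_d) \subset V$, which is exactly the structural hypothesis of \cref{cor:instabilityoverpara}. Here is where the two cases diverge. In case i), I would apply \cref{lemma:affinesubspace}: its hypotheses ($\min(d_\xx, d_\yy) \leq \min(w_1,\dots,w_L)$ and each $\sigma_i$ affine with non-vanishing derivative on some open interval $I_i$) coincide with those listed in case i) here, and it furnishes, for any $z$ in the affine-fit subspace $V$, an $\bar\alpha$ and an open $U$ with $\bar\alpha \in U$, $z = \Psi(\bar\alpha, x_d)$, and $\Psi(U, x_d) \subset V$. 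The subspace $V$ is proper precisely because $n > d_\xx + 1$ ensures that not every label vector is affine-linearly fittable (cf.\ the dimension count $\dim(V) \leq d_\yy(d_\xx+1) < n d_\yy = \dim(Y)$). In case ii), I would instead apply \cref{lemma:constantsubspace}, whose hypotheses (the $\sigma_i$ bounded on bounded sets and some $\sigma_j$ constant on an open interval $I_j$) match case ii) verbatim; the resulting $V$ of constant vectors is proper since $n \geq 2$ forces $\dim(V) = d_\yy < n d_\yy$. In either case one obtains the required triple $(\bar\alpha, U, V)$ with $V \neq Y$.

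With these ingredients assembled, the final step is mechanical: I would invoke \cref{cor:instabilityoverpara} directly. That corollary, applied with the $\bar\alpha$, $U$, and $V$ just constructed and any unit vector $v$ in the orthogonal complement $V^\perp$, yields exactly the two assertions claimed here---the discontinuity of the solution map at $\bar y_d := \Psi(\bar\alpha, x_d)$ (displayed via $y_d^s \to y_d$, $\bar\alpha \in \argmin$, and $U \cap \argmin = \emptyset$) and the nonuniqueness in the sense of minimizing sequences (the family $\{\alpha_s\}_{s>0} \subset D \setminus U$ with matching optimal loss). To obtain \emph{uncountably many} such $y_d$ rather than a single one, I would exploit the conicity \ref{fa:I}: scaling $\bar\alpha$ in its topmost layer by any factor $\gamma > 0$ produces a new base point $\gamma \bar y_d$ to which the same argument applies, exactly as in the scaling trick used in the proof of \cref{cor:NNinfiniteNonUniqueness}.

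The main obstacle I anticipate is not analytical but bookkeeping: ensuring that the hypotheses of the two structural lemmas are invoked with the correct case, and that the subspace $V$ is genuinely proper in each branch. The dimension inequalities ($n > d_\xx + 1$ in case i) and $n \geq 2$ in case ii)) are what guarantee $V \neq Y$, so I would state these explicitly to avoid a gap. Everything downstream is then an immediate citation of \cref{cor:instabilityoverpara}, so no new estimates are required.
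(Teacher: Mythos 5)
Your proposal is correct and follows essentially the same route as the paper, whose proof is literally the one-line instruction to combine \cref{lemma:nnconic}, the expressiveness results (\cref{lemma:heavisideapprox}, \cref{theorem:generalactiv}, \cref{cor:approxpropacti}), \cref{lemma:affinesubspace} or \cref{lemma:constantsubspace} depending on the case, and \cref{cor:instabilityoverpara}. The only cosmetic difference is how you produce uncountably many bad label vectors: you scale the topmost layer, whereas the most direct reading of the paper's citations is that \cref{lemma:affinesubspace} and \cref{lemma:constantsubspace} already furnish a suitable $\bar\alpha$ for \emph{every} $z$ in the (at least one-dimensional, hence uncountable) subspace $V$, so each such $z = \Psi(\bar\alpha, x_d)$ is a point of discontinuity; both arguments are valid.
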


\begin{proof}
To establish this corollary, it suffices to combine 
\cref{cor:approxpropacti,lemma:nnconic,lemma:heavisideapprox,theorem:generalactiv,lemma:affinesubspace,lemma:constantsubspace,cor:instabilityoverpara}. 
\end{proof}

It remains to study the consequences that the abstract results on regularized training problems in 
\cref{subsec:5.3} have for the neural networks in \cref{set:NNN}. 
For the sake of simplicity, in what follows, we will restrict our attention to 
regularization terms of the form $\nu \|\cdot\|_p^p$, $p \in [1,2]$, $\nu > 0$,
where $\|\cdot\|_p$ denotes the usual $p$-norm on the Euclidean space 
$\R^m \cong \R^{w_{L+1} \times w_{L}} \times \R^{w_{L+1}} \times ... \times \R^{w_{1} \times w_{0}} \times \R^{w_{1}}$. 
Other regularizers can be treated completely analogously, 
cf.\ the more general setting considered in \cref{theorem:spuriousregprob,theorem:approxgone,theorem:nonuniquereg}.

\begin{corollary}{\hspace{-0.05cm}\bf(Regularized Training Problems for Neural Networks)}\label[corollary]{cor:regNNcrap}
Consider the situation in \cref{set:NNN} and suppose that the 
widths $w_i$ and the activation functions $\sigma_i$
satisfy the conditions in \cref{theorem:generalactiv} (or \cref{cor:approxpropacti}, respectively). 
Assume further that $\frac12(d_\xx + 2)(d_\xx + 1) < n$ holds
and that the functions $\sigma_i$, $i=1,...,L$, are twice differentiable, and  
consider for an arbitrary but fixed $p \in [1,2]$ the regularized squared-loss training problem given by 
\begin{equation}
\label{eq:regNNproblem}
\min_{\alpha \in D} \|\Psi( \alpha, x_d) - y_d\|_Y^2 + \nu \|\alpha\|_p^p
=
\frac{1}{2n}\sum_{k=1}^n  \|\psi(\alpha, \xx_{\;d}^k) - \yy_d^k\|_2^2 + \nu \|\alpha\|_p^p. 
\end{equation}
Then, the following is true:
\begin{enumerate}[label=\roman*)]
\item
For every arbitrary but fixed $C>0$, there exist uncountably many combinations 
of training label vectors $y_d \in Y$ and regularization parameters $\nu > 0$ such that 
the origin
$\bar \alpha = 0 \in \R^m \cong \R^{w_{L+1} \times w_{L}} \times \R^{w_{L+1}} \times ... \times \R^{w_{1} \times w_{0}} \times \R^{w_{1}}$ is a spurious local minimum of \eqref{eq:regNNproblem}
that satisfies a local quadratic growth condition of the form
\begin{equation*}
\|\Psi( \alpha, x_d) - y_d\|_Y^2 + \nu \|\alpha\|_p^p
\geq \|\Psi(\bar \alpha, x_d) - y_d\|_Y^2 + \nu \|\bar \alpha\|_p^p + \varepsilon \|\alpha - \bar \alpha\|_2^2\quad \forall \alpha \in B_r(\bar \alpha)
\end{equation*}
for some $\varepsilon, r > 0$ and 
\begin{equation*}
\inf_{\alpha \in D} \|\Psi( \alpha, x_d) - y_d\|_Y^2 + \nu \|\alpha\|_p^p
+ C 
\leq 
 \|\Psi( \bar \alpha, x_d) - y_d\|_Y^2 + \nu \|\bar \alpha\|_p^p.
\end{equation*}
The problem \eqref{eq:regNNproblem} can thus possess arbitrarily bad spurious local minima. 
Moreover, there exists a nonempty open set $O \subset Y \times (0, \infty)$ 
such that \eqref{eq:regNNproblem} possesses at least one spurious local minimum 
for all tuples $(y_d, \nu) \in O$. 
(These minima do not necessarily satisfy a local quadratic growth condition.)
\item 
For every arbitrary but fixed 
regularization parameter $\nu > 0$,  there exist 
uncountably many label vectors $y_d \in Y \setminus \{0\}$ such that $\bar \alpha = 0$ is the 
unique global solution of the problem \eqref{eq:regNNproblem}. Adding the regularization term 
$\nu \|\alpha\|_p^p$ to the objective function of the problem \eqref{eq:trainingpropprot5} thus 
necessarily compromises the approximation property \ref{fa:II}.

\item 
There exist uncountably many combinations of regularization parameters $\nu>0$
and label vectors $y_d \in Y$ such that
there exist an $s_0 \geq 0$, a family $\{y_d^s\}_{s > s_0} \subset Y$, 
and an open neighborhood $U \subset D$ of the origin $\bar \alpha = 0$
satisfying $y_d^s \to y_d$ for $s \to s_0$, 
\begin{equation*}
U  \cap   \argmin_{\alpha \in D} 
 \|\Psi( \alpha, x_d) - y_d^s\|_Y^2 + \nu \|\alpha\|_p^p   = \emptyset\qquad \forall s > s_0,
\end{equation*}
and
\begin{equation*}
\bar \alpha \in \argmin_{\alpha \in D} 
\|\Psi( \alpha, x_d) - y_d \|_Y^2 + \nu \|\alpha\|_p^p.
\end{equation*}
Further, there exist uncountably many tuples $(y_d, \nu) \in Y \times (0, \infty)$ 
such that the set of solutions 
\begin{equation}
\label{eq:randomeq263535-23d3}
\argmin_{\alpha \in D} 
\|\Psi( \alpha, x_d) - y_d \|_Y^2 + \nu \|\alpha\|_p^p
\end{equation}
of the problem \eqref{eq:regNNproblem} contains more than one element. 
The regularized training problem \eqref{eq:regNNproblem} thus possesses the same nonuniqueness 
and instability properties as the optimization problem in \cref{cor:affineNNinstable}.
\end{enumerate}
\end{corollary}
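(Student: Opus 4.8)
The plan is to recognize that all three assertions of the corollary are instances of the abstract results on regularized training problems established in \cref{subsec:5.3}, so that the entire proof reduces to verifying that the neural-network setting of \cref{set:NNN}---together with the stated hypotheses on the widths, activations, and the sample number $n$---meets the assumptions of \cref{theorem:spuriousregprob}, \cref{theorem:approxgone}, and \cref{theorem:nonuniquereg}, and then invoking these theorems at the base point $\bar\alpha = 0$ with regularizer $g(\alpha) = \|\alpha\|_p^p$. Concretely, part (i) will follow from \cref{theorem:spuriousregprob} together with \cref{rem:Oneighborhood} (for the open set $O$), part (ii) from \cref{theorem:approxgone}, and part (iii) from \cref{theorem:nonuniquereg}; the instability and nonuniqueness in (iii) are precisely the regularized analogue of \cref{cor:affineNNinstable}.

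First I would check the two standing assumptions. Conicity \ref{fa:I} holds by \cref{lemma:nnconic}, and the improved expressiveness \ref{fa:II} (with $\Theta(\Psi, x_d) \leq 1 - 1/n$) holds by \cref{theorem:generalactiv} or \cref{cor:approxpropacti}, so \cref{ass:standingassumpssec4} is satisfied. Next, \cref{ass:linearlowlev} holds for every network of the form \eqref{eq:NNdef}: writing $A := A_1 \in \R^{w_1 \times d_\xx}$ (so that $q = w_1$) and collecting $b_1$ together with all higher-layer parameters into $\beta \in \R^p$, $p = m - w_1 d_\xx$, one has $\psi(\alpha, \xx) = \phi(\beta, A\xx)$, where $\phi$ first forms $\sigma_1(A\xx + b_1)$ and then passes the result through layers $2,\dots,L+1$. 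Since the activations $\sigma_i$ are twice differentiable, each $\sigma_i$ is $C^1$ (its first derivative is differentiable, hence continuous), so $\phi$ is continuously differentiable and twice differentiable; in particular the smoothness demanded at the base point by \cref{theorem:spuriousregprob}, \cref{theorem:approxgone}, and \cref{theorem:nonuniquereg} is met. Evaluating at $\alpha = 0$ gives $\phi(0,0) = 0$ and hence $\Psi(0, x_d) = 0$, because the topmost layer is the affine map $z \mapsto A_{L+1} z + b_{L+1}$, which returns $0$ when $A_{L+1} = 0$ and $b_{L+1} = 0$; this is the identity $\phi(0,0) = 0$ required by \cref{theorem:approxgone}. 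Finally, the base point $\bar\alpha = 0$ reshapes into $(\bar\beta, A) = (0,0)$, so it has the form $(\bar\beta, 0)$ demanded by the abstract theorems.

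It then remains to verify that the regularizer $g(\alpha) = \|\alpha\|_p^p$, $p \in [1,2]$, has the properties used in \cref{subsec:5.3}. Clearly $g \geq 0$ and $g(0) = 0$. On the open neighborhood $U := \{z \in \R^m : \|z\|_\infty < 1\}$ of the origin, the elementary inequality $|t|^p \geq t^2$ for $|t| \leq 1$ and $p \leq 2$ yields $g(z) = \sum_i |z_i|^p \geq \sum_i |z_i|^2 = \|z\|_2^2$, so the local quadratic lower bound $g(z) \geq c\|z\|_2^2$ holds with $c = 1$, which is exactly the hypothesis of \cref{theorem:spuriousregprob} and \cref{theorem:nonuniquereg}. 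Moreover, for $z \notin U$ some coordinate satisfies $|z_i| \geq 1$, whence $g(z) \geq 1$, giving the additional coercivity bound $g(z) \geq c_2$ on $\R^m \setminus U$ with $c_2 = 1$ required by \cref{theorem:approxgone}.

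With all assumptions in place, the three parts follow by direct application. For (i) I invoke \cref{theorem:spuriousregprob} at $\bar\alpha = 0$ to obtain, for every $C > 0$, uncountably many $(y_d, \nu)$ making the origin an arbitrarily bad spurious local minimum satisfying the stated quadratic growth; \cref{rem:Oneighborhood} then produces the open set $O \subset Y \times (0, \infty)$ of parameters yielding spurious minima (continuity of $\|\cdot\|_p^p$ near the origin being immediate). For (ii) I apply \cref{theorem:approxgone}, which---using $\phi(0,0) = 0$ and the coercivity of $g$---gives, for each fixed $\nu > 0$, uncountably many $y_d \neq 0$ for which $\bar\alpha = 0$ (with $\Psi(0,x_d) = 0$) is the unique global minimizer, so that \ref{fa:II} is destroyed. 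For (iii) I apply \cref{theorem:nonuniquereg}, whose conclusion is exactly the discontinuity of the solution map and the nonuniqueness in the sense of minimizing sequences asserted in the corollary, matching \cref{cor:affineNNinstable}. Since these verifications are all routine, I do not anticipate a genuine obstacle; the only points requiring minor care are the bookkeeping for \cref{ass:linearlowlev} (identifying $A$, $\beta$, $p$, $q$, and checking $\phi(0,0)=0$) and the elementary estimate $|t|^p \geq t^2$ for $|t| \le 1$ underlying every growth condition---everything else is a citation of the machinery in \cref{subsec:5.3}.
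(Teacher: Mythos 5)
Your proposal follows essentially the same route as the paper: verify \ref{fa:I} and \ref{fa:II} via \cref{lemma:nnconic} and \cref{theorem:generalactiv}/\cref{cor:approxpropacti}, check \cref{ass:linearlowlev} and the hypotheses on the regularizer $g(\alpha)=\|\alpha\|_p^p$, and then cite \cref{theorem:spuriousregprob}, \cref{rem:Oneighborhood}, \cref{theorem:approxgone}, and \cref{theorem:nonuniquereg}. Parts (i) and (ii) are handled correctly, and your verification of the growth bounds $g(z)\geq\|z\|_2^2$ on $\{\|z\|_\infty<1\}$ and $g(z)\geq 1$ outside it is exactly what is needed.

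There is one small but genuine gap in part (iii). The final assertion of the corollary is that the solution set in \eqref{eq:randomeq263535-23d3} \emph{contains more than one element}, i.e., that there is a second actual global minimizer, not merely a minimizing family that avoids a neighborhood of $\bar\alpha$. \Cref{theorem:nonuniquereg} only delivers a family $\{\alpha_s\}_{s>s_0}\subset D\setminus\tilde U$ whose objective values converge to the optimal value; by itself this is nonuniqueness in the sense of minimizing sequences, which is what you cite, but it does not yet produce a second element of the $\argmin$. The missing step is the one the paper supplies explicitly: because the regularization term $\nu\|\alpha\|_p^p$ is coercive, the family $\{\alpha_s\}$ is bounded, so one can extract a convergent subsequence whose limit lies in the closed set $D\setminus\tilde U$ (hence is distinct from $\bar\alpha\in\tilde U$) and, by continuity of the objective, attains the optimal value. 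You already established the coercivity of $g$ when checking the hypotheses of \cref{theorem:approxgone}, so the fix costs you one additional sentence; without it, the last claim of (iii) is not proved.
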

\begin{proof}
From \cref{lemma:nnconic} and \cref{theorem:generalactiv} (or \cref{cor:approxpropacti}, respectively), 
we obtain that \ref{fa:I} and \ref{fa:II} hold. 
Further, the conditions in \cref{ass:notation,ass:linearlowlev} are trivially satisfied in the 
considered situation (up to the isomorphism $\R^m \cong \R^{w_{L+1} \times w_{L}} \times \R^{w_{L+1}} \times ... \times \R^{w_{1} \times w_{0}} \times \R^{w_{1}}$)
with a twice differentiable function $\phi$. 
The various claims of the corollary thus follow immediately from 
\cref{theorem:spuriousregprob,theorem:approxgone,theorem:nonuniquereg}
and \cref{rem:Oneighborhood}.
Note that, as the regularization term in \eqref{eq:regNNproblem} is coercive, 
given a sequence $\{\alpha_s\}_{s > s_0}$
with the properties in the second part of \cref{theorem:nonuniquereg},
we can pass over to a convergent subsequence to obtain that the solution set in \eqref{eq:randomeq263535-23d3} 
indeed contains more than one element.
This completes the proof. 
\end{proof}

Note that \cref{cor:regNNcrap} does not require any assumptions on the existence of unrealizable vectors
or the relationship between $m$ and $n$. 
We conclude this section with a result that illustrates that our analysis 
can also be applied to neural networks which possess an architecture different from 
that in \cref{set:NNN}: 

\begin{corollary}{\hspace{-0.05cm}\bf(Properties \ref{fa:I} and \ref{fa:II} for Residual Neural Networks)}\label[corollary]{cor:resNets}
Suppose that $\XX = \R^{d_\xx}$, $\YY = \R^{d_\yy}$, $d_\xx, d_\yy \in \mathbb{N}$,
$n \in \mathbb{N}$, $x_d = \{\xx_{\;d}^k\}_{k=1}^n$, $L \in \mathbb{N}$, the numbers $w_i \in \mathbb{N}$,
the set $D$, and the functions $\sigma_i\colon \R \to \R$ 
are as in \cref{set:NNN}. 
Suppose further that arbitrary but fixed matrices $E_i \in \R^{w_{i} \times w_{i-1}}$, $i=1,...,L$,
are given, let $\xi_i^{A_i, b_i}\colon  \R^{w_{i-1}} \to \R^{w_i}$ 
be the functions defined by 
\begin{equation*}
\xi_i^{A_i, b_i}(z) := E_i z + \sigma_i\left (A_i z + b_i \right )~\forall i=1,...,L,
\qquad \xi_{L+1}^{A_{L+1}, b_{L+1}}(z) := A_{L+1}z + b_{L+1},
\end{equation*}
where $\sigma_i$ again acts componentwise on the entries of the vectors $ A_{i}z + b_{i}$,
and consider the residual neural network $\psi \colon D \times \XX \to \YY$ defined by 
\begin{equation}
\label{eq:residualNNdef}
\psi(\alpha, \xx) 
:= \left ( \xi_{L+1}^{A_{L+1}, b_{L+1}} \circ ... \circ \xi_{1}^{A_{1}, b_{1}} \right )(\xx)
\end{equation}
for all $\xx \in \XX$ and all $\alpha = (A_{L+1}, b_{L+1},..., A_1, b_1)  \in D$.
Assume that the activation functions $\sigma_i$ satisfy
\begin{equation}
\label{eq:randomeq16352527-37gee}
\lim_{0 < \gamma \to \infty} \frac{1}{\gamma} \sigma_i(\gamma s) =\sigma_i^- \min(0, s) + \sigma_i^+ \max(0, s)
\qquad \forall i=1,...,L\qquad \forall s \in \R
\end{equation}
for some numbers $\sigma_i^-, \sigma_i^+ \in \R$, $i=1,...,L$, with $\sigma_i^- \neq \sigma_i^+$,
and that it holds $w_i \geq 2$ for all $i=2,...,L$ and $w_1 \geq 4$. 
Then, the function
$\Psi(\cdot, x_d)\colon D \to Y$, $\alpha \mapsto \{\psi(\alpha, \xx_{\;d}^k) \}_{k=1}^n$,
associated with 
$x_d$ and the neural network $\psi$ in \eqref{eq:residualNNdef} possesses the properties \ref{fa:I} and \ref{fa:II}.
\end{corollary}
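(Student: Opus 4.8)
The plan is to verify the two standing conditions \ref{fa:I} and \ref{fa:II} separately, mirroring the strategy used for the feedforward networks of \cref{set:NNN} in \cref{lemma:nnconic,theorem:generalactiv}, and to treat the skip matrices $E_i$ as lower-order perturbations that disappear in the saturation limit. For conicity, I would argue exactly as in \cref{lemma:nnconic}: the topmost block $\xi_{L+1}$ is affine and carries all of $A_{L+1}, b_{L+1}$, whereas the residual blocks $\xi_i$, $i \le L$, do not involve these parameters. Replacing $(A_{L+1}, b_{L+1})$ by $(sA_{L+1}, sb_{L+1})$ with $s>0$ therefore multiplies the output of \eqref{eq:residualNNdef} by $s$ and leaves the lower layers untouched, so $\Psi(D, x_d)$ is a cone and \ref{fa:I} holds.

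The substance lies in \ref{fa:II}. The idea is to reduce the residual network to a plain, skip-free network whose activations are the positively homogeneous limits $\bar\sigma_i(s) := \sigma_i^-\min(0,s) + \sigma_i^+\max(0,s)$, which by \eqref{eq:randomeq16352527-37gee} satisfy $\gamma^{-1}\sigma_i(\gamma s)\to\bar\sigma_i(s)$. The key device is a layer-wise scaling with layer-dependent normalization: replace $(A_i, b_i)$ by $(\gamma A_i, \gamma b_i)$ for $i=1,\dots,L$ and $(A_{L+1}, b_{L+1})$ by $(\gamma^{-L}A_{L+1}, b_{L+1})$. Writing $W_i^\gamma$ for the output of the $i$-th scaled residual block at a data point, I would show by induction on $i$ that $\gamma^{-i}W_i^\gamma \to v_i$, where $v_i := \bar\sigma_i(A_i v_{i-1} + b_i)$ and $v_0 := \xx_{\;d}^k$ are the hidden representations of the plain network with activations $\bar\sigma_i$. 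The crucial observation is that in the block $W_i^\gamma = E_i W_{i-1}^\gamma + \sigma_i(\gamma(A_i W_{i-1}^\gamma + b_i))$ the skip term lags the activation by one power of $\gamma$: since $\gamma^{-(i-1)}W_{i-1}^\gamma\to v_{i-1}$, one has $\gamma^{-i}E_i W_{i-1}^\gamma = \gamma^{-1}E_i(\gamma^{-(i-1)}W_{i-1}^\gamma)\to 0$, while the activation argument equals $\gamma^i(A_i v_{i-1} + o(1))$, so homogeneity gives $\gamma^{-i}\sigma_i(\gamma(A_i W_{i-1}^\gamma + b_i))\to\bar\sigma_i(A_i v_{i-1}+b_i)$. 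The factor $\gamma^{-L}$ on the top layer then makes the whole output converge to $A_{L+1}v_L + b_{L+1}$, i.e. to the output of the plain network with activations $\bar\sigma_i$ and the same widths. Because $A_{L+1}, b_{L+1}$ are arbitrary and $\closure_Y(\Psi(D, x_d))$ is closed, this yields the inclusion $\closure_Y(\bar\Psi(D, x_d))\subset\closure_Y(\Psi(D, x_d))$, where $\bar\Psi$ is the plain network associated with $\bar\sigma_i$.

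To finish, I would observe that $\bar\Psi$ has leaky/parametric-ReLU activations with $\sigma_i^-\neq\sigma_i^+$, widths $w_i\ge 2$ for $i\ge 2$, and $w_1\ge 4$; these are precisely the hypotheses of \cref{theorem:generalactiv} with $J = \{1,\dots,L\}$ and $I=\emptyset$. Hence $\bar\Psi$ satisfies \ref{fa:II}, and by \cref{lemma:reformulatedcondition} the reformulated condition \eqref{eq:randomeq172636} holds for $\closure_Y(\bar\Psi(D, x_d))$. Combined with the inclusion of closures above, \eqref{eq:randomeq172636}---and therefore \ref{fa:II}---carries over to $\Psi$, and the monotonicity of the infimum in \eqref{eq:defTheta} gives $\Theta(\Psi, x_d)\le\Theta(\bar\Psi, x_d)\le 1-1/n$.

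I expect the main obstacle to be the order-of-$\gamma$ bookkeeping in the inductive step, in particular the degenerate case where a component of $A_i v_{i-1}+b_i$ vanishes: there the activation argument is only $o(\gamma^i)$ rather than of exact order $\gamma^i$, and one must justify $\gamma^{-i}\sigma_i(o(\gamma^i))\to 0 = \bar\sigma_i(0)$. This can be handled by upgrading \eqref{eq:randomeq16352527-37gee} to locally uniform convergence (which holds for the activations of interest) or, since \ref{fa:II} is a property of the closure, by perturbing the biases $b_i$ slightly so that no component of $A_i v_{i-1}+b_i$ vanishes at any of the finitely many data points $\xx_{\;d}^k$.
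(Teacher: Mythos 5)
Your overall strategy coincides with the paper's: prove \ref{fa:I} via the affine top layer, and prove \ref{fa:II} by saturating the activations so that the residual network degenerates to a skip-free network with the leaky-ReLU-type limit activations $\bar\sigma_i(s)=\sigma_i^-\min(0,s)+\sigma_i^+\max(0,s)$, then invoke the inclusion of closures together with \cref{theorem:generalactiv} (with $J=\{1,\dots,L\}$) and \cref{lemma:reformulatedcondition}. The difference is in how the saturation is executed, and your simultaneous version has two concrete problems. First, a bookkeeping error: with the scaling $(A_i,b_i)\mapsto(\gamma A_i,\gamma b_i)$ the argument of $\sigma_i$ in block $i\ge 2$ is $\gamma^i A_i\bigl(\gamma^{-(i-1)}W_{i-1}^\gamma\bigr)+\gamma b_i=\gamma^i\bigl(A_i(\gamma^{-(i-1)}W_{i-1}^\gamma)+\gamma^{1-i}b_i\bigr)$, so the bias is washed out and your limit is $v_i=\bar\sigma_i(A_i v_{i-1})$, not $\bar\sigma_i(A_i v_{i-1}+b_i)$. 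You therefore only recover the limit networks with zero biases in layers $2,\dots,L$, which is a proper subset of $\bar\Psi(D,x_d)$; since the construction behind \cref{lemma:heavisideapprox} uses nonzero upper-layer biases, the claimed inclusion $\closure_Y(\bar\Psi(D,x_d))\subset\closure_Y(\Psi(D,x_d))$ does not follow as stated. The fix is to scale $b_i$ by $\gamma^i$ rather than $\gamma$. Second, and more substantively, your inductive step passes the limit $\gamma^{-i}\sigma_i\bigl(\gamma^i(c_\gamma)\bigr)\to\bar\sigma_i(c)$ through a $\gamma$-dependent argument $c_\gamma\to c$, which requires locally uniform convergence in \eqref{eq:randomeq16352527-37gee}; the corollary only assumes pointwise convergence, and your proposed remedies do not close this (perturbing the biases only handles the degenerate case $c=0$, not the general $\gamma$-dependence of the argument).

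The paper avoids both issues by saturating one layer at a time, from the top down: at each stage only the parameters $(A_i,b_i)$ of the layer currently being saturated are scaled, so the input to that layer is a fixed vector independent of $\gamma$, the skip term is killed by the single factor $1/\gamma$ absorbed into the layer above, and the limit requires only the pointwise condition \eqref{eq:randomeq16352527-37gee} together with the continuity of the already-installed limit activations $\bar\sigma_j$ in the layers above. If you either repair the bias scaling and add locally uniform convergence as a hypothesis, or switch to the sequential saturation, your argument goes through; as written, the inductive step does not.
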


\begin{proof}
The proof of \ref{fa:I} is trivial. To establish \ref{fa:II}, we can proceed 
along similar lines as in the first half of the proof of \cref{theorem:generalactiv}:
From the definitions of the set $D$ and the function $\Psi(\cdot, x_d)\colon D \to Y$,
it follows straightforwardly that, for every arbitrary but fixed parameter vector 
$\alpha = (A_{L+1}, b_{L+1},..., A_1, b_1)\in D$
and all $\gamma > 0$, we have 
\begin{equation*}
\begin{aligned}
&\left \{
\psi
\left 
( \frac{1}{\gamma} A_{L+1}, b_{L+1}, \gamma A_L, \gamma b_{L}, A_{L-1}, b_{L-1},..., A_1, b_1 , \xx_{\;d}^k
\right ) \right \}_{k=1}^n
\\
&=
\Bigg \{
\frac{1}{\gamma} A_{L+1}
\Big [
E_L \big( \xi_{L-1}^{A_{L-1}, b_{L-1}} \circ ... \circ \xi_{1}^{A_{1}, b_{1}} \big)(\xx_{\;d}^k)
\\
&\qquad\qquad
+
\sigma_{L}
\left (
\gamma A_{L} \big(  \xi_{L-1}^{A_{L-1}, b_{L-1}} \circ ... \circ \xi_{1}^{A_{1}, b_{1}} \big)(\xx_{\;d}^k)
+
\gamma b_{L}
\right )
\Big ]
+ b_{L+1}\Bigg \}_{k=1}^n
\in \closure_Y\left (\Psi(D, x_d)\right ). 
\end{aligned}
\end{equation*}
Here, in the borderline case $L=1$, the ``empty'' composition 
$\xi_{L-1}^{A_{L-1}, b_{L-1}} \circ ... \circ \xi_{1}^{A_{1}, b_{1}}$ again has to be interpreted 
as the identity map.
By passing to the limit $0 < \gamma \to \infty$ in the above
and by exploiting \eqref{eq:randomeq16352527-37gee}, we obtain that 
\begin{equation*}
\begin{aligned}
&\Bigg \{
A_{L+1}
\Big [
\tilde \sigma_{L}
\left (
A_{L} \big ( \xi_{L-1}^{A_{L-1}, b_{L-1}} \circ ... \circ \xi_{1}^{A_{1}, b_{1}} \big )(\xx_{\;d}^k)
+
b_{L}
\right )
\Big ]
+ b_{L+1}\Bigg \}_{k=1}^n
\\
&= \Bigg \{ \left (\tilde \varphi_{L+1}^{A_{L+1}, b_{L+1}} \circ \tilde \varphi_{L}^{A_{L}, b_{L}} 
\circ  \xi_{L-1}^{A_{L-1}, b_{L-1}} \circ ... \circ \xi_{1}^{A_{1}, b_{1}} \right )(\xx_{\;d}^k)
\Bigg \}_{k=1}^n
\in \closure_Y\left (\Psi(D, x_d)\right ) 
\end{aligned}
\end{equation*}
holds for all $\alpha = (A_{L+1}, b_{L+1},..., A_1, b_1)\in D$,
where $\tilde \sigma_L$ denotes the ReLU-type 
activation function on the right-hand side of \eqref{eq:randomeq16352527-37gee} for $i=L$, 
i.e., $\tilde \sigma_L(s) := \sigma_L^- \min(0, s) + \sigma_L^+ \max(0, s)$,
and where $\smash{\tilde \varphi_{L}^{A_{L}, b_{L}}}$ and  $\smash{\tilde \varphi_{L+1}^{A_{L+1}, b_{L+1}}}$
are defined as in \eqref{eq:randomeq27353628hd37wb}, i.e., 
\begin{equation*}
\tilde \varphi_L^{A_L, b_L}(z) := \tilde \sigma_L\left (A_L z + b_L \right ),
\qquad \tilde \varphi_{L+1}^{A_{L+1}, b_{L+1}}(z) := A_{L+1}z + b_{L+1}. 
\end{equation*}
Using exactly the same saturation argument as above for the remaining layers 
of the network (starting with the topmost 
unsaturated layer and then proceeding downwards 
and exploiting the continuity of the functions $\tilde \sigma_i(s) := \sigma_i^- \min(0, s) + \sigma_i^+ \max(0, s)$,
$i=1,...,L$) yields that 
\begin{equation*}
\begin{aligned}
\Bigg \{ \left (\tilde \varphi_{L+1}^{A_{L+1}, b_{L+1}} \circ \tilde \varphi_{L}^{A_{L}, b_{L}} 
\circ  ... \circ \tilde \varphi_{1}^{A_{1}, b_{1}} \right )(\xx_{\;d}^k)
\Bigg \}_{k=1}^n
\in \closure_Y\left (\Psi(D, x_d)\right ) 
\end{aligned}
\end{equation*}
holds for all $\alpha = (A_{L+1}, b_{L+1},..., A_1, b_1)\in D$, where $\smash{\tilde \varphi_{i}^{A_{i}, b_{i}}}$, $i=1,...,L+1$,
are the functions in \eqref{eq:randomeq27353628hd37wb} associated with the activations $\tilde \sigma_i$
and where the set $\closure_Y\left (\Psi(D, x_d)\right )$ still refers 
to the original network $\psi$ in \eqref{eq:residualNNdef}.
The above shows that the closure $\closure_Y  (\Psi(D, x_d) )$
has to be at least as big as the set $\closure_Y  (\tilde\Psi(D, x_d)  )$
that is obtained from the function $\smash{\tilde \Psi(\cdot, x_d)\colon D \to Y}$, 
\smash{$\alpha \mapsto \{\tilde \psi(\alpha, \xx_{\;d}^k) \}_{k=1}^n$},
associated with a neural network $\tilde \psi$ 
that possesses the architecture in \cref{set:NNN}
and involves the ReLU-type activation functions $\tilde \sigma_i$, $i=1,...,L$.
Since this network $\tilde \psi$ satisfies \ref{fa:II} by 
our assumptions on the widths $w_i$, $i=1,...,L$, and 
\cref{theorem:generalactiv}, it now follows immediately that 
\begin{equation*}
\min_{y \in  \closure_Y\left (\Psi(D, x_d)\right )} \|y - y_d\|_Y^2 
\leq 
\min_{y \in  \closure_Y\left (\tilde \Psi(D, x_d)\right )} \|y - y_d\|_Y^2 
< \|y_d\|_Y^2\qquad \forall y_d \in Y\setminus \{0\},
\end{equation*}
and, by \cref{lemma:reformulatedcondition},
that the map $\Psi(\cdot, x_d)\colon D \to Y$ 
indeed possesses the property \ref{fa:II}.
This completes the proof. 
\end{proof}

It is easy to check that the last result applies in particular to residual neural networks 
of the type \eqref{eq:residualNNdef} that involve an arbitrary mixture of the 
activation functions in point \ref{item:activationex}  of \cref{cor:approxpropacti}.
For more details on ResNets, 
see  \cite{He2016}. 
We remark that, with \cref{cor:resNets} at hand,
one can again use the abstract analysis of \cref{sec:5}
to obtain results analogous to 
\cref{cor:NNbestapproxinstabil,cor:NNinfiniteNonUniqueness,cor:saddlenonover,cor:saddleoverpar,cor:spurminaffineNN,cor:spurminconstantNN,cor:affineNNinstable,cor:regNNcrap}
for the networks in \eqref{eq:residualNNdef}. We do not state these here for the sake of brevity. 
Note further that the technique used in the proof of \cref{cor:resNets} (i.e., the idea to establish \ref{fa:II} by 
saturating the activation functions and by subsequently invoking \cref{lemma:heavisideapprox} or \cref{theorem:generalactiv})
also works for other architectures. Once the properties \ref{fa:I} and \ref{fa:II} are established,
one can then again apply the theoretical machinery developed in \cref{sec:5} to the network under consideration. 
This flexibility is the main advantage of the general, axiomatic approach 
that we have taken in \cref{sec:5}. 

\section{Concluding Remarks}
\label{sec:7}
~\\[-0.575cm]
We conclude this paper with some additional remarks: 

First, we would like to stress that, although the results proved in the previous sections 
paint a somewhat bleak picture of the 
optimization landscape and the stability properties 
of squared-loss training problems for neural networks and general
nonlinear conic approximation schemes,
one should keep in mind that, even when 
applying an optimization algorithm to a problem of the form 
\eqref{eq:trainingpropprot} only allows to determine a spurious
local minimum or a saddle point (which may very well happen as
we have seen, for instance, in \cref{cor:spurminaffineNN}),
this resulting point may still perform far better, e.g., in terms of loss 
than anything that is obtainable with a classical approximation approach. 
The fact that driving the value of the objective function of \eqref{eq:trainingpropprot} 
to the global optimum may, in practice, not be possible due to spurious local minima 
or the instability effects discussed in \cref{sec:5,sec:6} thus does not mean 
that trying to solve problems of the type \eqref{eq:trainingpropprot} is not sensible 
(in particular as the results obtained, for instance, with stochastic gradient descent methods 
often turn out to be remarkably good in applications). The main issue that 
arises from the observations made in \cref{sec:5,sec:6}
is more one of reliability and robustness. As solving problems of the type \eqref{eq:trainingpropprot}
numerically may only provide good or locally optimal choices of the parameter $\alpha \in D$
but not globally optimal ones and since points with similar optimal or nearly optimal loss values may 
perform very differently even on the training data (see point \ref{item:stabth:ii} of \cref{theorem:abstractinstability}),
theoretical guarantees for, e.g., the generalization behavior 
or approximation properties of global minimizers of problems of the form \eqref{eq:trainingpropprot} 
may simply not apply to the points that are determined with 
optimization algorithms in reality. 
Further, due to the instability and nonuniqueness effects documented, e.g.,
in \cref{theorem:abstractinstability,cor:instabilityoverpara,theorem:nonuniquereg},
small perturbations of the training data or the hyper-parameters of the considered numerical solution method 
and/or a different behavior of stochastic components of the used optimization algorithm 
may affect the performance of the obtained solutions significantly, cf.\
the 
experiments of \cite{Cunningham2000}. 
Note that this implies in particular that general deterministic guarantees for 
the convergence of optimization algorithms etc.\ are simply unobtainable
and that only probabilistic approaches have the potential to explain in a satisfying way why neural networks perform the way they do.
We leave the study of the latter and their connections to the results of this paper for future research. 

We would like to point out that the observation that undesirable properties of the optimization landscape 
may prevent a proper identification of those parameters $\alpha$ for which, e.g., 
a neural network provides the best approximation properties in a particular situation 
also suggests that one should be careful with claims that nonlinear approximation instruments are able 
to break the curse of dimensionality. 
The main point here is that this curse may not only 
manifest itself in the fact that the number of operations or degrees of freedom
in an approximation scheme has to grow exponentially with, for instance, 
the spatial dimension of an underlying PDE to achieve a certain prescribed precision, but also in the 
loss surface of the minimization problems 
that have to be solved in order to adapt an approximation instrument to a given function. 
Compare, e.g., with \cref{cor:spurminaffineNN,cor:spurminconstantNN}
in this context which demonstrate that improved approximation properties are necessarily paid for in the 
form of a larger set of label vectors for which \eqref{eq:trainingpropprot} possesses spurious local minima
when ReLU-type neural networks are considered. 
To see the essential problem, one can also consider the extreme 
case of a continuous function $\Psi\colon \R \to Y$ from the real line into a (not necessarily finite-dimensional)
Hilbert space $(Y, \|\cdot\|_Y)$ whose image $\Psi(\R)$ is dense in $Y$ (i.e., a space-filling curve). 
Such an approximation scheme only requires one parameter to approximate 
arbitrary elements of $Y$ to an arbitrary precision and thus clearly does not suffer from 
the scaling behavior that classically characterizes the curse of dimensionality. 
However, this construction certainly does not break this curse, either,
simply because, for a high- or infinite-dimensional space $Y$, the
optimization landscape of the problem $\min_{\alpha \in \R} \|\Psi(\alpha) - y_d\|_Y^2$ for a given $y_d \in Y$
typically contains countably many spurious local minima 
and can thus not be effectively navigated with classical optimization algorithms
so that identifying parameters $\alpha \in \R$ for which the error 
$ \|\Psi(\alpha) - y_d\|_Y$ becomes small is in practice impossible. 
The results proved in \cref{sec:5,sec:6} suggest that 
it makes sense to interpret nonlinear approximation instruments 
like neural networks as elements of a spectrum which, at the
one end, has linear approximation schemes (which suffer 
from the usual scaling problems related to the curse of dimensionality 
but also give rise to, e.g., squared-loss problems with the best possible optimization landscape)
and, at the other end, has space-filling curves (which only require a single parameter 
to achieve an arbitrary approximation accuracy but also give 
rise to optimization problems which typically have the worst properties possible). 
Considering only the scaling behavior of the degrees of freedom 
w.r.t.\ an underlying dimension without taking into account the effort necessary 
to determine best approximating elements does not seem to be 
sensible when studying how neural networks and nonlinear approximation schemes
in general behave in view of the curse of dimensionality. 
We remark that this impression is also 
confirmed by the results on the computational complexity  
of training problems available in the literature. 
Compare, e.g., with \cite{Blum1992},
which, for a 2-layer-3-node neural network, show that, for any 
polynomial-time training algorithm, there exist choices of the training data
such that the network is not trained correctly,
and that
it is NP-complete to decide whether there exist network parameters 
such that the training data are fit precisely. 
For further contributions on this topic, 
see also \cite{Gupta1995,Bartlett2002,Sima2002} and \cite{Abrahamsen2021}. 
The findings of these papers indicate that, for the improved approximation properties of 
neural networks, one necessarily pays in terms of NP-hardness or 
ER-completeness of the associated training problems, too.\\[-0.6cm]

Regarding the optimization landscape of the
squared-loss training problems in  \eqref{eq:trainingpropprot}, we finally would like to  point out 
that, if $y_d$ is not a label vector that gives rise to a
problem with a spurious local minimum or a saddle point,
but close to a vector that does,
then the objective function of \eqref{eq:trainingpropprot} 
will still possess points which are almost stationary since the
gradients (or subgradients, respectively) of the function $\alpha \mapsto \|\Psi( \alpha, x_d) - y_d\|_Y^2$
depend continuously on $y_d$. The presence of flat regions in 
the optimization landscape that slow down gradient descent or may falsely indicate convergence
is thus also to be expected for label vectors 
that are not directly covered by \cref{th:stevaluedspurious,theorem:existencestatpts,prop:existencehotspurs,theorem:badcone}.
We remark that these predictions of our analysis again agree well with what is observed 
in the numerical practice \citep[cf.\,][]{Dauphin2014}. 
Similarly, points at which the best approximation map of \eqref{eq:trainingpropprot}
is unstable may also already affect the convergence behavior of numerical solution algorithms when the iterates 
of the algorithm come close to them. 
Compare, for instance, with \cite[Section 3]{Wolfe1975} for an example which illustrates that,
even if a function only possesses a statistically negligible set of ``bad'' points, 
these points may still prevent the convergence of gradient descent algorithms on a global level. 

\acks{This research was conducted within the International Research Training Group IGDK 1754,
funded by the German Science Foundation (DFG) and the Austrian Science Fund (FWF)
under project number 188264188/GRK1754.}

\vskip 0.2in
\bibliography{references}

\begin{thebibliography}{55}
\providecommand{\natexlab}[1]{#1}
\providecommand{\url}[1]{\texttt{#1}}
\expandafter\ifx\csname urlstyle\endcsname\relax
  \providecommand{\doi}[1]{doi: #1}\else
  \providecommand{\doi}{doi: \begingroup \urlstyle{rm}\Url}\fi

\bibitem[Abrahamsen et~al.(2021)Abrahamsen, Kleist, and
  Miltzow]{Abrahamsen2021}
M.~Abrahamsen, L.~Kleist, and T.~Miltzow.
\newblock Training neural networks is {ER}-complete.
\newblock arxiv:2102.09798v1, 2021.

\bibitem[Allen-Zhu et~al.(2019)Allen-Zhu, Li, and Song]{Zhu2019}
Z.~Allen-Zhu, Y.~Li, and Z.~Song.
\newblock A convergence theory for deep learning via over-parameterization.
\newblock In K.~Chaudhuri and R.~Salakhutdinov, editors, \emph{Proceedings of
  the 36th International Conference on Machine Learning}, volume~97, pages
  242--252. PMLR, 2019.

\bibitem[Arjevani and Field(2021)]{Arjevani2020}
Y.~Arjevani and M.~Field.
\newblock Symmetry \& critical points for a model shallow neural network.
\newblock arxiv:2003.10576v5, 2021.

\bibitem[Auer et~al.(1996)Auer, Herbster, and Warmuth]{Auer1996}
P.~Auer, M.~Herbster, and M.~K.~K. Warmuth.
\newblock Exponentially many local minima for single neurons.
\newblock In D.~S. Touretzky, M.~C. Mozer, and M.~E. Hasselmo, editors,
  \emph{Advances in Neural Information Processing Systems}, volume~8, pages
  316--322. Curran Associates, Inc., 1996.

\bibitem[Bartlett and Ben-David(2002)]{Bartlett2002}
P.~L. Bartlett and S.~Ben-David.
\newblock Hardness results for neural network approximation problems.
\newblock \emph{Theoret.~Comput.~Sci.}, 284\penalty0 (1):\penalty0 53--66,
  2002.

\bibitem[Blum and Rivest(1992)]{Blum1992}
A.~L. Blum and R.~L. Rivest.
\newblock Training a 3-node neural network is {NP-}complete.
\newblock \emph{Neural Networks}, 5\penalty0 (1):\penalty0 117--127, 1992.

\bibitem[Borwein and Vanderwerff(2010)]{Borwein2010}
J.~M. Borwein and J.~D. Vanderwerff.
\newblock \emph{Convex Functions: Constructions, Characterizations and
  Counterexamples}.
\newblock Cambridge University Press, Cambridge, 2010.

\bibitem[Braess(1986)]{Braess1986}
D.~Braess.
\newblock \emph{Nonlinear Approximation Theory}.
\newblock Number~7 in Springer Series in Computational Mathematics. Springer,
  Berlin/Heidelberg, 1986.

\bibitem[Burago and Zalgaller(1988)]{Burago1988}
Y.~D. Burago and V.~A. Zalgaller.
\newblock \emph{Geometric Inequalities}.
\newblock Springer, Berlin/Heidelberg, 1988.

\bibitem[Calin(2020)]{Calin2020}
O.~Calin.
\newblock \emph{Deep Learning Architectures: A Mathematical Approach}.
\newblock Springer Series in the Data Sciences. Springer, Cham, 2020.

\bibitem[Chen et~al.(2020)Chen, Cao, Zou, and Gu]{Chen2020}
Z.~Chen, Y.~Cao, D.~Zou, and Q.~Gu.
\newblock How much over-parameterization is sufficient to learn deep {ReLU}
  networks?
\newblock arxiv:1911.12360v3, 2020.

\bibitem[Christof and Hafemeyer(2021)]{Christof2021}
C.~Christof and D.~Hafemeyer.
\newblock On the nonuniqueness and instability of solutions of tracking-type
  optimal control problems.
\newblock \emph{Math.~Control Relat.~Fields}, 2021.
\newblock in press.

\bibitem[Cooper(2020)]{Cooper2020}
Y.~Cooper.
\newblock The critical locus of overparameterized neural networks.
\newblock arxiv:2005.04210v2, 2020.

\bibitem[Cunningham et~al.(2000)Cunningham, Carney, and Jacob]{Cunningham2000}
P.~Cunningham, J.~Carney, and S.~Jacob.
\newblock Stability problems with artificial neural networks and the ensemble
  solution.
\newblock \emph{Art.~Intell.~Med.}, 20\penalty0 (3):\penalty0 217--225, 2000.

\bibitem[DasGupta et~al.(1995)DasGupta, Siegelmann, and Sontag]{Gupta1995}
B.~DasGupta, H.~T. Siegelmann, and E.~Sontag.
\newblock On the complexity of training neural networks with continuous
  activation functions.
\newblock \emph{IEEE Trans.~Neural Netw.}, 6\penalty0 (6):\penalty0 1490--1504,
  1995.

\bibitem[Daubechies et~al.(2019)Daubechies, DeVore, Foucart, Hanin, and
  Petrova]{Daubechies2019}
I.~Daubechies, R.~A. DeVore, S.~Foucart, B.~Hanin, and G.~Petrova.
\newblock Nonlinear approximation and (deep) {ReLU} networks.
\newblock arxiv:1905.02199v1, 2019.

\bibitem[Dauphin et~al.(2014)Dauphin, Pascanu, G{\"u}lçehre, Cho, Ganguli, and
  Bengio]{Dauphin2014}
Y.~Dauphin, R.~Pascanu, C.~G{\"u}lçehre, K.~Cho, S.~Ganguli, and Y.~Bengio.
\newblock Identifying and attacking the saddle point problem in
  high-dimensional non-convex optimization.
\newblock In Z.~Ghahramani, M.~Welling, C.~Cortes, N.~Lawrence, and K.~Q.
  Weinberger, editors, \emph{Advances in Neural Information Processing
  Systems}, volume~27, pages 2933--2941. Curran Associates, Inc., 2014.

\bibitem[DeVore(1998)]{DeVore1998}
R.~A. DeVore.
\newblock Nonlinear approximation.
\newblock \emph{Acta Numer.}, pages 51--150, 1998.

\bibitem[Ding et~al.(2020)Ding, Li, and Sun]{Ding2020}
T.~Ding, D.~Li, and R.~Sun.
\newblock Sub-optimal local minima exist for almost all over-parameterized
  neural networks.
\newblock arxiv:1911.01413v3, 2020.

\bibitem[Eftekhari(2020)]{Eftekhari2020}
A.~Eftekhari.
\newblock Training linear neural networks: non-local convergence and complexity
  results.
\newblock In H.~Daumé and A.~Singh, editors, \emph{Proceedings of the 37th
  International Conference on Machine Learning}, volume 119, pages 2836--2847.
  PMLR, 2020.

\bibitem[Goldblum et~al.(2020)Goldblum, Geiping, Schwarzschild, Moeller, and
  Goldstein]{Goldblum2020Truth}
M.~Goldblum, J.~Geiping, A.~Schwarzschild, M.~Moeller, and T.~Goldstein.
\newblock Truth or backpropaganda? {A}n empirical investigation of deep
  learning theory.
\newblock arxiv:1910.00359v3, 2020.

\bibitem[He et~al.(2016)He, Zhang, Ren, and Sun]{He2016}
K.~He, X.~Zhang, S.~Ren, and J.~Sun.
\newblock Deep residual learning for image recognition.
\newblock In \emph{Proceedings of the 2016 IEEE Conference on Computer Vision
  and Pattern Recognition (CVPR)}, pages 770--778, 2016.

\bibitem[Hofmann(2013)]{Hofmann2013}
B.~Hofmann.
\newblock \emph{Regularization for Applied Inverse and Ill-Posed Problems: A
  Numerical Approach}.
\newblock Teubner-Texte zur Mathematik. Vieweg+Teubner, Wiesbaden, 2013.

\bibitem[Kainen et~al.(1999)Kainen, Kůrková, and Vogt]{Kainen1999}
P.~C. Kainen, V.~Kůrková, and A.~Vogt.
\newblock Approximation by neural networks is not continuous.
\newblock \emph{Neurocomputing}, 29\penalty0 (1):\penalty0 47--56, 1999.

\bibitem[Kainen et~al.(2001)Kainen, Kůrková, and Vogt]{Kainen2001}
P.~C. Kainen, V.~Kůrková, and A.~Vogt.
\newblock Continuity of approximation by neural networks in {$L^p$}-spaces.
\newblock \emph{Ann.~Oper.~Res.}, 101:\penalty0 143--147, 2001.

\bibitem[Kawaguchi(2016)]{Kawaguchi2016}
K.~Kawaguchi.
\newblock Deep learning without poor local minima.
\newblock In D.~D. Lee, M.~Sugiyama, U.~V. Luxburg, I.~Guyon, and R.~Garnett,
  editors, \emph{Advances in Neural Information Processing Systems}, volume~29,
  pages 586--594. Curran Associates, Inc., 2016.

\bibitem[Kazemipour et~al.(2020)Kazemipour, Larsen, and
  Druckmann]{Kazemipour2020}
A.~Kazemipour, B.~W. Larsen, and S.~Druckmann.
\newblock Avoiding spurious local minima in deep quadratic networks.
\newblock arxiv:2001.00098v2, 2020.

\bibitem[Kůrková and Sanguineti(2002)]{Kurkova2002}
V.~Kůrková and M.~Sanguineti.
\newblock Comparison of worst case errors in linear and neural network
  approximation.
\newblock \emph{IEEE Trans.~Inform.~Theory}, 48\penalty0 (1):\penalty0
  264--275, 2002.

\bibitem[Kurtz and Bah(2020)]{Kurtz2020}
J.~Kurtz and B.~Bah.
\newblock An integer programming approach to deep neural networks with binary
  activation functions.
\newblock arxiv:2007.03326v3, 2020.

\bibitem[Laurent and von Brecht(2018)]{Laurent2018}
T.~Laurent and J.~von Brecht.
\newblock Deep linear neural networks with arbitrary loss: All local minima are
  global.
\newblock In J.~G. Dy and A.~Krause, editors, \emph{Proceedings of the 35th
  International Conference on Machine Learning}, volume~80, pages 2908--2913.
  PMLR, 2018.

\bibitem[LeCun et~al.(2015)LeCun, Bengio, and Hinton]{LeCun2015}
Y.~LeCun, Y.~Bengio, and G.~Hinton.
\newblock Deep learning.
\newblock \emph{Nature}, 521:\penalty0 436--444, 2015.

\bibitem[Li et~al.(2021)Li, Ding, and Sun]{LiDawei2021}
D.~Li, T.~Ding, and R.~Sun.
\newblock On the benefit of width for neural networks: disappearance of bad
  basins.
\newblock arxiv:1812.11039v6, 2021.

\bibitem[Li and Liang(2018)]{Li2018}
Y.~Li and Y.~Liang.
\newblock Learning overparameterized neural networks via stochastic gradient
  descent on structured data.
\newblock In \emph{Proceedings of the 32nd International Conference on Neural
  Information Processing Systems}, NIPS'18, pages 8168--8177. Curran Associates
  Inc., 2018.

\bibitem[Liang et~al.(2018)Liang, Sun, Lee, and Srikant]{Liang2018}
S.~Liang, R.~Sun, J.~D. Lee, and R.~Srikant.
\newblock Adding one neuron can eliminate all bad local minima.
\newblock In S.~Bengio, H.~Wallach, H.~Larochelle, K.~Grauman, N.~Cesa-Bianchi,
  and R.~Garnett, editors, \emph{Advances in Neural Information Processing
  Systems}, volume~31, pages 4350--4360. Curran Associates, Inc., 2018.

\bibitem[Šíma(2002)]{Sima2002}
J.~Šíma.
\newblock Training a single sigmoidal neuron is hard.
\newblock \emph{Neural Computation}, 14\penalty0 (11):\penalty0 2709--2728,
  2002.

\bibitem[Nguyen et~al.(2018)Nguyen, Mukkamala, and Hein]{Nguyen2018}
Q.~Nguyen, M.~C. Mukkamala, and M.~Hein.
\newblock On the loss landscape of a class of deep neural networks with no bad
  local valleys.
\newblock arxiv:1809.10749v2, 2018.

\bibitem[Oymak and Soltanolkotabi(2020)]{Oymak2019}
S.~Oymak and M.~Soltanolkotabi.
\newblock Towards moderate overparameterization: global convergence guarantees
  for training shallow neural networks.
\newblock \emph{IEEE J.~Sel.~Areas Inform.~Theory}, 1\penalty0 (1):\penalty0
  84--105, 2020.

\bibitem[Pieper and Petrosyan(2020)]{Pieper2020}
K.~Pieper and A.~Petrosyan.
\newblock Nonconvex penalization for sparse neural networks.
\newblock arxiv:2004.11515v1, 2020.

\bibitem[P{\"o}rner(2018)]{Poerner2018}
F.~P{\"o}rner.
\newblock \emph{Regularization Methods for Ill-Posed Optimal Control Problems}.
\newblock W{\"u}rzburg University Press, W{\"u}rzburg, 2018.

\bibitem[Rockafellar and Wets(1998)]{RockafellarWets1998}
R.~T. Rockafellar and R.~J.-B. Wets.
\newblock \emph{Variational Analysis}, volume 317 of \emph{Grundlehren der
  Mathematischen Wissenschaften}.
\newblock Springer, Berlin, 1998.

\bibitem[Safran and Shamir(2018)]{Safran2018}
I.~Safran and O.~Shamir.
\newblock Spurious local minima are common in two-layer {ReLU} neural networks.
\newblock In J.~G. Dy and A.~Krause, editors, \emph{Proceedings of the 35th
  International Conference on Machine Learning}, volume~80, pages 4430--4438.
  PMLR, 2018.

\bibitem[Saxe et~al.(2014)Saxe, McClelland, and Ganguli]{Saxe2013ExactST}
A.~M. Saxe, J.~L. McClelland, and S.~Ganguli.
\newblock Exact solutions to the nonlinear dynamics of learning in deep linear
  neural networks.
\newblock arxiv:1312.6120v3, 2014.

\bibitem[Schwetlick and Sch{\"u}tze(1995)]{Schwetlick1995}
H.~Schwetlick and T.~Sch{\"u}tze.
\newblock Least squares approximation by splines with free knots.
\newblock \emph{BIT}, 35:\penalty0 361--384, 1995.

\bibitem[Soudry and Carmon(2016)]{Soudry2016}
D.~Soudry and Y.~Carmon.
\newblock No bad local minima: Data independent training error guarantees for
  multilayer neural networks.
\newblock arxiv:1605.08361v2, 2016.

\bibitem[Sun(2019)]{Sun2019}
R.~Sun.
\newblock Optimization for deep learning: theory and algorithms.
\newblock arxiv:1912.08957v1, 2019.

\bibitem[Sun et~al.(2020)Sun, Li, Liang, Ding, and Srikant]{Sun2020}
R.~Sun, D.~Li, S.~Liang, T.~Ding, and R.~Srikant.
\newblock The global landscape of neural networks: an overview.
\newblock \emph{IEEE Signal Process.~Mag.}, 37\penalty0 (5):\penalty0 95--108,
  2020.

\bibitem[Swirszcz et~al.(2016)Swirszcz, Czarnecki, and Pascanu]{Swirszcz2016}
G.~Swirszcz, W.~M. Czarnecki, and R.~Pascanu.
\newblock Local minima in training of neural networks.
\newblock arxiv:1611.06310v2, 2016.

\bibitem[Venturi et~al.(2019)Venturi, Bandeira, and Bruna]{Venturi2019}
L.~Venturi, A.~S. Bandeira, and J.~Bruna.
\newblock Spurious valleys in one-hidden-layer neural network optimization
  landscapes.
\newblock \emph{J.~Mach.~Learn.~Res.}, 20:\penalty0 1--34, 2019.

\bibitem[Wen et~al.(2016)Wen, Wu, Wang, Chen, and Li]{Wen2016}
W.~Wen, C.~Wu, Y.~Wang, Y.~Chen, and H.~Li.
\newblock Learning structured sparsity in deep neural networks.
\newblock In \emph{Proceedings of the 30th International Conference on Neural
  Information Processing Systems}, NIPS’16, pages 2082--2090. Curran
  Associates Inc., 2016.

\bibitem[Wolfe(1975)]{Wolfe1975}
P.~Wolfe.
\newblock A method of conjugate subgradients for minimizing nondifferentiable
  functions.
\newblock In M.~L. Balinski and P.~Wolfe, editors, \emph{Nondifferentiable
  Optimization}, pages 145--173. Springer, Berlin, Heidelberg, 1975.

\bibitem[Yoon and Hwang(2017)]{Yoon2017}
J.~Yoon and S.~J. Hwang.
\newblock Combined group and exclusive sparsity for deep neural networks.
\newblock In D.~Precup and Y.~W. Teh, editors, \emph{Proceedings of the 34th
  International Conference on Machine Learning}, volume~70, pages 3958--3966.
  PMLR, 2017.

\bibitem[Yu and Chen(1995)]{Yu1995}
X.-H. Yu and G.-A. Chen.
\newblock On the local minima free condition of backpropagation learning.
\newblock \emph{IEEE Trans.~Neural Netw.}, 6\penalty0 (5):\penalty0 1300--1303,
  1995.

\bibitem[Yun et~al.(2019)Yun, Sra, and Jadbabaie]{Yun2019}
C.~Yun, S.~Sra, and A.~Jadbabaie.
\newblock Small nonlinearities in activation functions create bad local minima
  in neural networks.
\newblock arxiv:1802.03487v4, 2019.

\bibitem[Zhou and Liang(2017)]{Zhou2017}
Y.~Zhou and Y.~Liang.
\newblock Critical points of neural networks: analytical forms and landscape
  properties.
\newblock arxiv:1710.11205v1, 2017.

\bibitem[Zou et~al.(2020)Zou, Long, and Gu]{Zou2020}
D.~Zou, P.~M. Long, and Q.~Gu.
\newblock On the global convergence of training deep linear {ResNets}.
\newblock arxiv:2003.01094v1, 2020.

\end{thebibliography}

\end{document}